\def\Ntop{{N_{_{\rm top}}}}
\def\Ninf{N_{_{\infty}}}
\def\Xh{{\widehat{X}}}
\def\Th{{\widehat{T}}}
\def\Xr{\mathring{X}}
\def\Tr{\mathring{T}}
\def\Trh{\widehat{\mathring{T}}}
\def\Zr{{\mathring{Z}}}
\def\Lr{{\mathring{L}}}
\def\Lbr{{\mathring{\underline{L}}}}
\def\zr{{\mathring{z}}}
\def\yr{{\mathring{y}}}
\def\kappar{{\mathring{\kappa}}}
\def\mur{{\mathring{\mu}}}
\def\etar{{\mathring{\eta}}}
\def\deltar{{\mathring{\delta}}}
\def\deltasr{{\mathring{\slashed{\delta}}}}
\def\Cb{{\underline{C}}}
\def\zetar{{\mathring{\zeta}}}
\def\chir{{\mathring{\chi}}}
\def\chibr{{\mathring{\underline{\chi}}}}
\def\E{{\mathcal E}}
\def\Eb{{\underline{\mathcal E}}}
\def\F{{\mathcal F}}
\def\Fb{{\underline{\mathcal F}}}
\def\ub{\underline{u}}
\def\wb{\underline{w}}
\def\chib{\underline{\chi}}
\def\Lb{\underline{L}}
\def\Hb{\underline{H}}
\def\Xih{\widehat{{\Xi}}}
\newtheorem*{TheoremDataExistence}{Theorem 1}
\newtheorem*{CorollaryExistenceOnDdelta}{Corollary 1}
\newtheorem*{TheoremExistenceRarefactionWaves}{Theorem 2}
\newtheorem*{TheoremStructuralStability}{Theorem 3}
\newtheorem{theorem}{Theorem}[section]
\newtheorem{lemma}[theorem]{Lemma}
\newtheorem{proposition}[theorem]{Proposition}
\newtheorem{corollary}[theorem]{Corollary}
\newtheorem{definition}[theorem]{Definition}
\newtheorem{remark}[theorem]{Remark}
\numberwithin{equation}{section}
\begin{document}
\title[Multi-dimensional rarefaction waves II]{On the stability of multi-dimensional rarefaction waves II: existence of solutions and applications to Riemann problem}

\author{Tian-Wen LUO and Pin YU}

\address{School of Mathematical Sciences, South China Normal University, Guangzhou, China}
\email{twluo@m.scnu.edu.cn}

\address{Department of Mathematical Sciences, Tsinghua University\\ Beijing, China}
\email{yupin@mail.tsinghua.edu.cn}


\maketitle

\begin{abstract}
This is the second paper in a series studying the nonlinear stability of rarefaction waves in multi-dimensional gas dynamics. We construct initial data near singularities in the rarefaction wave region and, combined with the \emph{a priori} energy estimates from the first paper, demonstrate that any smooth perturbation of constant states on one side of the diaphragm in a shock tube can be connected to a centered rarefaction wave. We apply this analysis to study multi-dimensional perturbations of the classical Riemann problem for isentropic Euler equations. We show that the Riemann problem is structurally stable in the regime of two families of rarefaction waves.

\end{abstract}
\tableofcontents

\section{Introduction and review}
Rarefaction waves are fundamental wave patterns in gas dynamics and hyperbolic conservation laws. It provides a nonlinear expansion mechanism that can instantly resolve and smooth out the initial  discontinuities. In the final chapter of A. Majda's monograph \cite{MajdaBook} on compressible flows, ``the existence and structure of rarefaction fronts'' is presented as the first among a list of open problems: ``\textit{Discuss the rigorous existence of rarefaction fronts for the physical equations and elucidate the differences in multi-D rarefaction phenomena when compared with the 1-D case}". 
This paper is the second in a series dedicated to exploring this problem. We provide an affirmative answer for the isentropic compressible Euler equations, constructing centered rarefaction wavefronts and showing their uniqueness and stability. Additionally, we demonstrate that the multidimensional rarefaction phenomena exhibit all of the characteristics of the one-dimensional case.



\subsection{Review of 1-D Riemann problem for the isentropic Euler system}\label{section:review on p system}

The 1-D Euler equations, describing the isentropic one-dimensional motion of a gas, are given by:
\begin{equation}\label{p system}
\begin{cases}
	\partial_t \rho + v \partial_x \rho = -\rho \partial_x v,\\
	\rho(\partial_t v + v \partial_x v) = -\partial_x p,
	\end{cases}
\end{equation}
where $\rho$, $p$ and $v$ are the density, pressure, and velocity of the gas, respectively. The equation of state is given by $p(\rho) = k_0 \rho^{\gamma}$ with constants $k_0 >0$ and $\gamma\in (1,3)$, which applies to most gases. 
The sound speed $c$ is defined as  $c=\sqrt{\frac{dp}{d\rho}}=k_0^{\frac{1}{2}}\gamma^{\frac{1}{2}}\rho^{\frac{\gamma-1}{2}}$.




Riemann's seminal work \cite{Riemann} on plane waves of finite amplitude in gas dynamics  \eqref{p system} was the first to explore the phenomena of shock waves and rarefaction waves. Riemann identified and solved the Riemann problems as the primary objects of study, laying  the foundation for the theory of conservation laws in one-dimension. Riemann's idea and deep insight have proven to be of lasting significance in nonlinear waves and hyperbolic partial differential equations.

We follow the approach in Smoller's textbook \cite{Smoller} to outline the results of Riemann, see Chapter 16, 17 and 18 of \cite{Smoller} for detailed computations. In line with the standard formulations of the theory of conservation laws, \eqref{p system} is expressed as:
 \begin{equation}\label{p system in conservation laws}
\partial_t U+ A(U) \partial_x U=0, 
\end{equation}
where $U=\begin{pmatrix}
  \rho \\
  v
   \end{pmatrix}$ and $A(U)=\begin{pmatrix}
  v & \rho\\
  \rho^{-1}c^2  & \rho^{-1}v
  \end{pmatrix}$. For a given solution $U$ and a given point $(t,x)$, the matrix $A(U)$ has two distinct  eigenvalues $\lambda_1(U)=v-c$  and   $\lambda_2(U)=v+c$.  
The Riemann problem is the Cauchy problem with specific data $U(0,x)$ posed on $t=0$: 
\begin{equation}\label{p system data}
U(0,x)=\begin{cases}
&U_l, \ \ x>0, \\
&U_r, \ \ x<0,
\end{cases}
\end{equation}
where $U_l=\begin{pmatrix}
  \rho_l \\
  v_l
   \end{pmatrix}$ and $U_r=\begin{pmatrix}
  \rho_r \\
  v_r
   \end{pmatrix}$ are two constant states. 

Riemann's solution to his problem is concisely summarized in Courant-Friedrichs \cite{CourantFriedrichs} as follows:
``\textit{either the initial discontinuity is resolved immediately and the disturbance, while propagated, becomes continuous, or the initial discontinuity is propagated through one or two shock fronts, advancing not at sonic but at supersonic speed relative to the medium ahead of them}''.  The last sentence refers to the determinism condition or entropy condition for shocks.

To be more precise, there are four fundamental solution patterns for the Riemann problem, which connect constant states through either shocks or rarefaction waves:
\medskip

$\bullet$ Shock Waves.

\medskip

Shocks are piecewise smooth and discontinuous solutions that propagate the initial discontinuities, while satisfying the jump conditions and the entropy inequalities across the shock front. There are two families of shock waves: the \emph{back shocks}, corresponding to the first eigenvalue $\lambda_1(U)$, and the \emph{front shocks}, corresponding to the second eigenvalue $\lambda_2(U)$. We will focus on the back shocks, as the two families are symmetrically related.




A back shock is a solution that is piecewise constant, consisting of exactly two pieces separated by the ray $x=st$, where $s$ is the shock speed.  It satisfies the jump condition
\[(v_l - v_r)^2 = \frac{\rho_l - \rho_r}{\rho_r \rho_l}(p(\rho_l) - p(\rho_r)). \]
Combining with the determinism condition $\lambda_1(U_l) = v_l - c_l < s_1 < v_r - c_r = \lambda_1(U_r)$, we obtain $s = v_1 \pm \sqrt{\frac{\rho_2}{\rho_1}\frac{p(\rho_1) - p(\rho_2)}{\rho_1 - \rho_2}} = v_2 \pm \sqrt{\frac{\rho_1}{\rho_2}\frac{p(\rho_1) - p(\rho_2)}{\rho_1 - \rho_2}}$ and 
\begin{equation}\label{back shock curve}
	v_r - v_l = S_1(\rho_r;U_l) := -\sqrt{\frac{\rho_l - \rho_r}{\rho_r \rho_l}(p(\rho_l) - p(\rho_r))}, \ \  \rho_r > \rho_l.
\end{equation}
It can be depicted as follows:
\begin{center}
\includegraphics[width=2.5in]{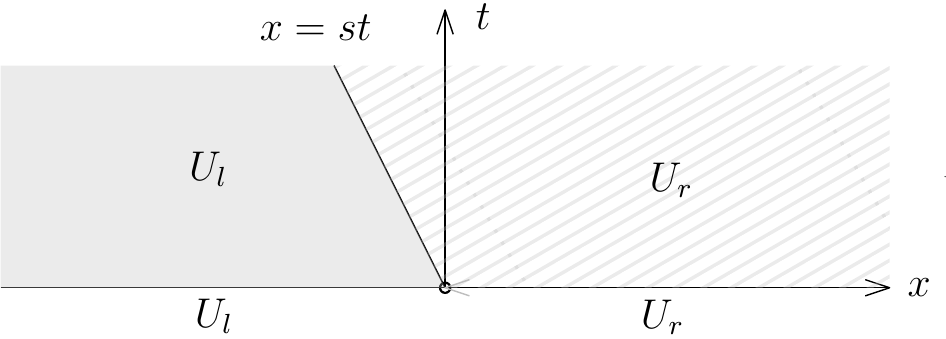}
\end{center}


\medskip

$\bullet$ Rarefaction Waves.

\medskip

A rarefaction wave is a continuous solution that immediately smooths out the initial discontinuities, in this case, a solution to \eqref{p system} of the form $U\big(\frac{x}{t}\big)$. There are two families of rarefaction waves: the \emph{back rarefaction waves}, corresponding to the first eigenvalue $\lambda_1(U)$, and the \emph{front rarefaction waves}, corresponding to the second eigenvalue $\lambda_2(U)$. Due to symmetry considerations, we will focus on the back  rarefaction waves.

For a back rarefaction wave, we use the ansatz $U(t,x)=U\left(\xi\right)$ in  \eqref{p system} where $\xi=\frac{x}{t}$, and we put $U(\xi)$ to be an eigenvector associated with the eigenvalue $\xi = \lambda_1(U) = v - c$. Therefore, we obtain 
\begin{equation}\label{back rarefaction curve}
	v_r - v_l = R_1(\rho_r;U_l) := \int_{\rho_l}^{\rho_r}\frac{c(\rho')}{\rho'} d\rho', \ \ \rho_r < \rho_l.
\end{equation}
The inequality $\rho_r > \rho_l$ arises from the fact that the characteristic speed $\lambda_1(U)$ must increase as $\xi$ increases. 
The solution  to \eqref{p system} in this case is  depicted as follows:
\begin{center}
\includegraphics[width=2.5in]{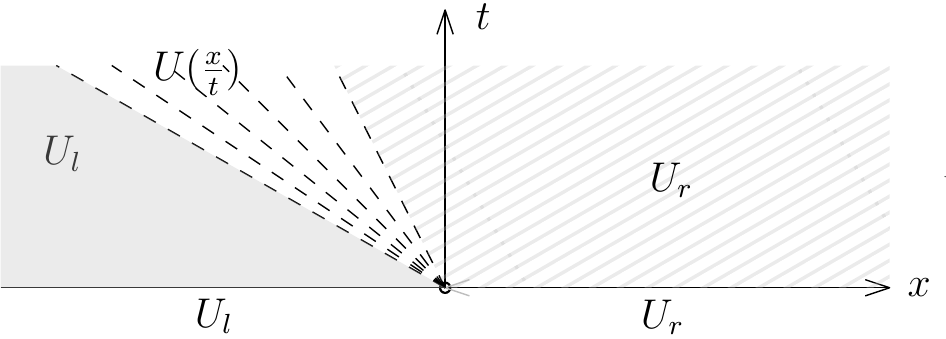}
\end{center}
The centered rarefaction wave region is filled by the dashed rays passing through the origin. The solution $U$ is constant along each ray. The left boundary is entirely determined by the data $U_l$ on $x<0$. In fact, it is the boundary of the domain of dependence of the data given on $x<0$. Since $\lambda_1(\xi)$ is an increasing function, we can increase $\frac{x}{t}$ until $\lambda_1\big(U(\frac{x}{t})\big)$ matches with $\lambda_1(U_r)$. This defines the right boundary of the rarefaction wave region.


\medskip

In the $(\rho,v)$-plane, for a given point $U_l$, the equation \eqref{back shock curve} and its counterpart for front shocks define two curves $S_1$ and $S_2$ emanating from the point $U_l$; similarly \eqref{back rarefaction curve} and its counterpart for front rarefaction waves also define two curves $R_1$ and $R_2$ emanating from the point $U_l$:
\begin{center}
\includegraphics[width=2.5in]{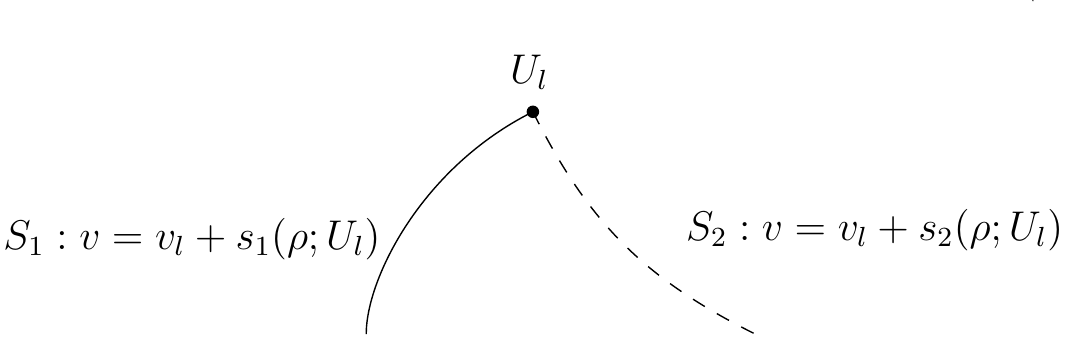} \qquad \includegraphics[width=2.5in]{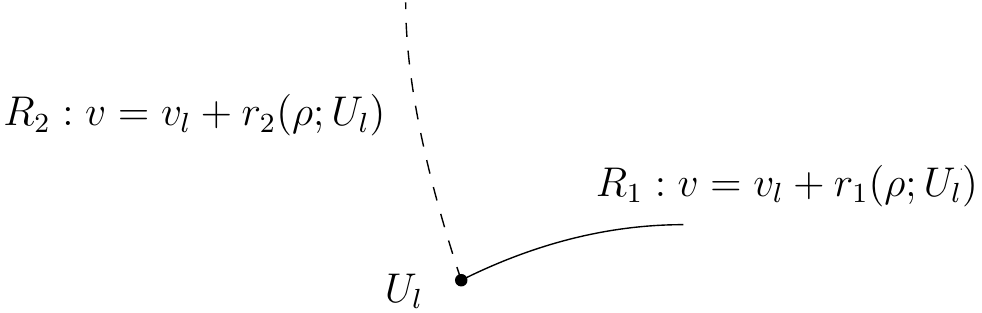}
\end{center}
In conclusion, a state $U_r$ can be connected to a given $U_l$ by a back shock if and only if $U_r\in S_1$; similarly it can be connected  by a front rarefaction wave if and only if $U_r\in R_2$.

\medskip

We will now study the four fundamental solution patterns for the Riemann problem. For any given point $U_l$ in the $(\rho,v)$-plane, for $i=1,2$,  the curves $S_i$ and $R_i$ join at $U_l$ in the $C^2$ manner and this defines the $C^2$ curve $W_i(U_l)$. The two intersecting curves $W_1(U_l)$ and $W_2(U_l)$ divide the $(\rho,v)$-plane into four parts. We list them as I, II, III and IV; see the following picture:
\begin{center}
\includegraphics[width=1.5in]{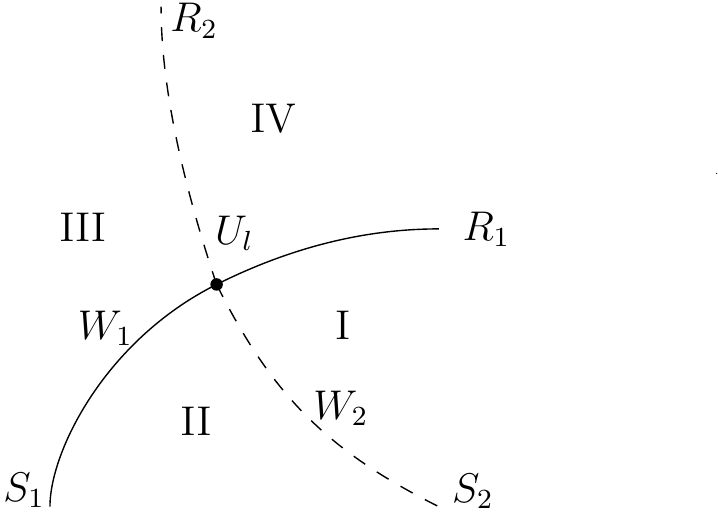} 
\end{center}
The wave curves $W_1(U)$, $W_2(U)$ can be used to solve the Riemann problem.
  For instance, if $U_r\in \text{II}$, the initial discontinuity is propagated through two shock fronts 
  ($\overline{U}$ denotes an intermediate state):
\begin{center}
	\includegraphics[width=4.5in]{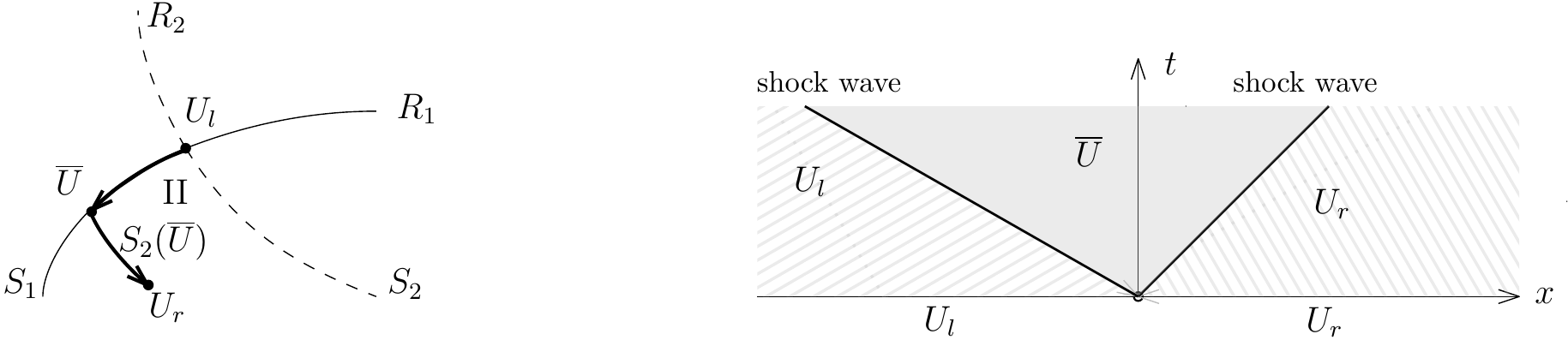}
\end{center}

If $U_r \in \text{I}$ or $U_r \in \text{III}$, $U_l$ and $U_r$ is connected by a rarefaction wave and a shock. 
The case for $U_r \in \text{I}$ is depicted as follows:
\begin{center}
\includegraphics[width=4.5in]{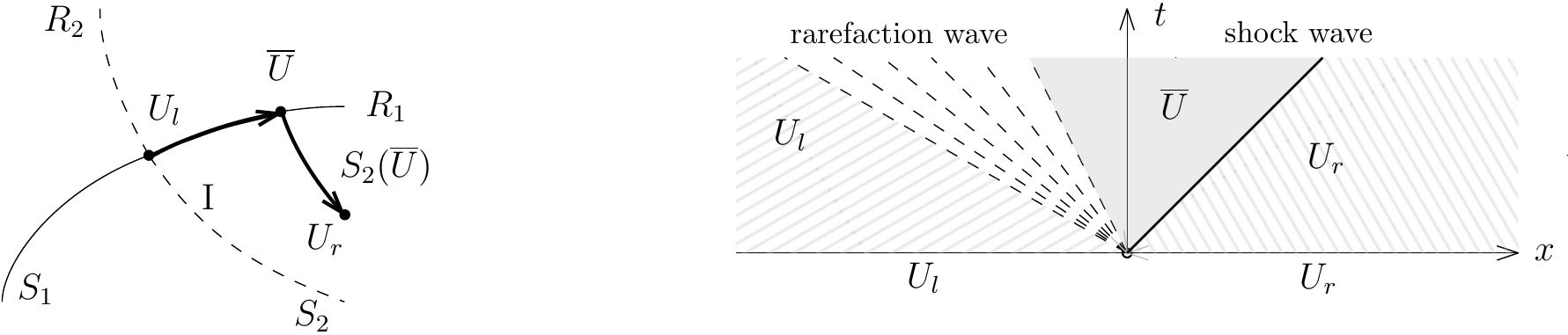}
\end{center}

The other typical case occurs when $U_r\in \text{IV}$. The behavior of solutions in this region is more intricate since vacuum may appear, as discussed in \cite{Smoller}. For simplicity, we assume that $U_r$ is close to $U_l$ in the $(\rho,v)$-plane to avoid the vacuum.
Under this assumption, $U_l$ is first connected to $\overline{U}$ by a back rarefaction wave and then connected to $U_r$ by a front rarefaction wave in a unique way. This is  illustrated as follows:
\begin{center}
	\includegraphics[width=4.5in]{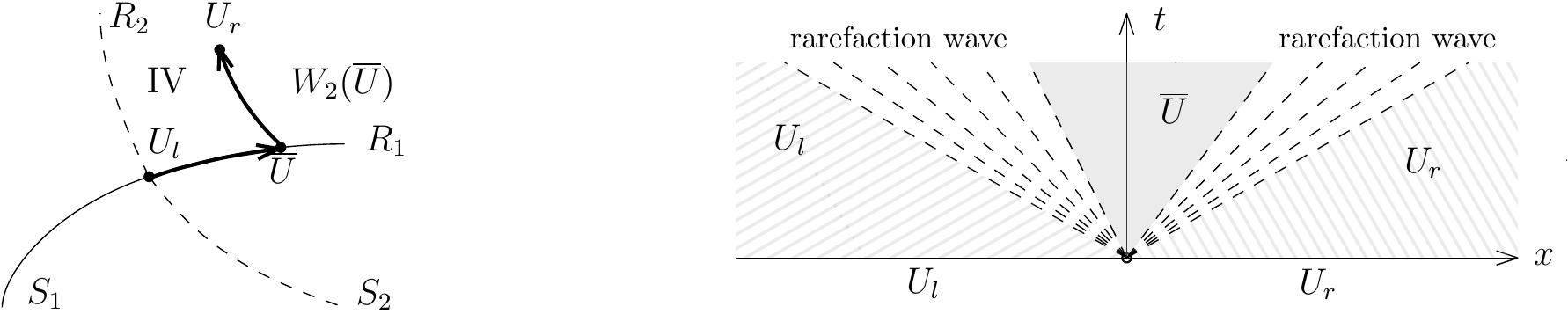}
\end{center}

\medskip

In this work, we focus on the final wave pattern, where the initial discontinuity is resolved immediately by two families of rarefaction waves. We will demonstrate that this solution pattern remains stable in multi-dimensional situations.

\subsection{The Riemann problem with two families of rarefaction waves}\label{section: calculations for 1D Riemann}
We will first provide a detailed calculation of the classical one-dimensional Riemann problem with two families of centered rarefaction waves, with the help of Riemann invariants introduced in \cite{Riemann}. This will serve as the background solution for our subsequent analysis of the multi-dimensional case.

In accordance with Riemann's notation \cite{Riemann}, we define the Riemann invariants $(\wb,w)$ relative to the solution $(v,c)$ using the following formulas:
\[ \begin{cases}
	&w=\frac{1}{2}\big(\frac{2}{\gamma-1}c-v\big),\\ 
	&\wb=\frac{1}{2}\big(\frac{2}{\gamma-1}c+v\big),
\end{cases} \ \ \Leftrightarrow \ \  \begin{cases}
	&v=\wb-w,\\ 
	&c=\frac{\gamma-1}{2}(\wb+w).
\end{cases}
\]
In addition to the Riemann invariants, we also have two characteristic spacetime vector fields:
\[\begin{cases}
	&L=\partial_t+(v+c)\partial_x,\\ 
	&\Lb=\partial_t+(v-c)\partial_x.
\end{cases}\]
These vector fields are null with respect to the acoustical metric $g = - c^2 dt^2 + (dx - vdt)^2$ defined by the solution $(v,c)$. We can rewrite equation \eqref{p system}
in two equivalent ways using the Riemann invariants and the frame $(L,\Lb)$ as follows:
\begin{equation*}
	\begin{cases}
		&L\wb=0,\\ 
		&Lw=2c\partial_x w,
	\end{cases} \ \ \Leftrightarrow \ \ \begin{cases}
		&\Lb w=0,\\ 
		&\Lb\wb=-2c\partial_x \wb.
	\end{cases}
\end{equation*}
We observe that the Riemann invariants $\wb$ and $w$ are constants along the directions of $L$ and $\Lb$, respectively. These invariant properties enable one to solve the Riemann problem explicitly:

\begin{center}
	\includegraphics[width=4in]{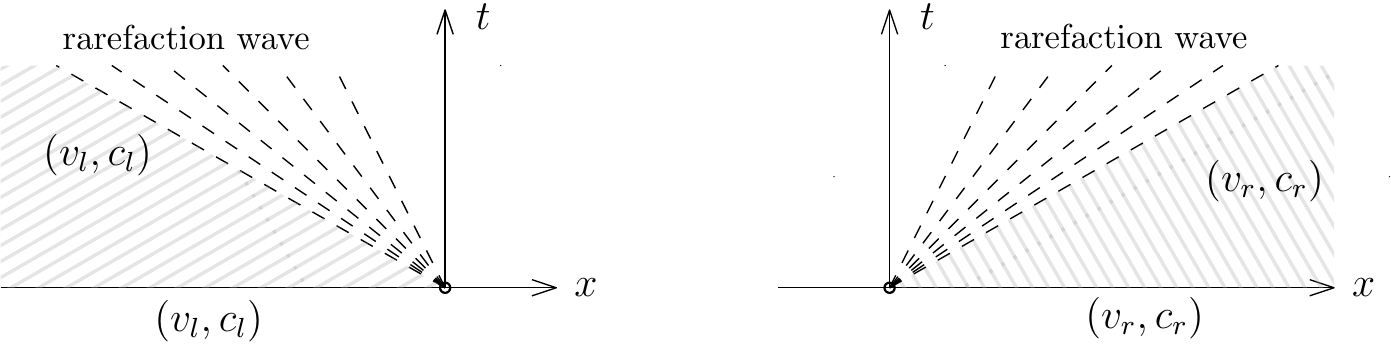}
\end{center}

\begin{itemize}
	\item By virtue of $L \wb = 0$, for a given constant state $(v_l,c_l)$ on $x\leqslant 0$, the unique centered rarefaction waves that can be connected to the maximal development of the data on the left is given by the following formula (the above figure on the left):
	\begin{equation*}
		\begin{cases}
			&\wb=\wb_l,\\
			&w=-\frac{2}{\gamma+1}\big(\frac{x}{t}+\frac{\gamma-3}{2}\wb_l\big).
		\end{cases}
	\end{equation*}
	\item By virtue of $\Lb w = 0$, for a constant state $(v_r,c_r)$ on $x\geqslant 0$, the unique centered rarefaction waves connected to the maximal development of the data on the right is given by the following formula (the above figure on the right):
	\begin{equation}\label{eq: precise solution for front rarefaction waves}
		\begin{cases}
			&w=w_r,\\
			&\wb=\frac{2}{\gamma+1}\big(\frac{x}{t}-\frac{\gamma-3}{2}w_r\big).
		\end{cases}.
	\end{equation}
\end{itemize}

Suppose we are given two constant states $(v_l,c_l)$ and $(v_r,c_r)$ on $x<0$ and $x>0$, respectively. We assume that the corresponding solution to the Riemann problem is a composite of two families of rarefaction waves, i.e., {\color{black}$w_l > w_r$} and $\wb_r > \wb_l$. 
This is depicted as follows:
\begin{center}
	\includegraphics[width=3in]{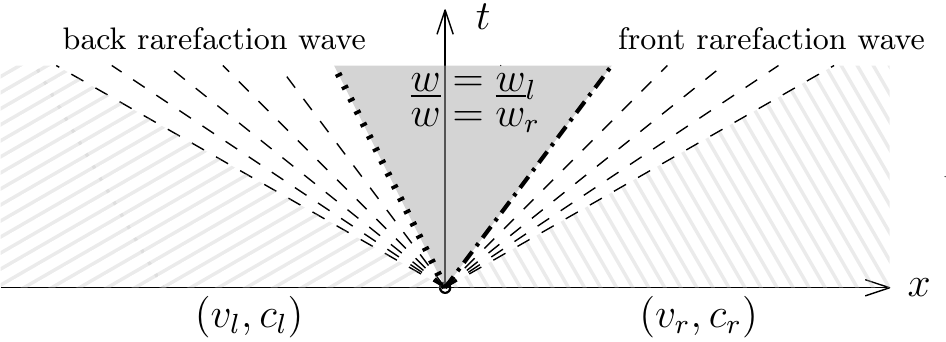}
\end{center}

The wave pattern for this case is a juxtaposition of the two rarefaction waves in the previous picture, and it corresponds to the last wave pattern described in Section \ref{section:review on p system}. The constant states $(v_l,c_l)$ and $(v_r,c_r)$ (or more precisely the corresponding $(\rho_l,v_l)$ and $(\rho_r,v_r)$) are located in region IV.

The middle state is separated from the front rarefaction wave by the dash-dotted ray in the picture. This boundary can be determined in the following way:  As we increase the {\color{black}slope $\frac{t}{x}$} by opening up the state $(v_r,c_r)$ with the front rarefaction wave,   the Riemann invariant $\wb$ decreases. The dash-dotted ray is then given by the locus of points where $\wb=\wb_l$. Similarly, we can determine the dotted ray which separates the middle state from the back rarefaction wave.

{\bf Assumptions on the background solution}. \ \ For the rest of this work, we will use the fixed background solution to the one-dimensional Riemann problem, which admits two families of rarefaction waves. We fix such constant states $(\mathring{v}_l, \mathring{c}_l)$ and $(\mathring{v}_r,\mathring{c}_r)$ (or equivalently,  $(\mathring{v}_l, \mathring{\rho}_l)$ and $(\mathring{v}_r,\mathring{\rho}_r)$). Therefore, $\mathring{\wb}_r>\mathring{\wb}_l$ (defined with respect to  $(\mathring{v}_r, \mathring{c}_r)$ and $(\mathring{v}_l,\mathring{c}_l)$) and $\mathring{w}_l>\mathring{w}_r$. In particular, the grey region in the above picture is {\bf non-degenerate}, i.e., the angle between the dotted and the dash-dotted rays is positive.

We study the two (spatial) dimensional isentropic Euler system:
\begin{equation}\label{eq: Euler in rho v}
	\begin{cases}
		(\partial_t + v \cdot \nabla) \rho &= -\rho \nabla \cdot v,\def\E{{\mathcal E}}
		\\
		(\partial_t + v \cdot \nabla)v &= -\rho^{-1} \nabla p.
	\end{cases}
\end{equation} 
We assume that the solution is defined on the 2-dimensional tube 
\[\Sigma_0=\mathbb{R}\times \mathbb{R}\slash 2\pi\mathbb{Z}=\big\{(t,x_1,x_2)\big| t=0, x_1\in \mathbb{R}, 0\leqslant x_2\leqslant 2\pi \big\}.\] 
We identify $(t,x_1,0)$ and $(t,x_1,2\pi)$  so that we only consider the problem with periodicity in $x_2$.

The equation of state is given by $p(\rho) = k_0 \rho^{\gamma}$ with $\gamma\in (1,3)$ and $k_0 >0$. In this case, the sound speed $c$ is given by $c=\sqrt{p'(\rho)}$ and the enthalpy $h$ can be computed by $h = \frac{1}{\gamma-1} c^2$. We remark that the momentum equation of the Euler equations can also be expressed as
\[(\partial_t + v \cdot \nabla)v = -\nabla h.\]
The equation of motion for irrotational flows is equivalent to the continuity equation for $h$:
\[\frac{1}{c^2}(\partial_t + v \cdot \nabla) h + \nabla \cdot v = 0.\]

\medskip

The initial data of the system \eqref{eq: Euler in rho v} are posed in terms of $v$ and $c$ on $\Sigma_0$:
\[
(v,c)\big|_{t=0}=
\begin{cases}
	\big(v_r(0,x_1,x_2),c_r(0,x_1,x_2)\big),  \ \ &x_1\geqslant 0;\\
	\big(v_l(0,x_1,x_2),c_l(0,x_1,x_2)\big),  \ \ & x_1\leqslant 0.
\end{cases}
\]
In terms of components, we have 
\[\begin{cases}
	v_l(0,x_1,x_2)=&\big(v^1_l(0,x_1,x_2),v^2_l(0,x_1,x_2)\big),\\
	v_r(0,x_1,x_2)=&\big(v^1_r(0,x_1,x_2),v^2_r(0,x_1,x_2)\big).
\end{cases}\]
\begin{center}
	\includegraphics[width=4in]{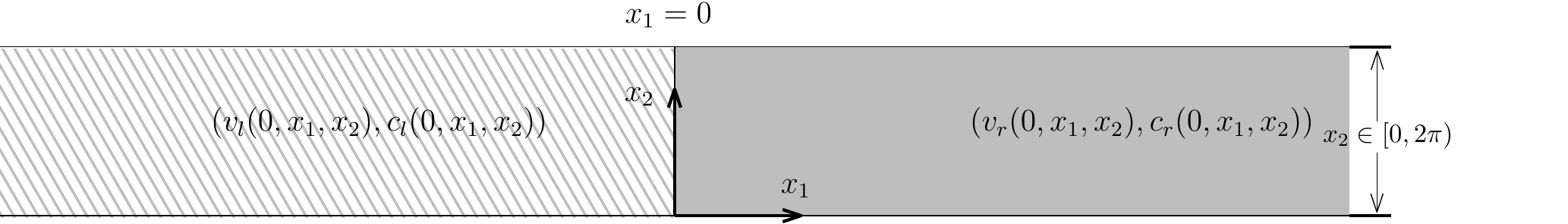}
\end{center}
We use $\varepsilon$  to quantify the size of the perturbation. The smallness of  $\varepsilon$ is understood in the following sense:
\[\varepsilon \ll w_l-w_r, \ \ \varepsilon \ll \wb_r-\wb_l.\]
We assume that the initial data $(v,c)\big|_{t=0}$ is an $\varepsilon$-perturbed Riemann data, i.e., it satisfies the following definition:
\begin{definition}\label{def:data} For a given set of initial data $(v,c)\big|_{t=0}$, we say that it is an {\bf $\varepsilon$-perturbed Riemann data} for the Euler system \eqref{eq: Euler in rho v}, if it satisfies the following assumptions:
	\begin{itemize}
		\item[1)]{\bf (smoothness)} $v^1_l(0,x_1,x_2), v^2_l(0,x_1,x_2)$ and $c_l(0,x_1,x_2)$ are smooth functions on $x_1 \leqslant 0$ (including the boundary); $v^1_r(0,x_1,x_2), v^2_r(0,x_1,x_2)$ and $c_r(0,x_1,x_2)$ are smooth functions on $x_1 \geqslant 0$.
		\item[2)]{\bf (smallness)} $(v,c)\big|_{t=0}$ is an $O(\varepsilon)$-purterbation of the fixed one dimensional data $(\mathring{v}_l, \mathring{c}_l)$ and $(\mathring{v}_r,\mathring{c}_r)$ defined as above. In other words, there exists a small constant $\varepsilon>0$ whose size will be determined in the course of the proof and a sufficiently large integer $N$ (we take $N\geqslant \Ntop+2$ where $\Ntop$ is defined in the first paper \cite{LuoYu1}\footnote{$\Ntop$ is the highest order Sobolev index appeared in the energy estimates.} and we can take $\Ntop=10$), so that
		\[\begin{cases}
			\|v^1_l(0,x)-\mathring{v}_l\|_{H^N(\Sigma_-)}+\|v^2_l(0,x)\|_{H^N(\Sigma_-)}+\|c_l(0,x)-\mathring{c}_l\|_{H^N(\Sigma_-)}<\varepsilon,\\
			\|v^1_r(0,x)-\mathring{v}_r\|_{H^N(\Sigma_+)}+\|v^2_r(0,x)\|_{H^N(\Sigma_+)}+\|c_r(0,x)-\mathring{c}_r\|_{H^N(\Sigma_+)}<\varepsilon,
		\end{cases}
		\]
		where $\Sigma_-=\mathbb{R}_{\leqslant 0}\times [0,2\pi]$ and $\Sigma_+=\mathbb{R}_{\geqslant 0}\times [0,2\pi]$.
		\item[3)]{\bf (irrotational)} The initial data $(v,c)\big|_{t=0}$ consists of locally integrable functions on $\Sigma_0$, we assume that it is irrotational in the distributional sense, i.e., 
		\begin{equation}\label{irrotational condition}
			{\rm curl}(v)\stackrel{\mathscr{D}'(\Sigma_0)}{=}0.
		\end{equation}
	\end{itemize}
\end{definition}
The data $\big(v_l(0,x_1,x_2),c_l(0,x_1,x_2)\big)$ on the left and $\big(v_r(0,x_1,x_2),c_r(0,x_1,x_2)\big)$ on the right play symmetric and exchangeable roles in the remainder of the work. Therefore, we will primarily focus on the data given on the right.

The main results of our analysis for the Riemann problem consisting of two rarefaction waves can be summarized as follows:

\medskip

\textbf{(Rough statement of the main result)} \ \ The one-dimensional wave pattern is stable under two-dimensional perturbations in the sense described above.
\medskip




It is evident that the background solution for $t>0$ is continuous and piecewise smooth. However, the transverse derivatives have a jump discontinuity on the characteristic hypersurfaces that bound the rarefaction wave regions. These hypersurfaces are known as \textbf{rarefaction fronts}, which emanate from the initial discontinuity.

As noted by Majda on page 154 of \cite{MajdaBook}, "the dominant signals in rarefaction fronts move at characteristic wave speeds." This is a fundamental difference from shock fronts, which move at non-characteristic wave speeds, and is a major technical difficulty in constructing rarefaction waves.


\begin{remark}
In general, rarefaction fronts are free boundaries that cannot be determined a priori from the data. However, the example of the one-dimensional Riemann problem with two constant states described above is special in that the two rarefaction waves are adjacent to regions of constant states and are therefore \emph{simple waves}. In particular, rarefaction fronts are straight lines, and the solutions are constant along the rarefaction fronts.	
\end{remark}

\begin{remark}\label{Remark: difference between rarefaction fronts and shock fronts}
Although the construction of rarefaction fronts and shock fronts can both be viewed as free boundary problems, they are different in nature. This is because rarefaction fronts are characteristic hypersurfaces and are subject to defining equations. More precisely, using the acoustical metric, rarefaction fronts are ruled by null geodesics and therefore satisfy the geodesic equations (ODE system). Once the rarefaction wave region has been constructed, the rarefaction fronts are no longer a free boundary problem because they can be immediately determined from the initial conditions. On the other hand, as noted by Majda in the last chapter of \cite{MajdaBook}, determining the rarefaction fronts may even be more difficult than determining the shock fronts if one does not know the solution in advance.
\end{remark}


\subsection{Prior results}
We provide a brief review of prior results. For a more detailed discussion, including singularity formation and other systems, please refer to our first paper in the series \cite{LuoYu1}. In this paper, we focus on the multi-dimensional elementary waves for the Euler system.

Riemann's fundamental idea was extended and formalized by Lax \cite{Lax1957} into a mathematical framework of hyperbolic conservation laws in one space dimension. In particular, the general Riemann problem was solved in terms of three types of elementary waves: shocks, rarefaction waves, and contact discontinuities. A significant breakthrough was achieved by Glimm's landmark work \cite{Glimm1965}, which established interaction estimates based on the Riemann problem and proved the existence of weak solutions in BV spaces. The one-dimensional theory has since developed into a mature field of mathematics. For a detailed account, we refer to the encyclopedia of Dafermos \cite{Dafermos} and Liu's recent textbook \cite{Liu2021book}.

In contrast to the success in one dimension, the multi-dimensional theory encounters many obstacles, with a major one being the breakdown of the BV space approach \cite{Rauch}. Inspired by the Riemann-Lax-Glimm program, a fundamental problem is to understand elementary waves and the Riemann problem in multi-dimension. Generally, these are free boundary problems subject to possible instabilities and coupled with initial singularities. The pioneering work of Majda \cite{MajdaShock2,MajdaShock3} proved the stability of planar shock fronts in gas dynamics and initiated the study of multi-dimensional elementary waves. For subsequent development and extensions of Majda's work on shock fronts, we refer to \cite{Metivier2001book, Benzoni-Gavage-Serre2007book} and the references therein.

Presented as an open problem at the end of Majda's book \cite{MajdaBook}, rarefaction waves are essentially different from the shock front problem: rarefaction fronts are {\it characteristic} hypersurfaces and cannot satisfy the uniform stability condition that is valid for shock fronts. This not only causes a potential loss of derivatives near rarefaction fronts but also is coupled with the initial singularity. In this direction, significant progress was achieved by Alinhac \cite{AlinhacWaveRare1,AlinhacWaveRare2}. He proved the local existence of multi-dimensional rarefaction waves for a general hyperbolic system by introducing innovative techniques, including the 'good unknown' and Nash-Moser iteration in co-normal spaces. 

However, Alinhac's work did not give a complete picture of the rarefaction front problem: not only do the estimates lose derivatives and degenerate near rarefaction fronts, but also a high-order $k$-compatibility condition on normal derivatives was required on the initial data. In particular, for a given piecewise smooth Riemann data, the rarefaction fronts bounding the rarefaction wave regions could only be described in the asymptotic limit.

Alinhac's scheme was adapted to study the Prandtl-Meyer expansion wave in \cite{WangYin} and combinations of other elementary wave patterns in \cite{Li1991,ChenLi}. There is also a different approach to study the two-dimensional Riemann problem in self-similar variables as summarized in the book \cite{Zheng2001book}. We  also mention the recent work by Q. Wang \cite{Wang}, which focuses on constructing global-in-time solutions for classical Cauchy data  to three-dimensional compressible Euler equations. In that work, the rarefaction effect is dynamically formed.

\subsection{Acoustical geometry, Riemann invariants and the second null frame}\label{section_acoustical geometry}
We set up the acoustical geometry for the rarefaction wave problem. See \cite{LuoYu1}, \cite{ChristodoulouShockFormation} or \cite{ChristodoulouMiao} for a more detailed account.

\subsubsection{Riemann invariants and the acoustical wave equation}

We only consider the case where the velocity field $v$ is {\bf irrotational} in this work. Under the irrotational assumption, there exists a potential function $\phi$ so that $v = -\nabla \phi$, i.e., $v^1=-\partial_1\phi$ and $v^2=-\partial_2\phi$. In this situation, the dynamics of $(v,c)$ are governed by the acoustical waves, i.e., the Euler equations is equivalent to the following quasi-linear wave equation in the Galilean coordinates $(t,x^1,x^2)$:
\begin{equation}\label{eq:Euler:isentropic:irrotational}
\sum_{\mu,\nu=0}^2g^{\mu \nu} \frac{\partial^2 \phi}{\partial x^\mu \partial x^\nu}  = 0,
\end{equation}
where $x^0=t$. The acoustical metric $g$ is defined by
\[g = - c^2 dt^2 + \sum_{i=1}^2 (dx^i - v^idt)^2.\]
Therefore, the restriction of $g$ on each $t$-slice of the Galilean spacetime is the standard Euclidean metric.

We also define
\[\psi_0= \frac{\partial \phi}{\partial t}, \ \psi_1= \frac{\partial \phi}{\partial x_1}=-v^1, \ \psi_2= \frac{\partial \phi}{\partial x_2}=-v^2.\]
For all $\psi \in \{\psi_0,\psi_1,\psi_2\}$,  it satisfies the following covariant wave equation
\begin{equation}\label{eq: conformal Euler}
\Box_{g} \psi = -\frac{1}{2}g(d\log(\Omega), d\psi),
\end{equation}
where $\Omega=\frac{\rho}{c}$ and $d$ is the differential of a function.

Following Riemann \cite{Riemann}, we define the Riemann invariants 
\begin{equation}\label{def: Riemann invariants}
w=\frac{1}{2}\big(\frac{2}{\gamma-1}c+\psi_1\big),  \ \ \wb=\frac{1}{2}\big(\frac{2}{\gamma-1}c-\psi_1\big),
\end{equation}
or equivalently
\[c=\frac{\gamma-1}{2}(w+\wb),  \ \ \psi_1= w-\wb.\]
The Riemann invariants satisfy the following wave equations:

  \begin{equation}\label{Main Wave equation: order 0}
\begin{cases}
\Box_{g}\wb&=-c^{-1}\left(g(D \wb,D \wb)+\frac{\gamma-3}{4}g(D \wb,D w)+\frac{\gamma+1}{4}g(D w,D w)+\frac{1}{2}g( D\psi_2, D\psi_2)\right),\\
\Box_{g} w&=-c^{-1}\left(\frac{\gamma+1}{4}g(D \wb,D \wb)+\frac{\gamma-3}{4}g(D \wb,D w)+g(D w,D w)+\frac{1}{2}g( D\psi_2, D\psi_2)\right),\\
\Box_{g}\psi_2&=-c^{-1}\left(\frac{3-\gamma}{4}g(D\wb, D \psi_2)+\frac{3-\gamma}{4}g(Dw, D \psi_2)\right).
\end{cases}
\end{equation}

\subsubsection{The acoustical geometry}\label{section:acoustical geometry}
We use $\mathcal{D}_0$ to denote the future domain of dependence determined by the data $\big(v_r(0,x_1,x_2),c_r(0,x_1,x_2)\big)$ on the right. Its boundary $C_0$ is a null hypersurface with respect to the acoustical metric $g$. We use $(v_r,c_r)=\big(v_r(t,x_1,x_2),c_r(t,x_1,x_2)\big)$ to denote the solution in $\mathcal{D}_0$.
\begin{center}
\includegraphics[width=2.5in]{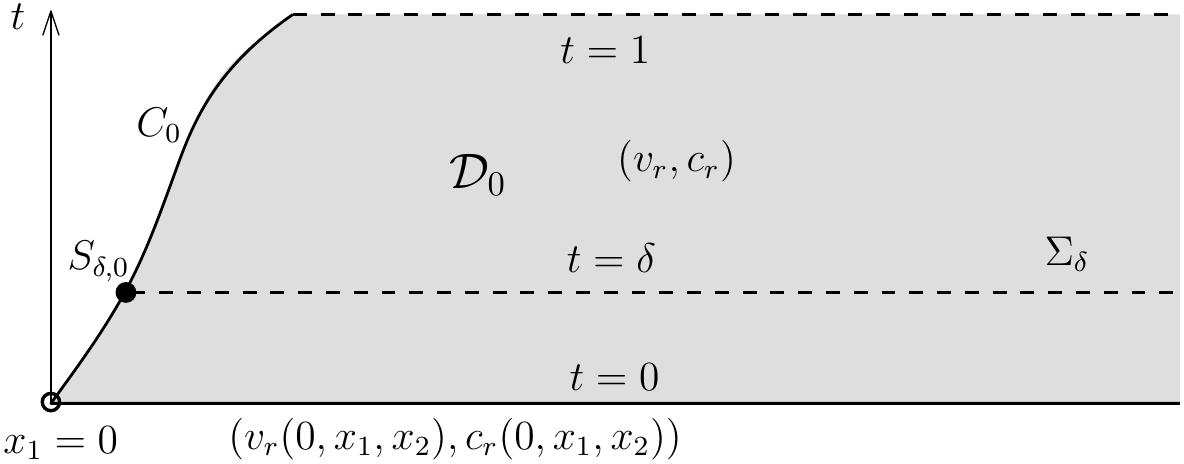}
\end{center}
Since $\varepsilon$ is small, in view of the continuous dependence of the solution on the initial data, the domain $\mathcal{D}_0$ covers up to $t=t^*=1$. For $t_0\in [0,t^*]$, we define $\Sigma_{t_0}=\{(t,x_1,x_2) |t=t_0\}$. For a small parameter $\delta$ (which will go to $0$ in a limiting process), the spatial hypersurface $\mathcal{D}_0\cap \Sigma_\delta$ is depicted as follows:
\begin{center}
\includegraphics[width=2.5in]{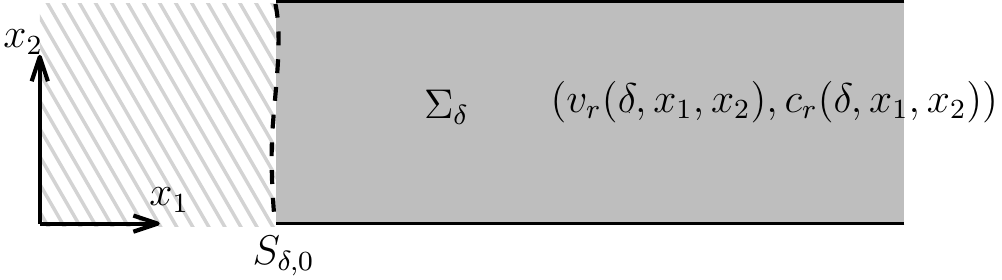}
\end{center}
We define $S_{\delta,0}=\Sigma_\delta\cap C_0$. The restriction of the solution $(v_r,c_r)$ on $\Sigma_\delta$ gives data on the right-hand side of $S_{\delta,0}$. The data for the front rarefaction waves will be given on the left-hand side of $S_{\delta,0}$ on $\Sigma_\delta$. 

We will choose a specific smooth function $u$ on $\Sigma_\delta$ so that $S_{\delta,0}$ is given by $u=0$ and the lefthand side of $S_{\delta,0}$ on $\Sigma_\delta$ is given by $u>0$, see Section \ref{section:initial foliatiion} for the construction of $u$.  The data for the front rarefaction waves will be specified for $u \in [0,u^*]$ and the parameter $u^*$, which represents the width of the rarefaction waves, is determined by $(\mathring{v}_r,\mathring{c}_r)$, see \eqref{initial size u*}. Together with the data on $C_0$, the data on $u \in [0,u^*]$ evolves to the development $\mathcal{D}(\delta)$ (the shaded region on the left of the following figure).

\begin{center}
\includegraphics[width=3.3in]{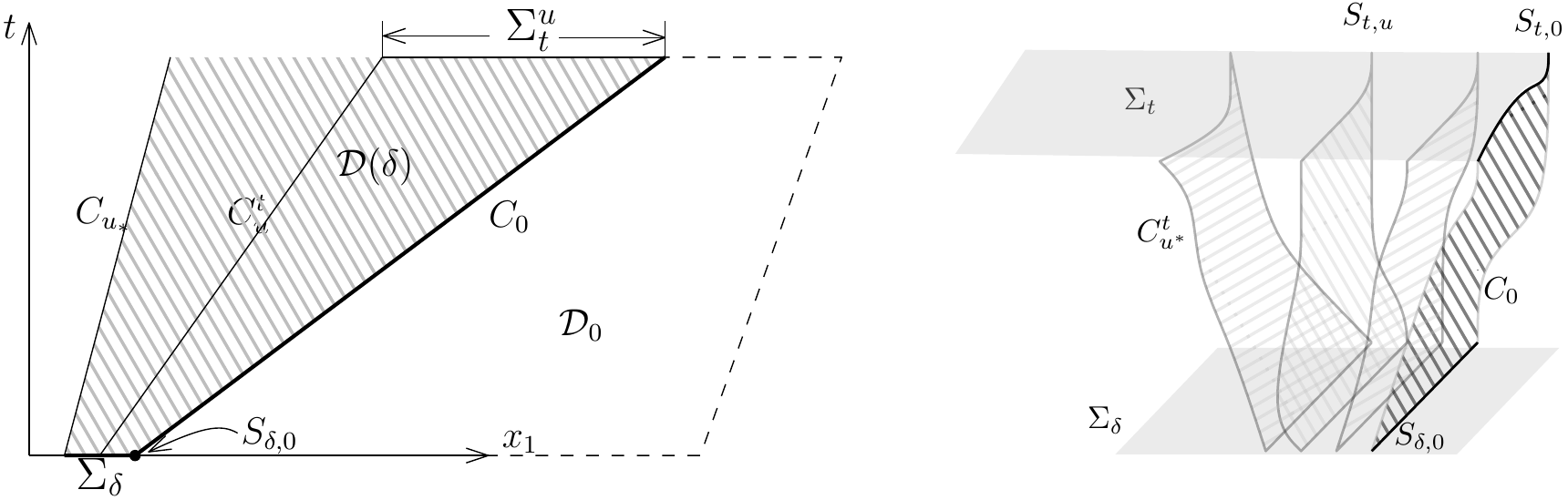}
\end{center}

We recall the definition of the acoustical coordinates $(t,u,\vartheta)$ on $\mathcal{D}(\delta)$. The function $t$ is defined as $x_0$ restricted to $\mathcal{D}(\delta)$.  We define $C_{u_0}$ to be the null hypersurfaces consisting of null (future right-going) geodesics emanating from the level set of $u=u_0$ on $\Sigma_\delta$. We define $u$ on $\mathcal{D}(\delta)$ in such a way that $C_u$'s are the level sets of $u$. Let $L$ be the generator of the null geodesics on $C_u$ subject to the normalization that $L(t) = 1$. We define $S_{t,u} = \Sigma_t \cap C_u$. Let $T$ be the vector field  tangential to $\Sigma_t$, orthogonal to $S_{t,u}$ with respect to $g$ and subject to the normalization that $Tu = 1$. We define $\kappa^2=g(T,T)$.

We also have the following notations:
\[\mathcal{D}(\delta)(t^*,u^*) =\!\!\!\!\!\!\bigcup_{(t,u) \in [\delta,t^*]\times [0,u^*]}\!\!\!\!\!\!S_{t,u}, \ \mathcal{D}(\delta)(t,u) =\!\!\!\!\!\!\bigcup_{(t',u') \in [\delta,t]\times [0,u]}\!\!\!\!\!\!S_{t',u'},\ \ \Sigma_t^u=\!\!\!\bigcup_{u' \in [0,u]}\!\!\!S_{t,u'}, \ \ C_u^{t}=\!\!\!\bigcup_{t' \in [\delta,t]}\!\!\!S_{t',u}.\]
For the sake of simplicity, we also use $\Sigma_t$ to denote $\Sigma_t^{u^*}$.

The construction of the function $\vartheta$ can be found in Section \ref{section:initial foliatiion}. Indeed, we define $\vartheta$ on $C_0$ through the following ordinary differential equation:
\begin{equation}\label{def: vartheta in acoustical}
L( \vartheta)=0, \ \ \  \vartheta\big|_{S_{0,0}}=x_2\big|_{S_{0,0}}.
\end{equation}
We then extend $\vartheta$ to $\Sigma_\delta$ by solving $T( \vartheta)=0$ with initial data on $S_{\delta,0}$ given by the solution of \eqref{def: vartheta in acoustical}. In particular, this means that  $T\big|_{\Sigma_\delta} = \frac{\partial}{\partial u}$ on $\Sigma_\delta$. Finally, we solve $L(\vartheta)=0$ to extend $\vartheta$ from $\Sigma_\delta$ to the entire spacetime $\mathcal{D}(\delta)$.

In the acoustical coordinates $(t,u,\vartheta)$, we have
\begin{equation}\label{eq: L T in terms of coordinates}
L = \frac{\partial}{\partial t}, \quad T = \frac{\partial}{\partial u} - \Xi \frac{\partial}{\partial \vartheta},
\end{equation}
where $\Xi$ is a smooth function.  We also define $X=\frac{\partial}{\partial \vartheta}$, $\slashed{g}=g(X,X)$ and the unit vector $\widehat{X}=\slashed{g}^{-\frac{1}{2}}X$. Let $\mu=c\kappa$. We then have
\[g(L,T) = -{\mu}, \quad g(L,L) = g(L,\Xh) = g(T,\Xh)=0, \quad g(\Xh,\Xh) = 1.
\]
Let $B=\frac{\partial}{\partial t}+v$ be the material vector field. We have $B(t)=1$ and $B$ is $g$-perpendicular to $\Sigma_t$.  We also define the unit vector $\widehat{T} = \kappa^{-1}T$. Thus, $L$ can be expressed as $L = \frac{\partial}{\partial t} +v-c\widehat{T}$.

We define the left-going null vector field $\underline{L} = c^{-1} \kappa L + 2T$. Therefore, we obtain {\bf the first null frame} $(L,\Lb,\Xh)$.

For the following three isometric embeddings $\Sigma_t\hookrightarrow \mathcal{D}$,  $S_{t,u} \hookrightarrow C_u $ and $S_{t,u} \hookrightarrow \Sigma_t$, the corresponding second fundamental forms are denoted as follows:
\[2 c k = \overline{\mathcal{L}}_B g, \ \ 2 \chi = \slashed{\mathcal{L}}_L g, \ \ 2 \kappa \theta = \slashed{\mathcal{L}}_T g.\]
where $\overline{\mathcal{L}}$ and $\slashed{\mathcal{L}}$ denote the projections of Lie derivatives to $\Sigma_t$ and $S_{t,u}$ respectively.
We define the torsion 1-forms $\zeta$ and $\eta$ on $S_{t,u}$ as
\[\zeta(Y) =  g(D_{Y} L, T), \ \ \eta(Y) =  -g(D_{Y} T, L),\]
where $Y$ is any vector field tangent to $S_{t,u}$. We also define the $1$-form $\slashed{\varepsilon}$ as $\kappa \slashed{\varepsilon}(Y)=k(Y,T)$.

Since the $S_{t,u}$'s are 1-dimensional circles, we can represent the tensors by functions. For the sake of simplicity, we use the same symbols to denote the following scalar functions:
\[\chi = \chi(\Xh,\Xh), \  \theta =  \theta(\Xh,\Xh), \ \slashed{k} = k(\Xh,\Xh), \ \zeta = \zeta(\Xh),\ \eta = \eta(\Xh), \ \slashed{\varepsilon}=\slashed{\varepsilon}(\Xh).\]
Since $\slashed{g}=g(X,X)$, we have $L(\slashed{g})=2 \slashed{g} \cdot \chi$. These quantities are related by
\[\chi =  c(\slashed{k} - \theta), \ \ \eta = \zeta + \Xh(\mu), \ \ \zeta=\kappa\big(c\slashed{\varepsilon}-\Xh(c)\big).\]

By using $\Lb$, we can introduce  another second fundamental form $\underline{\chi}$ which is defined by 
\[ 2 \underline{\chi} = \slashed{\mathcal{L}}_{\underline{L}} g. \]
We will also work with its scalar version $\underline{\chi} =\underline{\chi}(\Xh,\Xh)$. It can be represented by $\slashed{k}$ and  $\theta$ through $\underline{\chi} =  \kappa(\slashed{k} + \theta)$.

The wave operator $\Box_g$ can  be decomposed with respect to the null frame $(L,\Lb,\Xh)$:
\begin{equation}\label{eq:wave operator in null frame}
 \Box_{g} (f) = \Xh^2 (f) - \mu^{-1}L\big(\underline{L}(f)\big) - \mu^{-1}\big(\frac{1}{2}\chi\cdot \underline{L}(f) +\frac{1}{2}\chib\cdot L(f)\big)- 2 \mu^{-1}\zeta \cdot \Xh(f).
\end{equation}

The change of $\kappa$ along the characteristic direction $L$ is recorded in the following equation:
\begin{equation}\label{structure eq 1: L kappa}
L \kappa = m' + e' \kappa 
\end{equation}
where 
\begin{equation}\label{eq: m' e'}
m' = -\frac{\gamma+1}{\gamma-1}Tc, \ \ e'= c^{-1}\widehat{T}^i\cdot L (\psi_i).
\end{equation} 
where $\Th^i$ is the $i$-th component of $\Th$ in the Cartesian coordinates, i.e., $\Th=\sum_{i=1}^2\Th^i \frac{\partial}{\partial x_i}$. Similarly, in Cartesian coordinates, we have $\Xh=\sum_{i=1}^2\Xh^i \partial_i$ and $L=\partial_0+\sum_{i=1}^2L^i \partial_i$. We also remark that $\Th^1=-\Xh^2$ and $\Th^2=\Xh^1$. This is because $\Xh$ is perpendicular to $\Th$. 
\begin{remark}[Einstein summation convention]
The repeated Latin letter indices (say $i,j,k$) indicate the summation over $1,2$. The repeated Greek letter indices (say $\mu,\nu$) indicate the summation over $0,1,2$.  
\end{remark}


For  $k=1,2$, we have the following formulas for $L^{i}$ and $\Th^j$:
\begin{equation}\label{structure eq 3: L T on Ti Xi Li}
\begin{cases}
&L(L^{k}) =-\left(L(c) +\widehat{T}^i\cdot L(\psi_i)\right) \widehat{T}^k -\frac{\gamma+1}{2}\Xh(h) \Xh^k,\\
&L(\widehat{T}^{k})= -\kappa^{-1}\zeta \cdot \Xh^k=\left(\widehat{T}^j\cdot \Xh(\psi_j) + \Xh(c)\right)\Xh^k,\\
&T(L^{i}) = L(\kappa) \widehat{T}^i + \eta\cdot \Xh^i=L(\kappa) \widehat{T}^i + \left(-\kappa\left(\widehat{T}^j\cdot \Xh(\psi_j) + \Xh(c)\right)+\Xh(\mu)\right) \Xh^i,\\
&T(\widehat{T}^{i})= -\Xh(\kappa) \Xh^i.
\end{cases}
\end{equation}

Among all the geometric quantities, $\Th^1$, $\Th^2$ and $\kappa$ are the primitive ones. The others can be expressed explicitly using $\psi_i$, $\Th^i$ and $\kappa$. We collect those relations in the following two set of equations:
\begin{equation}\label{defining eq of theta and chi}
	\theta = \Xh^2\Xh(\Xh^1) - \Xh^1\Xh(\Xh^2), \ \ \chi  = -\Xh^i\Xh(\psi_i) - c\Xh^2\Xh(\Xh^1) + c\Xh^1\Xh(\Xh^2),
\end{equation}
and
\begin{equation}\label{structure quantities: connection coefficients}
\begin{cases}
&c k_{ij} =  - \partial_i \psi_j = - \partial_j \psi_i, \ \ \slashed{\varepsilon}=-\mu^{-1}\Xh^i T^j \partial_i \psi_j,\\
&\zeta =-\kappa\big(\widehat{T}^j\cdot \Xh(\psi_j) + \Xh(c)\big), \ \ \eta=-\kappa \widehat{T}^j\cdot \Xh(\psi_j)+c\Xh(\kappa),\\
&\chib=2\kappa \slashed{k}-\kappa\alpha^{-1}\chi=c^{-1}\kappa\big(-2\Xh^j\cdot \Xh(\psi_j)-\chi\big).
\end{cases}
\end{equation}

We also collect the following formulas of commutators:
\begin{equation}\label{eq:commutator formulas}
\begin{cases}
&[L,\Xh]=-\chi\cdot \Xh,  \ \ [L,\Lb]=- 2(\zeta + \eta)\Xh+L(c^{-1}\kappa)L, \\
&[L,T]=- (\zeta + \eta)\Xh=-\big(\kappa\big(2c\Xh^i\cdot T(\psi_i)+2\Xh(c)\big)-\Xh(\mu)\big)\Xh,\\
&[T,\Xh]=-\kappa\theta\cdot \Xh, \ \ [\Lb,\Xh]=-\chib\cdot\Xh-\Xh(c^{-2}\mu)L.
\end{cases}
\end{equation}

\subsubsection{Euler equations in the diagonal form}

The Euler equations \eqref{eq: Euler in rho v} can also be written in terms of $v$ and $c$ as follows:
\[\begin{cases}
(\partial_t + v  \cdot\nabla ) c &= -\frac{\gamma-1}{2}c \nabla \cdot v, 
\\
(\partial_t + v \cdot \nabla )v &= -\frac{2}{\gamma-1}c  \nabla  c.
\end{cases}
\]
In terms of the frame $(L,T,\Xh)$,  the Euler equations are then equivalent to
\begin{equation}\label{Euler equations:form 1}
\begin{cases}
L (\frac{2}{\gamma-1}c) &= -c \widehat{T}(\frac{2}{\gamma-1}c)+c \widehat{T}(\psi_k)\widehat{T}^k +c\Xh(\psi_k)\Xh^k,\\
L (\psi_1) &= -c \widehat{T}(\psi_1)+\frac{2}{\gamma-1}c \widehat{T}(c)\widehat{T}^1+\frac{2}{\gamma-1}c\Xh(c)\Xh^1,\\
L (\psi_2) &= -c \widehat{T}(\psi_2)+\frac{2}{\gamma-1}c \widehat{T}(c)\widehat{T}^2+\frac{2}{\gamma-1}c\Xh(c)\Xh^2.
\end{cases}
\end{equation}
Hence, we have the formulation of Euler equations in terms of Riemann invariants:
\begin{equation}\label{Euler equations:form 2}
\begin{cases}
L (\wb) &= -c \widehat{T}(\wb)(\widehat{T}^1+1)+\frac{1}{2}c \widehat{T}(\psi_2)\widehat{T}^2 +\frac{1}{2}c \Xh(\psi_2)\Xh^2-c\Xh(\wb)\Xh^1,\\
L (w) &= 	c \widehat{T}(w)(\widehat{T}^1-1)	+\frac{1}{2}c \widehat{T}(\psi_2)\widehat{T}^2 +c\Xh(w)\Xh^1+\frac{1}{2}c\Xh(\psi_2)\Xh^2,\\
L (\psi_2) &= -c \widehat{T}(\psi_2)+c\widehat{T}( w+\wb)\widehat{T}^2+c\Xh( w+\wb)
\Xh^2.
\end{cases}
\end{equation}
We now define 
\[A=\left( {\begin{array}{ccc}
-( \widehat{T}^1+1)& 0&   \frac{1}{2}\widehat{T}^2 \\
   0 &   \widehat{T}^1-1& \frac{1}{2}\widehat{T}^2\\
     \widehat{T}^2 &  \widehat{T}^2&-1
  \end{array} } \right), \ \ B=\left( {\begin{array}{ccc}
-\Xh^1& 0&   \frac{1}{2}\Xh^2 \\
   0 &   \Xh^1 & \frac{1}{2}\Xh^2\\
    \Xh^2 & \Xh^2&0
  \end{array} } \right), \ \ V=\left( {\begin{array}{c}
  \wb \\
     w\\
     \psi_2
  \end{array} } \right).\]
  The system \eqref{Euler equations:form 2} is equivalent to
  \[L(V)=c A \cdot \widehat{T}(V)+c B\cdot \Xh(V).\]
By using $(\widehat{T}^1)^2+(\widehat{T}^2)^2=1$, we can show that $A$ has three eigenvalues $0$, $-1$ and $-2$ regardless the exact values of $\widehat{T}^1$ and $\widehat{T}^2$.  We choose three eigenvectors 
\[\frac{1}{2}\left( {\begin{array}{c}
  1-\widehat{T}^1\\
   1+\widehat{T}^1 \\
    2 \widehat{T}^2
  \end{array} } \right),\ \ \frac{1}{2}\left( {\begin{array}{c}
  \widehat{T}^2\\
   -\widehat{T}^2 \\
     2\widehat{T}^1
  \end{array} } \right),\ \ \frac{1}{2} \left( {\begin{array}{c}
 1+ \widehat{T}^1\\
  1 -\widehat{T}^1 \\
     -2\widehat{T}^2
  \end{array} } \right)\] 
  corresponding to the eigenvalues $0$, $-1$ and $-2$ respectively. 
 Using these eigenvectors as columns, we can construct  
 \[P=\left( {\begin{array}{ccc}
\frac{1- \widehat{T}^1}{2}& \frac{\widehat{T}^2}{2}&  \frac{1+ \widehat{T}^1}{2} \\
    \frac{1+ \widehat{T}^1}{2}&   - \frac{\widehat{T}^2}{2}& \frac{1- \widehat{T}^1}{2}\\
  \widehat{T}^2 &\widehat{T}^1&-\widehat{T}^2
  \end{array} } \right)\]
  to diagonalize \eqref{Euler equations:form 2}. Indeed, if we define $U=P^{-1} \cdot V$, we obtain that
 \[LU=c\Lambda\cdot \widehat{T}(U)+cP^{-1} B P \cdot \Xh(U)+\left(c\Lambda P^{-1}\widehat{T}(P)-P^{-1}L(P)+cP^{-1}B \Xh(P)\right)\cdot U,\]
where 
\begin{equation}\label{def:Lambda}
\Lambda=\left( {\begin{array}{ccc}
  0 & 0   &   0 \\
  0& -1 & 0 \\
  0&  0 & -2
  \end{array} } \right).
  \end{equation}
  Since $\widehat{T}=\kappa T$, we finally obtain:
 \begin{equation}\label{eq:Euler in diagonalized form}
 LU=\frac{c}{\kappa}\Lambda\cdot {T}(U)+cP^{-1} B P \cdot \Xh(U)+\left( \frac{c}{\kappa}\Lambda P^{-1} {T}(P)-P^{-1}L(P)+cP^{-1}B \Xh(P)\right)\cdot U.
\end{equation}
The following computations may help to understand the structure of \eqref{eq:Euler in diagonalized form}:
\[P^{-1}=\left( {\begin{array}{ccc}
\frac{1-\widehat{T}^1}{2}& \frac{1+\widehat{T}^1}{2}&  \frac{\widehat{T}^2}{2} \\
   \widehat{T}^2 & -\widehat{T}^2& \widehat{T}^1\\
 \frac{1+\widehat{T}^1}{2}& \frac{1-\widehat{T}^1}{2}&-\frac{\widehat{T}^2}{2}
  \end{array} } \right),\ \ P^{-1}B=\left( {\begin{array}{ccc}
  -\frac{1}{2}\widehat{T}^2& \frac{1}{2}\widehat{T}^2 &   -\frac{1}{2}\widehat{T}^1 \\
 -1  & -1 & 0 \\
 -\frac{1}{2}\widehat{T}^2& \frac{1}{2}\widehat{T}^2  & -\frac{1}{2}\widehat{T}^1 
  \end{array} } \right),\]
  and $P^{-1}BP$ is  a constant matrix:
    \[ P^{-1}BP=\left( {\begin{array}{ccc}
  0 & -\frac{1}{2}   &   0 \\
  -1& 0 & -1  \\
  0&   -\frac{1}{2} & 0
  \end{array} } \right).  \]
In an explicit manner, we can express $U$ in terms of Riemann invariants as follows:
\begin{equation}\label{eq:explicit formula for U}
\left( {\begin{array}{c}
  U^{(0)} \\
     U^{(-1)}\\
     U^{(-2)}
  \end{array} } \right)=\left( {\begin{array}{c}
\frac{1-\widehat{T}^1}{2}\wb+\frac{1+\widehat{T}^1}{2}w+  \frac{\widehat{T}^2}{2}\psi_2 \\
   \widehat{T}^2 \wb - \widehat{T}^2 w+\widehat{T}^1\psi_2\\
  \frac{1+\widehat{T}^1}{2}\wb+ \frac{1-\widehat{T}^1}{2}w-\frac{\widehat{T}^2}{2}\psi_2
  \end{array} } \right).  \end{equation}
Conversely, we have
\begin{equation}\label{eq:explicit formula for U converse}
 \begin{cases}
   \wb&=\frac{1-\Th^1}{2}U^{(0)}+  \frac{\Th^2}{2} U^{(-1)}+ \frac{1+\Th^1}{2}U^{(-2)},\\
   w&=  \frac{1-\Th^1}{2}U^{(-2)}+\frac{1+\Th^1}{2}U^{(0)}-  \frac{\Th^2}{2} U^{(-1)},\\
   \psi_2&= \Th^1 U^{(-1)}+\Th^2U^{(0)}-\Th^2 U^{(-2)}.
   \end{cases}
  \end{equation}

  \subsubsection{The second null frame}
Following the first paper \cite{LuoYu1}, in order to derive energy estimates, we introduce
\[\Xr=\partial_2, \ \Trh=-\partial_1,  \ \Lr=\partial_t+v-c\Trh=\partial_t+(v^1+c)\partial_1+v^2\partial_2,\]
and
\[\kappar=t, \ \ \Tr=\kappar \Trh, \ \ \mur = c\kappar.\]
The vector fields satisfy the following metric relations:
\[g(\Lr,\Tr)=-\mur, \ g(\Lr,\Lr)=g(\Lr,\Xr)=0,\  g(\Xr,\Xr)=1, \ g(\Tr,\Tr)=\kappar^2, \ g(\Tr,\Xr)=0.\]
Let $\Lbr= c^{-1} \kappar \Lr + 2\Tr$. We then obtain {\bf the second null frame}
$(\Lr, \Lbr, \Xr)$. One can check that
\[g(\Lr,\Lbr)=-2\mur, \ g(\Lr,\Lr)=g(\Lbr,\Lbr)=g(\Lbr,\Xr)=g(\Lr,\Xr)=0,\  g(\Xr,\Xr)=1.\]
We introduce functions $y$, $\yr$, $z$ and $\zr$ as follows:
\begin{equation}\label{def: yring zring}
y=\Xr(v^1+c), \ \ \yr = \frac{y}{\kappar},  \ \ z=1+\Tr(v^1+c), \ \ \zr = \frac{z}{\kappar}.
\end{equation}
We list the definitions and formulas for the connection coefficients in the second null frame as follows:
\begin{equation*}
	\begin{cases}
		&\chir:=g(D_{\Xr} \Lr,\Xr)=-\Xr(\psi_2), \ \ \chibr :=  g(D_{\Xr} \Lbr, \Xr)=c^{-1}\kappar \chir=-c^{-1}\kappar\Xr(\psi_2), \\ 
		&\zetar:=  g(D_{\Xr} \Lr, \Tr)=-\kappar y, \ \ \etar:=  -g(D_{\Xr} \Tr, \Lr)=\zetar+\Xr(\mur)=ck(\Tr,\Xr)=-\Tr(\psi_2), \\
		&\deltasr:=g(D_{\Lr} \Lr,\Xr)=cy,\ \ \deltar:=g(D_{\Lr} \Lr,\Tr)=-\Lr(\mur)+cz.
	\end{cases}
\end{equation*}
The commutators for the new vector fields are collected as follows:
\begin{equation*}
\begin{cases}
&[\Tr, \Xr]=0, \ \ [\Lr, \Xr]=\yr\cdot \Tr-\chir\cdot\Xr, \ \ [\Lr, \Tr]=\zr\cdot \Tr-\etar\Xr,\\
&[\Lbr, \Xr]=-\left(\frac{1}{2}c^{-2}\kappar y+\Xr(c^{-1}\kappar)\right)\Lr-\chibr\cdot\Xr+\frac{1}{2}c^{-1}y\cdot\Lbr, \\
& [\Lr, \Lbr]=\left(\Xr(c^{-1}\kappar)-c^{-1}z\right)\Lr-2\etar\cdot \Xr+\zr\cdot \Lbr.
\end{cases}
\end{equation*}

Similar to \eqref{Euler equations:form 1}, we can rewrite the Euler equations in the following form:
\begin{equation}\label{Euler equations:form 1 ringed}
\begin{cases}
\Lr (\frac{2}{\gamma-1}c) &= -c \Trh(\frac{2}{\gamma-1}c)-c \Trh(\psi_1) +c\Xr(\psi_2),\\
\Lr (\psi_1) &= -c \Trh(\psi_1)-c \Trh\left(\frac{2}{\gamma-1}c\right),\\
\Lr (\psi_2) &= -c \Trh(\psi_2)+c\Xr\left(\frac{2}{\gamma-1}c\right).
\end{cases}
\end{equation}
In terms of the Riemann invariants, \eqref{Euler equations:form 1 ringed} reduces to a simple form
\begin{equation}\label{Euler equations:form 3}
\begin{cases}
\Lr (\wb) &= \frac{1}{2}c \Xr(\psi_2),\\
\Lr (w) &= 	-2c \Trh(w)+\frac{1}{2}c\Xr(\psi_2),\\
\Lr (\psi_2) &= -c \Trh(\psi_2)+c\Xr( w+\wb).
\end{cases}
\end{equation}

We also recall the following notations:
A \emph{multi-index} $\alpha$ is a string of numbers $\alpha=(i_1,i_2,\cdots,i_n)$ with {\color{black}$i_j=0$ or $1$} for $1\leqslant j\leqslant n$. The \emph{length} of the multi-index $\alpha$ is defined as $|\alpha|=n$. Given a multi-index $\alpha$ and a smooth function $\psi$, the shorthand notations $Z^\alpha(\psi)$ and  $\Zr^\alpha(\psi)$ denote the following functions:
\[Z^\alpha(\psi)=Z_{(i_N)}\left(Z_{(i_{N-1})}\left(\cdots \left(Z_{(i_1)}(\psi)\right)\cdots\right)\right), \ \Zr^\alpha(\psi)=\Zr_{(i_N)}\big(\cdots \big(\Zr_{(i_1)}(\psi)\big)\cdots \big),\]
where $Z_{(0)}=\Xh$, $Z_{(1)}=T$, $\Zr_{(0)}=\Xr$ and $\Zr_{(1)}=\Tr$.

\subsubsection{An effective domain in $\mathcal{D}(\delta)$}\label{section:effective domain}

For every point $(\delta,0,\vartheta)\in S_{\delta,0}$, we consider the integral curve $\underline{\ell}(\vartheta)$ of $\Lb$ emanated from this point and inside  $\mathcal{D}(\delta)$. The congruence of all such curves defines a codimension one hypersurfaces in $\mathcal{D}(\delta)$:
\[\overline{C}_\delta=\bigcup_{\vartheta\in [0,2\pi]}{\color{black}\underline{\ell}(\vartheta)}.\]
By definition, $\Lb$ is tangential to $\underline{\ell}(\vartheta)$. Therefore,  $\underline{\ell}(\vartheta)$ is a null curve with respect to the acoustical metric $g$. Hence, $\overline{C}_\delta$ is a causal hypersurface, i.e., for all $p\in \overline{C}_\delta$, $T_p\overline{C}_\delta$ is either null or timelike. Indeed, we recall that
\[D_{\underline{L}} \underline{L} = \big(\mu^{-1}\underline{L}\mu + L(c^{-1}\kappa)\big)\underline{L} - 2\mu  \Xh(c^{-1}\kappa)\cdot \Xh.\]
Therefore, $\overline{C}_\delta$ is a null hypersurface if and only if $\Xh(c^{-1}\kappa)=0$.
\begin{center}
\includegraphics[width=4in]{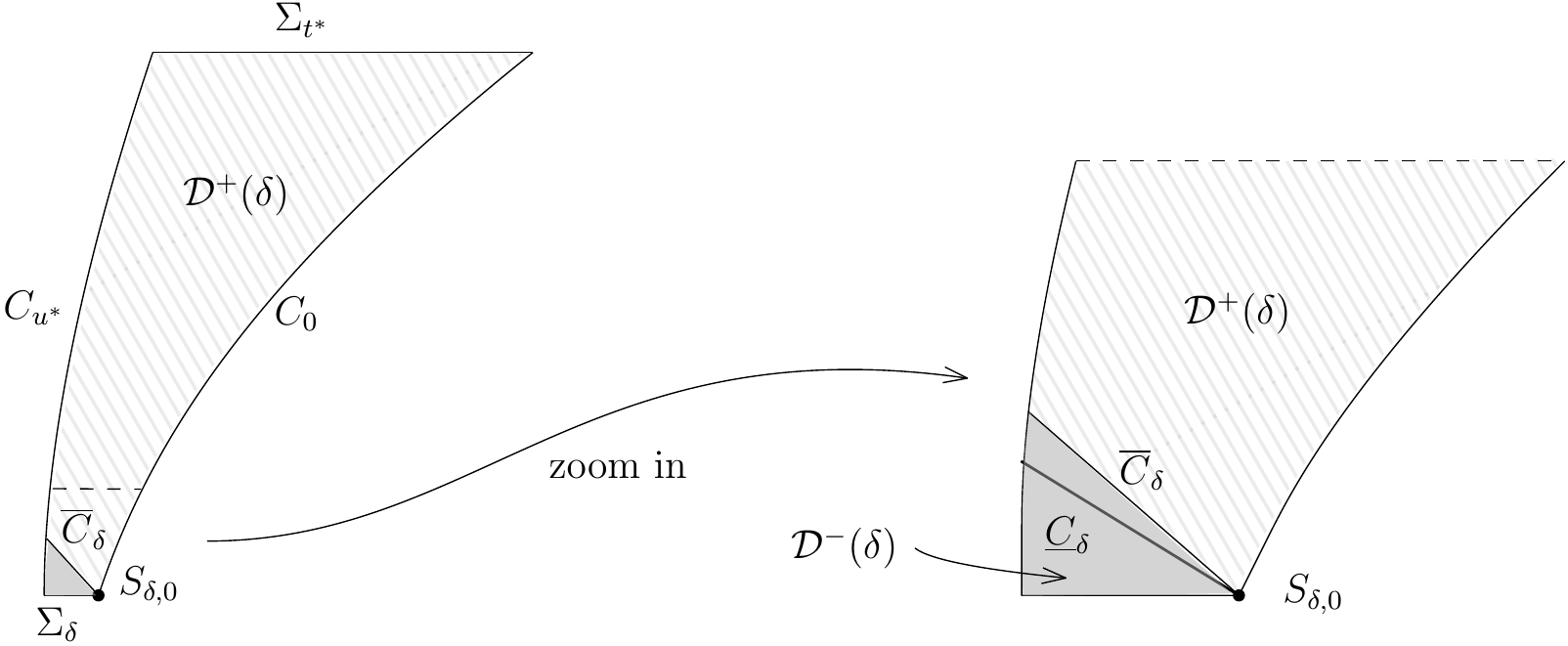}
\end{center}
The hypersurface $\overline{C}_\delta$ separates $\mathcal{D}(\delta)$ into two parts $\mathcal{D}^+(\delta)$ and $\mathcal{D}^-(\delta)$. The $\mathcal{D}^+(\delta)$ is the upper part, i.e., the one contains $C_0$. 

On the other hand, we can use $\Cb_{\delta}$ to denote the left-going null hypersurface emanated from $S_{\delta,0}$. It is the right boundary of the future domain of dependence of $\Sigma_\delta$. Since $\overline{C}_\delta$ is a causal hypersurface, it is in the future of $\Cb_\delta$. This is depicted on the right of the above picture. 

We will show that the solution $(\wb,w,\psi_2)$ associated to a given data are bounded for all derivatives  in $\mathcal{D}^+(\delta)$. We do not have effective control on solutions with multiple $L$ derivatives in $\mathcal{D}^-(\delta)$. Please see Section \ref{Section:The rough bounds on L derivatives} and \ref{Section:The precise bounds on L derivatives}.

\section{Main theorems}

\subsection{Construction of initial data and existence theorem}

\subsubsection{Formulation for the data on $\Sigma_\delta$}\label{section:compatibility}

The initial data for $(v,c)$ or equivalently $(\wb,w,\psi_2)$ is already prescribed on the acoustical null hypersurface $C_0$. This is because $C_0$ is the future boundary of the domain of dependence $\mathcal{D}_0$ of the solution $\big(v_r(t,x_1,x_2),c_r(t,x_1,x_2)\big)$ associated to the data $\big(v_r(0,x_1,x_2),c_r(0,x_1,x_2)\big)$ given on $x_1\geqslant 0$. The trace of $\big(v_r(t,x_1,x_2),c_r(t,x_1,x_2)\big)$ on $C_0$ is well-defined. It suffices to prescribe initial data on $\Sigma_\delta$. This is depicted in the following picture:
\begin{center}
\includegraphics[width=4.5in]{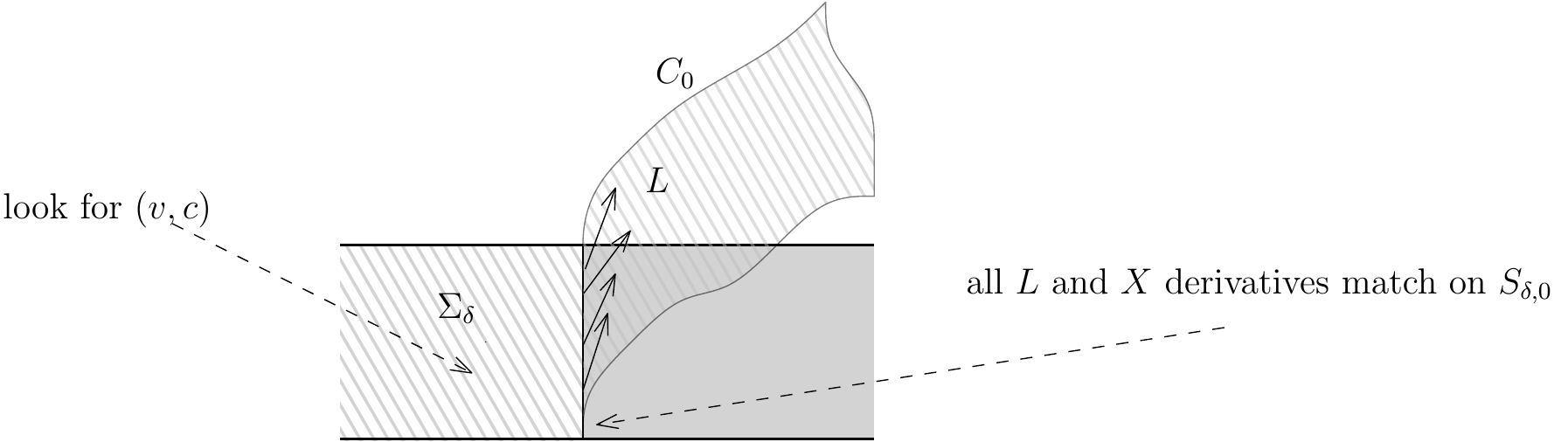}
\end{center}
In particular, since $L$ and $X$ are tangential to $C_0$, for all $m$ and $n$, $L^mX^n v$ and $L^mX^n c$ are already determined by the solution on $\mathcal{D}_0$. We remark that $X=\frac{\partial}{\partial \vartheta}$ and $[L,X]=0$. We also remark that on $S_{\delta,0}$, the vector field $L=\partial_t+v-c\widehat{T}$ is completely determined by the solution on $\mathcal{D}_0$. This is because {\color{black}$\widehat{T}$ is the unit normal of $S_{\delta,0}$ in $\Sigma_{\delta}$.}

To summarize, we will look for the data $(v,c)$ on $\Sigma_\delta$ where $u\in [0,u^*]$ so that the following compatibility conditions are satisfied:
\begin{itemize}
\item[C1)](Smoothness in acoustical coordinates) We require that $(v,c)$ are smooth functions in $(u, \vartheta)$ for $(u,\vartheta)\in [0,u^*]\times [0,2\pi]$. 
\item[C2)](Continuity across $S_{\delta,0}$) We require that  $(v,c)\big|_{S_{\delta,0}}=(v_r,c_r)\big|_{S_{\delta,0}}$. 
\item[C3)](Compatibility in higher order derivatives) We require that higher jets of $(v,c)$ along $C_0$ are compatible with those of $(v_r,c_r)$ at $S_{\delta,0}$, i.e., for a fixed positive integer $N$, for all nonnegative integers $m$ and $n$ with $m+n\leqslant N$, we have
\begin{equation}\label{compatibility condition 3}L^mX^n  {v\choose c }\big|_{S_{\delta,0}}=L^mX^n{v_r\choose c_r } \big|_{S_{\delta,0}}.
  \end{equation}
\end{itemize}

We recall that we have used a set of initial ansatz in \emph{a priori} energy estimates derived in \cite{LuoYu1}. Therefore, in addition to C1), C2) and C3),  we require that the data for rarefaction waves should verify the ansatz $\mathbf{(I_0)}, \mathbf{(I_2)}$, $\mathbf{(I_\infty)}$ and $\mathbf{(I_{irrotational})}$ in \cite{LuoYu1}.

\begin{definition}\label{def:rare data}
	Given $(v,c)$ on $\Sigma_\delta$ which satisfies the above condition C1),C2) and C3), we say that $(v,c)$ is a {\bf $C^N$ data}. We say that it is a \textbf{$C^N$ data for rarefaction waves}, or rarefaction data for short, if it satisfies in addition the rarefaction ansatz $\mathbf{(I_0)}, \mathbf{(I_2)}$, $\mathbf{(I_\infty)}$ and $\mathbf{(I_{irrotational})}$, 
\end{definition}

The first main result of the current paper is to construct initial data so that all the conditions and ansatz are realized. 
For the sake of completeness, we list the ansatz $\mathbf{(I_0)}, \mathbf{(I_2)}$ and $\mathbf{(I_\infty)}$ as follows. We  break and rearrange the ansatz $\mathbf{(I_\infty)}$ so that it suits the proof in Section \ref{section: data construction}. 

The ansatz $\mathbf{(I_0)}$ is given as follows:
\begin{equation}\label{initial size u*}
\mathbf{(I_0)} \ \ \ u^*=c_0\cdot\frac{\gamma+1}{\gamma-1}\mathring{c}_r. 
\end{equation}
We remark that the constant $c_0$ has been set to be $\frac{1}{2}$ in the first paper \cite{LuoYu1}. It can be any given positive constant less than $1$ (to exclude vacuum)  and does not affect the proof.

The ansatz $\mathbf{(I_2)}$ is given as follows:
\begin{equation}\label{initial I2}
\mathbf{(I_2)}\begin{cases}
&\mathscr{E}_{n}(\psi)(\delta,u^*)\lesssim \varepsilon^2 \delta^2, \ \mathscr{F}_{n}(\psi)(t,0)\lesssim \varepsilon^2 t^2, \ \ \psi \in \{w,\wb,\psi_2\},  \ \ 1\leqslant n\leqslant \Ntop;\\
    &\mathcal{E}(\psi)(\delta,u^*)+\underline{\mathcal{E}}(\psi)(\delta,u^*)\lesssim\varepsilon^2 \delta^2, \ \ \psi \in \{w,\psi_2\};\\
        &\mathcal{F}(\psi)(t,0)+\underline{\mathcal{F}}(\psi)(t,0)\lesssim\varepsilon^2 t^2, \ \ \psi \in \{w,\psi_2\};
\end{cases}
\end{equation}
where $t\in [\delta,t^*]$. We recall the definitions for the energy and flux:
\begin{equation}\label{def: energy L}
\mathcal{E}(\psi)(t,u)=\frac{1}{2}\int_{\Sigma_t^u}c^{-1}\kappa \left(c^{-1}\kappa (L\psi)^2+\mu (\Xh\psi)^2\right),\ \ \mathcal{F}(\psi)(t,u)=\int_{C_u^t}  c^{-1}\kappa(L\psi)^2,
\end{equation}
and
\begin{equation}\label{def: energy Lb}
\Eb(\psi)(t,u)=\frac{1}{2}\int_{\Sigma_t^u} (\Lb\psi)^2+\kappa^2 (\Xh \psi)^2, \ \ \Fb(\psi)(t,u)=\int_{C_u^t}c\kappa  (\Xh \psi)^2.
\end{equation}
{\color{black} 
For all $n\leqslant N_{_{\rm top}}$, we recall that
\[\mathscr{E}_{n}(\psi)(t,u)= \sum_{|\alpha|= n} \mathscr{E}_{\alpha}(\psi)(t,u), \ \ \mathscr{F}_{n}(\psi)(t,u)= \sum_{|\alpha|= n} \mathscr{F}_{\alpha}(\psi)(t,u),
\]
where $\mathscr{E}_{\alpha}(\psi)$ and $\mathscr{F}_{\alpha}(\psi)$ are the total energy and the total flux associated to $\Zr^\alpha(\psi)$ as follows:
\[\begin{cases}
	&\mathscr{E}_{\alpha}(\psi)(t,u)= \mathcal{E}\big(\Zr^\alpha(\psi)\big)(t,u)+\Eb\big(\Zr^\alpha(\psi)\big)(t,u),\\
	&\mathscr{F}_{\alpha}(\psi)(t,u) = \F\big(\Zr^\alpha(\psi)\big)(t,u)+\Fb\big(\Zr^\alpha(\psi)\big)(t,u).
\end{cases}
\]
}
Finally, we list the ansatz $\mathbf{(I_\infty)}$ and $\mathbf{(I_{irrotational})}$:

\begin{equation}\label{initial Iinfty1}
\mathbf{(I_{\infty,1})}
\begin{cases}
&\|\slashed{g} - 1\|_{L^\infty(\Sigma^{u}_\delta)} +  \|\frac{\kappa}{\delta}-1\|_{L^\infty(\Sigma^{u}_\delta)} + \|\Th^2\|_{L^\infty(\Sigma^{u}_\delta)} \lesssim  \varepsilon \delta, \ \|\Th^1+1\|_{L^\infty(\Sigma^{u}_\delta)} \lesssim \varepsilon^2 \delta^2;\\
	& \|Z(\slashed{g})\|_{L^\infty(\Sigma^{u}_\delta)}  \lesssim \varepsilon \delta, \ \|Z^\alpha(\kappa)\|_{L^\infty(\Sigma^{u}_\delta)} \lesssim \varepsilon \delta^2,  \ \  1 \leqslant |\alpha| \leqslant 2;\\
	& \|Z^\alpha(\Th^1)\|_{L^\infty(\Sigma^{u}_\delta)}\lesssim \varepsilon^2 \delta^2, \ \|Z^\alpha(\Th^2)\|_{L^\infty(\Sigma^{u}_\delta)}\lesssim \varepsilon \delta,  \ \  1 \leqslant |\alpha| \leqslant 2.
\end{cases}
\end{equation}

\begin{equation}\label{initial Iinfty2}
\mathbf{(I_{\infty,2})}
\begin{cases}
	&\|L(\psi)\|_{L^\infty(\Sigma^{u}_\delta)} + \|\Xh(\psi)\|_{L^\infty(\Sigma^{u}_\delta)} \lesssim \varepsilon;\\
	&\|T(w)\|_{L^\infty(\Sigma^{u}_\delta)} + \|T(\psi_2)\|_{L^\infty(\Sigma^{u}_\delta)} +  \|T\wb + \frac{2}{\gamma+1}\|_{L^\infty(\Sigma^{u}_\delta)} \lesssim \varepsilon \delta;\\
	&\|LZ^{\alpha}\psi\|_{L^\infty(\Sigma^{u}_\delta)}+\|\Xh Z^{\alpha}\psi\|_{L^\infty(\Sigma^{u}_\delta)}+\delta^{-1}\|TZ^{\alpha}\psi\|_{L^\infty(\Sigma^{u}_\delta)} \lesssim \varepsilon, \ \ 1 \leqslant |\alpha| \leqslant 2.
	\end{cases}
\end{equation}
\begin{equation}\label{initial irrotational}
	\mathbf{(I_{irrotational})} \ \ \ \frac{\partial v^2}{\partial x^1}\Big|_{\Sigma^{u}_\delta} = \frac{\partial v^1}{\partial x^2}\Big|_{\Sigma^{u}_\delta}.
\end{equation}
In the formulas of \eqref{initial Iinfty2}, $\psi\in \{\wb,w,\psi_2\}$ and $Z \in \{\Xh, T\}$.

\begin{remark}\label{rem:asymptotic condition}
	The ansatz $\mathbf{(I_\infty)}$ implies the following \textbf{asymptotic condition} for rarefaction waves:
	\[ \frac{\partial x^1}{\partial u} \approx \kappa \sim t, \quad T\wb \sim -\frac{2}{\gamma+1}, \ T\psi_2 = O(t\varepsilon), \ Tw = O(t\varepsilon), \]
	as $t\rightarrow 0$. In view of the condition C1) of the Definition \ref{def:rare data}, it implies in particular that the data is singular in $(x_1,x_2)$. 
\end{remark}

\begin{remark}
	In applications, we take $N=\Ntop+1$. We recall that $\Ntop$ is the maximal number of derivatives needed for the energy estimates in the first paper \cite{LuoYu1}. Since we are only concerned with solutions that are sufficiently smooth, the exact value of $\Ntop$ does not matter during the proof since it can be arbitrarily large.
\end{remark}

\subsubsection{Existence of data and solution for $\mathcal{D}(\delta)$}
The first theorem of the paper is as follows:
\begin{TheoremDataExistence}[Existence of data on $\Sigma_\delta$]
For all $\delta \in (0,t^*)$ and for all $N$, there exists a $C^N$ data for rarefaction waves defined on $\Sigma_\delta$, i.e., the ansatz $\mathbf{(I_0)}, \mathbf{(I_2)}$, $\mathbf{(I_{\infty,1})}$, $\mathbf{(I_{\infty,2})}$ and $\mathbf{(I_{irrotational})}$ hold.
\end{TheoremDataExistence}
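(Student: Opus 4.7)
The plan is to construct the data on $\Sigma_\delta$ directly in the acoustical coordinates $(u,\vartheta)$ as a small perturbation of the explicit one-dimensional centered rarefaction profile, tuned to match the known jet of $(v_r,c_r)$ at $S_{\delta,0}$ to order $N$. I would first set up the foliation on $\Sigma_\delta$: since $(v_r,c_r)$ is already determined on $\mathcal{D}_0$ by the given right-side data, the unit $g$-normal $\widehat{T}$ of $S_{\delta,0}$ inside $\Sigma_\delta$ is known, and I define $u$ by $u=0$ on $S_{\delta,0}$ together with $Tu\equiv 1$, and propagate $\vartheta$ by $T$ as in Section \ref{section:initial foliatiion}. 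This reduces the problem to prescribing three smooth functions $(\wb,w,\psi_2)$ of $(u,\vartheta)\in[0,u^*]\times\mathbb{R}/2\pi\mathbb{Z}$.

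Next I would compute the required Taylor expansion of the data at $u=0$. Using the structure equations \eqref{structure eq 3: L T on Ti Xi Li}, the connection relations \eqref{structure quantities: connection coefficients}, and the commutators \eqref{eq:commutator formulas}, every $T^m X^n(v,c)|_{u=0}$ (equivalently every $\partial_u^m\partial_\vartheta^n$ in the acoustical chart) is expressed recursively in terms of $L^m X^n(v_r,c_r)|_{S_{\delta,0}}$, which are themselves determined by the classical Cauchy solution on $\mathcal{D}_0$. The data itself is then taken in the form
\begin{equation*}
\wb=\wb_r|_{S_{\delta,0}}(\vartheta)-\tfrac{2}{\gamma+1}u+r_{\wb}(u,\vartheta),\ \ w=w_r|_{S_{\delta,0}}(\vartheta)+r_w(u,\vartheta),\ \ \psi_2=\psi_2^r|_{S_{\delta,0}}(\vartheta)+r_{\psi_2}(u,\vartheta),
\end{equation*}
where each $r_\bullet$ is a polynomial in $u$ of degree $N$ with $\vartheta$-dependent coefficients, multiplied by a fixed cutoff localised near $u=0$; the coefficients are chosen so that the $N$-jet at $u=0$ matches the jet computed above. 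The explicit linear term $-\frac{2}{\gamma+1}u$ reproduces the one-dimensional centered rarefaction profile \eqref{eq: precise solution for front rarefaction waves} in the unperturbed case and guarantees the leading asymptotic $T\wb\approx -\frac{2}{\gamma+1}$ demanded by $\mathbf{(I_{\infty,2})}$.

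To enforce $\mathbf{(I_{irrotational})}$ I would lift the construction to the velocity potential: since the right-side initial data is irrotational, $v_r=-\nabla\phi_r$ is well-defined on $\mathcal{D}_0$, so one prescribes $(\phi,c)$ on $\Sigma_\delta$ by the same Taylor-matching recipe applied to the jets of $(\phi_r,c_r)$ at $S_{\delta,0}$, then sets $v=-\nabla\phi$, which makes the curl vanish identically on $\Sigma_\delta^{u^*}$; the Riemann invariants are derived quantities enjoying the profile above up to $O(\varepsilon\delta)$ corrections. The remaining ansatz are then verified directly: $\mathbf{(I_0)}$ is a choice of $u^*$; $\mathbf{(I_{\infty,1})}$ and $\mathbf{(I_{\infty,2})}$ follow from the explicit leading terms (in particular $\kappa\sim\delta$, $\widehat{T}\approx -e_1$, with deviations inherited from the $\varepsilon$-perturbation of $(v_r,c_r)$) plugged into \eqref{structure eq 3: L T on Ti Xi Li} and \eqref{structure quantities: connection coefficients}; and $\mathbf{(I_2)}$ follows by integrating these pointwise bounds over the thin slab $\Sigma_\delta^{u^*}$, of $g$-volume $O(\delta)$, together with the flux control $\mathscr{F}_n(\psi)(t,0)\lesssim\varepsilon^2 t^2$ which is a direct consequence of the smoothness of $(v_r,c_r)$ on $C_0$.

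The hardest part will be tracking the precise $\delta$-scaling of the corrections $r_\bullet$. Because the Jacobian of $(u,\vartheta)\mapsto(x_1,x_2)$ on $\Sigma_\delta$ is of order $\delta$, each conversion between the null-frame derivatives $L,T,\widehat{X}$ and the coordinate derivatives $\partial_u,\partial_\vartheta$ introduces factors of $\delta^{\pm 1}$ that must be monitored at every order; one must verify that the polynomial coefficients of $r_\bullet$, which are built out of $L^m X^n(v_r,c_r)|_{S_{\delta,0}}$, carry exactly the scaling $\varepsilon\delta$ (for $\wb$) or $\varepsilon\delta^2$ (for higher $T$-derivatives of $w$ and $\psi_2$) demanded by $\mathbf{(I_{\infty,1})}$ and $\mathbf{(I_{\infty,2})}$. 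This is where the diagonalised form \eqref{eq:Euler in diagonalized form} of the Euler system is crucial: it separates the characteristic null mode that carries the rarefaction profile from the two transverse modes, so that the correction in the former can be isolated and shown to satisfy the stronger $O(\varepsilon\delta)$ bound rather than the generic $O(\varepsilon)$ bound one would get for a non-characteristic perturbation.
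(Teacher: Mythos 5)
Your overall shape --- prescribing the data on $\Sigma_\delta$ in acoustical coordinates as the one-dimensional profile $-\frac{2}{\gamma+1}u$ plus a degree-$N$ Taylor correction in $u$ whose coefficients are forced by matching the jet of $(v_r,c_r)$ at $S_{\delta,0}$ through the diagonalized system \eqref{eq:Euler in diagonalized form} --- is the same as the paper's. But there are genuine gaps. First, you never actually construct the foliation: ``$u=0$ on $S_{\delta,0}$ and $Tu\equiv 1$'' does not determine $u$, because $T$ is itself defined from the level sets of $u$ (any function vanishing on $S_{\delta,0}$ with non-degenerate gradient satisfies these conditions). The paper's $u$ is the very specific function \eqref{def: for u initially}, obtained by averaging $\psi_1+c\Th^1$ along $C_0$ over $[0,\delta]$ (formally integrating back from the singularity), and this choice is not a convenience: it is what yields $\frac{\kappa}{\delta}-1=O(\varepsilon^2\delta^2)$, $\Th^1+1=O(\varepsilon^2\delta^2)$ and $Z^\alpha(\kappa)=O(\varepsilon\delta^2)$ in $\mathbf{(I_{\infty,1})}$, and --- via the decomposition $y=\frac{A(\delta,x_2)}{\delta}+X(v^1+c)+y_{\rm err}$ and the intermediate value theorem --- the bound $\yr=O(\varepsilon)$, without which the $\varepsilon^2\delta^2$ scaling in $\mathbf{(I_2)}$ (which is phrased in the ringed frame $\Xr,\Tr$) cannot be reached. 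A generic foliation with $\kappa\sim\delta$ loses a power of $\delta$ exactly where you flag ``the hardest part''; the cancellation mechanism has to be exhibited, and your proposal contains no candidate for it.

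Second, the flux half of $\mathbf{(I_2)}$ is not ``a direct consequence of the smoothness of $(v_r,c_r)$ on $C_0$''. For $n\geqslant1$, $\mathscr{F}_{n}(\psi)(t,0)$ involves $\Zr^\alpha\psi$ with $\Tr$-derivatives transversal to $C_0$, and these jump across $C_0$: they are determined not by $(v_r,c_r)$ but by the newly constructed data at $S_{\delta,0}$, propagated along $C_0$ by the transport hierarchy coming from the wave equations \eqref{Main Wave equation: order 0} and the structure equations. One must solve this hierarchy and prove the $t$-weighted bounds $\kappa=t+O(\varepsilon t^2)$, $T\wb+\frac{2}{\gamma+1}=O(\varepsilon t)$, $TZ^\beta\psi=O(\varepsilon t)$ and $\yr=O(\varepsilon)$ on $C_0$ in order to get $\mathscr{F}_n(t,0)\lesssim\varepsilon^2t^2$; only the lowest-order fluxes for $w,\psi_2$ come for free from continuity across $C_0$. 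A smaller point: your potential-based shortcut for $\mathbf{(I_{irrotational})}$ is plausible, but as stated it is not integrated with the Riemann-invariant Taylor construction --- you would have to redo the free/constrained splitting of the jets (including the choice of the characteristic component and the extra $\kappa$-smallness of the non-characteristic coefficients, cf. \eqref{eq:aaa4}) at the level of $\phi$, or else keep the construction in $(\wb,w,\psi_2)$ and, as the paper does, prove curl-freeness by showing that $T^k\overline{\omega}$ vanishes on $S_{\delta,0}$ for all $k\leqslant N-1$.
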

The proof of the theorem is inspired by the last slice argument of Christodoulou and Klainerman in \cite{CK}, and is detailed in Section \ref{section: data construction}. By the \emph{a priori energy estimates} derived in \cite{LuoYu1} and the Sobolev inequalities, the theorem has the following immediate consequence:

\begin{CorollaryExistenceOnDdelta}[Existence of solution on $\mathcal{D}(\delta)$]For the given data constructed in {\bf Theorem 1}, there exists a unique solution $(v,c)$ to the Euler equations \eqref{eq: Euler in rho v} defined on $\mathcal{D}(\delta)$ so that 
\begin{equation*}
(\wb ,w ,\psi_2)\in C^0 \big((0,t^*]; H^k(\Sigma_t) \big),
\end{equation*}
for all $k\leqslant N$. Moreover, we have
\begin{equation*}
(\wb ,w ,\psi_2)\in C^{N-3} ([0,t^*]\times [0,u^*]\times [0,2\pi] ) \cap C^0\big([0,t^*];C^{N-2} ( [0,u^*]\times [0,2\pi])\big).
\end{equation*}
\end{CorollaryExistenceOnDdelta}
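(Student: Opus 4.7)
The plan is to combine the data constructed in Theorem 1 with the already-known solution on $\mathcal{D}_0$ to set up a mixed Cauchy--characteristic initial value problem for the diagonalized Euler system \eqref{eq:Euler in diagonalized form} (or equivalently the covariant wave system \eqref{Main Wave equation: order 0}), and then extend the local solution to all of $\mathcal{D}(\delta)$ by means of the \emph{a priori} energy estimates of \cite{LuoYu1}. The data on the portion $u \in [0,u^*]$ of $\Sigma_\delta$ is supplied by Theorem 1, while the data on the null hypersurface $C_0$ is the trace of the solution $(v_r,c_r)$ on $\mathcal{D}_0$, which is smooth by the continuous dependence discussion in Section \ref{section:acoustical geometry}. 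The compatibility condition C3) in Section \ref{section:compatibility} guarantees that the two pieces of data match to order $N$ at the corner $S_{\delta,0}$, so that the mixed problem admits a $C^N$ formulation.

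First I would invoke the standard local existence theory for quasilinear symmetric hyperbolic systems with characteristic boundary (in the spirit of Majda \cite{MajdaBook} and the mixed-problem framework used in Alinhac's work): since \eqref{eq:Euler in diagonalized form} is strictly hyperbolic with $L$ as the outgoing characteristic direction and since $C_0$ is exactly the outgoing characteristic of the frozen-coefficient problem at the corner, one obtains a unique $H^N$ solution on a slab $[\delta, \delta + \tau]$ for some $\tau >0$, together with finite-speed-of-propagation determining the lateral extent. Next I would run a continuity/bootstrap argument on the maximal existence time $t_{\max} \in (\delta, t^*]$: the \emph{a priori} energy estimates of \cite{LuoYu1}, applied under the rarefaction ansatz $\mathbf{(I_0)}, \mathbf{(I_2)}, \mathbf{(I_{\infty,1})}, \mathbf{(I_{\infty,2})}, \mathbf{(I_{irrotational})}$ guaranteed by Theorem 1, yield uniform control of $\mathscr{E}_n(\psi)(t,u)$ and $\mathscr{F}_n(\psi)(t,u)$ for $1 \leqslant n \leqslant \Ntop$ and all $(t,u)\in [\delta,t_{\max}]\times [0,u^*]$. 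Combined with the propagation equations \eqref{Euler equations:form 3} and the commutation relations from Section \ref{section_acoustical geometry}, one upgrades these to $H^N(\Sigma_t)$ bounds (with $N = \Ntop+1$) that cannot blow up, so $t_{\max} = t^*$ and the solution exists on all of $\mathcal{D}(\delta)$. The map $t \mapsto (\wb,w,\psi_2)(t,\cdot) \in H^k(\Sigma_t)$ is continuous for $k\leq N$ by standard weak-strong continuity for hyperbolic evolutions.

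For the pointwise regularity, I would use Sobolev embedding on the two-dimensional slices $\Sigma_t$: $H^k(\Sigma_t) \hookrightarrow C^{k-2}(\Sigma_t)$ yields $C^{N-2}$ spatial regularity for each fixed $t$, giving the second assertion $(\wb,w,\psi_2) \in C^0([0,t^*]; C^{N-2}([0,u^*]\times [0,2\pi]))$ (the extension down to $t=0$ in the acoustical coordinates is possible because the rarefaction ansatz is compatible with the limit, in contrast with the singular behavior in Cartesian coordinates noted in Remark \ref{rem:asymptotic condition}). To gain the full spacetime $C^{N-3}$ regularity, I would trade one spatial derivative for a temporal derivative at a time through the evolution equations \eqref{Euler equations:form 3}, which express $L$-derivatives in terms of $T$- and $\Xh$-derivatives; iterating this $N-3$ times and combining with the slice-wise $C^{N-2}$ estimate gives joint continuity in $(t,u,\vartheta)$ of all derivatives of total order at most $N-3$.

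The main obstacle, I expect, will be the corner $S_{\delta,0}$ and the passage $t\to 0^+$. At the corner, even though the compatibility condition C3) supplies matching of derivatives up to order $N$, rigorously establishing $C^N$ regularity of the mixed problem up to the characteristic boundary requires careful conormal energy estimates or a Nash--Moser-type argument (as in Alinhac \cite{AlinhacWaveRare1, AlinhacWaveRare2}); however, since all compatibility conditions of order $N$ are satisfied by construction, this reduces to a straightforward application of existing results. At $t\to 0^+$, the acoustical coordinates remain regular and the uniform estimates persist thanks to the explicit $t$-weights in $\mathbf{(I_2)}$, so no new difficulty arises on $\mathcal{D}(\delta)$ (noting the statement concerns $\mathcal{D}(\delta)$ with $\delta > 0$).
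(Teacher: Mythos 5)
Your proposal takes essentially the same approach as the paper: the paper dismisses Corollary 1 as an ``immediate consequence'' of Theorem 1 combined with the \emph{a priori} energy estimates of \cite{LuoYu1} and Sobolev embedding, and your argument (local existence for the mixed Cauchy--characteristic problem, bootstrap via the a priori estimates, Sobolev embedding for slice-wise regularity, trading spatial for temporal derivatives via the evolution equations) simply spells out what lies behind that remark. No genuine divergence in method.
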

\begin{remark}
If one uses $(v,c)$ on $\mathcal{D}(\delta)$ and uses $(v_r,c_r)$ on $\mathcal{D}_0$, this defines a continuous solution to the Euler equations \eqref{eq: Euler in rho v}. It is not a $C^1$ solution: On $C_0$ from the $\mathcal{D}_0$ side,  $\frac{\partial \wb_r}{\partial x_1}$ is of size $O(\varepsilon)$; On $C_0$ from the $\mathcal{D}(\delta)$ side,  $\frac{\partial\wb}{\partial x_1}$ is of size $O(t^{-1})$.
\end{remark}

\subsubsection{Existence of rarefaction waves connected to the data on the right}
For a constant state on the right in one-dimensional case,  it can be connected by a front rarefaction wave. We show that  there is an analogue in multi-dimensional cases. In view of \eqref{eq: precise solution for front rarefaction waves} for the constant states $(\mathring{v}_r,\mathring{c}_r)$, we have
\[{\color{black}\mathring{c}_r=\frac{\gamma-1}{\gamma+1}\big(\frac{x}{t}+2\mathring{w}_r\big).}\]
Thus, the zero set of the above function defines
\[\mathring{u}_*=-2\mathring{w}_r>0.\]
Let $\varepsilon_0$ be a small universal constant so that $\varepsilon \ll \varepsilon_0$. Its definition will be given in \eqref{def: varepsilon0}. We define the following region:
\[\mathcal{W}=\big\{(t,x_1,x_2)\in \mathbb{R}^3-\mathcal{D}_0\big| 0< t\leqslant t^*,  \frac{x_1}{t}\geqslant \mathring{u}_*+\varepsilon_0\big\}.\]
\begin{center}
\includegraphics[width=3.2in]{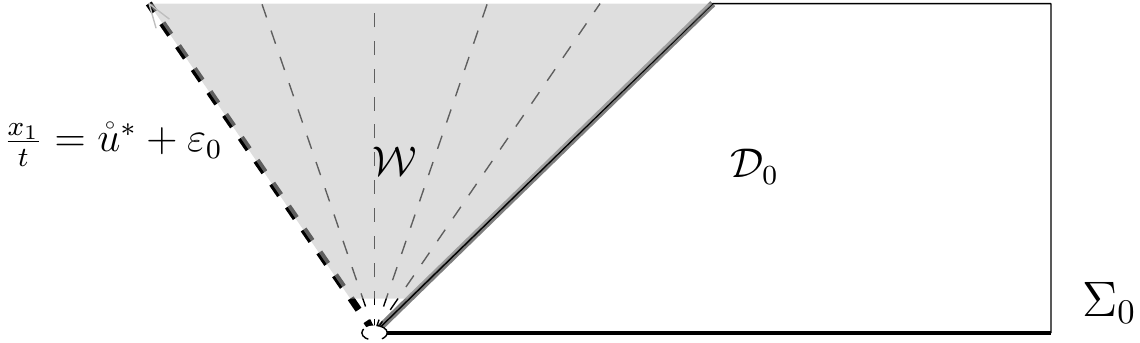}
\end{center}
\begin{TheoremExistenceRarefactionWaves}[Existence of rarefaction waves]\label{thm:existence-rarefaction}
	There exists an irrotational solution $(v,c)$ to the Euler equations \eqref{eq: Euler in rho v} defined on $\mathcal{W}$ so that 
\begin{itemize}
\item[1)] $(v,c)\big|_{C_0}=(v_r,c_r)\big|_{C_0}$;
\item[2)] $(v,c)\in C^0 \big((0,t^*]; H^k(\Sigma_t) \big)$ for all $k\leqslant N$;
\item[3)] For all $t\in (0,t^*]$, we have $\big|-t\frac{\partial\wb}{\partial x_1}+\frac{2}{\gamma-1}\big|\lesssim t\varepsilon$. Therefore,  the continuous solution to the Euler equations defined by $(v,c)$ on $\mathcal{D}_0$ and by $(v_r,c_r)$ on $\mathcal{W}$ is not a $C^1$ solution.
\end{itemize}
\end{TheoremExistenceRarefactionWaves}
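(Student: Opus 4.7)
The strategy is to obtain the solution on $\mathcal{W}$ as a limit of the solutions $(v^{(\delta)},c^{(\delta)})$ furnished by \textbf{Corollary 1} as $\delta\to 0^+$. For each sufficiently small $\delta>0$, \textbf{Theorem 1} produces rarefaction data on $\Sigma_\delta$ whose constants in the ansatz $\mathbf{(I_0)}$--$\mathbf{(I_{\infty,2})}$ and $\mathbf{(I_{irrotational})}$ are \emph{independent of $\delta$}; the a priori energy estimates of \cite{LuoYu1} then propagate these bounds throughout the effective domain $\mathcal{D}^+(\delta)$, yielding $\delta$-uniform control of $(\wb^{(\delta)},w^{(\delta)},\psi_2^{(\delta)})$ up to $\Ntop$ derivatives, with the prescribed degeneration of the $T$-derivative bounds by a factor of $t$.

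The first substantive step is geometric: I would verify that $\mathcal{W}\subset \mathcal{D}^+(\delta)$ for every $\delta$ sufficiently small. The $1$-D background shows that the left boundary of the front rarefaction wave is the ray $x_1/t=\mathring{u}_*$, while $\mathcal{W}$ is separated from this ray by $\varepsilon_0$. A continuous-dependence argument applied to the integral curves of $\Lb$ issuing from $S_{\delta,0}$, using the uniform bounds on $\mu$ and $\Xh(c^{-1}\kappa)$ furnished by $\mathbf{(I_{\infty,1})}$, places $\overline{C}_\delta$ in the region $x_1/t<\mathring{u}_*+\varepsilon_0/2$ provided $\varepsilon\ll \varepsilon_0$ and $\delta$ is small; consequently $\mathcal{W}$ sits strictly above $\overline{C}_\delta$ and is covered by the effective domain.

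Next, on each truncated slab $\mathcal{W}_\tau:=\mathcal{W}\cap\{t\geqslant \tau\}$ with $\tau>0$, the uniform energy bounds together with Sobolev embedding yield uniform $C^{N-3}$ control, and the Euler equations \eqref{eq: Euler in rho v} supply uniform Lipschitz-in-time bounds. An Aubin--Lions type argument then extracts a sequence $\delta_n\to 0$ with $(v^{(\delta_n)},c^{(\delta_n)})$ converging in $C^0([\tau,t^*];H^{N-1}(\mathcal{W}_\tau))$; a diagonal argument across $\tau\to 0$ produces a limit $(v,c)$ defined on all of $\mathcal{W}$ which solves \eqref{eq: Euler in rho v}, inherits the regularity statement (2), and is irrotational. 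Claim (1) (matching on $C_0$) follows from the compatibility conditions C2) and C3) of Section \ref{section:compatibility}, and uniqueness of the limit -- so that the full family, not merely a subsequence, converges -- follows from the diagonalized symmetric-hyperbolic form \eqref{eq:Euler in diagonalized form}.

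Finally, claim (3) is a translation of the acoustical ansatz $T\wb=-\tfrac{2}{\gamma+1}+O(t\varepsilon)$ in $\mathbf{(I_{\infty,2})}$ back into Cartesian coordinates: since $\mathbf{(I_{\infty,1})}$ gives $\Th^1+1=O(\varepsilon^2\delta^2)$, $\Th^2=O(\varepsilon\delta)$, and $\kappa\sim t$, the relations \eqref{eq: L T in terms of coordinates} and \eqref{structure eq 3: L T on Ti Xi Li} yield $\tfrac{\partial x^1}{\partial u}\sim t$, whence $-t\,\tfrac{\partial \wb}{\partial x_1}=\tfrac{2}{\gamma-1}+O(t\varepsilon)$ after passing to the limit. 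The main obstacle is making the $\delta\to 0$ limit rigorous in the presence of the vertex singularity at $t=0$ flagged by Remark \ref{rem:asymptotic condition}: one must carefully distinguish the seminorms which stay uniform as $\delta\to 0$ from those which degenerate like $t^{-1}$, and simultaneously quantify the motion of $\overline{C}_\delta$ via the $\Lb$-geodesic equation in order to identify the limiting domain as \emph{exactly} $\mathcal{W}$ rather than a proper subset.
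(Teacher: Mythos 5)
Your overall scheme is the one the paper follows (Sections \ref{section:region of convergence}--\ref{section: description of the solution}): uniform-in-$\delta$ rarefaction data from {\bf Theorem 1}, the \emph{a priori} estimates of \cite{LuoYu1} on $\mathcal{D}(\delta)$, compactness on an exhaustion by compact sets plus a diagonal argument, and passing the pointwise bound on $\Tr\wb$ to the limit. The genuine problem is your ``first substantive step''. The inclusion you assert is both unnecessary and false. The hypersurface $\overline{C}_\delta$ is the union of $\Lb$-integral curves emanating from $S_{\delta,0}=C_0\cap\Sigma_\delta$, where $x_1/t\approx \mathring{v}_r+\mathring{c}_r$, i.e. far to the \emph{right} of $\mathring{u}_*+\varepsilon_0/2$; it then sweeps leftward across the fan and exits through $C_{u^*}$ within time $O(\delta)$ (this is exactly the estimate $\mathcal{D}^-(\delta)\subset\{t\lesssim C_1\delta\}$ in Section \ref{Section:The precise bounds on L derivatives}). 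So $\overline{C}_\delta$ does not lie in $\{x_1/t<\mathring{u}_*+\varepsilon_0/2\}$, and $\mathcal{W}\not\subset\mathcal{D}^+(\delta)$: points of $\mathcal{W}$ with $t<\delta$ are not even in $\mathcal{D}(\delta)$, and points with $t$ comparable to $\delta$ near the left edge of $\mathcal{W}$ lie in $\mathcal{D}^-(\delta)$. What the limiting argument actually needs is that each truncated set $\mathcal{K}_k$ is contained in $\mathcal{D}(\delta)$ for all small $\delta$, and $\mathcal{D}(\delta)$ is bounded on the left by the $L$-characteristic $C_{u^*}$, not by the $\Lb$-hypersurface $\overline{C}_\delta$. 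The required geometric input is therefore a comparison of $C_{u^*}$ with the background ray, which is what Section \ref{section:region of convergence} supplies via $\|v^1+c-\frac{x_1}{t}\|_{L^\infty(\Sigma_t)}\lesssim\varepsilon t$ and $\|u-u^{\rm cst}\|_{L^\infty(\Sigma_t)}\lesssim\varepsilon$, and it is precisely how the margin $\varepsilon_0=A_0\varepsilon$ is fixed; your continuous-dependence argument along $\Lb$-curves does not yield it. Note also that the effective domain plays no role in {\bf Theorem 2}: the energy estimates of \cite{LuoYu1} hold on all of $\mathcal{D}(\delta)$, and $\mathcal{D}^+(\delta)$ is only needed later for the uniform bounds on repeated $L$-derivatives entering {\bf Theorem 3}.

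A secondary point: claim 3) cannot be read off from the data ansatz $\mathbf{(I_{\infty,2})}$, which lives only on $\Sigma_\delta$; one needs the \emph{propagated} interior bound $\|\Tr\wb+\tfrac{2}{\gamma+1}\|_{L^\infty(\Sigma_t)}\lesssim\varepsilon t$ for every $t\in[\delta,t^*]$ (Lemma 4.2 of \cite{LuoYu1}, as invoked in Proposition \ref{prop: L infty bound for limiting solution}), which then survives the limit by the local strong convergence; your opening paragraph gestures at this propagation, but your closing paragraph sources the estimate from the initial ansatz and also drops a sign in the stated conclusion. The remaining ingredients --- Aubin--Lions in $C^0_tH^{N-1}_x$ in place of the paper's spacetime Rellich--Kondrachov on the sets $\mathcal{K}_k$, and the treatment of 1), 2) and irrotationality --- are fine and essentially the paper's argument.
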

{\color{black}
The theorem describes all possible local centered rarefaction waves that can be continuously connected to the given solution in $\mathcal{D}_0$, see Section \ref{section:uniqueness-centered-rare} for the uniqueness part of  {\bf Theorem 2}. 
}

\begin{remark}
The proof of {\bf Theorem 2} relies on a limiting process that may result in the loss of derivatives.  Once again, since we are only concerned with solutions that are sufficiently smooth, the exact value of $N$ does not matter during the proof since it can be taken to be arbitrarily large.
\end{remark}

\begin{remark}[Foliation]

In one-dimensional conservation laws, the manifold where we pose initial data has only one dimension, and it is foliated by sub-manifolds (or points) of co-dimension $1$ in a unique way. Consequently, the characteristic hypersurfaces (curves) emanating from the singularity are also unique.

Although the situation in multi-dimensions is much more complicated than one-dimensional cases, we show that it is still feasible to generate a canonical foliation of characteristic hypersurfaces (represented by $C_u$) in the rarefaction wave region $\mathcal{W}$, see Section  \ref{Section: app to Riemann 1}.  The proof there also shows that the foliations $C_u$'s are $O(\varepsilon)$-close to the one dimensional (or more precisely two dimensional cases with plane symmetry) rarefaction wave fronts.

In Remark \ref{Remark: difference between rarefaction fronts and shock fronts} (and proved in Section  \ref{Section: app to Riemann 1}), we discussed that a rarefaction wave front $H$ in $\mathcal{W}$ is a null hypersurface determined by its initial trace in the singularity $\mathbf{S}_*$. The singularity $\mathbf{S}_*$ can be intrinsically viewed as a two-dimensional manifold with a canonical coordinate system $(u,\vartheta)$. Instead of using the level set of $u$ as the initial data for the null hypersurfaces, we can also use other curves that are defined as graphs by $u=f(\vartheta)$ to generate rarefaction wave fronts. 
\begin{center}
\includegraphics[width=4in]{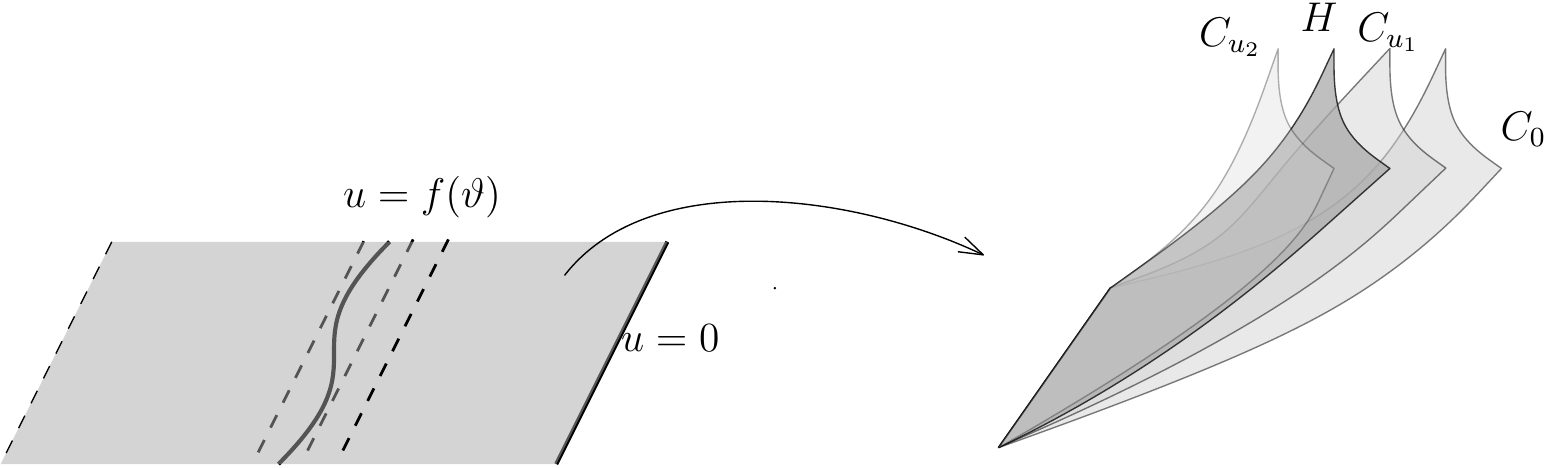}
\end{center}
\end{remark}
{\color{black}
	\begin{remark}[Singular boundary]\label{remark:singular-boundary}
		The singular boundary $\mathbf{S}_*$ is of codimension $2$ in the background spacetime. This is different from the generic singular boundary of the maximal development in the case of shock formations studied by Christodoulou in \cite{ChristodoulouShockFormation} (or in \cite{ChristodoulouMiao}). In that case, the  singular boundary is of codimension $1$. 
		
		In terms of the extrinsic geometry, the function $\kappa$ degenerate on $\mathbf{S}_*$, i.e., all $\frac{\partial}{\partial u}$ and  $\frac{\partial}{\partial \vartheta}$ derivatives of $\kappa$ vanishes on $\mathbf{S}_*$. This simplifies the causal geometry of the singular boundary: All null geodesics starting at $\mathbf{S}_*$ are outgoing, see Section \ref{section:geometric constructions} and Proposition \ref{prop:geometric constructions}, in contrast to the trichotomy of outgoing, incoming and the other null geodesics starting from the singulary boundary in the case of shock formations in \cite{ChristodoulouShockFormation,ChristodoulouMiao}. This fact will lead to a geometric construction of a canonical acoustical coordinate $(t,u,\vartheta)$, see Section \ref{Section: app to Riemann 1}.
	\end{remark}

}
\subsection{Application to the Riemann problem and uniqueness}

\subsubsection{Application to the Riemann problem}
For the classical Riemann problem in one dimension, the shape of the rarefaction wave fronts is a \emph{fan}. We show that in multi-dimension theory the shape becomes an \emph{opened book} and the structure is the same as in one dimension:
\begin{TheoremStructuralStability}[Structural stability of the Riemann problem]
We use $\mathbf{S}_*$ to denote the singularity:
\[\mathbf{S}_*:=\big\{(t,x_1,x_2)\big|t=0,x_1=0\big\}.\]
There exists a constant $\varepsilon_*>0$, for all $\varepsilon<\varepsilon_*$ and for any given data $(v,c)\big|_{t=0}$ in Definition \ref{def:data}, there exists a continuous solution to the Euler equations \eqref{eq: Euler in rho v} on $[0,t^*]\times \mathbb{R}^2-\mathbf{S}_*$.  
\begin{center}
\includegraphics[width=3in]{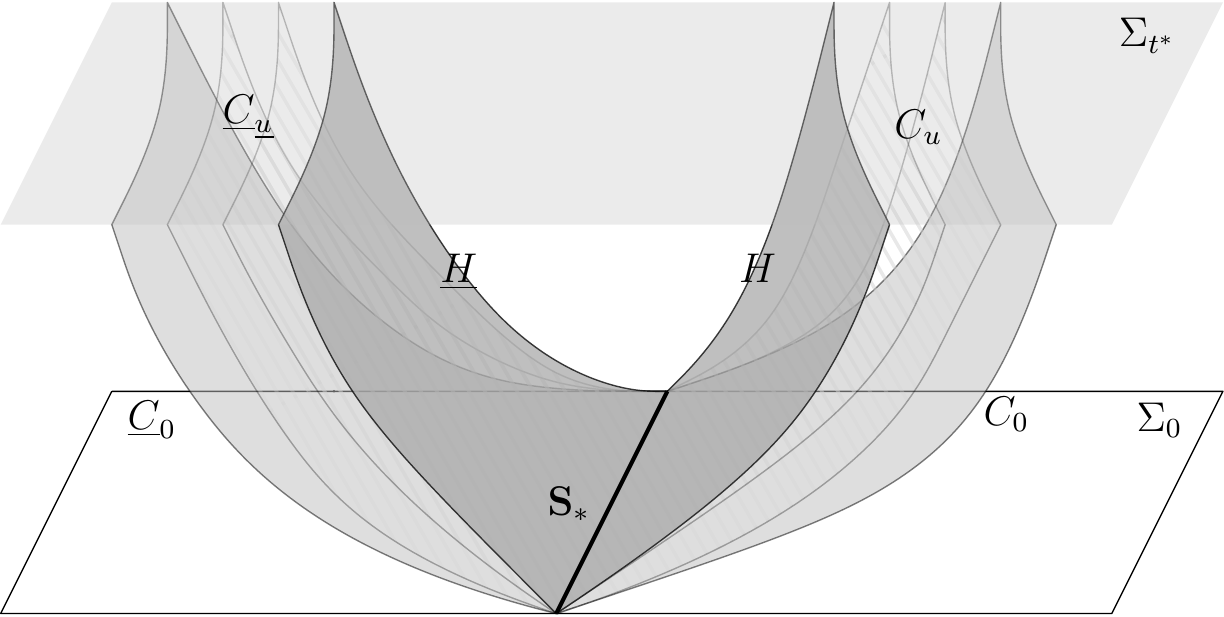}
\end{center}
The solution is piecewisely $C^N$ in the following sense:
\begin{itemize}
\item[1)] There are four characteristic hypersurfaces all emanated from the singular set $\mathbf{S}_*$ and we denote them as $\Cb_0, \Hb,H$ and $C_0$ from left to right (as $x_1$ increases). 
\begin{itemize}
\item The (gray) hypersurfaces $\Cb_0$  and $C_0$ are the characteristic boundaries of the domain of dependence of the data $\big(v_l(0,x_1,x_2),c_l(0,x_1,x_2)\big)$ posed on $x_1\leqslant 0$ and  $\big(v_r(0,x_1,x_2),c_r(0,x_1,x_2)\big)$ posed on $x_1\geqslant 0$ respectively;
\item The (dark gray) hypersurfaces $\Hb$ and $H$ are uniquely determined by the data $(v,c)\big|_{t=0}$.
\end{itemize}
\item[2)] The solution is of class $C^N$ for all points $(t,x_1,x_2)$ with $t\geqslant 0$ on the left of $\Cb_0$ or on the right of $C_0$.
\item[3)] The solution is of class $C^N$ for all points $(t,x_1,x_2)$ with $t>0$ between $\Cb_0$ and $\Hb$ or between  $H$ and $C_0$. These two regions are the back and front rarefaction wave regions respectively.
\item[4)] The solution is of class $C^N$ for all points $(t,x_1,x_2)$ with $t\geqslant 0$ (including the singularity) between $\Hb$ and $H$. 
\end{itemize}
Moreover, the solution is not of class $C^1$ on $C_0,\Cb_0,H$ and $\Hb$.
\end{TheoremStructuralStability}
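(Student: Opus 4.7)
My plan is to assemble the global continuous solution from five pieces, each arising from a well-posed sub-problem. To the right of $C_0$ the solution is the local Cauchy development of the smooth data $\big(v_r(0,x_1,x_2),c_r(0,x_1,x_2)\big)$ posed on $x_1\geqslant 0$; $C_0$ itself is, by definition, the left-going null characteristic hypersurface from $\mathbf{S}_*$ that bounds this domain of dependence. The region to the left of $\Cb_0$ is handled symmetrically. The front rarefaction wave solution on a neighborhood adjacent to $C_0$ is produced by \textbf{Theorem 2} applied to the right data, giving a solution on $\mathcal{W}_r$; the back rarefaction wave on a neighborhood adjacent to $\Cb_0$ comes from the symmetric version of \textbf{Theorem 2} applied to the left data. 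The middle region bounded by $\Hb$ on the left and $H$ on the right is then filled by solving a quasilinear characteristic initial value (Goursat) problem with data on $H\cup\Hb$ taken from the traces of the two rarefaction wave solutions.

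The inner boundaries $H$ and $\Hb$ will be identified in the acoustical coordinates $(t,u,\vartheta)$ of Section \ref{section_acoustical geometry}. In these coordinates the singular set $\mathbf{S}_*$ is the smooth two-manifold $\{t=0\}$, carrying canonical coordinates $(u,\vartheta)$ as explained in the Foliation remark. Within $\mathcal{W}_r$, I would define $H$ as the $L$-ruled null hypersurface emanating from the curve $\{u=u_H(\vartheta)\}\subset\mathbf{S}_*$, where $u_H(\vartheta)$ is determined implicitly by the matching condition that $\wb$ equal the middle-state value $\mathring{\wb}_l$ along that curve; the non-degeneracy assumption $\mathring{\wb}_r>\mathring{\wb}_l$ on the background together with condition (3) of \textbf{Theorem 2} provides a transversality that lets the implicit function theorem produce a $C^N$ graph $u_H$. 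An analogous construction, matching $w=\mathring{w}_r$, yields $\Hb$ on the left side.

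With $H$ and $\Hb$ and their traces in hand, the middle region problem becomes a Goursat problem for the irrotational Euler system (equivalently the quasilinear wave equation \eqref{eq:Euler:isentropic:irrotational}) with data on a pair of null hypersurfaces that meet transversally along a $C^N$ curve inside $\mathbf{S}_*$. Because the background middle state is constant and both rarefaction wave solutions from \textbf{Theorem 2} are $O(\varepsilon)$-perturbations in the acoustical coordinates, the Goursat data are a small perturbation of constant data near transversally intersecting null cones. A standard $C^N$ characteristic Cauchy theory (of Rendall--Friedrich type, adapted to the near-constant quasilinear regime) then yields a unique $C^N$ solution in the middle region, including a smooth extension through $\mathbf{S}_*$ which is a regular point of the middle-region solution. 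Continuity across each of $\Cb_0,\Hb,H,C_0$ is built into the matching, while the failure of $C^1$ regularity across the two acoustical surfaces $C_0,\Cb_0$ and across the rarefaction fronts $H,\Hb$ is read off from estimate (3) of \textbf{Theorem 2}, which forces a jump of $\partial_{x_1}\wb$ of order $t^{-1}$ between the rarefaction side and the constant-development or middle side.

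The main obstacle I foresee is the rigorous identification of $H,\Hb$ together with a verification that the resulting Goursat data possess $C^N$-compatibility at their intersection inside $\mathbf{S}_*$. Since the rarefaction wave solutions of \textbf{Theorem 2} are $C^N$ only for $t>0$ and have singular $\partial_{x_1}$-derivatives at $\mathbf{S}_*$, both the matching equations defining $u_H$ and the compatibility analysis on $H\cap\Hb$ must be carried out entirely in the acoustical coordinates, where the blow-up desingularizes the data. Once this is done, the compatibility at the intersection curve reduces to checking that back and front rarefaction waves approach the same perturbed middle state at $\mathbf{S}_*$, which follows from tracing both \textbf{Theorem 2} constructions back to the same initial Riemann data via Definition \ref{def:data}. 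Uniqueness of the assembled solution then follows piecewise: Cauchy uniqueness on the two outer regions, the uniqueness clause of \textbf{Theorem 2} on the two rarefaction regions, and uniqueness of the Goursat problem on the middle.
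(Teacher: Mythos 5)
Your overall architecture matches the paper's: two applications of \textbf{Theorem 2} to build the back and front rarefaction fans, construction of the inner fronts $H,\Hb$ from their traces at the singularity, and a Goursat problem in the middle. However, there are three concrete gaps where your proposal either omits a necessary technical step that the paper supplies or relies on a step that would fail as stated.

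First, and most importantly, you acknowledge that the rarefaction waves from \textbf{Theorem 2} are only $C^N$ for $t>0$ and propose to ``carry out the matching in acoustical coordinates, where the blow-up desingularizes the data,'' but this does not by itself give the regularity you need. The solution of \textbf{Theorem 2} lies in $C^0\big((0,t^*];H^N(\Sigma_t)\big)$, which controls $T$- and $\Xh$-derivatives uniformly but says nothing uniform about $L$-derivatives (i.e.\ $\partial_t$ in acoustical coordinates). Indeed, using only the Euler system diagonalized in the $L$-direction one gets $\|L^k\psi\|_{L^\infty(\Sigma_t)}\lesssim t^{-k+1}\varepsilon$, which blows up as $t\to 0$ and would ruin both the existence of a $C^N$ trace on $\mathbf{S}_*$ and the $C^N$ compatibility of the Goursat data. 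The paper overcomes this in Section~\ref{Section:The precise bounds on L derivatives}: rather than the Euler ODEs, it writes the acoustical \emph{wave} equations for $\psi$ in the null frame $(\Lb,L,\Xh)$ and integrates along $\Lb$ from $C_0$, retrieving the uniform bounds $\|L^kZ^\alpha\psi\|_{L^\infty}\lesssim\varepsilon$ on the effective domain $\mathcal{D}^+(\delta)$. This is the step that lets one extend the solution $C^{N-3}$-smoothly through $\mathbf{S}_*$ in acoustical coordinates, and it is absent from your outline.

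Second, your description of $H$ as ``the $L$-ruled null hypersurface emanating from $\{u=u_H(\vartheta)\}$'' is incorrect unless $u_H$ is constant. The $L$-ruled surface through a non-level curve in $\mathbf{S}_*$ is a union of pieces of different $C_u$'s and is not characteristic for the acoustical metric. Because the metric degenerates at $\mathbf{S}_*$ (all $u$- and $\vartheta$-derivatives of $\kappa$ vanish there), one has to build $H$ from the cotangent-bundle Hamiltonian flow of the rescaled metric $\mu^{-1}g$ with initial covector $-du+u_H'(\vartheta)\,d\vartheta$. The paper's computation \eqref{eq: outgoing null geodesics} shows the generators of $H$ are tangent to $L$ at $\mathbf{S}_*$ but then drift to larger $u$ at order $\tau^2$, precisely because $p_\vartheta=u_H'(\vartheta)\neq 0$. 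Your implicit-function-theorem step to define $u_H$ is fine (it uses the monotonicity of $\wb$ in $u$ from $T\wb\approx-\tfrac{2}{\gamma+1}$), but the passage from $u_H$ to the actual characteristic hypersurface needs the Hamiltonian argument, together with the regularity of Section~\ref{Section:The precise bounds on L derivatives} to ensure the coefficients are smooth enough.

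Third, your justification of compatibility of the Goursat data on $H\cap\Hb=\mathbf{S}_*$ is too coarse. The trace of the right rarefaction solution on $\mathbf{S}_*$ gives $\psi_2(u,\vartheta)=-v_r^2(0,\vartheta)$ and the trace of the left one gives $\psi_2(\ub,\vartheta)=-v_l^2(0,\vartheta)$. ``Tracing both constructions back to the same Riemann data'' does not force $v_l^2(0,\vartheta)=v_r^2(0,\vartheta)$; Definition~\ref{def:data} allows the two one-sided data to be independent smooth perturbations on $x_1\leqslant 0$ and $x_1\geqslant 0$. The equality is a consequence specifically of the \emph{distributional irrotational condition} \eqref{irrotational condition}: integrating $\mathrm{curl}(v)=0$ against test functions and integrating by parts on each half-plane produces $(v_l)_2(0,x_2)=(v_r)_2(0,x_2)$. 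Relatedly, you match $\wb$ on $H_0$ to the background constant $\mathring{\wb}_l$, whereas the correct target is the $\vartheta$-dependent trace $\wb_l(0,\vartheta)$ of the perturbed left data (compare with \eqref{eq:H_0}); matching to $\mathring{\wb}_l$ would leave an $O(\varepsilon)$ discrepancy on $\mathbf{S}_*$ that breaks the Goursat compatibility.
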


\begin{remark}
The back rarefaction wave region and the front rarefaction wave region can be foliated by the characteristic hypersurfaces ${\underline{C}}_{\ub}$ and ${C}_{u}$ respectively. For each $u$ and $\ub$, ${\underline{C}}_{\ub}$ and ${C}_{u}$ are  $\varepsilon$-close to $\mathring{\underline{C}}_{\ub}$ and  $\mathring{C}_{u}$ which are the corresponding characteristic hypersurfaces arising from the one dimensional constant states $(\mathring{v}_l, \mathring{c}_l)$ and $(\mathring{v}_r,\mathring{c}_r)$ , as $\varepsilon$ approaches zero, the above picture converges to the one-dimensional picture.
\end{remark}


{\color{black}


\begin{remark}
	We have made the following assumption for the sake of simplicity: the initial discontinuity is across a \emph{straight} line (more precisely a circle) on $\Sigma_0$. To go beyond this limitation, i.e., extending the theorems to the general case when the initial discontinuity is an arbitrary smooth curve, one should make the following modifications: the Riemann invariants  should be chosen adapted to the curve of singularity:
	\[w=\frac{1}{2}\big(\frac{2}{\gamma-1}c+(\widehat{T'})^i\psi_i\big), \ \ \wb=\frac{1}{2}\big(\frac{2}{\gamma-1}c-(\widehat{T'})^i\psi_i\big), \ \ \psi_2= (\widehat{X'})^i\psi_i,\]
	where $X'$ and $\widehat{T'}$ are the unit tangential vector field and the unit normal vector field of the separating curves. We should also  choose $X'$ and $T'=\kappa  \widehat{T'}$ as commutator vector fields. The construction of the initial data can be derived in the same manner. The proof of the \emph{a priori} energy estimates in \cite{LuoYu1} will be much longer since the equations for the new Riemann invariants and the commutators of $X'$ and $T'$ will be more complicated. 
\end{remark}

\subsubsection{Uniqueness of solutions to the Riemann problem consisting of rarefaction waves}

	We show that the solution to the Cauchy problem stated in {\bf Theorem 3} is indeed unique. It is typical that the uniqueness is a separated issue from the existence for quasilinear equations and it also should be stated in a suitable class of functions. Enlightening examples to the uniqueness problem with singularities can be found in the series of papers \cite{Christodoulou 1}, \cite{Christodoulou 2},  \cite{Christodoulou 3} and \cite{Christodoulou 4} of Christodoulou and of Christodoulou and Lisibach for self-gravitating relativistic fluids. In \cite{Christodoulou 2} and \cite{Christodoulou 4}, the authors prescribed the asymptotic behaviors of the solutions and proved that the solution is unique provided the specific asymptotic behavior towards the singularities. 

We will take a different approach. In the theory of conservation laws, our uniqueness part corresponds to the  weak-strong uniqueness  in the class of ``entropy solutions''. This is among the largest possible class of functions in the 1D conservation laws that one expects uniqueness. The idea of the proof was originally from Dafermos and Diperna, see \cite{DiPerna79} or Theorem 5.2.1 of the textbook \cite{Dafermos}. It is known as \emph{the relative entropy method}. We will follow  Chen and Chen \cite{Chen-Chen} where the authors use the idea to study the uniqueness and stability for plane rarefaction waves in higher dimensions.

For a smooth (locally Lipschitz) solution $(\rho,v)$ to the Euler equations \eqref{eq: Euler in rho v}, the internal energy density $e$ is given by 
\[ e = \int \frac{p(\rho)}{\rho^2} d\rho = \frac{1}{\gamma-1}\rho^{\gamma-1}.\] 
The law of energy conservation of the system can be written as
\[ \partial_t \eta + \nabla \cdot q = 0,\] 
where $(\eta,q)$ is the mechanical energy-energy flux pair given by
\begin{equation}\label{def: eta q}
\begin{cases} 
&\eta(\rho,v) = \frac{1}{2}\rho |v|^2 + \rho e, \\
& q(\rho,v) = \big(\frac{1}{2}\rho |v|^2 + \rho e + p\big) v = \rho  (\frac{|v|^2}{2} + h) v.
\end{cases}
\end{equation}
We remark that $q(t,x^1,x^2)$ is a time-dependent vector field on $\mathbb{R}^2$.

We also recall the following definition introduced {\color{black}by Lax \cite{Lax71} and Kru\v{z}kov \cite{Kruzkov1970}}:
\begin{definition}\label{def: entropy-solution}
	An {\bf entropy solution} or an {\bf admissible bounded weak solution} to \eqref{eq: Euler in rho v} are measurable bounded functions $(\rho,v)$ satisfying  \eqref{eq: Euler in rho v} (in conservative form) in the sense of distributions and the following the entropy inequality 
	\[\partial_t \eta + \nabla \cdot q \leqslant 0.\]
\end{definition}
We remark that, a locally Lipschitz solution $(\rho,v)$ is an entropy solution since $\partial_t \eta + \nabla \cdot q = 0$. {\color{black}Also, across a weak shock front, the above entropy inequality is equivalent to the determinism condition; see \cite{Lax71,Smoller}.}
{\color{black} 
\begin{remark}
	The concept of ``entropy solution'' in this paper is introduced as a mathematical tool (essentially the $L^2$-based energy method)  for  comparing different solutions with the aim of proving uniqueness, and does not refer to the physical concept of entropy. Indeed, the present paper belongs to the isentropic context where the physical entropy is constant throughout.  Although this method is not related to entropy in physics, we still use this term out of respect for tradition, and we ask the readers not to be confused.
\end{remark}
}

\begin{proposition}[Uniqueness to Riemann problem]\label{prop: uniqueness to thm 3}
Let $(v',c')$ be an entropy solution to the Euler equations \eqref{eq: Euler in rho v} on $[0,t^*]\times \mathbb{R}^2$ with respect to the same initial data given in {\bf Theorem 3}. Then, $(v',c')\equiv (v,c)$ where $(v,c)$ is the solution constructed in {\bf Theorem 3}. 
\end{proposition}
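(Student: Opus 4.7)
The plan is to implement the relative entropy method of Dafermos and DiPerna, adapted to multi-dimensional rarefaction waves by Chen--Chen \cite{Chen-Chen}. Let $U=(\rho,\rho v)$ denote the conservative variables of the reference solution $(v,c)$ from \textbf{Theorem 3} and $U'=(\rho',\rho' v')$ those of an arbitrary entropy solution with the same initial data. Using the mechanical energy pair $(\eta,q)$ of \eqref{def: eta q}, we define the relative entropy and relative flux
\[
\eta(U'|U) = \eta(U') - \eta(U) - D\eta(U)(U' - U), \qquad q(U'|U) = q(U') - q(U) - Dq(U)(U' - U).
\]
Since $\gamma\in (1,3)$ and both solutions avoid vacuum on $[0,t^*]$, the entropy $\eta$ is strictly convex there, so $\eta(U'|U)\simeq |U'-U|^2$ on bounded sets, and it suffices to prove $\int_{\mathbb{R}^2}\eta(U'|U)(t,\cdot)\,dx\equiv 0$ for every $t\in(0,t^*]$.

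Combining the entropy inequality satisfied by $U'$ with the pointwise Euler equations satisfied by $U$ on each of the four smooth regions described in \textbf{Theorem 3}, we derive the Dafermos-type identity
\[
\partial_t \eta(U'|U) + \nabla\cdot q(U'|U) \leqslant -\mathbf{Q}(U',U):\nabla U,
\]
where $\mathbf{Q}$ is the standard symmetric quadratic expression in $U'-U$. In the two constant-state regions the right-hand side vanishes. Jumps of $\nabla U$ across the four characteristic hypersurfaces $\Cb_0,\Hb,H,C_0$ produce distributional contributions proportional to the conormal projection of the jump $[\nabla U]$; these are controlled because $U$ itself is continuous across these surfaces (only the transverse derivative jumps), the traces of $U'-U$ are square integrable, and the characteristic nature of the four hypersurfaces forces $q(U'|U)$ to pair with the conormal in a sign-compatible way. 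Shock contributions from $U'$ itself enter with the correct sign by admissibility.

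Inside the two rarefaction regions we expand everything in the diagonal variables of \eqref{eq:explicit formula for U} and invoke Remark \ref{rem:asymptotic condition} together with item~3) of \textbf{Theorem 3}: the dominant component of $\nabla U$ is $\partial_{x_1}\wb\sim \frac{2}{(\gamma-1)t}>0$, the expansion of the rarefaction, which when paired against $\mathbf{Q}:\nabla U$ produces a term of definite sign modulo $O(\varepsilon)$ corrections. Integrating over the slab $[\tau,t^*]\times\mathbb{R}^2$, noting that the initial data of the two solutions agree off the codimension-two set $\mathbf{S}_*$ so that the trace of $\eta(U'|U)$ on $\Sigma_0$ vanishes in $L^1$, and applying a singular Gronwall inequality tolerating an $O(1/t)$ coefficient with its favorable expansion sign (in the manner of \cite{Chen-Chen}), one concludes $\int_{\mathbb{R}^2}\eta(U'|U)(t,\cdot)\,dx=0$ after sending $\tau\to 0$, hence $U'\equiv U$ almost everywhere.

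The main obstacle is the simultaneous presence of (i) the $t^{-1}$ singularity of $\nabla U$ at $\mathbf{S}_*$ and (ii) possible shocks of $U'$ that may emerge from or cross the four characteristic hypersurfaces of $U$. Justifying the weak Dafermos identity uniformly up to the singular boundary requires a careful cut-off argument: truncate in time at $t=\tau>0$, localize in a neighborhood of $\mathbf{S}_*$, apply the identity on the resulting regular domain, and pass to $\tau\to 0$ using the $L^\infty$ continuity of $U$ guaranteed by \textbf{Theorem 3} and the $L^\infty$ bound on the entropy solution $U'$. The essential cancellation enabling the limit is the expansion sign $\partial_{x_1}\wb>0$ intrinsic to the rarefaction wave, which is what prevents the singular Gronwall coefficient from producing exponential blow-up; the codimension-two nature of $\mathbf{S}_*$ stressed in Remark \ref{remark:singular-boundary} is crucial for ensuring that the initial data agree in an $L^1$ (equivalently, $L^2_{\mathrm{loc}}$) sense despite the singularity of the reference solution.
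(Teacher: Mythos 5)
Your proposal follows the same route as the paper: the Dafermos--DiPerna relative entropy method in the conservative variables with the mechanical energy pair \eqref{def: eta q}, absorption of all $O(\varepsilon)$ derivatives of the reference solution into a Gronwall term, and the observation that the only unbounded derivative, $\partial_{x_1}\wb>0$ in the two rarefaction regions, enters with a favorable sign. Two steps, however, are asserted where the actual work lies. First, the sign of the singular term is not a consequence of $\partial_{x_1}\wb>0$ together with ``$\mathbf{Q}$ is the standard quadratic expression'': one must check that the contraction of $D^2\eta(\overline U)$ with $QF^{1}$ along the $\wb$-direction of the change of variables $\partial U/\partial(\wb,w,\psi_2)$ is pointwise nonnegative. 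The paper's computation (equations \eqref{eq: hassian eta}, \eqref{eq: QFone}, \eqref{eq: partial U partial V}, culminating in \eqref{eq: Wplus estimate}) shows this coefficient equals $[p(\rho)-p(\overline\rho)-p'(\overline\rho)(\rho-\overline\rho)]+\rho(v^1-\overline v^1)^2\geqslant 0$; with that in hand the singular term is simply discarded and no ``singular Gronwall with $O(1/t)$ coefficient'' is needed --- the remaining Gronwall coefficient is $C_0\varepsilon$, bounded. Without this algebraic verification the proof is incomplete, since an indefinite quadratic form paired with a $t^{-1}$ coefficient would destroy the argument.

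Second, integrating the relative entropy inequality over full slabs $[\tau,t^*]\times\mathbb{R}^2$ is not justified for a merely bounded entropy solution: neither the spatial integrability of $\eta(U'|U)$ nor the vanishing of the flux at spatial infinity is available a priori. The standard fix, and the one the paper uses, is to integrate over the trapezoidal region $\Omega_L$ whose lateral boundaries recede at a speed $s_0$ chosen so that $\beta(U,\overline U)\leqslant s_0\,\alpha(U,\overline U)$, making the lateral boundary contributions nonpositive; the conclusion on any compact set then follows by letting $L$ be large. Finally, a small misconception: no distributional ``conormal jump'' terms arise from $\nabla\overline U$ across $\Cb_0,\Hb,H,C_0$, because the reference solution is locally Lipschitz away from $\mathbf{S}_*$ (only transverse derivatives jump, as bounded functions), so the identity for the strong solution holds almost everywhere and the right-hand side of the relative entropy inequality is an $L^1_{\mathrm{loc}}$ function up to $t=0$ (the $t^{-1}$ blow-up is spacetime-integrable since the rarefaction regions have width $O(t)$); your cut-off near $\mathbf{S}_*$ is therefore harmless but unnecessary.
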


\subsubsection{Uniqueness of centered rarefaction waves}\label{section:uniqueness-centered-rare}

We show that the single family of rarefaction waves constructed in {\bf Theorem 2} is also unique. The uniqueness in the current setup is completely different from the uniqueness for solutions to the usual Cauchy problem because we can arbitrarily extend the data to $x_1<0$ in a smooth way to provide many solutions. To specify the spaces in which we can prove uniqueness,  we first provide a definition for the word  \emph{centered} rarefaction waves. Motivated by the one-dimensional centered rarefaction waves, we regard such an object in higher dimensions as a solution to the Euler equations with a family of smooth characteristic hypersurfaces emanating from the initial discontinuity $\mathbf{S}_*$. 

\begin{definition}\label{def: centered-rarefaction-waves}
Given smooth data $U_r$ (away from vacuum) to the Euler equations  \eqref{eq: Euler in rho v}   on $\{x_1 > 0\}$, let $\mathcal{D}_0$ be its future domain of dependence with characteristic boundary $C_0$. A family of {\bf centered rarefaction waves} connected to $U_r$ is a solution  $(v',c')$ to the Euler equations \eqref{eq: Euler in rho v} defined on an open set $\mathcal{W}' \subset \big\{(t,x_1,x_2)\in \mathbb{R}^3-\mathcal{D}_0\big| 0< t\leqslant t^*\}$ so that  $C_0\subset {W}'$ with the following properties:
	\begin{enumerate}
		\item (Foliation) The rarefaction wave region $\mathcal{W}' = \mathcal{W}'_{{u^*}'} := \bigcup_{u' \in [0,{u^*}']} C_{u'}'$ is foliated by a family of smooth null hypersurfaces emanating from $\mathbf{S}_*$ determined by an eikonal function $u'$  with respect to the acoustical metric of $(v',c')$. Moreover, $C'_0=C_0$ and for all $u'_1>u'_2$, $C'_{u'_1}$ is in the future of $C'_{u'_2}$.

		\item (Smoothness) There exists an acoustical coordinates $(t,u',\vartheta')$ adapted to the foliation $\{C_{u'}'\}_{0\leqslant u'\leqslant {u^*}'}$ so that  $(v',c')$ is in $C^N(\overline{W}')$ with respect to $(t,u',\vartheta')$  and $(v',c')\big|_{C_0}=(v_r,c_r)\big|_{C_0}$.
		
		\item (Centered) The inverse density function $\kappa'(t,u',\vartheta')$ satisfies   
		\begin{equation}\label{eq:kappa-limit}
			\lim_{t \to 0}\frac{\kappa'(t,u',\vartheta')}{t} = 1.
		\end{equation} 
	\end{enumerate}
	
\end{definition}

\begin{remark}
Instead of \eqref{eq:kappa-limit}, we may assume the limit $\frac{\kappa'}{t}$ is bounded from below and above by positive constants because we can then choose a new $u'$-foliation canonical by a suitable rescaling.

We will assume $N \geqslant 3$ and we do not pursue the optimal $N$ for the uniqueness.
\end{remark}

\begin{proposition}[Uniqueness]\label{prop: uniqueness to thm 2}
	Let $(v,c)$ be the solution on $\mathcal{W}_{u^*} = \bigcup_{u \in [0,u^*]}C_u$ from {\bf Theorem 2} and $(v',c')$ be another family of (right-going) rarefaction waves on a region $\mathcal{W}'_{{u^*}'} = \bigcup_{u' \in [0,u^*]}C'_{u'}$. Then, $\mathcal{W}'_{u^*} = \mathcal{W}_{u^*}$ and $(v,c)=(v',c')$.
\end{proposition}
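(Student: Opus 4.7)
The strategy is to reduce the problem to a characteristic Goursat problem with data on $C_0$ and an asymptotic (centered) boundary condition at $\mathbf{S}_*$. Since $\mathcal{D}_0$ is determined by $U_r$ alone, we already have $C_0' = C_0$, $(v',c')|_{C_0} = (v,c)|_{C_0}$, and the two acoustical metrics $g$ and $g'$ agree on $C_0$. The argument then proceeds in two steps: (i) identify the two foliations $\{C_u\}$ and $\{C'_{u'}\}$ using the centered normalization on $\mathbf{S}_*$, and (ii) with the foliations identified, invoke the diagonalized Euler system \eqref{eq:Euler in diagonalized form} in the common acoustical chart to conclude $(v,c) \equiv (v',c')$.

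For step (i), view $u'$ in the acoustical coordinates of the first solution as $u' = F(t,u,\vartheta)$. Since $C'_{u_0'}$ is $g'$-null, $F$ satisfies the eikonal equation for $g'$ with boundary condition $F = 0$ on $C_0 = \{u = 0\}$. On $\mathbf{S}_*$ (the limit $t \to 0$), the centered assumptions $\kappa/t \to 1$ and $\kappa'/t \to 1$ serve as parametrization normalizations for the two foliations. A direct computation using $T = \partial_u - \Xi \partial_\vartheta$ shows that under a reparametrization $u' = F(u,\vartheta)$ restricted to $\mathbf{S}_*$, the quantity $\kappa'$ scales by $\partial_u F$ at leading order, so the centered condition for the second solution forces $\partial_u F \equiv 1$ on $\mathbf{S}_*$; combined with $F|_{u=0} = 0$, this yields $F \equiv u$ on $\mathbf{S}_*$. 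I would then propagate $u' = u$ into the bulk by a continuous induction in $u_0 \in [0, u^*]$: assuming $(v,c) = (v',c')$ on $\bigcup_{u \in [0, u_0]} C_u$, the surface $C_{u_0}$ is null for both metrics, and the Goursat problem for the diagonalized system \eqref{eq:Euler in diagonalized form} with null data on $C_{u_0}$ and matching asymptotic data on $\mathbf{S}_* \cap \{u_0 \leqslant u \leqslant u_0 + \delta\}$ has a unique solution. The three characteristic speeds $0, -1, -2$ of $\Lambda$ supply the transport structure: $U^{(0)}$ is carried by $L$ from $C_{u_0}$, while $U^{(-1)}, U^{(-2)}$ are transported outward at speeds $c/\kappa$ and $2c/\kappa$. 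The bootstrap closes by a weighted $L^2$ energy estimate for the differences $(v-v',c-c')$ using the energy/flux pairs \eqref{def: energy L}--\eqref{def: energy Lb} with the singular weight $\kappa \sim t$ and Gronwall.

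\textbf{Main obstacle.} The principal difficulty is the singular boundary $\mathbf{S}_*$, where $\kappa$ degenerates and standard Goursat theory does not directly apply. One must verify that the energy fluxes on $\Sigma_t$ vanish as $t \to 0^+$, which is precisely where the centered condition is essential: it fixes the linear degeneration $\kappa \sim t$ and ensures the weighted fluxes are integrable against the bounded higher-order norms afforded by $N \geqslant 3$. A secondary subtlety is that the two solutions have \emph{different} acoustical metrics off $C_0$, so the bootstrap must carry the solution-level matching $(v,c) = (v',c')$ and the geometric matching $g = g'$ simultaneously, rather than treating the foliation identification and the solution identification as separate steps.
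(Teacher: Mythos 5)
Your proposal takes a genuinely different route from the paper's proof. Your step~(i) — deriving the asymptotic data at $\mathbf{S}_*$ for the second solution and concluding that the two centered foliations must agree there — parallels the paper, which proves \eqref{eq:data at the singularity'} by exactly the same limiting computation. But the two arguments diverge completely afterward. The paper does \emph{not} attempt a direct Goursat-type energy argument in acoustical coordinates. Instead, it chooses an auxiliary smooth data $U_l$ on $\{x^1 \leqslant 0\}$ whose trace on $\mathbf{S}_*$ is compatible with the trace computed in \eqref{eq:data at the singularity'}, solves a classical Goursat problem in the middle region between $\underline{C}_0$ and $H' = C'_{u^*}$ (resp.\ $H = C_{u^*}$), and thereby embeds each of $(v',c')$ and $(v,c)$ into a full solution of the Riemann problem with identical initial data $(U_l, U_r)$. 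The uniqueness then follows by citing Proposition~\ref{prop: uniqueness to thm 3}, which is proved by the Dafermos--DiPerna relative entropy method in physical coordinates. This reduction is more economical and avoids any direct contact with the degenerate boundary $\mathbf{S}_*$.

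There is a genuine gap in your step~(ii) that the paper's reduction circumvents and that you have not resolved. The coefficient of the transverse transport in the diagonalized system \eqref{eq:Euler in diagonalized form} is $c\Lambda/\kappa \sim t^{-1}$, so the evolution equation for the difference $(v-v', c-c')$ has a non-integrable singular coefficient as $t \to 0^+$; a naive weighted $L^2$ energy inequality plus Gronwall does not close. What saves the paper's argument is a structural sign condition: in the rarefaction wave region the relevant singular coefficient is $\partial_{x^1}\wb$, which is \emph{positive} (density decreases in the $T$-direction), so the singular term in the relative entropy identity is non-negative and can simply be discarded — see the computation around \eqref{eq: Wplus estimate}. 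Your proposed acoustical bootstrap has no analogue of this one-sided structure built in; invoking the centered condition and $N \geqslant 3$ regularity controls the flux on $\Sigma_t$ from $(v,c)$ and $(v',c')$ individually, but does not resolve the unbounded coefficient in the equation for the difference. You would need to either uncover the same monotonicity in acoustical coordinates (which is essentially what the paper does via the relative entropy flux), or adopt the paper's reduction to the full Riemann problem, where the sign structure is transparent. A secondary, related point: the relative entropy method requires only $L^\infty$ regularity of the competing solution, so the paper uses the $C^N$ hypothesis solely to carry out the smooth Goursat extension, not in the uniqueness estimate itself; your direct approach leans on the regularity of $(v',c')$ more heavily than necessary.
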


}

\subsection{Difficulties and ideas of the proof}

We address several major difficulties in the proof and provide a brief description of the ideas used to overcome them.
\subsubsection{Construction of rarefaction wave data and existence}

One of the main challenges is to construct $C^N$  \emph{rarefaction data} on $\Sigma_{\delta}$ (refer to Definition \ref{def:rare data}). To do this, it is essential to have a precise characterization of the asymptotic singular initial data that propagates to $\Sigma_{\delta}$ from the singularity. Moreover, the rarefaction wave data on $\Sigma_{\delta}$ must satisfy various compatibility and constraint equations.

We do this in two steps:


\smallskip


\begin{itemize}[noitemsep,wide=0pt, leftmargin=\dimexpr\labelwidth + 2\labelsep\relax]
	\item[1)] The construction of $u$ and $\vartheta$ on $\Sigma_\delta$.
	
	\smallskip

	Based on the energy estimates in the first paper \cite{LuoYu1}, which strongly suggest that rarefaction waves are smooth in the acoustical coordinate system $(t,u,\vartheta)$, it is natural to prescribe the initial data on $\Sigma_\delta$ using the functions $u$ and $\vartheta$. However, the acoustical coordinates are defined by the solutions, and we need to be extremely accurate in choosing the null hypersurface foliation $C_u$. In fact, the energy estimates in \cite{LuoYu1} depend crucially on the choice of $C_u$ to ensure that the error terms are of size $O(\varepsilon^2t^2)$. Since $t$ can be arbitrarily small as $\delta \rightarrow 0$, the choice of $u$ cannot suffer any loss of order $t$. Moreover, the rarefaction ansatz involves a number of delicate initial conditions. For instance, the function $\zr$ is of size $\varepsilon$, which is equivalent to {\color{black}$\frac{\partial \wb}{\partial u}=-\frac{2}{\gamma+1}$} up to an error of order $O(t\varepsilon)$. These conditions imply that the choice of $u$ cannot be arbitrary. In particular, an intuitively straightforward choice of initial foliation and the corresponding asymptotic data seem unlikely to satisfy the desired rarefaction ansatz.


	By formally setting $\delta=0$ and examining the asymptotic behavior of the Riemann invariants at the singularity, we can show that they must take on the following form:
	\begin{equation}\label{eq:asymptotic-Riemann-invariants}
		\begin{cases}
			&\wb(u,\vartheta)= \wb_r(0,\vartheta)-\frac{2}{\gamma+1}u,\\
			&w(u,\vartheta)=w_r(0,\vartheta),\\
			&\psi_2(u,\vartheta)=-v_r^2(0,\vartheta),
		\end{cases}
	\end{equation}
	in the normalization $\lim_{t \to 0}\frac{\kappa}{t}=1$.
	Therefore, if one assumes that the solution, or equivalently, the data is unique at $\mathbf{S}*$, then the acoustical function $u$ must also be unique. To obtain $u$ at $\Sigma_\delta$, we use ideas that are reminiscent of the last slice argument from Christodoulou-Klainerman's seminal work \cite{CK}, as we explained in the introduction of \cite{LuoYu1}. The key point is to construct the solution in a manner that accurately adapts to the geometry of the initial rarefaction front $C_0$. We formally integrate from the singularity $\mathbf{S}_*$ to $\Sigma_\delta$ to define $u$ and $\vartheta$ explicitly. See the definitions of $u$ and $\vartheta$ in Section \ref{section: definition of u on Sigmadelta} and Section \ref{section: definition of vartheta on Sigmadelta}. For technical details on the relationship between $u$ and $\yr$, we refer to Section \ref{Section: lambda on C0}.	
	
	\medskip
	
	\item[2)] The construction of rarefaction data on $\Sigma_\delta$ and the compatibility conditions.
	
	\smallskip
	

	In contrast to the simple form of the asymptotic limit at $\delta = 0$ given by equation \eqref{eq:asymptotic-Riemann-invariants}, rarefaction data at $\Sigma_{\delta}$ not only require  precise asymptotic expressions, but also must satisfy various constraint equations. 

To be more precise, rarefaction waves in acoustical coordinates must adhere to the following asymptotic condition (refer to Remark \ref{rem:asymptotic condition}):
	\begin{equation}\label{eq:asymptotic condition}
		\frac{\partial x^1}{\partial u} \approx \kappa \sim t, \quad T\wb \sim -\frac{2}{\gamma+1}, \ T\psi_2 = O(t\varepsilon), \ Tw = O(t\varepsilon).
	\end{equation}
	To connect these singular rarefaction data to a given smooth initial data across the rarefaction front $C_0$, certain conditions must be satisfied. If one is searching for a $C^N$-solution in $\mathcal{D}(\delta)$, the data on $\Sigma_\delta$ must satisfy compatible conditions for all higher-order derivatives along $C_0$ at $S_{\delta,0} = C_0 \cap \Sigma_{\delta}$, as shown in equation \eqref{compatibility condition 3}. In fact, the data must satisfy the following sequence of conditions:
	\[L^k {v\choose c }\big|_{S_{\delta,0}}=L^k{v_r\choose c_r } \big|_{S_{\delta,0}}, \ \ k\leqslant N.\]
	By using the Euler equation, the conditions can be reformulated as a set of algebraic constraints on the jets of the data on $S_{\delta,0}$. Obtaining the data that satisfies these constraints, particularly with the additional bounds imposed by the rarefaction ansatz (refer to Definition \ref{def:rare data}), is not a trivial task.
	
	By diagonalizing the Euler system in the $L$-direction, we can gain valuable insight, which is analogous to Riemann's concept of Riemann invariants. The diagonalized system \eqref{eq:Euler in diagonalized form} exposes a profound structure of the data, namely, a decoupling of normal derivatives, which arises from the characteristic nature of the rarefaction front $C_0$. By utilizing equation \eqref{eq:Euler in diagonalized form}, it is adequate to pose compatibility conditions for the components $U^{(-1)}$ and $U^{(-2)}$, while the characteristic component $U^{(0)}$ on $\Sigma_\delta$ is virtually unconstrained.
	
	Technically, we construct data on $\Sigma_\delta$ by defining a finite Taylor series expansion in $u$. We employ an induction argument to determine the Taylor coefficients, and this inductive structure can be expressed using polynomials $\mathbf{P}_n$ in the variables (including their $T$ and $X$ derivatives) from the set:
	\[\big\{\psi_2, w,\wb, h, c,c^{-1}, \slashed{g}, \kappa, \widehat{T}^1, \widehat{T}^2\big\}\cup\big\{\frac{\mathfrak{d}_1 \circ \cdots \circ \mathfrak{d}_k(\kappa)}{\kappa} \big|k\geqslant 1, \mathfrak{d}_i \in \{T,X\}\big\}.\]
	The polynomials $\mathbf{P}_n$'s encode all the quantities that have at most $n-1$ $T$-derivatives. We show that
	\begin{align}\label{eq:L^n U formal}
		 L^n (U) \approx \big(\frac{c}{\kappa}\Lambda\big)^n\cdot T^n(U) + \frac{1}{\kappa^{n-1}} \cdot \mathbf{P}_{n}
	\end{align}
	where $\Lambda = \operatorname{diag}(0,-1,-2)$; see \eqref{eq:Euler:L^n U in diagonalized form} for the precise formula. This equation defines the normal derivatives for $U^{(-1)}$ and $U^{(-2)}$. The Taylor coefficients can be determined using an induction argument. It is important to note that equation \eqref{eq:L^n U formal} is formally singular as $\delta \to 0$.
	
The construction \emph{maintains} the fundamental hierarchical structure in equation \eqref{eq:asymptotic condition} for higher and mixed derivatives, as demonstrated in equations \eqref{eq: bound on Un with X} and Proposition \ref{prop: correct data size}. This highlights the intricate structure of rarefaction waves and forms the foundation for the energy estimates presented in the initial paper \cite{LuoYu1}.

	\smallskip

\end{itemize}

In view of the rarefaction data and the energy estimates established in \cite{LuoYu1}, we can demonstrate the existence of a single family of rarefaction waves through a limiting argument; see Section \ref{section:existence}. The flat rarefaction fronts that are connected to the constant background state play an important role in setting up the region of convergence in the physical spacetime. As a result, we recover the asymptotic data \eqref{eq:asymptotic-Riemann-invariants} and a \emph{canonical} characteristic foliation in the limit.

\subsubsection{The perturbed Riemann problem of two families of rarefaction waves}
As demonstrated by the 1-D Riemann problem involving two rarefaction waves in Section \ref{section: calculations for 1D Riemann}, the normal derivatives undergo a jump discontinuity at the rarefaction fronts that bound the rarefaction wave regions. Determining these rarefaction fronts through the free boundary problem is one of the main challenges in Alinhac's work \cite{AlinhacWaveRare1}, and would potentially cause loss of derivatives as they are characteristic.

We adopt a different and more geometric approach, which is closer in spirit to the classical construction in one-dimensional Riemann problem.  
Rather than solving the Riemann problem with discontinuous data and deciding the rarefaction fronts as free boundaries through the iterations, we solve the problem in the following scheme (see the picture in {\bf Theorem 3}):
\begin{itemize}
	\item [{\bf Step 1.}] Construct a family of front rarefaction waves connected to a given smooth data $U_r$ on the right (refer to {\bf Theorem 2}), and a family of back rarefaction waves connected to $U_l$ on the left.
	
	\item [{\bf Step 2.}] Construct the rarefaction wave fronts $\Hb$ and $H$ by solving the Eikonal equation with respect to the acoustical metric.
	
	\item [{\bf Step 3.}] Solve a classical Goursat problem in the region bounded by $\Hb$ and $H$.
\end{itemize}

{\bf Theorem 2} immediately implies Step 1 and yields two family of back and front rarefactions connected to $U_l$ and $U_r$, respectively. However, identifying and gluing the free characteristic boundaries are not trivial in general. The key point is that,  
as mentioned in Remark \ref{Remark: difference between rarefaction fronts and shock fronts}, rarefaction fronts are characteristic hypersurfaces that are ruled by null geodesics; therefore, they can be determined from the initial conditions, provided that the rarefaction solution we constructed is \emph{regular} in the limit as $t \rightarrow 0$. Hence a main technical challenge in Step 2 is to obtain uniform higher regularity: 
\begin{itemize}[noitemsep,wide=0pt, leftmargin=\dimexpr\labelwidth + 2\labelsep\relax]
	\item  Regularity of rarefaction fronts and uniform bounds on $L^k(\psi)$ for $k\geqslant 2$.
	
	\smallskip

	The solution $(v,c)\in C^0 \big((0,t^*]; H^N(\Sigma_t) \big)$ obtained from {\bf Theorem 2} is regular in the spatial variables within the rarefaction region. However, for the rarefaction fronts $\Hb$ and $H$ to be regular (refer to {\bf Theorem 3}), we need  higher temporal regularity. To establish this temporal regularity, it is necessary to derive uniform bounds on $L^k(\psi)$ for the solution constructed on $\mathcal{D}(\delta)$ as presented in {\bf Corollary 1}.

	However, according to the Euler equations, we have $L^n (U) \sim \big(\frac{c}{\kappa}\Lambda\big)^n\cdot T^n(U)$, see \eqref{eq:L^n U formal}. It is likely that we could only hope to obtain the following bounds on  $\mathcal{D}(\delta)$:
	\begin{equation}\label{eq: rough bound on Lk}
		\|L^k(\wb)  \|_{L^\infty(\Sigma_t)}\lesssim t^{-k+2}\varepsilon,  \ \ \|L^k  (w)  \|_{L^\infty(\Sigma_t)} +\|L^k  (\psi_2)  \|_{L^\infty(\Sigma_t)} \lesssim t^{-k+1}\varepsilon,
	\end{equation}
	for $k\geqslant 2$. We refer to Section \ref{Section:The rough bounds on L derivatives} for detailed computations. This seems to suggest that $L^k(\psi)$ suffers from a loss in $t$ for $k \geq 2$.
	
	We believe that it is very difficult to recover the loss in $t$ on $\mathcal{D}(\delta)$. Using the diagonalized system \eqref{eq:Euler in diagonalized form}, for $\lambda_0=-1$ and $-2$, we can show that
	\begin{equation*}
		\kappa^k L^k\big(U^{(\lambda_0)}\big)=\mathbf{P}_k^{(\lambda_0)}+\mathbf{Err},
	\end{equation*}
	for all $k\geqslant 2$ where $|\mathbf{Err}|\lesssim \varepsilon t$ and $\mathbf{P}_k^{(\lambda_0)}$ is the $\lambda_0$-component of the vector $\mathbf{P}_k$ defined inductively as: 
	\[\mathbf{P}_1 =c\Lambda T(U), \ \ \mathbf{P}_{j+1}  = \big(c\Lambda T-jL(\kappa)\big) (\mathbf{P}_j) , \ \ j\geqslant 1,\]
	with $\Lambda$ given in \eqref{def:Lambda}. 	
	
In fact, the loss in $t$ for $L^k(\psi)$ would happen, unless a sequence of conditions in the form of partial differential inequalities $\mathbf{P}_{j}=O(\delta)$ for all $j\leqslant k$ are satisfied. Even assuming this can be done, we have only
	\[
	{\color{black}\|L^k \big(U^{(\lambda_0)}\big) \|_{L^\infty(\Sigma_t)}\lesssim t^{-k+2}\varepsilon,  \ \ \lambda_0=-1,-2}.
	\]
	Compared to \eqref{eq: rough bound on Lk}, the order in $\delta$ is only improved by one in the power. 
	It suggests that finding the initial data with uniform higher temporal bounds on $\Sigma_\delta$ is a difficult task. 
	
	The previous discussion shows that even for the data on $\Sigma_\delta$, we can not obtain uniforms bounds on $L^k(\psi)$. On the other hand, using the continuity across $C_0$, we know that $L^k(\psi)$ is indeed bounded on $C_0$ due to the smoothness of $(v_r,c_r)$. The question is whether we can propagate this regularity \emph{inside} the rarefaction wave region. 
	
	This turns out to be true. Rather than using the Euler system itself, we will use the acoustical wave equations for $\psi\in \{\wb,w,\psi_2\}$ in the null frame $(\Lb,L,\Xh)$ and integrate along the $\Lb$ direction to retrieve the uniform bounds on $L^k(\psi)$ from $C_0$. We recall that in Section \ref{section:effective domain}, $\mathcal{D}(\delta)$ has been decomposed into the effective domain $\mathcal{D}^+(\delta)$ and the irrelevant domain $\mathcal{D}^-(\delta)$. We will show that $L^k(\psi)$ are uniformly bounded on  $\mathcal{D}^+(\delta)$. Using the limiting argument as $\delta \rightarrow 0$, we show that the information from $\mathcal{D}^-(\delta)$ will be irrelevant to the solution in 
	{\bf Theorem 2} or {\bf Theorem 3}.

%
%
%

\end{itemize}	

Once we have the uniform bounds on $L^k(\psi)$, it remains to find the initial data for the rarefaction fronts $\Hb$ and $H$ and solve the Eikonal equations:
\begin{itemize}[noitemsep,wide=0pt, leftmargin=\dimexpr\labelwidth + 2\labelsep\relax]
	\item Determine the initial data for the two rarefaction fronts $\Hb$ and $H$. 
	
	In general $\Hb$ and $H$ are different from the null hypersurfaces $\Cb_{\ub}$ and $C_u$ of the foliation.	
	The initial data for $\Hb$ and $H$ correspond to two limiting curves $\Hb_0$ and $H_0$ in the back and front rarefaction regions, respectively. To locate $\Hb_0$ and $H_0$, the key point is that: as the solutions become singular in the limit $t \to 0$,  the data must take the simple form in \eqref{eq:asymptotic-Riemann-invariants}. 
	As a result, we can use one dimensional construction discussed in Section \ref{section: calculations for 1D Riemann} to find the initial data; see \eqref{eq:H_0} for the defining equation of $H_0$. 
	
	\item Construction of rarefaction fronts emanating from the singular boundary $\mathbf{S}_*$.
	
	 The singular boundary $\mathbf{S}_*$ of rarefaction waves is of codimension $2$ in the background spacetime. Since the rarefaction solution become singular near $\mathcal{S}_*$, solving the Eikonal equation with respect to the acoustical metric is not a trivial task. Following the ideas in \cite{ChristodoulouShockFormation, ChristodoulouMiao}, we study and classify the null geodesics starting at $\mathbf{S}_*$ (see Section \ref{section:geometric constructions} and Proposition \ref{prop:geometric constructions}), leading to a geometric construction of the canonical acoustical coordinate. The Hamiltonian approach also enables us to construct rarefaction fronts emanating from any given smooth graph in $S_*$;
	see Section \ref{section: construction of null hypersurfaces} for the detail construction.
\end{itemize}
With the two rarefaction fronts $\Hb$ and $H$ at hand, we immediately obtain a smooth solution in the region bounded by $\Hb$ and $H$ as a classical Goursat problem, completing Step 3 and the construction of the solution to the Riemann problem. For details we refer to Section \ref{Section: app to Riemann 2}.

\subsubsection{Uniqueness and some comments}
One of the main challenge in proving uniqueness is that the solutions become singular in rarefaction wave regions near the singular boundary $\mathbf{S}_*$. 
We provide two uniqueness results:
\begin{itemize}[noitemsep,wide=0pt, leftmargin=\dimexpr\labelwidth + 2\labelsep\relax]
	\item Uniqueness of the Riemann problem consisting of rarefaction waves.
	
	We use the relative entropy method from \cite{DiPerna79,Dafermos,Chen-Chen}. It provides uniqueness in the class of entropy solutions without regularity requirement (see Definition \ref{def: entropy-solution} and Proposition \ref{prop: uniqueness to thm 3}). The proof relies on the key property of rarefaction wave: the density decreases along the $T$-direction, see \eqref{eq: Wplus estimate}.
	
	\item Uniqueness of centered rarefaction waves.
	
	We provide a definition of centered rarefaction wave as a family of smooth characteristic hypersurfaces emanating from the singularity $\mathbf{S}_*$(refer to Definition \ref{def: centered-rarefaction-waves}) and shows that is also unique. The proof can be reduced to the proof of the uniqueness of the Riemann problem.
\end{itemize}

We offer some comments on the compatibility condition, data and uniqueness presented in Alinhac's work \cite{AlinhacWaveRare1, AlinhacWaveRare2} (refer to our first paper \cite{LuoYu1} for comments on the tame estimates and iteration scheme):
\begin{itemize}[noitemsep,wide=0pt, leftmargin=\dimexpr\labelwidth + 2\labelsep\relax]
	\item  In \cite{AlinhacWaveRare1}, the initial data separated by $\mathbf{S}_*$ are assumed to be compatible, in the sense that for each $\vartheta$, the state on the left and the state on the right can be connected by a one-dimensional rarefaction wave of the same family. Additionally, they must satisfy a ``$k$-compatibility condition'' on normal derivatives up to order $k$. For the Euler equations, it implies that the solution need to be \emph{smooth} across one of the rarefaction front (see Remark \ref{rem: 1 family alinhac}).
	
	In our work, we do not need any compatibility condition to establish the existence of a single family of rarefaction waves (refer to {\bf Theorem 2}) or the Riemann problems consisting of rarefaction waves (refer to {\bf Theorem 3} and Remark \ref{rem: 1 family alinhac}). Rather, we describe all rarefaction waves belonging to this family that can be connected to the given data and initial surface of discontinuity $\mathbf{S}_*$, and construct the rarefaction fronts emanating from $\mathbf{S}_*$ to solve the Riemann problem. In general the solutions are continuous but not smooth across both rarefaction fronts of the Riemann problem.

	\item In \cite{AlinhacWaveRare2}, uniqueness  is proved under the same compatible and ``$k$-compatibility condition'' on the initial data $(U_l,U_r)$ as in \cite{AlinhacWaveRare1}, assuming the existence of a `blow-up' coordinate where the solution is smooth.

\end{itemize}

\section{Construction of the initial data on $\Sigma_\delta$}\label{section: data construction}

We fix $\delta>0$. The purpose of this section is to construct a $C^N$ data for rarefaction waves (see Definition \ref{def:rare data}) so that the initial ansatz $\mathbf{(I_0)}, \mathbf{(I_2)}$,
$\mathbf{(I_{\infty,1})}$, $\mathbf{(I_{\infty,2})}$ and $\mathbf{(I_{irrotational})}$ hold.

\subsection{The initial foliation}\label{section:initial foliatiion}
In this subsection, we construct $u$ and $\vartheta$ on $\Sigma_\delta$.
\subsubsection{Functions on $C_0$}\label{section:functions on C0}
Since $\Th$ is the unit normal of $S_{t,0}$ as an embedded curve of $\Sigma_t$, $\Th$ is already determined by $C_0$. By continuity, $v$ and $c$ are also determined on $C_0$ by $v_r$ and $c_r$. As a consequence, $L$ is already fixed on $C_0$ because $L =\partial_t +v-c\Th$.

We use $\slashed{\vartheta}: C_0\rightarrow \mathbb{R}$ to denote the restriction of $\vartheta$ on $C_0$. In view of the construction of acoustical coordinates, $\slashed{\vartheta}$ is defined by the following ODE system:
\begin{equation}\label{eq:theta on C0}
\begin{cases}
&L(\slashed{\vartheta})=0,\\
&\slashed{\vartheta}\big|_{S_{0,0}}=x_2\big|_{S_{0,0}}.
\end{cases}
\end{equation}
Let $\slashed{t}$ be the restriction of $t$ on $C_0$. We then obtain a coordinate system $(\slashed{t},\slashed{\vartheta})$ (the restriction of the acoustical coordinates) on $C_0$.

Let $(\slashed{x}_1,\slashed{x}_2)$ be the restriction of the Cartesian coordinate functions $(x_1,x_2)$ on $C_0$. We now use the coordinate functions $(\slashed{t},\slashed{\vartheta})$ to represent $(\slashed{x}_1,\slashed{x}_2)$. Indeed, since $x^1(0,\slashed{\vartheta})= 0$ and $x^2(0,\slashed{\vartheta})=\vartheta$, we can integrate $L(x^i)=v^i-c\Th^i$ to derive
\begin{equation}\label{eq:x1x2 on C0 0}
\begin{cases}
\slashed{x}_1(t,\slashed{\vartheta})&=- \displaystyle\int_{0}^t \psi_1(\tau,\slashed{\vartheta})+c(\tau,\slashed{\vartheta})\Th^1(\tau,\slashed{\vartheta})d\tau,\\
\slashed{x}_2(t,\slashed{\vartheta})&=\slashed{\vartheta}-\displaystyle\int_{0}^t \psi_2(\tau,\slashed{\vartheta})+c(\tau,\slashed{\vartheta})\Th^2(\tau,\slashed{\vartheta})d\tau.
\end{cases}
\end{equation}

For a given time $\slashed{t}=t_0>0$, we have two coordinate functions $\slashed{x}_2$ and $\slashed{\vartheta}$ on $S_{t_0,0}$. The change of coordinates $\slashed{\vartheta}\mapsto \slashed{x}_2(t_0,\slashed{\vartheta})$ is given by the second formula in \eqref{eq:x1x2 on C0 0}. We compute the differential:
\begin{align*}
\frac{\partial \slashed{x}_2}{\partial \slashed{\vartheta}}(t_0,\slashed{\vartheta})=1-\int_{0}^{t_0} \frac{\partial }{\partial \slashed{\vartheta}}\big[\psi_2(\tau,\slashed{\vartheta})(\tau,\slashed{\vartheta})+c(\tau,\slashed{\vartheta})\Th^2(\tau,\slashed{\vartheta})\big]d\tau.
\end{align*}
Since $(v,c)\big|_{C_0}=(v_r,c_r)\big|_{C_0}$ is evolved from an $\varepsilon$-perturbation of the constant states $(\mathring{v}_r,\mathring{c}_r)$(see Definition \ref{def:data}),  by the standard continuous dependence on the initial conditions for hyperbolic equations, it is clear that $\big|\big(\frac{\partial}{\partial \slashed{\vartheta}}\big)^k(v,c)\big|\lesssim \varepsilon$ for $k\leqslant \Ntop$. Therefore, we obtain the following bounds
\begin{equation}\label{eq:x1x2 on C0}
\big|\frac{\partial \slashed{x}_2}{\partial \slashed{\vartheta}}(t_0,\slashed{\vartheta})-1\big|\lesssim \varepsilon t_0, \ \ \ \ \big|\frac{\partial^k \slashed{x}_2}{\partial \slashed{\vartheta}^k}(t_0,\slashed{\vartheta})\big|\lesssim \varepsilon t_0, \ \ k\leqslant \Ntop.
\end{equation}
 In particular, by inverse function theorem, we can represent $\slashed{\vartheta}$ in terms of $\slashed{x}_2$, i.e., $\slashed{\vartheta}\big|_{S_{t_0,0}}=\slashed{\vartheta}(t_0,\slashed{x}_2)$. 

\subsubsection{The function $u$ on $\Sigma_\delta$}\label{section: definition of u on Sigmadelta}

Using the standard Cartesian coordinates $(x_1,x_2)$ on $\Sigma_\delta$, to define the acoustical function $u$ on $\Sigma_\delta$, it suffices to write $u$ as $u(x_1,x_2)$.  We use $I(\tau,x_2)$ to denote the following auxiliary function:
\[I(\tau,x_2)=\psi_1\left(\tau,\slashed{\vartheta}(\delta,x_2)\right)+c\left(\tau,\slashed{\vartheta}(\delta,x_2)\right)\Th^1\left(\tau,\slashed{\vartheta}(\delta,x_2)\right).\]
The function $u$ on $\Sigma_\delta$ are defined as follows:
\begin{equation}\label{def: for u initially}
u: \Sigma_{\delta}\rightarrow \mathbb{R}, \ \  (x_1,x_2)\mapsto u(x_1,x_2)=-\frac{x_1}{\delta}-\frac{1}{\delta}\int_{0}^\delta I(\tau,x_2)d\tau. 
 \end{equation}
We check that 
\begin{equation}\label{eq: u vanishes at C0}
u\big|_{S_{\delta,0}}\equiv 0.
\end{equation}
For an arbitrary point $p=(\delta, \slashed{\vartheta}_0)\in S_{\delta,0}$. The $x_2$ coordinate of this point is given by $x_2(p)=\slashed{x}_2(\delta, \slashed{\vartheta}_0)$. In terms of the $\vartheta$-coordinate on $S_{\delta,0}$, $p$ is given by $\slashed{\vartheta}_0$ where $\slashed{\vartheta}_0=\slashed{\vartheta}(\delta,x_2(p))$. Therefore, we have
\begin{align*}
u(p)&=-\frac{x_1(\delta,\slashed{\vartheta}_0)}{\delta}-\frac{1}{\delta}\int_{0}^\delta \psi_1\left(\tau,\slashed{\vartheta}(\delta,x_2(p))\right)+c\left(\tau,\slashed{\vartheta}(\delta,x_2(p))\right)\Th^1\left(\tau,\slashed{\vartheta}(\delta,x_2(p))\right)d\tau\\
&=-\frac{x_1(\delta,\slashed{\vartheta}_0)}{\delta}-\frac{1}{\delta}\int_{0}^\delta \psi_1\left(\tau,\slashed{\vartheta}_0\right)+c\left(\tau,\slashed{\vartheta}_0\right)\Th^1\left(\tau,\slashed{\vartheta}_0\right)d\tau.
\end{align*}
In view of \eqref{eq:x1x2 on C0 0}, we obtain that $u(p)=0$. This proves \eqref{eq: u vanishes at C0}.

To study the foliation on $\Sigma_\delta$ by the level sets of $u$, it is natural to compute the gradient of $u$. We recall that $X=\frac{\partial }{\partial \slashed{\vartheta}}$ on $C_0$. Therefore, 
\begin{align*}
\frac{\partial u}{\partial x_2}= -\frac{1}{\delta}\int_{0}^\delta X \big(\psi_1+c \Th^1\big)\left(\tau,\slashed{\vartheta}(\delta,x_2)\right) \frac{\partial \slashed{\vartheta}}{\partial x_2}(\delta,x_2)d\tau,
\end{align*}
where $\psi_1+c \Th^1$ is a function defined on $C_0$. We introduce the following two auxiliary functions:
\begin{equation}\label{eq: def for a and A}
a(\tau,x_2)=  X \big(\psi_1+c \Th^1 \big) \big(\tau,\slashed{\vartheta}(\delta,x_2) \big) \frac{\partial \slashed{\vartheta}}{\partial x_2}(\delta,x_2), \ \ A(t,x_2)=\int_0^t a(\tau,x_2)d\tau.
\end{equation}
Since $\big|\big(\frac{\partial}{\partial \slashed{\vartheta}}\big)^k(v,c)\big|\lesssim \varepsilon$ on $C_0$ for $k\leqslant \Ntop$,  by regarding $a(\tau,\cdot)$ as a function on $S_{\tau,0}$, we have
\begin{equation}\label{eq: bound on a}\big|\frac{\partial^k a}{\partial \slashed{x}_2^k}(\tau,\slashed{x}_2)\big|\lesssim \varepsilon, \ \ k\leqslant \Ntop.
\end{equation}
We can represent $\nabla u$ as
\begin{equation}\label{eq: gradient of u on Sigma delta}
\nabla u=\big(\frac{\partial u}{\partial x_1},\frac{\partial u}{\partial x_2}\big)=\big(-\frac{1}{\delta},-\frac{1}{\delta}\int_{0}^\delta a(\tau,x_2)d\tau\big)=-\frac{1}{\delta}\big(1,A(\delta,x_2)\big).
\end{equation}
Since $\Th$ is the unit normal of the level sets of $u$, we obtain that
\begin{equation}\label{eq:Th on Sigma delta}
\Th =-\frac{\left(1,A(\delta,x_2)\right)}{\sqrt{1+A(\delta,x_2)^2}}.
\end{equation}
We can rotate $\Th$ by $\frac{\pi}{2}$ to obtain $\Xh$:
\begin{equation}\label{eq:Xh on Sigma delta}
\Xh =\frac{\left(-A(\delta,x_2),1\right)}{\sqrt{1+A(\delta,x_2)^2}}.
\end{equation}
The inverse density $\kappa$ is computed as
\begin{equation}\label{eq: kappa on Sigma delta}
\kappa=\frac{1}{|\nabla u|}=\frac{\delta}{\sqrt{1+A(\delta,x_2)^2}}.
\end{equation}

\begin{lemma}\label{lemma: bounds on geometry on sigma delta 1}
For all $1\leqslant n\leqslant \Ntop$, $\mathfrak{d}_1, \cdots, \mathfrak{d}_n \in \{\partial_1,\partial_2\}$ and $\Upsilon \in \{\Th^1,\Th^2, \kappa\}$, we have 
\[\big\|\mathfrak{d}_1\big(\mathfrak{d}_2\big(\cdots \big(\mathfrak{d}_n(\Upsilon)\big)\cdots \big)\big)\big\|_{L^\infty(\Sigma_\delta)}\lesssim \delta \varepsilon.\]
Moreover, we have following more precise bounds: 
\begin{equation}\label{eq:precise estimates on Th}
\|\Th^1+1\|_{L^\infty(\Sigma_\delta)}\lesssim \delta^2\varepsilon^2, \ \ \|\Th^2\|_{L^\infty(\Sigma_\delta)}\lesssim \delta\varepsilon,  \ \ \big\|\mathfrak{d}_1\big(\mathfrak{d}_2\big(\cdots \big(\mathfrak{d}_n(\Th^1)\big)\cdots \big)\big)\big\|_{L^\infty(\Sigma_\delta)}\lesssim \delta^2 \varepsilon^2. 
\end{equation}
and
\[\|\kappa\|_{L^\infty(\Sigma_\delta)} \lesssim \delta,\ \  \big\|\mathfrak{d}_1\big(\mathfrak{d}_2\big(\cdots \big(\mathfrak{d}_n(\kappa)\big)\cdots \big)\big)\big\|_{L^\infty(\Sigma_\delta)}\lesssim \delta^3 \varepsilon^2.\]
\end{lemma}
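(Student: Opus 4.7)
\medskip

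\noindent\textbf{Proof Proposal.} The plan is to read everything off the explicit formulas \eqref{eq:Th on Sigma delta}--\eqref{eq: kappa on Sigma delta} combined with the bounds \eqref{eq: bound on a} on $a$. The crucial first observation is that $A(\delta,x_2)$ depends only on $x_2$, hence so do $\Th^1, \Th^2, \kappa$; consequently every application of $\partial_1$ kills the quantity and we may replace $\mathfrak{d}_1\circ\cdots\circ\mathfrak{d}_n$ by $\partial_2^n$ throughout. From \eqref{eq: def for a and A} and \eqref{eq: bound on a} we immediately get
\[
\|A(\delta,\cdot)\|_{L^\infty} \lesssim \delta\varepsilon, \qquad \|\partial_2^n A(\delta,\cdot)\|_{L^\infty} \lesssim \delta\varepsilon, \quad 1\leqslant n\leqslant \Ntop,
\]
by integrating $\partial_2^n a$ in $\tau$ from $0$ to $\delta$.

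Next I would expand $(1+A^2)^{-1/2}$ as a convergent power series in $A^2$, valid since $A$ is small: write
\[
(1+A^2)^{-1/2} = 1 + F(A^2), \qquad F(s) = \sum_{k\geqslant 1} c_k\, s^k,
\]
which converges for $|s|<1$. Then $\Th^1+1 = -F(A^2)$, $\Th^2 = -A(1+F(A^2))$, and $\kappa = \delta + \delta F(A^2)$. For the $L^\infty$ bound, since $|A|\lesssim \delta\varepsilon$ we obtain $|F(A^2)| \lesssim A^2 \lesssim \delta^2\varepsilon^2$, which yields the stated size estimates $\|\Th^1+1\|_{L^\infty}\lesssim \delta^2\varepsilon^2$, $\|\Th^2\|_{L^\infty}\lesssim \delta\varepsilon$ and $\|\kappa\|_{L^\infty}\lesssim \delta$.

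For the derivative bounds, applying $\partial_2^n$ to $F(A^2)$ and using Fa\`{a} di Bruno (or simply differentiating the power series termwise) produces a finite sum of products of the form $\partial_2^{n_1}(A^2)\cdots\partial_2^{n_j}(A^2)$ times bounded coefficients, with $n_1+\cdots+n_j=n$. Since
\[
\partial_2^m(A^2) = \sum_{l=0}^m \binom{m}{l}(\partial_2^l A)(\partial_2^{m-l}A), \qquad \|\partial_2^m(A^2)\|_{L^\infty}\lesssim \delta^2\varepsilon^2,
\]
\emph{every} such product is bounded by $\delta^2\varepsilon^2$ (the remaining factors are $O(1)$). This gives $\|\partial_2^n (\Th^1+1)\|_{L^\infty} = \|\partial_2^n F(A^2)\|_{L^\infty}\lesssim \delta^2\varepsilon^2$, which is \eqref{eq:precise estimates on Th}, and also $\|\partial_2^n \kappa\|_{L^\infty}= \delta\|\partial_2^n F(A^2)\|_{L^\infty}\lesssim \delta^3\varepsilon^2$. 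For $\Th^2 = -A - AF(A^2)$ I would use the Leibniz rule: the leading contribution comes from the linear term $-\partial_2^n A$ of size $\delta\varepsilon$, while each term from $AF(A^2)$ is the product of one factor of size $\delta\varepsilon$ with derivatives of $F(A^2)$ of size $\delta^2\varepsilon^2$, so overall $\|\partial_2^n\Th^2\|_{L^\infty}\lesssim \delta\varepsilon$.

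The only step requiring mild care is the bookkeeping of the Fa\`{a} di Bruno expansion to verify that \emph{each} term in $\partial_2^n F(A^2)$ carries at least two factors of $A$ or its derivatives — this is automatic because $F$ vanishes to second order at $0$ in $A$. Once this is observed, all the claimed bounds follow uniformly for $n\leqslant \Ntop$. No genuine obstacle arises; the lemma is really an algebraic consequence of the explicit formulas together with the $C^N$-regularity of the background on $C_0$.
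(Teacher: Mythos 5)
Your proposal is correct and follows essentially the same route as the paper. The paper's proof packages the argument by defining three auxiliary smooth functions $F_1, F_2, F_3$ (which are just $-\,(1+x^2)^{-1/2}$, $-x(1+x^2)^{-1/2}$, $(1+x^2)^{-1/2}$) and invoking the boundedness of their derivatives on $[-1,1]$ together with the Fa\`a di Bruno formula, whereas you expand $(1+A^2)^{-1/2}=1+F(A^2)$ as a convergent power series and track the degree of vanishing at $A=0$ explicitly. These are two ways of phrasing the same observation — that $\Th^1+1$ and $\kappa/\delta-1$ are smooth functions of $A^2$ vanishing at $0$, while $\Th^2$ is $-A$ times a smooth function of $A^2$ equal to $1$ at $0$ — and both deliver the quadratic gain. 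Your remark that $\partial_1$ annihilates $\Th^i,\kappa$ (because $A(\delta,\cdot)$ depends only on $x_2$) makes explicit a reduction that the paper uses implicitly by only estimating $\partial_{\slashed{x}_2}^k A$. The one small point worth tightening is the termwise differentiation of the infinite series: this is justified because $|A|\lesssim\delta\varepsilon$ keeps $A^2$ uniformly in a compact subset of the disc of convergence, but the paper sidesteps that convergence discussion entirely by working with the finitely many bounded derivatives of the smooth $F_i$ on $[-1,1]$; either formulation is legitimate.
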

\begin{proof}
We consider the following three smooth functions:
\[F_1(x)=-\frac{1}{\sqrt{1+x^2}}, \ \ F_2(x)=-\frac{x}{\sqrt{1+x^2}}, \ \ F_3(x)=\frac{1}{\sqrt{1+x^2}}.\]
We observe that for all $k\leqslant N_{\rm top}$ and $i=1,2,3$, we have
\begin{equation}\label{eq:lamma 41 aux 1}
\big\|\big(\frac{d}{dx}\big)^k F_i\big\|_{L^{\infty}([-1,1])}\lesssim 1.
\end{equation}
In view of \eqref{eq:Th on Sigma delta}, \eqref{eq:Xh on Sigma delta} and \eqref{eq: kappa on Sigma delta}, 
\[\Th^1=F_1\big(A(\delta,x_2)\big), \ \ \Th^2=F_2\big(A(\delta,x_2)\big), \ \ \kappa=\delta F_3\big(A(\delta,x_2)\big).\]
For sufficient small $\delta$ and $\varepsilon$, it is clear that $A(\delta,x_2)\in [-1,1]$. Moreover, since
$A(\delta,x_2)=\int_0^t a(\tau,x_2)d\tau$, \eqref{eq: bound on a} yields that
\[\big|\frac{\partial^k A(\delta, x_2)}{\partial \slashed{x}_2^k}\big|\lesssim \delta\varepsilon, \ \ k\leqslant \Ntop.\]
Therefore, thanks to \eqref{eq:lamma 41 aux 1} and the Fa\`a di Bruno formula, we obtain that
\[\big\|\mathfrak{d}_1\big(\mathfrak{d}_2\big(\cdots \big(\mathfrak{d}_n(\Upsilon)\big)\cdots \big)\big)\big\|_{L^\infty(\Sigma_\delta)}\lesssim \delta \varepsilon, \ \ n\leqslant \Ntop, \ \Upsilon \in \{\Th^1,\Th^2, \kappa\}.\]
According to \eqref{eq:Th on Sigma delta}, we have
\[\Th^1+1 =1-\frac{1}{\sqrt{1+A(\delta,x_2)^2}},  \ \ \Th^2 =-\frac{ A(\delta,x_2)}{\sqrt{1+A(\delta,x_2)^2}}.
\]
Hence, the estimates in \eqref{eq:precise estimates on Th} for $\Th^1+1$ and $\Th^2$ follow from the Taylor expansion of the above functions. We also have
\[
\mathfrak{d}_1\left(\mathfrak{d}_2\left(\cdots \left(\mathfrak{d}_n(\Th^1)\right)\cdots \right)\right) =-\mathfrak{d}_1\big(\mathfrak{d}_2\big(\cdots \big(\mathfrak{d}_n\big(\frac{1}{\sqrt{1+A(\delta,x_2)^2}}\big)\big)\cdots \big)\big).
\]
The Taylor expansion of the above formula is in $A(\delta,x_2)^2$. Thus, it is bounded by $\delta^2\varepsilon^2$. According to equation \eqref{eq: kappa on Sigma delta}, we have
\[\frac{\kappa}{\delta}-1=1-\frac{1}{\sqrt{1+A(\delta,x_2)^2}}.
\]
The Taylor expansion of the above formula is in $A(\delta,x_2)^2$ and this shows that \[\big\|\frac{\kappa}{\delta}-1\big\|_{L^\infty(\Sigma^{u}_\delta)} \lesssim  \varepsilon^2 \delta^2.\]
We can apply the similar argument to derivatives of $\kappa$. This completes the proof of the lemma.
\end{proof}

\begin{proposition}\label{prop: bounds on geometry on sigma delta 1}
For any multi-index $\alpha$ with $1\leqslant |\alpha| \leqslant \Ntop$, for all $Z\in \mathscr{Z}=\{T,\Xh\}$ and $\Upsilon \in \{\Th^1,\Th^2, \kappa\}$, we have 
\[\|Z^\alpha(\Upsilon)\|_{L^\infty(\Sigma_\delta)}\lesssim \delta \varepsilon.\]
Moreover, we have following more precise bounds: 
\begin{equation}\label{eq:improved estimates on Th and kappa}
\|\Th^1+1\|_{L^\infty(\Sigma_\delta)}\lesssim \delta^2\varepsilon^2, \ \ \|\Th^2\|_{L^\infty(\Sigma_\delta)}\lesssim \delta\varepsilon, \ \  \|Z^\alpha(\Th^1)\|_{L^\infty(\Sigma_\delta)}\lesssim \delta^2 \varepsilon^2, \ \  \|Z^\alpha(\kappa)\|_{L^\infty(\Sigma_\delta)}\lesssim \delta^3 \varepsilon^2. 
\end{equation}
\end{proposition}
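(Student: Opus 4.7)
The strategy is to translate the frame derivatives $T$ and $\Xh$ into Cartesian partial derivatives and then to invoke Lemma~\ref{lemma: bounds on geometry on sigma delta 1}. Using $T = \kappa \widehat{T}$ together with \eqref{eq:Th on Sigma delta} and \eqref{eq:Xh on Sigma delta}, we write
\[
T \,=\, \kappa\,\Th^1\,\partial_1 \,+\, \kappa\,\Th^2\,\partial_2, \qquad \Xh \,=\, -\Th^2\,\partial_1 \,+\, \Th^1\,\partial_2,
\]
so each $Z \in \{T,\Xh\}$ is a first-order operator $a^i\partial_i$ whose coefficients $a^i$ lie in the polynomial ring generated by $\{\kappa,\Th^1,\Th^2\}$. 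By Lemma~\ref{lemma: bounds on geometry on sigma delta 1}, $\|a^i\|_{L^\infty(\Sigma_\delta)}\lesssim 1$ and in fact the $a^i$ for $T$ carry an extra power of $\delta$.

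The main step is to establish, by induction on $|\alpha|$, the decomposition
\[
Z^\alpha(\Upsilon) \,=\!\!\!\sum_{1\leqslant |\beta| \leqslant |\alpha|}\!\! c^{(\alpha)}_\beta \,\partial^\beta \Upsilon,
\]
where each $c^{(\alpha)}_\beta$ is a polynomial in $\{\kappa,\Th^1,\Th^2\}$ and their Cartesian partial derivatives of order at most $|\alpha|-1$. The base case $|\alpha|=1$ is immediate from the formulas above, and the inductive step is just the Leibniz identity $Z(c^{(\alpha)}_\beta \partial^\beta \Upsilon) = (Z c^{(\alpha)}_\beta)\partial^\beta \Upsilon + c^{(\alpha)}_\beta\, a^i\partial_i\partial^\beta \Upsilon$. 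The essential observation is that $Z$ is a derivation, so in every term at least one Cartesian derivative must land on $\Upsilon$, justifying $|\beta|\geqslant 1$. Since $|\alpha|\leqslant \Ntop$, all Cartesian derivatives of $\kappa, \Th^1, \Th^2$ appearing in the coefficients are bounded by Lemma~\ref{lemma: bounds on geometry on sigma delta 1}, and hence $\|c^{(\alpha)}_\beta\|_{L^\infty(\Sigma_\delta)}\lesssim 1$.

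With this decomposition in hand, the three frame estimates reduce to feeding in the Cartesian estimates of Lemma~\ref{lemma: bounds on geometry on sigma delta 1}: the universal bound $\|\partial^\beta \Upsilon\|_{L^\infty(\Sigma_\delta)}\lesssim \delta\varepsilon$ for $\Upsilon \in \{\Th^1,\Th^2,\kappa\}$ and $1\leqslant |\beta|\leqslant |\alpha|$ yields $\|Z^\alpha \Upsilon\|_{L^\infty(\Sigma_\delta)}\lesssim \delta\varepsilon$; the sharper bounds $\|\partial^\beta \Th^1\|_{L^\infty(\Sigma_\delta)}\lesssim \delta^2\varepsilon^2$ and $\|\partial^\beta \kappa\|_{L^\infty(\Sigma_\delta)}\lesssim \delta^3\varepsilon^2$ furnished by Lemma~\ref{lemma: bounds on geometry on sigma delta 1} give the improved estimates $\|Z^\alpha \Th^1\|_{L^\infty(\Sigma_\delta)}\lesssim \delta^2\varepsilon^2$ and $\|Z^\alpha \kappa\|_{L^\infty(\Sigma_\delta)}\lesssim \delta^3\varepsilon^2$. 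The pointwise bounds $\|\Th^1+1\|_{L^\infty(\Sigma_\delta)}\lesssim \delta^2\varepsilon^2$ and $\|\Th^2\|_{L^\infty(\Sigma_\delta)}\lesssim \delta\varepsilon$ are direct restatements of \eqref{eq:precise estimates on Th}.

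The proof is essentially a bookkeeping exercise; the only point requiring care is that the Leibniz-based induction really keeps the order of Cartesian derivatives in the coefficients strictly below $|\alpha|$, so that all pieces fall within the range covered by Lemma~\ref{lemma: bounds on geometry on sigma delta 1}. There is no substantive obstacle beyond this verification.
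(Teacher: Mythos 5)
Your proposal is correct and follows essentially the same route as the paper: express $T=\kappa\widehat T$ and $\Xh$ in the Cartesian frame, expand $Z^\alpha(\Upsilon)$ by the Leibniz rule into a polynomial in $\partial^\beta\kappa,\partial^\beta\Th^1,\partial^\beta\Th^2$ with at least one Cartesian derivative landing on $\Upsilon$ (the paper's "no constant term" observation), and then feed in Lemma \ref{lemma: bounds on geometry on sigma delta 1}. The only quibble is a harmless sign slip in your formula for $\Xh$ (the paper has $\Xh=\Th^2\partial_1-\Th^1\partial_2$), which does not affect any of the $L^\infty$ estimates.
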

\begin{proof}
We first express the vector field $Z=T$ or $\Xh$ in the Cartesian frame: 
\[T=\kappa \Th =\kappa \big(\Th^1 \partial_1 + \Th^2 \partial_2\big), \ \ \Xh= \Th^2 \partial_1 -\Th^1 \partial_2.\]
By the Leibniz rule,  we then write $Z^\alpha(\Upsilon)$ as a polynomial in the variables $\partial^i \kappa, \partial^j \Th^1, \partial^k \Th^2$ and $ \partial^l \Upsilon$. It is clear that $i,j,k,l\leqslant \Ntop$. Moreover, this polynomial has no constant term. The results follow immediately from the previous lemma.
\end{proof}
\begin{remark}\label{remark: improvement with T}
Since $T=\kappa \big(\Th^1 \partial_1 + \Th^2 \partial_2\big)$, the proof indeed shows that, for any multi-index $\alpha$ with $1\leqslant |\alpha| \leqslant \Ntop-1$, for all $Z\in \mathscr{Z}=\{T,\Xh\}$ and $\Upsilon \in \{\Th^1,\Th^2, \kappa\}$, we have 
\[\|TZ^\alpha(\Upsilon)\|_{L^\infty(\Sigma_\delta)}\lesssim \delta^2 \varepsilon.\]
\end{remark}
\subsubsection{The function $\vartheta$ on $\Sigma_\delta$}\label{section: definition of vartheta on Sigmadelta}

In view of \eqref{def: vartheta in acoustical}, we have $[L,\frac{\partial}{\partial \vartheta}]=0$. On $C_0$, we express $\frac{\partial}{\partial \slashed{\vartheta}}$ in terms of the Cartesian frame: 
\[\frac{\partial}{\partial \slashed{\vartheta}}=\slashed{R}^1\frac{\partial}{\partial x_1}+\slashed{R}^2\frac{\partial}{\partial x_2}.\]
By $[L,\frac{\partial}{\partial \slashed{\vartheta}}]=0$, we obtain the defining equations for $\slashed{R}^1$ and $\slashed{R}^2$:
\begin{equation}\label{eq:partial theta on C0}
\begin{cases}
&L(\slashed{R}^k)=\sum_{j=1}^2\slashed{R}^j\partial_j L^k = X(L^k), \ \ k=1,2;\\
&(\slashed{R}^1,\slashed{R}^2)\big|_{S_{0,0}}=(0,1).
\end{cases}
\end{equation}
where $L^k=v^k-c\Th^k$. Since  $\big|\big(\frac{\partial}{\partial \slashed{\vartheta}}\big)^k(v,c)\big|\lesssim \varepsilon$ on $C_0$ for all $k\leqslant \Ntop$,  we have $|X(L^k)|\lesssim \varepsilon$. Therefore, by integrating \eqref{eq:partial theta on C0} from $0$ to $\delta$, we obtain that
\[\big|\slashed{R}^1\big|+\big|\slashed{R}^2-1\big|\lesssim \varepsilon \delta.\]
Since the restriction of the acoustical metric on $\Sigma_\delta$ is flat, this shows that 
\[\|\slashed{g} - 1\|_{L^\infty(S_{\delta,0})} =\|\sqrt{\big(\slashed{R}^1\big)^2+\big(\slashed{R}^2\big)^2}- 1\|_{L^\infty(S_{\delta,0})}\lesssim \varepsilon \delta.\]
 
 We use $T=\kappa \Th$ to extend $\vartheta$ from $S_{\delta,0}$ to $\Sigma_\delta$. Since $T=\frac{\partial}{\partial u}$ on the initial slice $\Sigma_\delta$, we have $[T,\frac{\partial}{\partial \vartheta}]=0$. We write $\frac{\partial}{\partial \slashed{\vartheta}}$ in terms of the Cartesian frame: 
\[\frac{\partial}{\partial \slashed{\vartheta}}=R^1\frac{\partial}{\partial x_1}+R^2\frac{\partial}{\partial x_2}.\]
The relation $[T,\frac{\partial}{\partial \vartheta}]=0$ gives $T(R^k)=\sum_{j=1}^2 {R}^j\partial_j (T^k)$ for $k=1,2$. Hence, we obtain the following ODE system for ${R}^1$ and ${R}^2$:
\begin{equation}\label{eq:partial theta on Sigma delta}
\begin{cases}
&T(R^k)= \sum_{j=1}^2 {R}^j\partial_j (\kappa \Th^k), \ \ k=1,2,\\
&( {R}^1, {R}^2)\big|_{S_{\delta,0}}=(\slashed{R}^1,\slashed{R}^2).
\end{cases}
\end{equation}
We integrate the above equations from $S_{\delta,0}$ to $S_{\delta,u}$ for $u \in [0,u^*]$. By Lemma \ref{lemma: bounds on geometry on sigma delta 1},  we conclude that
\[{\color{black}\big|R^1\big|+\big|R^2-1\big|\lesssim \varepsilon \delta}\]
on $\Sigma_{\delta}$. Since $g\big|_{\Sigma_\delta}$ is flat, this yields
\[\|\slashed{g} - 1\|_{L^\infty(\Sigma_{\delta})} \lesssim \varepsilon \delta.\]

\begin{proposition}\label{prop: on gslahsed}
For all $Z\in \mathscr{Z}=\{T,\Xh\}$, we have 
\[\| \slashed{g}-1\|_{L^\infty(\Sigma_\delta)}+\|Z(\slashed{g})\|_{L^\infty(\Sigma_\delta)}\lesssim \delta \varepsilon.\]
\end{proposition}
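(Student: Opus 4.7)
The bound $\|\slashed{g} - 1\|_{L^\infty(\Sigma_\delta)} \lesssim \delta\varepsilon$ is essentially already derived in the discussion preceding the proposition: writing $X = R^1 \partial_1 + R^2 \partial_2$ on $\Sigma_\delta$ and using that $g|_{\Sigma_\delta}$ is the flat Euclidean metric gives $\slashed{g} = (R^1)^2 + (R^2)^2$, so integrating \eqref{eq:partial theta on Sigma delta} from $S_{\delta,0}$ together with $|\slashed R^1|+|\slashed R^2-1|\lesssim \delta\varepsilon$ on $S_{\delta,0}$ yields $|R^1|+|R^2-1|\lesssim \delta\varepsilon$ and hence $|\slashed{g} - 1| \lesssim \delta\varepsilon$. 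It remains to estimate $T(\slashed{g})$ and $\Xh(\slashed{g})$.

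For $Z=T$, decompose $T(\slashed{g}) = 2R^1 T(R^1) + 2R^2 T(R^2)$ and read off $T(R^k) = R^j\partial_j(\kappa\Th^k) = R^j\bigl(\partial_j\kappa\cdot\Th^k + \kappa\partial_j\Th^k\bigr)$ directly from the defining ODE \eqref{eq:partial theta on Sigma delta}. By Lemma \ref{lemma: bounds on geometry on sigma delta 1} we have $|\partial_j\kappa|\lesssim \delta^3\varepsilon^2$, $|\partial_j\Th^k|\lesssim \delta\varepsilon$, $\|\kappa\|_{L^\infty}\lesssim \delta$ and $\|\Th^k\|_{L^\infty}\lesssim 1$, so $|\partial_j(\kappa\Th^k)|\lesssim \delta^2\varepsilon$ and therefore $|T(\slashed{g})|\lesssim \delta^2\varepsilon$, which is actually stronger than what the proposition asks for.

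For $Z=\Xh$, since $\Xh=\slashed{g}^{-1/2}X$ and $\slashed{g}\geqslant \frac{1}{2}$ (for $\delta\varepsilon$ sufficiently small), it suffices to bound $X(R^k)=\partial_\vartheta R^k$. Because $T=\partial/\partial u$ and $X=\partial/\partial\vartheta$ are coordinate vector fields on $\Sigma_\delta$, we have $[T,X]=0$; applying $X$ to \eqref{eq:partial theta on Sigma delta} produces the linear ODE
\[
T\bigl(X(R^k)\bigr) \;=\; X(R^j)\,\partial_j(\kappa\Th^k) + R^j R^m\,\partial_m\partial_j(\kappa\Th^k),
\]
whose coefficient and source are both of size $O(\delta^2\varepsilon)$, again by Lemma \ref{lemma: bounds on geometry on sigma delta 1}. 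The initial data $X(R^k)|_{S_{\delta,0}}=\partial_{\slashed\vartheta}\slashed R^k$ is controlled by applying $X$ to the $C_0$ ODE \eqref{eq:partial theta on C0} (legal since $[L,X]=0$ on $C_0$) and integrating from $t=0$ to $t=\delta$ with $|X^2(L^k)|\lesssim\varepsilon$ on $C_0$ (a consequence of the smoothness of $(v_r,c_r)$), yielding $\|X(R^k)\|_{L^\infty(S_{\delta,0})}\lesssim \delta\varepsilon$. A Gronwall argument on $u\in[0,u^*]$ then propagates this bound to all of $\Sigma_\delta$ and gives $|X(R^k)|\lesssim \delta\varepsilon$, hence $|\Xh(\slashed{g})|\lesssim \slashed{g}^{-1/2}\bigl(|R^1||X(R^1)|+|R^2||X(R^2)|\bigr)\lesssim \delta\varepsilon$.

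The main technical point is the estimate for $\Xh(\slashed{g})$, which requires propagating an additional $X$-derivative both along the null generators of $C_0$ (to fix the initial value on $S_{\delta,0}$) and then along the $T$-flow on $\Sigma_\delta$. Both steps rely crucially on the commutation relations $[L,X]=0$ and $[T,X]=0$ and on the smallness of the coefficients supplied by Lemma \ref{lemma: bounds on geometry on sigma delta 1}, which prevents any loss in powers of $\delta$ or $\varepsilon$ during the Gronwall step on the interval $[0,u^*]$ of length $O(1)$.
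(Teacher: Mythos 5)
Your argument is correct and takes essentially the same route as the paper: the paper likewise bounds $X(\slashed{g})$ by commuting $X$ with the transport equations \eqref{eq:partial theta on C0} on $C_0$ and \eqref{eq:partial theta on Sigma delta} on $\Sigma_\delta$ (with the data $X(\slashed{R}^k)$ on $S_{\delta,0}$ obtained by integrating along $C_0$), and bounds $T(\slashed{g})$ directly from \eqref{eq:partial theta on Sigma delta} using Lemma \ref{lemma: bounds on geometry on sigma delta 1} and Proposition \ref{prop: bounds on geometry on sigma delta 1}. The only difference is cosmetic: you record the slightly sharper bound $|T(\slashed{g})|\lesssim \delta^2\varepsilon$, which the paper does not state but which is available from the same estimates.
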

 \begin{proof}
 It remains to control $Z(\slashed{g})$. Since {\color{black}$X=\sqrt{\slashed{g}}\Xh$} and $\| \slashed{g}-1\|_{L^\infty(\Sigma_\delta)}\lesssim \delta \varepsilon$, the desired bound on $\Xh(\slashed{g})$ is equivalent to
 \[\|X(\slashed{g})\|_{L^\infty(\Sigma_\delta)}\lesssim \delta \varepsilon.\]
We first show that the above inequality holds on $S_{\delta,0}$. We commute $X$ with  \eqref{eq:partial theta on C0} to derive
\[
\begin{cases}
&L(X(\slashed{R}^k))=X^2(L^k), \ \ k=1,2,\\
&(X(\slashed{R}^1),X(\slashed{R}^2))\big|_{S_{0,0}}=(0,0).
\end{cases}
\]
Since $X^2(L^k)$  are of size $O(\varepsilon)$ on $C_0$, by integrating  from $0$ to $\delta$, we obtain that
\[\big|X(\slashed{R}^1)\big|+\big|X(\slashed{R}^2)\big|\lesssim \varepsilon \delta.\]
Next, we commute $X$ with  \eqref{eq:partial theta on Sigma delta} to derive
\begin{equation}\label{eq:partial theta on Sigma delta 1 derivative}
\begin{cases}
&T(X(R^k))= \sum_{j=1}^2 X({R}^j)\partial_j (\kappa \Th^k)+\sum_{j=1}^2 {R}^j X(\partial_j (\kappa \Th^k)), \ \ k=1,2,\\
&\big( X({R}^1),  X({R}^2)\big)\big|_{S_{\delta,0}}=\big( X(\slashed{R}^1), X(\slashed{R}^2)\big).
\end{cases}
\end{equation}
According to Lemma \ref{lemma: bounds on geometry on sigma delta 1} and Proposition \ref{prop: bounds on geometry on sigma delta 1}, we have $\partial_j (\kappa \Th^k)$ and $X(\partial_j (\kappa \Th^k))$ on the righthand side of the above equations are of size $O(\delta\varepsilon)$. By integrating equation from $S_{\delta,0}$ to $S_{\delta,u}$,  we conclude that
\begin{equation}\label{eq: construct theta aux 1}
\big|X(\slashed{R}^1)\big|+\big|X(\slashed{R}^2)\big|\lesssim \varepsilon \delta
\end{equation}
on $\Sigma_{\delta}$. Therefore,
\[|X(\slashed{g})|=|X \sqrt{\big( {R}^1\big)^2+\big({R}^2\big)^2}|\lesssim \varepsilon \delta.\]

To  estimate $T\slashed{g}$, we compute that
\[ T(\slashed{g}) =|T \sqrt{\big( {R}^1\big)^2+\big({R}^2\big)^2}|=\slashed{g}^{-1}\big({R}^1 T({R}^1)+R^2T({R}^1)\big).\]
We can use \eqref{eq:partial theta on Sigma delta} to compute the terms $T(R^1)$ and $T(R^2)$ in the expression of  $T(\slashed{g})$. In view of the estimates in Lemma \ref{lemma: bounds on geometry on sigma delta 1} and Proposition \ref{prop: bounds on geometry on sigma delta 1}, this yields $|T(\slashed{g})| \lesssim \varepsilon \delta$ which completes the proof of the proposition.
\end{proof}
 \begin{remark}
For all multi-indices $\alpha$ and for all $Z\in \mathscr{Z}=\{T,\Xh\}$, we can proceed in the same manner to show that
\begin{equation}\label{eq: higher order estimates on slashed g}
\|Z^\alpha(\slashed{g})\|_{L^\infty(\Sigma_\delta)}\lesssim \delta \varepsilon.
\end{equation}
 \end{remark}
 
\begin{remark}\label{remark: closing I infty 1}
By Lemma \ref{lemma: bounds on geometry on sigma delta 1}, Proposition \ref{prop: bounds on geometry on sigma delta 1} and Proposition \ref{prop: on gslahsed}, we have checked all the inequalities in $\mathbf{(I_{\infty,1})}$, see \eqref{initial Iinfty1}.
\end{remark}

\subsection{Algebraic preparations}\label{section: algebraic prepa}
We now introduce the proper algebraic language to describe the structure of the Euler equations. We start with a polynomial ring with includes all the quantities  in the null frame $(L,\Lb,\Xh)$ (and their derivatives) which appear in the Euler equations and the structure equations of the acoustical geometry.

We introduce the following sets of functions:
\begin{align*}
&\mathfrak{X}_{0,1}= \big\{\psi_2, w,\wb,c^{-1}\big\},\ \ \mathfrak{X}_{0,2}= \big\{ \slashed{g}, \kappa, \widehat{T}^1, \widehat{T}^2\big\},\ \ \mathfrak{X}_0= \mathfrak{X}_{0,1}\cup \mathfrak{X}_{0,2}.
\end{align*}
We also introduce the following set of differential operators
\[\mathfrak{D}=\{T,X,\Xh\}.\]
Given a positive integer $n$, we define the following set of {\bf order $n$} objects:
\[\mathfrak{Y}_n=\big\{ (\mathfrak{d}_1\circ\mathfrak{d}_2\circ\cdots \circ\mathfrak{d}_n)(x)\big|x\in \mathfrak{X}_0, \mathfrak{d}_i \in \mathfrak{D}, 0\leqslant i\leqslant n\big\}.\]
We also use $\mathfrak{Z}_{\leqslant n}$ to denote
\[\mathfrak{Z}_{\leqslant n}=  \bigcup_{k\leqslant n}\mathfrak{Y}_k \cup \big\{\frac{\mathfrak{d}_1 \circ \cdots \circ \mathfrak{d}_k(\kappa)}{\kappa} \big|1\leqslant k\leqslant n-1, \mathfrak{d}_i \in \mathfrak{D}\big\}.
\]
We emphasize that in $\mathfrak{Z}_{\leqslant n}$ the maximal number of derivatives for $\frac{\mathfrak{d}_1 \circ \cdots \circ \mathfrak{d}_k(\kappa)}{\kappa}$ are $n-1$. The definition for $\mathfrak{Z}_n$ will be clear in Remark \ref{rem:structure in LU}.

We consider the polynomial ring $\mathbb{R}[\mathfrak{Z}_{\leqslant n}]$, i.e., the set of all $\mathbb{R}$-coefficients polynomials with unknowns from $\mathfrak{Z}_{\leqslant n}$. We write a object from $\mathbb{R}[\mathfrak{Z}_{\leqslant n}]$ as ${\mathscr{P}}_n$ as a schematic expression. The following examples  help to elucidate the definition of  the symbol $\mathscr{P}_n$:
\begin{itemize}
\item For $c=\frac{\gamma-1}{2}(w+\wb)$ and $\psi_1= w-\wb$, we have $c,\psi_1\in \mathscr{P}_0$.
\item By \eqref{eq:explicit formula for U}, $U^{(\lambda)}=\mathscr{P}_0$ for $\lambda\in \{1,2,3\}$.
\item For $\mu=c\kappa$ and $h= \frac{1}{\gamma-1} c^2$, since $\kappa \in \mathfrak{Y}_0$, therefore, $\mu=\mathscr{P}_0$ and $h=\mathscr{P}_0$. 
\item For $\zeta =-\kappa\big(\widehat{T}^j\cdot \Xh(\psi_j) + \Xh(c)\big)$ and $\eta=-\kappa \widehat{T}^j\cdot \Xh(\psi_j)+c\Xh(\kappa)$, we have $\zeta=\mathscr{P}_1$ and $\eta=\mathscr{P}_1$.
\item We have  $\theta=\mathscr{P}_1$ and $\chi=\mathscr{P}_1$. This is clear from \eqref{defining eq of theta and chi}.
\item For $\chib=c^{-1}\kappa\big(-2\Xh^j\cdot \Xh(\psi_j)-\chi\big)$, we also have $\chib=\mathscr{P}_1$.

\end{itemize}
We remark that, in different situations, the polynomial ${\mathscr{P}}_n$ may change to another polynomial in $\mathbb{R}[\mathfrak{Z}_{\leqslant n}]$  but this will not affect the proof.  We also define the order of ${\mathscr{P}}_n$ as ${\rm ord}({\mathscr{P}}_n)=n$. For $n\leqslant m$, we also regard ${\mathscr{P}}_n$ as ${\mathscr{P}}_m$. 

Therefore, for all $\mathfrak{d} \in \mathfrak{D}$,  the following schematic formulas hold:
\begin{equation}\label{eq:schematic algebraic rules}
\mathfrak{d}\left({\mathscr{P}}_n\right)={\mathscr{P}}_{n+1},\ \ {\mathscr{P}}_n + {\mathscr{P}}_m ={\mathscr{P}}_{\max(m,n)},\ \ {\mathscr{P}}_n \cdot {\mathscr{P}}_m ={\mathscr{P}}_{\max(m,n)}.
\end{equation}
In fact, we only have to check for the elements from the second set in the definition of $\mathfrak{Z}_{\leqslant n}$. We make the following observation:
\[\mathfrak{d}\left(\frac{\mathfrak{d}_1 \circ \cdots \circ \mathfrak{d}_k(\kappa)}{\kappa}\right)=\frac{\mathfrak{d}\circ \mathfrak{d}_1 \circ \cdots \circ \mathfrak{d}_k(\kappa)}{\kappa}-\frac{\mathfrak{d}_1 \circ \cdots \circ \mathfrak{d}_k(\kappa)}{\kappa} \cdot \frac{\mathfrak{d}(\kappa)}{\kappa} \in \mathbb{R}[\mathfrak{Z}_{\leqslant n+1}].\]
The proof of \eqref{eq:schematic algebraic rules} is straightforward.

\subsubsection{Schematic computations}

In view of \eqref{structure eq 1: L kappa}, 
\eqref{structure eq 3: L T on Ti Xi Li},  \eqref{Euler equations:form 1} and \eqref{Euler equations:form 2}, we have 
\begin{equation}\label{eq:L x x from mathfrak X}
L(x)={\mathscr{P}}_{{\rm ord}(x)+1}, \ \ x \in \mathfrak{X}_{0,2}; \ L(y)= \frac{1}{\kappa}{\mathscr{P}}_{{\rm ord}(y)+1}, \ \  y \in \mathfrak{X}_{0,1}.
\end{equation}

For all $n\geqslant 1$, we will derive a schematic formula for $L\left({\mathscr{P}}_n\right)$. To simplify the notations, we  use $\mathfrak{d}^k$ to denote all possible differential operators $\mathfrak{d}_1\circ\mathfrak{d}_2\circ\cdots \circ\mathfrak{d}_k$ with $k\geqslant 1$, $\mathfrak{d}_i \in \mathfrak{D}$, $i\leqslant k$. 

We first derive a commutator formula for $[L,\mathfrak{d}^k]$. We observe that $[L,\Xh]$ and $[L,T]$ in \eqref{eq:commutator formulas} can be schematically written as $[L, \mathfrak{d}]={\mathscr{P}}_1 \cdot \mathfrak{d}$. We notice that the derivative $\mathfrak{d}$ on the righthand side is not $T$, i.e., there is no $T$ derivative in $[L, \mathfrak{d}]$. 
\begin{lemma}
 For all $k\geqslant 1$, we have
\begin{equation}\label{eq:commutator:schematic L commute with d^k}
[L,\mathfrak{d}^k]=\sum_{j=1}^k {\mathscr{P}}_j \cdot \mathfrak{d}^{k+1-j},
\end{equation}
where the top order operator on the righthand side $\mathfrak{d}^{k} \neq T^k$.
\end{lemma}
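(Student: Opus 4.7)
\medskip

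\noindent\textbf{Proof proposal.} The plan is a straightforward induction on $k$, using the commutator identities in \eqref{eq:commutator formulas} together with the schematic calculus \eqref{eq:schematic algebraic rules} developed in Section \ref{section: algebraic prepa}. The only subtle point is keeping track of the assertion that the top-order operator $\mathfrak{d}^k$ on the right-hand side is never $T^k$; this will be forced by the fact that every basic commutator $[L,\mathfrak{d}]$ produces an $\Xh$ (and no $T$).

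For the base case $k=1$, I would simply read off from \eqref{eq:commutator formulas} that $[L,T]=-(\zeta+\eta)\Xh=\mathscr{P}_1\cdot \Xh$ and $[L,\Xh]=-\chi\cdot \Xh=\mathscr{P}_1\cdot \Xh$, while $[L,X]=0$ since $L=\partial_t$ and $X=\partial_\vartheta$ in acoustical coordinates. In each case $[L,\mathfrak{d}]$ equals $\mathscr{P}_1\cdot \mathfrak{d}'$ with $\mathfrak{d}'\in\{\Xh\}\subset\mathfrak{D}\setminus\{T\}$, which gives both the schematic identity and the ``no $T^k$ at top order'' condition for $k=1$.

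For the inductive step, writing $\mathfrak{d}^{k+1}=\mathfrak{d}\circ \mathfrak{d}^k$ with $\mathfrak{d}\in\mathfrak{D}$, I would use the Leibniz-type identity
\[[L,\mathfrak{d}\circ \mathfrak{d}^k]=[L,\mathfrak{d}]\circ \mathfrak{d}^k+\mathfrak{d}\circ[L,\mathfrak{d}^k].\]
The first term is $\mathscr{P}_1\cdot \Xh \circ \mathfrak{d}^k=\mathscr{P}_1\cdot \mathfrak{d}^{k+1}$ (with an $\Xh$ in the top slot, so not $T^{k+1}$). For the second term, the induction hypothesis gives $\sum_{j=1}^{k}\mathscr{P}_j\cdot \mathfrak{d}^{k+1-j}$ with the leading $(j=1)$ operator $\mathfrak{d}^k\neq T^k$; applying $\mathfrak{d}$ and using $\mathfrak{d}(\mathscr{P}_j)=\mathscr{P}_{j+1}$ from \eqref{eq:schematic algebraic rules} yields
\[\mathfrak{d}\circ[L,\mathfrak{d}^k]=\sum_{j=1}^{k}\mathscr{P}_j\cdot \mathfrak{d}^{k+2-j}+\sum_{j=1}^{k}\mathscr{P}_{j+1}\cdot \mathfrak{d}^{k+1-j}.\]
Combining the two contributions and re-indexing produces the desired sum $\sum_{j=1}^{k+1}\mathscr{P}_j\cdot \mathfrak{d}^{k+2-j}$.

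The only point that requires a moment of care is the ``top order $\neq T^{k+1}$'' clause, which is the assertion at $j=1$. In the combined sum, the $j=1$ contribution has two pieces: the $\mathscr{P}_1\cdot \Xh\circ \mathfrak{d}^k$ from $[L,\mathfrak{d}]\circ \mathfrak{d}^k$, whose leading factor is $\Xh$, and the $\mathscr{P}_1\cdot \mathfrak{d}\circ \mathfrak{d}^k$ coming from $\mathfrak{d}\circ[L,\mathfrak{d}^k]$ at $j=1$, whose tail $\mathfrak{d}^k$ is already not $T^k$ by the induction hypothesis and so $\mathfrak{d}\circ \mathfrak{d}^k\neq T^{k+1}$. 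Thus no purely $T^{k+1}$ top-order term can be generated, closing the induction. The main (minor) obstacle is purely bookkeeping — ensuring the index shifts in the two sums match and that the exclusion of $T^k$ at top order is preserved through each step — but no genuine analytic difficulty arises.
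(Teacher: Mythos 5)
Your proposal is correct and follows essentially the same route as the paper: induction on $k$ via the decomposition $[L,\mathfrak{d}\circ\mathfrak{d}^k]=[L,\mathfrak{d}]\circ\mathfrak{d}^k+\mathfrak{d}\circ[L,\mathfrak{d}^k]$, the base observation $[L,\mathfrak{d}]=\mathscr{P}_1\cdot\Xh$ (with $[L,X]=0$), and the schematic rule $\mathfrak{d}(\mathscr{P}_j)=\mathscr{P}_{j+1}$. Your explicit tracking of why no $T^{k+1}$ top-order term can arise is just a slightly more detailed version of the paper's appeal to the induction hypothesis.
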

\begin{proof}
We prove by induction on $k$. It is clear that \eqref{eq:commutator:schematic L commute with d^k} holds for $k=1$. If it holds for $k$, the following provides a proof for the case $k+1$:
\begin{align*}
[L,\mathfrak{d}^{k+1}]x&=[L,\mathfrak{d}]\mathfrak{d}^k(x)+\mathfrak{d}\big([L,\mathfrak{d}^k]x\big)={\mathscr{P}}_1 \cdot \mathfrak{d}\big(\mathfrak{d}^k(x)\big)+\mathfrak{d}\big(\sum_{j=1}^k {\mathscr{P}}_j \cdot \mathfrak{d}^{k+1-j}\big)\\
&={\mathscr{P}}_1 \cdot \mathfrak{d}^{k+1}(x)+\sum_{j=1}^k \big(\underbrace{\mathfrak{d}({\mathscr{P}}_j)}_{{\mathscr{P}}_{j+1}}\cdot \mathfrak{d}^{k+1-j}+{\mathscr{P}}_j \cdot \mathfrak{d}^{k+2-j}\big)\\
&=\sum_{j=1}^{k+1} {\mathscr{P}}_j \cdot \mathfrak{d}^{k+2-j}.
\end{align*}
From the induction hypothesis, it is clear that $\mathfrak{d}^{k+1} \neq T^{k+1}$. This proves the formula \eqref{eq:commutator:schematic L commute with d^k}.
\end{proof}

\begin{lemma} \label{lem: L poly_n}For all $n\geqslant 1$ and ${\mathscr{P}}_n$, we have $L\left({\mathscr{P}}_n\right)=\frac{1}{\kappa}{\mathscr{P}}_{n+1}$.
\end{lemma}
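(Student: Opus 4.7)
The natural approach is to reduce the claim to a statement about each generator of the ring $\mathbb{R}[\mathfrak{Z}_{\leqslant n}]$. Because $L$ is a derivation and the algebraic rules \eqref{eq:schematic algebraic rules} show that the symbol $\mathscr{P}_\bullet$ is closed under products and sums, it suffices to prove $L(z) = \frac{1}{\kappa}\mathscr{P}_{n+1}$ for each single generator $z \in \mathfrak{Z}_{\leqslant n}$. The generators come in two flavors: (I) iterated derivatives $\mathfrak{d}^k(x)$ with $x \in \mathfrak{X}_0$ and $0\leqslant k \leqslant n$, and (II) quotients $\frac{\mathfrak{d}^k(\kappa)}{\kappa}$ with $1 \leqslant k \leqslant n-1$.

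For the Type I generators I would commute $L$ past $\mathfrak{d}^k$ using \eqref{eq:commutator:schematic L commute with d^k}:
\[ L\bigl(\mathfrak{d}^k(x)\bigr) = \mathfrak{d}^k\bigl(L(x)\bigr) + \sum_{j=1}^k \mathscr{P}_j\cdot \mathfrak{d}^{k+1-j}(x). \]
The commutator sum is obviously $\mathscr{P}_{k+1}$. For $\mathfrak{d}^k(L(x))$, split on the two subsets of $\mathfrak{X}_0$ via \eqref{eq:L x x from mathfrak X}: if $x\in \mathfrak{X}_{0,2}$ then $L(x)=\mathscr{P}_1$, so $\mathfrak{d}^k(L(x))=\mathscr{P}_{k+1}$; if $x\in \mathfrak{X}_{0,1}$ then $L(x)=\frac{1}{\kappa}\mathscr{P}_1$, and a side induction (see below) gives $\mathfrak{d}^k(\frac{1}{\kappa}\mathscr{P}_1) = \frac{1}{\kappa}\mathscr{P}_{k+1}$. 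To combine the two outputs one uses the simple identity $\mathscr{P}_{k+1} = \frac{1}{\kappa}(\kappa\,\mathscr{P}_{k+1}) = \frac{1}{\kappa}\mathscr{P}_{k+1}$, which is legitimate because $\kappa \in \mathfrak{X}_{0,2}\subset \mathfrak{Z}_{\leqslant n+1}$. This settles Type I.

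For the Type II generators the Leibniz rule gives
\[ L\!\left(\frac{\mathfrak{d}^k(\kappa)}{\kappa}\right) \;=\; \frac{L\bigl(\mathfrak{d}^k(\kappa)\bigr)}{\kappa} \;-\; \frac{\mathfrak{d}^k(\kappa)}{\kappa}\cdot\frac{L(\kappa)}{\kappa}. \]
The first term is $\frac{1}{\kappa}\mathscr{P}_{k+1}$ by Type I applied to $\kappa$. The second term factors as $\frac{1}{\kappa}\bigl(\frac{\mathfrak{d}^k(\kappa)}{\kappa}\cdot L(\kappa)\bigr)$; here the constraint $k\leqslant n-1$ is precisely what ensures $\frac{\mathfrak{d}^k(\kappa)}{\kappa} \in \mathfrak{Z}_{\leqslant n}$, so it lies in $\mathscr{P}_n$, and multiplying by $L(\kappa)=\mathscr{P}_1$ keeps us in $\mathscr{P}_n \subset \mathscr{P}_{n+1}$. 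The auxiliary claim $\mathfrak{d}^k(\frac{1}{\kappa}\mathscr{P}_1)=\frac{1}{\kappa}\mathscr{P}_{k+1}$ used above is proved by induction on $k$, the inductive step being $\mathfrak{d}(\frac{1}{\kappa}\mathscr{P}_{k+1}) = \frac{1}{\kappa}\mathfrak{d}(\mathscr{P}_{k+1}) - \frac{1}{\kappa}\mathscr{P}_{k+1}\cdot\frac{\mathfrak{d}(\kappa)}{\kappa}$, in which $\frac{\mathfrak{d}(\kappa)}{\kappa}\in \mathfrak{Z}_{\leqslant k+2}$ is exactly an allowed generator for $\mathscr{P}_{k+2}$.

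\textbf{Main obstacle.} The subtle point is bookkeeping: the ring $\mathbb{R}[\mathfrak{Z}_{\leqslant n}]$ does not contain $\kappa^{-1}$ as a generator, only the \emph{normalized} quotients $\frac{\mathfrak{d}^k\kappa}{\kappa}$ with $k\leqslant n-1$. Thus every time Leibniz produces a $\kappa^2$ in the denominator one must repackage it as $\frac{1}{\kappa}\cdot \frac{\mathfrak{d}^k\kappa}{\kappa}$, and this only succeeds because differentiating $\frac{\mathfrak{d}^k\kappa}{\kappa}$ raises the order index by exactly one — matching the jump $\mathscr{P}_n \to \mathscr{P}_{n+1}$ in the statement. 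The upper bound $k\leqslant n-1$ in the definition of $\mathfrak{Z}_{\leqslant n}$ is tight: it is precisely what is needed so that the second term in the Type II computation lives in $\mathscr{P}_{n+1}$. Once this accounting is done, the remainder of the proof is purely mechanical application of Leibniz and the algebraic rules~\eqref{eq:schematic algebraic rules}.
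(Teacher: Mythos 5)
Your proof is correct and takes essentially the same approach as the paper: both reduce to the multiplicative building blocks of $\mathscr{P}_n$ (you argue generator-by-generator via the derivation property of $L$; the paper argues monomial-by-monomial via Leibniz, which is the same reduction), split into the two types $\mathfrak{d}^k(x)$ and $\frac{\mathfrak{d}^k(\kappa)}{\kappa}$, and handle the first with the commutator formula \eqref{eq:commutator:schematic L commute with d^k} plus \eqref{eq:L x x from mathfrak X} and the second directly by Leibniz. Your bookkeeping of the $\kappa^{-1}$ factors (in particular the repackaging $\mathscr{P}_{k+1}=\frac{1}{\kappa}\kappa\,\mathscr{P}_{k+1}$ and the side induction showing $\mathfrak{d}^k\bigl(\frac{1}{\kappa}\mathscr{P}_1\bigr)=\frac{1}{\kappa}\mathscr{P}_{k+1}$) is actually slightly more careful than the paper's, which compresses that step into the remark ``$\mathfrak{d}(\kappa^{-1})=\kappa^{-1}\mathscr{P}_1$.''
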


\begin{proof}
By definition, each ${\mathscr{P}}_{n}$ can be written as a linear combination of monomials. Each such monomial $\mathfrak{m}$ can be written as the following form of product:
\[\mathfrak{m}=\mathfrak{d}^{i_1}(x_1)\cdot\mathfrak{d}^{i_2}(x_2) \cdots \mathfrak{d}^{i_s}(x_s) \cdot\frac{\mathfrak{d}^{j_1}(\kappa)}{\kappa}\cdot \frac{\mathfrak{d}^{j_2}(\kappa)}{\kappa} \cdots  \frac{\mathfrak{d}^{j_t}(\kappa)}{\kappa},\]
where $x_1,\cdots, x_s\in \mathfrak{X}_0$ with $\displaystyle\max_{a \leqslant s \atop \\ b\leqslant t}\big(i_a,j_b+1\big)\leqslant n$. According to the Leibniz rule, it suffices to understand each $L\big(\mathfrak{d}^{i_a}(x_a)\big)$ and $L\big(\frac{\mathfrak{d}^{j_b}(\kappa)}{\kappa}\big)$ term. Indeed, by \eqref{eq:L x x from mathfrak X} and \eqref{eq:commutator:schematic L commute with d^k}, we have
\begin{align*}
L\left(\mathfrak{d}^{i_a}x_a\right)&=\mathfrak{d}^{i_a}\left(L(x_a)\right)+\sum_{i'=1}^{i_a} {\mathscr{P}}_{i'} \cdot \mathfrak{d}^{i_a+1-{i'}}(x_a)=\mathfrak{d}^{i_a}\big(\frac{1}{\kappa}{\mathscr{P}}_{1}\big)+\sum_{i'=1}^{i_a} {\mathscr{P}}_{i_a}\\
&= \frac{1}{\kappa}{\mathscr{P}}_{i_a+1}.
\end{align*}
In the above calculations, we used the fact that $\mathfrak{d}(\kappa^{-1})=\kappa^{-1}{\mathscr{P}}_{1}$. We also have
\begin{align*}
L\big(\frac{\mathfrak{d}^{j_b}(\kappa)}{\kappa}\big)&=-\frac{L\kappa}{\kappa}\frac{\mathfrak{d}^{j_b}\kappa}{\kappa}+\frac{1}{\kappa}L\left(\mathfrak{d}^{j_b}(\kappa)\right).
\end{align*}
Similar to the computations for $L\left(\mathfrak{d}^{i_a}x_a\right)$, we have $L\kappa={\mathscr{P}}_{1}$ and $L\left(\mathfrak{d}^{j_b}(\kappa)\right)={\mathscr{P}}_{j_b+1}$. Thus,
\begin{align*}
L\big(\frac{\mathfrak{d}^{j_b}(\kappa)}{\kappa}\big)&=\frac{1}{\kappa}{\mathscr{P}}_{j_b+1}.
\end{align*}
Therefore, $L(\mathfrak{m})=\frac{1}{\kappa}{\mathscr{P}}_{n+1}$ for each monomial $\mathfrak{m}$ in the ${\mathscr{P}}_{n}$. This completes the proof of the lemma.
\end{proof}
\subsubsection{Euler equations in the diagonal schematic forms}
We follow the notations used in \eqref{eq:Euler in diagonalized form}. We rewrite the Euler equations \eqref{eq:Euler in diagonalized form} in the schematic form:
\begin{equation}\label{eq:Euler:L^n U in diagonalized form:n=1}
L(U)=\frac{c}{\kappa}\Lambda\cdot {T}(U)+\frac{c}{\kappa}\Lambda P^{-1} {T}(P)\cdot U+{\mathscr{P}}_1.
\end{equation}
\begin{remark}\label{rem:structure in LU}
According to \eqref{eq:Euler in diagonalized form}, we have
\[{\mathscr{P}}_1=cP^{-1} B P \cdot \Xh(U)-\big(P^{-1}L(P)-cP^{-1}B \Xh(P)\big)\cdot U.\]
In view of  \eqref{structure eq 3: L T on Ti Xi Li}, $\mathscr{P}_1$ does not involve  terms of the form $T(x)$ with $x\in \mathfrak{X}_{0,1}$. In particular, there are no terms of the form $T(U^{(\lambda)})$ for eigenvalues $\lambda \in \{0,-1,-2\}$. 
 \end{remark}

The next lemma provides computations for multiple $L$-derivatives on $U$:
\begin{lemma}\label{lemma:  L^n U computation}
For all integer $n\geqslant 1$, we have
\begin{equation}\label{eq:Euler:L^n U in diagonalized form}
\begin{split}
L^n (U)&=\underbrace{\big(\frac{c}{\kappa}\Lambda\big)^n\cdot {T}^n(U)}_{\mathbf{P}_{n,0}}+\underbrace{\frac{1}{\kappa^n}\sum_{k =1}^{n-1}{\mathscr{P}}_{n-k} \cdot T^k(U)}_{\mathbf{P}_{n,1}}+\underbrace{\frac{1}{\kappa^n}\sum_{k=1}^n {\mathscr{P}}_{n-k} \cdot {T}^k (P) }_{\mathbf{P}_{n,2}} +\underbrace{\frac{1}{\kappa^{n-1}}{\mathscr{P}}_n}_{\mathbf{P}_{n,3}},
\end{split}
\end{equation}
where $\mathbf{P}_{n,i}$ ($i=1,2,3,4$) denotes the four terms on the righthand side. Moreover, there is no $T^n(x)$ appearing in the last term $\mathbf{P}_{n,3}$ for $x\in \mathfrak{X}_{0,1}$.
\end{lemma}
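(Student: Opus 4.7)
\textbf{Proof proposal for Lemma \ref{lemma: L^n U computation}.} The plan is to prove the formula by induction on $n$. The base case $n=1$ is essentially the diagonalized Euler system \eqref{eq:Euler:L^n U in diagonalized form:n=1}: term by term one has $\mathbf{P}_{1,0}=\frac{c}{\kappa}\Lambda T(U)$, $\mathbf{P}_{1,1}=0$ (empty sum), $\mathbf{P}_{1,2}=\frac{1}{\kappa}(c\Lambda P^{-1})\cdot T(P)\cdot U$ with $c\Lambda P^{-1}U\in\mathscr{P}_0$, and $\mathbf{P}_{1,3}=\mathscr{P}_1$. The assertion that $\mathbf{P}_{1,3}$ contains no $T(x)$ for $x\in\mathfrak{X}_{0,1}$ is exactly Remark \ref{rem:structure in LU}.

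For the inductive step, assume \eqref{eq:Euler:L^n U in diagonalized form} holds at level $n$ and apply $L$ to each of the four pieces. The engine is: (i) Leibniz rule; (ii) the commutator identity \eqref{eq:commutator:schematic L commute with d^k}, which lets us write $LT^k=T^kL+\sum_{j=1}^{k}\mathscr{P}_j\cdot\mathfrak{d}^{k+1-j}$ with the top-order operator on the right never equal to $T^{k}$; (iii) Lemma \ref{lem: L poly_n}, giving $L(\mathscr{P}_m)=\frac{1}{\kappa}\mathscr{P}_{m+1}$; and (iv) the base-case Euler equation \eqref{eq:Euler:L^n U in diagonalized form:n=1}, invoked every time an $L(U)$ arises after commuting $L$ past some $T^k$. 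The leading contribution comes from $L(\mathbf{P}_{n,0})$: writing $L T^n(U) = T^n(L U) + [L,T^n](U)$ and inserting \eqref{eq:Euler:L^n U in diagonalized form:n=1}, the top term $T^n\!\left(\frac{c}{\kappa}\Lambda T(U)\right)$ combined with $\left(\frac{c}{\kappa}\Lambda\right)^n$ in front yields $\left(\frac{c}{\kappa}\Lambda\right)^{n+1}T^{n+1}(U)=\mathbf{P}_{n+1,0}$, while all remaining terms (products of $T^j$ applied to $\frac{c}{\kappa}\Lambda$, the commutator residue, the image of $\frac{c}{\kappa}\Lambda P^{-1}T(P)U$ under $T^n$, and $T^n\mathscr{P}_1$) fall into the three lower-order buckets.

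The remaining pieces $L(\mathbf{P}_{n,1}), L(\mathbf{P}_{n,2})$ are handled analogously: one differentiates the rational coefficient $\kappa^{-n}$ using $L\kappa=\mathscr{P}_1$ (producing an extra $\kappa^{-n-1}\mathscr{P}_1$ factor), differentiates the schematic $\mathscr{P}_{n-k}$ factor using Lemma \ref{lem: L poly_n}, and commutes the resulting $L$ past $T^k$. All produced terms are of the form $\kappa^{-(n+1)}\mathscr{P}_{n+1-k}\cdot T^{k}(U)$ with $1\leqslant k\leqslant n$, or $\kappa^{-(n+1)}\mathscr{P}_{n+1-k}\cdot T^k(P)$ with $1\leqslant k\leqslant n+1$, fitting into $\mathbf{P}_{n+1,1}$ and $\mathbf{P}_{n+1,2}$ respectively; any residual terms with no explicit $T^k(U)$ or $T^k(P)$ factor go into $\mathbf{P}_{n+1,3}$. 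The last piece $L(\mathbf{P}_{n,3})=L(\kappa^{-(n-1)}\mathscr{P}_n)$ is immediately $\kappa^{-n}\mathscr{P}_{n+1}$ by Lemma \ref{lem: L poly_n} and $L\kappa=\mathscr{P}_1$, contributing purely to $\mathbf{P}_{n+1,3}$.

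The main obstacle, and the place where care is required, is verifying the refined structural statement that $\mathbf{P}_{n+1,3}$ contains no $T^{n+1}(x)$ with $x\in\mathfrak{X}_{0,1}$. This is where the two ``forbidden-top-operator'' clauses we rely on play their role: in \eqref{eq:commutator:schematic L commute with d^k} the top-order commutator operator is never $T^{k}$, and in the base equation \eqref{eq:Euler:L^n U in diagonalized form:n=1} the schematic $\mathscr{P}_1$ already excludes $T(x)$ for $x\in\mathfrak{X}_{0,1}$ (Remark \ref{rem:structure in LU}). Any candidate term in the expanded expression that would produce $T^{n+1}(x)$ with $x\in\mathfrak{X}_{0,1}$ must either (a) have come from applying $T^{k}$ to such an $x$ directly, in which case the operator is $T^{k}$ and our inductive hypothesis/commutator bound excludes it, or (b) have survived from the residual $\mathscr{P}_1$ piece of the Euler equation, which by Remark \ref{rem:structure in LU} does not contain any $T(x)$ with $x\in\mathfrak{X}_{0,1}$ to begin with. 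Tracing each category of terms through the expansion and confirming that the surviving ``$T^{n+1}(U)$'' and ``$T^{n+1}(P)$'' pieces are absorbed into $\mathbf{P}_{n+1,0}$ and $\mathbf{P}_{n+1,2}$ respectively closes the induction.
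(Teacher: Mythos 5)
Your proposal is correct and follows essentially the same route as the paper: induction on $n$, with the base case coming from \eqref{eq:Euler:L^n U in diagonalized form:n=1} and Remark \ref{rem:structure in LU}, and the inductive step carried out by applying $L$ to each of the four pieces, using Lemma \ref{lem: L poly_n}, the commutator identity \eqref{eq:commutator:schematic L commute with d^k}, and a re-insertion of the first-order equation \eqref{eq:Euler:L^n U in diagonalized form:n=1} to handle $T^k(LU)$. The only cosmetic difference is in grouping: the paper treats $L(\mathbf{P}_{n,0})$ and $L(\mathbf{P}_{n,1})$ together (since both produce $L(T^k U)$ terms that require the same substitution), whereas you isolate $\mathbf{P}_{n,0}$ as the leading term and then bundle $\mathbf{P}_{n,1}$ with $\mathbf{P}_{n,2}$; both bookkeeping schemes track the same terms, and your account of the ``no $T^{n+1}(x)$, $x\in\mathfrak{X}_{0,1}$'' clause via the two forbidden-top-operator observations matches the paper's reasoning.
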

\begin{proof}
 We do induction on $n$ to prove \eqref{eq:Euler:L^n U in diagonalized form}. In view of Remark \ref{rem:structure in LU}, the basic case for $n=1$ is clear. We now assume that \eqref{eq:Euler:L^n U in diagonalized form} holds for $n$ and we will prove it for $n+1$. 

We apply $L$ on both sides of \eqref{eq:Euler:L^n U in diagonalized form} and we treat the terms on the righthand side one by one.

First of all, we consider $L(\mathbf{P}_{n,0})$ and $L(\mathbf{P}_{n,1})$. By Leibniz rule and  Lemma \ref{lem: L poly_n}, if the derivative $L$ does not hit on $T^k(U)$,  it contributes terms into $\mathbf{P}_{n+1,1}$. Therefore, we have
\[L(\mathbf{P}_{n,0})+L(\mathbf{P}_{n,1})=\big(\frac{c}{\kappa}\Lambda\big)^n\cdot L\big({T}^n(U)\big)+\frac{1}{\kappa^n}\sum_{k =1}^{n-1}{\mathscr{P}}_{n-k} \cdot L\big(T^k(U)\big)+\mathbf{P}_{n+1,1}.\]
In view of  the commutator formula \eqref{eq:commutator:schematic L commute with d^k}, we have
\begin{align*}
L\left({T}^{n}(U)\right)&=T^n\left(L(U)\right)+\sum_{j=1}^n {\mathscr{P}}_j \cdot \underbrace{\mathfrak{d}^{n+1-j}(U)}_{={\mathscr{P}}_n}=T^n\left(L(U)\right)+{\mathscr{P}}_n.
\end{align*}
The last ${\mathscr{P}}_n$ will be assorted into $\mathbf{P}_{n+1,3}$. It remains to understand $T^{n}\left(L(U)\right)$. We then use \eqref{eq:Euler:L^n U in diagonalized form:n=1} to replace $L(U)$. By the Leibniz rule and ignoring the irrelevant constants,  we compute that
\begin{equation*}
\begin{split}
T^{n}\left(L(U)\right)&=T^n\big(\frac{c}{\kappa}\Lambda\cdot {T}(U)+\frac{c}{\kappa}\Lambda P^{-1} {T}(P)\cdot U+{\mathscr{P}}_1\big)\\
&=\underbrace{\Lambda\!\!\!\sum_{i_1+i_2+i_3=n} T^{i_1}(c)T^{i_2}\big(\frac{1}{\kappa}\big) T^{i_3+1}(U)}_{\mathbf{I_{1}}}+\underbrace{\sum_{i_1+i_2=n}T^{i_1}\big(\frac{1}{\kappa}\big) T^{i_2}\big({c}\Lambda P^{-1} {T}(P)\cdot U\big)}_{\mathbf{I_{2}}}+{\mathscr{P}}_{n+1}.
\end{split}
\end{equation*}
By the definition of the polynomial ring $\mathbb{R}[\mathfrak{Z}_{\leqslant n}]$, we have
\[T\big(\frac{1}{\kappa}\big)=-\frac{T\kappa}{\kappa^2}=-\frac{1}{\kappa}\cdot \frac{T\kappa}{\kappa}=\frac{1}{\kappa}\cdot {\mathscr{P}}_1.\]
Therefore, for all $k\geqslant 1$, we have $T^k\left(\frac{1}{\kappa}\right)=\frac{1}{\kappa}\cdot {\mathscr{P}}_k$. Hence,
\[\mathbf{I}_1=\frac{c}{\kappa}\Lambda\cdot {T}^{n+1}(U)+\frac{1}{\kappa}\sum_{j =1}^{n}{\mathscr{P}}_{n+1-j} \cdot T^j(U).\]
For $\mathbf{I}_2$, we have
\begin{align*}
\mathbf{I}_2&=\sum_{i_1+i_2=n}  \frac{{\mathscr{P}}_{i_1}}{\kappa} T^{i_2}\left({c}\Lambda P^{-1} {T}(P)\cdot U\right)=\sum_{j=1}^{n+1} \frac{{\mathscr{P}}_{n+1-j}}{\kappa} {T}^j(P).
\end{align*}
Hence,\[L(\mathbf{P}_{n,0})+L(\mathbf{P}_{n,1})=\big(\frac{c}{\kappa}\Lambda\big)^{n+1}\cdot {T}^{n+1}(U)+\mathbf{P}_{n+1,1}+\mathbf{P}_{n+1,2}+\mathbf{P}_{n+1,3}+\frac{1}{\kappa^n}{\mathscr{P}}_{n+1}.\]
We also notice that the only possible $T^{n+1}(x)$'s for $x\in \mathfrak{X}_{0,1}$ in the above computations appears in the first term of the righthand side of the above equation.

Secondly, we compute $L(\mathbf{P}_{n,2})$:
\begin{align*}
L(\mathbf{P}_{n,2})&=\sum_{k=1}^n \big(\frac{L\left({\mathscr{P}}_{n-k}\right) \cdot {T}^k (P)}{\kappa^n}+\frac{{\mathscr{P}}_{n-k} \cdot L\left({T}^k (P)\right) }{\kappa^n}-\frac{nL(\kappa){\mathscr{P}}_{n-k} \cdot {T}^k (P)}{\kappa^{n+1}}\big)\\
&=\mathbf{P}_{n+1,2}+\sum_{k=1}^n \frac{{\mathscr{P}}_{n-k} \cdot L\left({T}^k (P)\right) }{\kappa^n}
\end{align*}
where we used Lemma \ref{lem: L poly_n}. In order to compute $L\left({T}^k (P)\right)$ for $1\leqslant k \leqslant n$, we use \eqref{eq:commutator:schematic L commute with d^k} to derive
\begin{align*}
{\color{black}L\left({T}^{k}(P)\right)}&=T^k\left(L(P)\right)+\sum_{j=1}^k {\mathscr{P}}_j \cdot \underbrace{\mathfrak{d}^{k+1-j}(P)}_{={\mathscr{P}}_k}=T^k\left(L(P)\right)+{\mathscr{P}}_k.
\end{align*}
By \eqref{structure eq 3: L T on Ti Xi Li} and the definition of $P$,  $L(P)=\mathscr{P}_1$. Hence, $L\left({T}^{k}(U)\right)={\mathscr{P}}_{k+1}$. Therefore,
\begin{align*}
L(\mathbf{P}_{n,2}) =\mathbf{P}_{n+1,2}+\frac{1}{\kappa^n}{\mathscr{P}}_{n+1}.
\end{align*}

Finally, we compute $L(\mathbf{P}_{n,3})$ as follows:
\begin{align*}
L(\mathbf{P}_{n,3})&=-(n-1)\frac{L(\kappa)}{\kappa^{n}}{\mathscr{P}}_n+\frac{1}{\kappa^{n-1}}L\left({\mathscr{P}}_n\right)=\frac{1}{\kappa^{n}}{\mathscr{P}}_n+\frac{1}{\kappa^{n}} {\mathscr{P}}_{n+1}=\frac{1}{\kappa^{n}} {\mathscr{P}}_{n+1}.
\end{align*}
It is of the form $\mathbf{P}_{n+1,3}$.

It is clear that in the computations of $L(\mathbf{P}_{n,2})$ and $L(\mathbf{P}_{n,3})$, there is no $T^{n+1}(x)$ type terms where $x\in \mathfrak{X}_{0,1}$. By putting the formulas of $L(\mathbf{P}_{n,i})$'s together, this completes the proof of the lemma.
\end{proof}
\begin{remark}\label{rem: contribution to P_n1}
In the inductive process, the terms in  $\mathbf{P}_{n+1,1}$ are from $L(\mathbf{P}_{n,0})$ and $L(\mathbf{P}_{n,1})$. There is no contribution from $L(\mathbf{P}_{n,2})$ and $L(\mathbf{P}_{n,3})$ to $\mathbf{P}_{n+1,1}$.
\end{remark}

\subsection{The formal Taylor series for diagonalized variables}\label{section: determine the jets}

Since $\Th$ is already fixed on $\Sigma_\delta$, in view of \eqref{eq:explicit formula for U}, the construction of $C^N$ data $(v,c)$ for rarefaction waves (see Definition \ref{def:rare data}) is equivalent  to the construction of $U$ on $\Sigma_\delta$.

For the given integer $N\geqslant 1$, we consider a {\bf formal} finite Taylor series of $U$ on $\Sigma_{\delta}$ where we expand $U$ in the variable $u$:
\begin{equation}\label{Taylor expansions of U up to order N}
\begin{cases}
&U^{(0)}(u,\vartheta) = U^{(0)}_0(\vartheta) + U^{(0)}_1(\vartheta) u +  \cdots+ \frac{U^{(0)}_N(\vartheta)}{N!} u^N,\\
&U^{(-1)}(u,\vartheta) = U^{(-1)}_0(\vartheta) + U^{(-1)}_1(\vartheta) u +  \cdots+ \frac{U^{(-1)}_N(\vartheta)}{N!} u^N,  \\
&U^{(-2)}(u,\vartheta) = U^{(-2)}_0(\vartheta) + U^{(-2)}_1(\vartheta) u +  \cdots+ \frac{U^{(-2)}_N(\vartheta)}{N!} u^N.
\end{cases}
\end{equation}
Since $T=\frac{\partial}{\partial u}$, for $\lambda \in \{0,-1,-2\}$ and $0\leqslant n\leqslant N$, we have $T^n\big(U^{(\lambda)}\big)\big|_{u=0}=U^{(\lambda)}_n(\vartheta)$. We emphasize that the above Taylor series of $U$ are formal in the sense that it only determines the jets of $U^{(\lambda)}$'s of order at most $N$ at $S_{\delta,0}$.

Let $U^{(\lambda)}{}_r$ be the diagonal form of the solution of Euler equations with data $(v_r,c_r)$ in the domain $\mathcal{D}_0$. We observe that $\Th$ is indeed smooth across $C_0$. In view of \eqref{eq:Euler in diagonalized form}, the conditions  C2) and C3)  in Definition \ref{def:rare data} is equivalent to the following ones:
\begin{equation}\label{eq:matching condition}
\begin{cases}
U^{(\lambda)}\big|_{S_{\delta,0}}=U^{(\lambda)}{}_r\big|_{S_{\delta,0}} \ \  &\text{(continuity on boundary)},\\
L^n\big(U^{(\lambda)}\big)\big|_{S_{\delta,0}}=L^n\big(U^{(\lambda)}{}_r\big)\big|_{S_{\delta,0}}, \  1\leqslant n\leqslant N  \ \ &\text{(higher order matching)},
\end{cases}
\end{equation}
where $\lambda\in \{0,-1,-2\}$.
\begin{remark}
	We notice that, since $X$ is tangential to $\Sigma_\delta$ and $[L,X]=0$,  in order to prove \eqref{compatibility condition 3}, it suffices to prove for $n=0$ and $m\leqslant N$.
\end{remark}

\begin{remark}[Taylor coefficients for $U^{(0)}$]
We impose the Taylor coefficients of $U^{(0)}$ as follows:
\begin{equation}\label{eq:Taylor coefficients of U0}
U^{(0)}_{1}(\vartheta)\equiv -\frac{2}{\gamma+1}, \ \ 
U^{(0)}_{k}(\vartheta)\equiv 0,  \ \  2\leqslant k \leqslant N.
\end{equation}
Therefore, the formal Taylor expansion of $U^{(0)}$ is as follows:
\begin{equation}\label{eq: formula for U0}
U^{(0)}(u,\vartheta) = U^{(\lambda)}{}_r(\vartheta) -\frac{2}{\gamma+1} u .
\end{equation}
The coefficients $U^{(0)}_{k}(\vartheta)$'s for $k\geqslant 1$ indeed can be freely prescribed. Our choice is motivated by the data at the singularity $\mathbf{S}_*$  \eqref{eq:data at the singularity} where we show that higher $U^{(0)}_{k}(\vartheta)\equiv 0$ for $k\geqslant 2$. 
The choice of $U^{(0)}_{1}(\vartheta)$ is  indispensable in the nonlinear energy estimates in our first paper \cite{LuoYu1}, guaranteeing that $\mathring{z}$ is of size $O(\varepsilon)$. 
\end{remark}
The choice of $U^{(0)}_{k}(\vartheta)$ is also related to the one dimensional picture depicted as follows:
\begin{center}
	\includegraphics[width=2.5in]{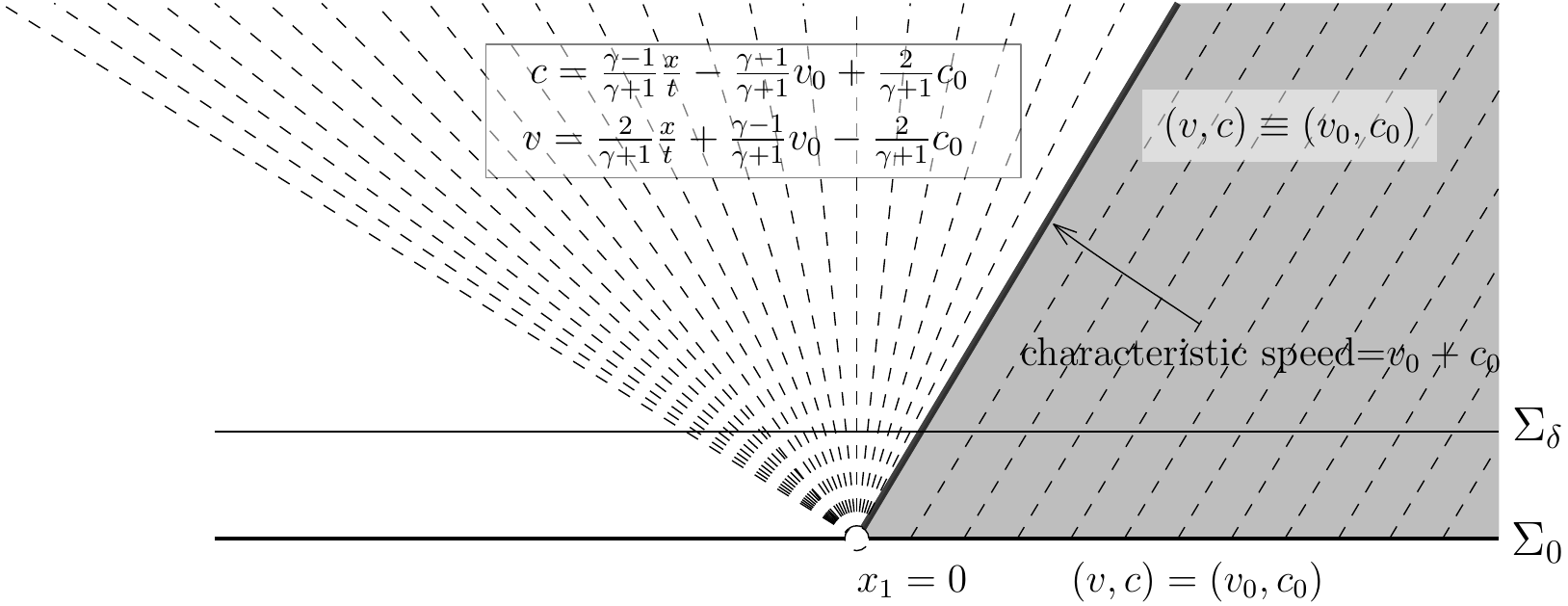}
\end{center}
The unique family of front  rarefaction waves in the dashed region connected to the constant data $(v,c)\big|_{t=0}=(v_0,c_0)$ on $x\geqslant 0$ is 
given by \eqref{eq: precise solution for front rarefaction waves}, or equivalent,
\begin{equation}\label{eq:1D-rarefaction-wave}
	\begin{cases}
		v&=\frac{2}{\gamma+1}\frac{x}{t}+\big(\frac{\gamma-1}{\gamma+1}v_0-\frac{2}{\gamma+1}c_0\big),\\
		c&=\frac{\gamma-1}{\gamma+1}\frac{x}{t}-\big(\frac{\gamma-1}{\gamma+1}v_0-\frac{2}{\gamma+1}c_0\big).
	\end{cases}
\end{equation}
The acoustical function $u$ and vector field $T$ are given by $u=-\frac{x}{t}$ and  $T=-t\partial_x$ on $\Sigma_t$. We compute that  \begin{equation}\label{eq:1D-rarefaction-wave diagonalized}
	U^{(0)}=\frac{1}{2}\big[\frac{4}{\gamma+1}\frac{x}{t}+\frac{\gamma-3}{\gamma-1}\big(\frac{\gamma-1}{\gamma+1}v_0-\frac{2}{\gamma+1}c_0\big)\big].
\end{equation}
Therefore, we have $T\big(U^{(0)}\big)=-\frac{2}{\gamma+1}$. 

\subsubsection{The inductive argument}\label{subsubsection:inductive construction of Taylor coefficients}
We use an induction argument on $n$ to construct Taylor coefficients $U^{(\lambda)}_n$ for $\lambda=-1,-2$ so that \eqref{eq:matching condition} is satisfied.  The first equation in \eqref{eq:matching condition} determines the case for $n=0$.

We make the following inductive hypothesis: for all $k\leqslant n$, $U^{(\lambda)}_k$'s have been constructed in such a way that $\slashed{\mathfrak{d}}^k\big(U^{(\lambda)}\big)=\slashed{\mathfrak{d}}^k\big(U^{(\lambda)}{}_r\big)$ holds on $S_{\delta,0}$ for all possible $\slashed{\mathfrak{d}}\in \slashed{\mathfrak{D}}=\big\{L,\Xh,X\big\}$. The notation $\slashed{\mathfrak{d}}^k$ denotes any possible differential operator $\slashed{\mathfrak{d}}_1\circ\slashed{\mathfrak{d}}_2\circ\cdots \circ\slashed{\mathfrak{d}}_k$ with $k\geqslant 1$, $\slashed{\mathfrak{d}}_i \in \mathfrak{D}$, $i\leqslant k$. In particular, we have $L^k\big(U^{(\lambda)}\big)=L^k\big(U^{(\lambda)}{}_{r}\big)$ on $S_{\delta,0}$ for all $k\leqslant n$. 

To determine the $(n+1)$-th Taylor coeffients,  we use the condition that $L^{n+1}\big(U^{(\lambda)}\big)=L^{n+1}\big(U^{(\lambda)}\big){}_{r}$  on $S_{\delta,0}$. By the formula \eqref{eq:Euler:L^n U in diagonalized form}, for each $\lambda \in \{0,-1,-2\}$, we have
\begin{equation}\label{eq:improve on mu}
L^{n+1} (U^{(\lambda)})\big|_{S_{\delta,0}}= \big(\frac{c}{\kappa}\Lambda\big)^{n+1}\cdot {T}^{n+1}(U^{(\lambda)})+ \mathbf{P}_{n+1,1}+ \mathbf{P}_{n+1,2}+ \mathbf{P}_{n+1,3}.
\end{equation}
According to the conclusion of Lemma \ref{lemma:  L^n U computation}, except for $T^{n+1}(P)$, the number of $T$ derivatives in each single term appearing in $\mathbf{P}_{n+1,1}$, $\mathbf{P}_{n+1,2}$ and $\mathbf{P}_{n+1,3}$ is at most $n$. Therefore, they are already determined by the 
the first $n$ Taylor coefficients. For $T^{n+1}(P)$, since $P$ is determined $\Th^1$ and $\Th^2$ which are already given (once the function $u$ is given) on $\Sigma_{\delta}$, this term is also known. Therefore, the last three terms on the right of \eqref{eq:improve on mu} have already been determined by the inductive hypothesis. On the other hand, the argument also shows that adding the $n+1$'s Taylor term does not change the value of $L^k\left(U^{(\lambda)}\right)$ terms on $S_{\delta,0}$ for $k\leqslant n$.

Therefore, subject to the matching conditions $L^{n+1}\big( U^{(\lambda)}\big)\big|_{S_{\delta,0}}=L^{n+1}\big(U^{(\lambda)}{}_{r}\big)\big|_{S_{\delta,0}}$, for the nonzero eigenvalues $\lambda=-1$ and $-2$, we set 
\begin{equation}\label{eq:aaa4}
U^{(\lambda)}_{n+1}(\vartheta)=\big(\frac{\kappa}{\lambda c}\big)^{n+1}\big( L^{n+1}(U^{(\lambda)}{}_{r})(\delta,0,\vartheta)- \mathbf{P}_{n+1,1}- \mathbf{P}_{n+1,2}- \mathbf{P}_{n+1,3}\big).
\end{equation}

For the eigenvalue $\lambda = 0$,  the Euler equations indeed guarantee that $L^{n+1}\big( U^{(0)}\big)\big|_{S_{\delta,0}}=L^{n+1}\big(U^{(0)}{}_{r}\big)\big|_{S_{\delta,0}}$ automatically holds. To see this, we take the component corresponding to $\lambda=0$ in \eqref{eq:Euler in diagonalized form}:
 \begin{equation}\label{eq: LU0}
 L\big(U^{(0)}\big)=\big(c P^{-1} B P \cdot \Xh(U)\big)^{(0)}+\big(\big(-P^{-1}L(P)+cP^{-1}B \Xh(P)\big)\cdot U\big)^{(0)}.
\end{equation}
where the upper index $(0)$ means that we take the first component of the vector. By applying \eqref{structure eq 3: L T on Ti Xi Li} to  $L(P)$, we see that the righthand side of the equation only involves $\Xh$-derivatives of the elements from $\mathfrak{X}_{0}$. Therefore, by applying the operator $L^{n}$ to \eqref{eq: LU0}, we obtain that 
\[L^{n+1}\big(U^{(0)}\big)={\mathscr{Q}}_{n+1}.\] 
This ${{\mathscr{Q}}}_{n+1}$ has a special structure: it is a polynomial of the unknowns from the set $\big\{L^j\widehat{X} (x)\big| x\in\mathfrak{X}_{0}, j\leqslant n\big\}$. In view of \eqref{structure eq 1: L kappa},  \eqref{structure eq 3: L T on Ti Xi Li},  \eqref{Euler equations:form 1},  for all $x\in \mathfrak{X}_{0,1}$, a term of the form $L^i\Xh^j (x)$ with total order at most $n$ only involves the Taylor coefficients (and their $\Xh$ or $X$ derivatives) up to order $n$, i.e., they are determined in an algebraic way by the $\Xh$ derivatives of  $U^{(\lambda)}_1(\vartheta),\cdots,U^{(\lambda)}_n(\vartheta)$ which are already given. Since we have $L^j(U)$ already matches with $L^j(U_{r})$ on $S_{\delta,0}$ for all $j\leqslant n$, we have $\Xh L^j(U)\big|_{S_{\delta,0}}={\color{black}\Xh L^j(U_r)}\big|_{S_{\delta,0}}$ for $j\leqslant n$. Therefore, by commuting derivatives, we obtain that $L^j\Xh (U)\big|_{S_{\delta,0}}= L^j\Xh (U_r)\big|_{S_{\delta,0}}$ for $j\leqslant n$. This shows $L^{n+1}\big( U^{(0)}\big)\big|_{S_{\delta,0}}=L^{n+1}\big(U^{(0)}{}_r\big)\big|_{S_{\delta,0}}$.

We also remark that for all possible derivatives $\slashed{\mathfrak{d}}^{n+1} U$ where $\slashed{\mathfrak{d}}=L$ or $\Xh$, we use the commutator formula \eqref{eq:commutator formulas} to move all $\Xh$ derivatives to the front of the expression. Since terms of the form $\widehat{X}^j L^i(U)$ already match with those from $\mathcal{D}_0$. Therefore,  each $\slashed{\mathfrak{d}}^{n+1} U$ match  with the corresponding one from the $\mathcal{D}_0$ side. This finishes the inductive argument.
\begin{remark}
Together with \eqref{eq:Taylor coefficients of U0}, the Taylor coefficients $U^{(\lambda)}_1(\vartheta),\cdots,U^{(\lambda)}_N(\vartheta)$ in \eqref{Taylor expansions of U up to order N} are uniquely determined by the solution $(v_r,c_r)$ on $\mathcal{D}_0$.
\end{remark}

\subsubsection{The extra power of $\kappa$}

As a byproduct of the previous construction, we show that, as long as there is a $T$-derivative, the derivatives of $U^{(-1)}$ and $U^{(-2)}$ on $S_{\delta,0}$ has an extra $\delta$ factor in $L^\infty$ norm. This is completely different from the bounds on the derivatives of $U^{(-1)}$ and $U^{(-2)}$ on $\mathcal{D}_0$ side. It manifests the nature of the centered rarefaction waves.

 For $\lambda = -1$ or $-2$, by  \eqref{eq:Euler in diagonalized form}, we have
\[
 L\big(U^{(\lambda)}\big)=\frac{c}{\kappa}\lambda\cdot {T}\big(U^{(\lambda)}\big)+\big(cP^{-1} B P \cdot \Xh(U)\big)^{(\lambda)}+\big(\big( \frac{c}{\kappa}\Lambda P^{-1} {T}(P)-P^{-1}L(P)+cP^{-1}B \Xh(P)\big)\cdot U\big)^{(\lambda)},
\]
where the upper index $(\lambda)$ indicates the corresponding components. We can use the schematic language to rewrite these equations as 
\begin{equation}\label{eq: extra kappa aux 1}
 \frac{\kappa}{\lambda c} L\big(U^{(\lambda)}\big)+\kappa \mathscr{P}_1= {T}\big(U^{(\lambda)}\big)+\big(\big( \lambda \Lambda P^{-1} {T}(P) \big)\cdot U\big)^{(\lambda)},
\end{equation}
where $\mathscr{P}_1$ only consists of $\Xh$ derivatives of $U$ (and other terms of order $0$). Since $\Xh(U)$ and $L(U)$ match with $\Xh(U_r)$ and $L(U_r)$ from $\mathcal{D}_0$, we have $\|\mathscr{P}_1\|_{L^\infty(S_{\delta,0})}\lesssim \delta \varepsilon$ and $\big\|L\big(U^{(\lambda)}\big)\big\|_{S_{\delta,0}}\lesssim   \varepsilon$. We also have $c\approx 1$ on $S_{\delta,0}$ thanks to the first condition in \eqref{eq:matching condition}. By Proposition \ref{prop: bounds on geometry on sigma delta 1}, we have $\|T(P)\|_{L^\infty(\Sigma_{\delta})}\lesssim \delta \varepsilon$, Hence, \eqref{eq: extra kappa aux 1} implies that
  \begin{equation}\label{eq: bound on T U on S delta 0}
 \| {T}\big(U^{(\lambda)}\big)\|_{L^\infty(S_{\delta,0})}\lesssim \delta \varepsilon, \ \ \lambda=-1,-2.
\end{equation}
In other words, the computation yields
\begin{equation}\label{eq: bound on U1}
\big\| U^{(\lambda)}_1(\vartheta)\big\|_{L^\infty_{\vartheta}}\lesssim \delta \varepsilon, \ \ \lambda=-1,-2.
\end{equation}
\begin{remark}
For all $l\geqslant 0$, we can apply $X^l$ to \eqref{eq: extra kappa aux 1} and use the same argument to derive:
\begin{equation}\label{eq: bound on U1 with X}
 \big\|X^l\big( U^{(\lambda)}_1(\vartheta)\big)\big\|_{L^\infty_{\vartheta}}\lesssim \delta \varepsilon, \ \ \lambda=-1,-2.
\end{equation}
\end{remark}
We now perform an induction argument on $n$ to show that for all $n\leqslant N$, the following two inequalities hold simultaneously:
\begin{equation}\label{eq: bound on Un}
 \big\| U^{(\lambda)}_n(\vartheta)\big\|_{L^\infty}\lesssim \delta \varepsilon, \ \ \lambda=-1,-2.
\end{equation}
and for all $\mathscr{P}_j$ appearing in $\mathbf{P}_{n,1},\mathbf{P}_{n,2}$ and $\mathbf{P}_{n,3}$ in \eqref{eq:Euler:L^n U in diagonalized form}, we have
\begin{equation}\label{eq: bound on Pj}
 \|\mathscr{P}_j\big\|_{L^\infty(S_{\delta,0})}\lesssim  \varepsilon.
\end{equation}

The previous analysis proves the case for $n=1$. We make the inductive hypothesis that  \eqref{eq: bound on Un} and \eqref{eq: bound on Pj} hold up to $n$. To show they hold for $n+1$, by the constructions of $\mathbf{P}_{n+1,1},\mathbf{P}_{n+1,2}$ and $\mathbf{P}_{n+1,3}$ in Section \ref{subsubsection:inductive construction of Taylor coefficients}, it is clear that the $\mathscr{P}_j$'s appearing in $\mathbf{P}_{n+1,1},\mathbf{P}_{n+1,2}$ and $\mathbf{P}_{n+1,3}$ are polynomials of the $\mathscr{P}_j$'s and their derivatives appearing in $\mathbf{P}_{n,1},\mathbf{P}_{n,2}$ and $\mathbf{P}_{n,3}$, see Remark \ref{rem: contribution to P_n1}. They are all determined by the data on the $\mathcal{D}_0$ side. On the other hand, there is no $T^{n+1}U$ appearing in those $\mathscr{P}_j$'s. We also recall Proposition \eqref{prop: bounds on geometry on sigma delta 1} that $\|{T}^k (P)\|_{L^{\infty}(\Sigma_\delta)}\lesssim\delta \varepsilon$ for all $1\leqslant k\leqslant N$. Thus, by the inductive hypothesis, \eqref{eq: bound on Pj} holds for $n+1$. To prove \eqref{eq: bound on Un} for $n+1$, we recall that
\begin{equation}\label{eq: extra kappa aux 2}
U^{(\lambda)}_{n+1}(\vartheta)=\big(\frac{\kappa}{\lambda c}\big)^{n+1}\big( L^{n+1}(U^{(\lambda)}{}_{r})(\delta,0,\vartheta)- \mathbf{P}_{n+1,1}- \mathbf{P}_{n+1,2}- \mathbf{P}_{n+1,3}\big).
\end{equation}
Since $\|{T}^k (P)\|_{L^{\infty}(\Sigma_\delta)}\lesssim\delta \varepsilon$ for all $1\leqslant k\leqslant N$, in view of the definition of $\mathbf{P}_{n+1,2}$ and $\mathbf{P}_{n+1,3}$, i.e.,
\[
\mathbf{P}_{n+1,2} =\sum_{k=1}^{n+1} \frac{{\mathscr{P}}_{n+1-k} \cdot {T}^k (P) }{\kappa^{n+1}},\ \ \mathbf{P}_{n+1,3}= \frac{1}{\kappa^{n}}{\mathscr{P}}_{n+1},\]
it is straightforward to see that
\[\big\|\big(\frac{\kappa}{\lambda c}\big)^{n+1} \mathbf{P}_{n+1,2}\big\|_{L^{\infty}(S_{\delta,0})}+\big\|\big(\frac{\kappa}{\lambda c}\big)^{n+1} \mathbf{P}_{n+1,3}\big\|_{L^{\infty}(S_{\delta,0})}\lesssim\delta \varepsilon.\]
It is also clear that, for $\lambda=-1$ and $-2$, we have
\[\big\|\big(\frac{\kappa}{\lambda c}\big)^{n+1}\big( L^{n+1}(U^{(\lambda)}{}_{r})(\delta,0,\vartheta)\big)\big\|_{L^{\infty}(S_{\delta,0})}\lesssim\delta \varepsilon.\]
It remains to bound contribution of $\mathbf{P}_{n+1,1}$ to $U^{(\lambda)}_{n+1}(\vartheta)$. In fact, for $\lambda=-1$ or $-2$, we have
\[{\color{black}\big(\frac{\kappa}{\lambda c}\big)^{n+1}}\mathbf{P}_{n+1,1}=\frac{1}{\lambda^{n+1} c^{n+1}}\sum_{k =1}^{n}{\mathscr{P}}_{n+1-k} \cdot T^k(U^{(\lambda)}).\]
Since \eqref{eq: bound on Un} and \eqref{eq: bound on Pj} hold for all $1\leqslant k\leqslant n$, we obtain that
\[\big\|\big(\frac{\kappa}{\lambda c}\big)^{n+1} \mathbf{P}_{n+1,1}\big\|_{L^{\infty}(S_{\delta,0})}\lesssim\delta \varepsilon.\]
Putting all the pieces together, we obtain that
\[ \| U^{(\lambda)}_{n+1}(\vartheta)\big\|_{L^\infty}\lesssim \delta \varepsilon.\]
This yields \eqref{eq: bound on Un} and completes the induction argument.
\begin{remark}
We can also apply $X^l$ to \eqref{eq: extra kappa aux 2} and use the same inductive argument  to derive:
\begin{equation}\label{eq: bound on Un with X}
 \big\| X^l\big(U^{(\lambda)}_n(\vartheta)\big)\big\|_{L^\infty}\lesssim \delta \varepsilon, \ \ \lambda=-1,-2,
\end{equation}
where $n\geqslant 1$.
\end{remark}

\subsection{The Taylor series for Riemann invariants}

Accroding to \eqref{eq:explicit formula for U}, for all $0\leqslant k\leqslant N$, we have
\begin{equation}\label{eq:definition for the Taylor coefficients}
 \begin{cases}
  &\wb_{;k}(\vartheta):=T^k(\wb)\big|_{u=0}=\frac{1}{2}T^k\big[(1-\Th^1)U^{(0)}+  \Th^2 U^{(-1)}+ (1+\Th^1)U^{(-2)}\big]\big|_{u=0},\\
  &w_{;k}(\vartheta):= T^k(w)\big|_{u=0}=  \frac{1}{2}T^k\big[(1-\Th^1)U^{(-2)}+(1+\Th^1)U^{(0)}-   \Th^2U^{(-1)}\big]\big|_{u=0},\\
   &{\psi_2}_{;k}(\vartheta):=T^k(\psi_2)\big|_{u=0}= T^k\big[\Th^1 U^{(-1)}+\Th^2U^{(0)}-\Th^2 U^{(-2)}\big]\big|_{u=0}.
   \end{cases}
  \end{equation}
  Therefore, $\wb_{;k}(\vartheta),w_{;k}(\vartheta)$ and ${\psi_2}_{;k}(\vartheta)$ are uniquely determined. According to \eqref{eq: bound on Un with X} and Proposition \ref{prop: bounds on geometry on sigma delta 1},  we have the following estimates for the jets of the Riemann invariants:
  \begin{equation}\label{bounds on the coefficients}
   \begin{cases}
    &\big\| X^l\big(\wb_{;0}(\vartheta)\big)\big\|_{L^\infty}+\big\| X^l\big(w_{;0}(\vartheta)\big)\big\|_{L^\infty}+\big\| X^l\big({\psi_2}_{;0}(\vartheta)\big)\big\|_{L^\infty}\lesssim \varepsilon, \ \ l\geqslant 1;\\
 &\big\| X^l\big(w_{;k}(\vartheta)\big)\big\|_{L^\infty}+\big\| X^l\big({\psi_2}_{;k}(\vartheta)\big)\big\|_{L^\infty}\lesssim \delta \varepsilon, \ \ k\geqslant 1, l\geqslant 0;\\
 &\big\|X^l\big(\wb_{;k}(\vartheta)\big)\big\|_{L^\infty}+\big\| X^{l'} \big(\wb_{;1}(\vartheta)\big)\big\|_{L^\infty}\lesssim \delta \varepsilon, \ \ l'\geqslant 1, k\geqslant 2, l\geqslant 0.
   \end{cases}
\end{equation}

We define the following Taylor series on $\Sigma_\delta$ (where $u\in [0,u^*]$):
\begin{equation}\label{Taylor expansions of Riemann invariants up to order N}
\begin{cases}
&\wb(\delta,u,\vartheta) = \wb_{;0}(\vartheta) + \wb_{;1}(\vartheta)u +  \cdots+ \frac{1}{N!} \wb_{;N}(\vartheta)u^N,\\
&w(\delta,u,\vartheta) = w_{;0}(\vartheta) + w_{;1}(\vartheta)u +  \cdots+ \frac{1}{N!} w_{;N}(\vartheta)u^N,\\
&{\psi_2}(\delta,u,\vartheta) = {\psi_2}_{;0}(\vartheta) + {\psi_2}_{;1}(\vartheta)u +  \cdots+ \frac{1}{N!} {\psi_2}_{;N}(\vartheta)u^N.
\end{cases}
\end{equation}
\begin{remark}
Rather than being formal Taylor series, $\wb(\delta,u,\vartheta),w(\delta,u,\vartheta)$ and $\psi_2(\delta,u,\vartheta)$ are functions given on $\Sigma_\delta$. They will serve as the initial data for the Euler equations.
\end{remark}
  
  \subsubsection{Preliminary pointwise bounds on the Riemann invariants on $\Sigma_\delta$}

  The bound in \eqref{bounds on the coefficients} can be interpreted as the estimate fo $\wb, w$ and $\psi_2$ on $S_{\delta,0}$. They indeed hold on the entire $\Sigma_\delta$:
\begin{proposition}\label{prop: correct data size}
For all $k+l\leqslant N$, we have the following pointwise bounds on $\Sigma_\delta$:
\begin{equation}\label{eq: bound on Riemann Invariants on Sigma delta}
\begin{cases}
 &\|{T}^kX^l\big(w\big)\|_{L^\infty(\Sigma_{\delta})}+\|{T}^kX^l\big(\psi_2\big)\|_{L^\infty(\Sigma_{\delta})}\lesssim \delta \varepsilon, \ \  k \geqslant 1;\\
 &\| X^l\big(w\big)\|_{L^\infty(\Sigma_{\delta})}+\| X^l\big(\psi_2\big)\|_{L^\infty(\Sigma_{\delta})}\lesssim \delta \varepsilon, \ \ 0\leqslant l \leqslant N. 
  \end{cases}
\end{equation}
and
\begin{equation}\label{eq: bound on wb on Sigma delta}
\begin{cases}
 &\|{T}^kX^l\big(\wb\big)\|_{L^\infty(\Sigma_{\delta})}\lesssim \delta \varepsilon, \ \  k \geqslant 1, \ k+l\geqslant 2;\\
 &\| X^l\big(\wb\big)\|_{L^\infty(\Sigma_{\delta})}\lesssim \delta \varepsilon, \ \ 2\leqslant k \leqslant N. 
  \end{cases}
\end{equation}
\end{proposition}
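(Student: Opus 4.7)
The plan is to reduce both \eqref{eq: bound on Riemann Invariants on Sigma delta} and \eqref{eq: bound on wb on Sigma delta} to the pointwise bounds \eqref{bounds on the coefficients} on the Taylor coefficients by differentiating the explicit data defined in \eqref{Taylor expansions of Riemann invariants up to order N} term by term. The key geometric input is that the construction of $\vartheta$ on $\Sigma_\delta$ in Section \ref{section: definition of vartheta on Sigmadelta} solves $T(\vartheta)=0$ on $\Sigma_\delta$, so that the function $\Xi$ appearing in \eqref{eq: L T in terms of coordinates} vanishes on $\Sigma_\delta$ and therefore $T|_{\Sigma_\delta} = \partial/\partial u$. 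Consequently $T$ and $X = \partial/\partial \vartheta$ are commuting coordinate vector fields on the initial hypersurface, so that $[T,X]|_{\Sigma_\delta} = 0$.

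Granted this commutation, differentiating \eqref{Taylor expansions of Riemann invariants up to order N} gives, for each $\psi \in \{\wb, w, \psi_2\}$ and each pair $k,l$ with $k+l \leqslant N$, the explicit finite-sum identity
\[
T^k X^l \psi(\delta, u, \vartheta) \;=\; \sum_{m=k}^{N} \frac{u^{m-k}}{(m-k)!}\, X^l \psi_{;m}(\vartheta).
\]
Since $u \in [0,u^*]$ with $u^* = c_0 \tfrac{\gamma+1}{\gamma-1}\mathring{c}_r$ a fixed universal constant, each factor $u^{m-k}/(m-k)!$ is uniformly $O(1)$ in $\delta, \varepsilon$. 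Hence the $L^\infty(\Sigma_\delta)$ norm of $T^k X^l \psi$ is controlled by the maximum of $\|X^l \psi_{;m}\|_{L^\infty}$ over $k \leqslant m \leqslant N$, and it remains to match the combinatorial cases against the three lines of \eqref{bounds on the coefficients}.

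For $T^k X^l w$ and $T^k X^l \psi_2$ with $k \geqslant 1$ the index range $m \geqslant k \geqslant 1$ falls inside the second line of \eqref{bounds on the coefficients}, giving the bound $\lesssim \delta\varepsilon$. The same line handles the pure-$X$ derivatives $X^l w$ and $X^l \psi_2$ for $l \geqslant 1$ together with the first line of \eqref{bounds on the coefficients} controlling the leading $m=0$ coefficient. For $T^k X^l \wb$ with $k \geqslant 1$ and $k+l \geqslant 2$: if $k \geqslant 2$, every $m$ in the sum satisfies $m \geqslant 2$ and the first part of the third line of \eqref{bounds on the coefficients} applies directly; the remaining case $k=1$, $l \geqslant 1$ splits into $m=1$, covered by the second part of the third line ($X^{l'} \wb_{;1}$ with $l' \geqslant 1$), and $m \geqslant 2$, again covered by the first part. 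An analogous argument using only the third line of \eqref{bounds on the coefficients} together with boundedness of $u$ yields \eqref{eq: bound on wb on Sigma delta}.

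The argument is essentially bookkeeping, and no serious obstacle arises beyond ensuring that the hypothesis $k+l \geqslant 2$ in \eqref{eq: bound on wb on Sigma delta} precisely excludes the single coefficient $\wb_{;1}(\vartheta) \approx -\tfrac{2}{\gamma+1}$, which is of size $O(1)$ rather than $O(\delta\varepsilon)$; this is the reason the case $k=1$, $l=0$ is omitted from the statement, and why the commutativity $[T,X]=0$ on $\Sigma_\delta$ is essential so that an extra $X$-derivative can always be used to activate the third line of \eqref{bounds on the coefficients}.
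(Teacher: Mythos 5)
Your proof follows the same route as the paper's: on $\Sigma_\delta$ we have $T=\partial/\partial u$, $X=\partial/\partial\vartheta$, so one simply differentiates the finite Taylor series \eqref{Taylor expansions of Riemann invariants up to order N} term by term and invokes the coefficient bounds \eqref{bounds on the coefficients}; the paper's one-line proof writes exactly the identity $T^kX^l\psi=\sum_j \tfrac{1}{j!}X^l(\psi_{;j})\,T^k(u^j)$ and concludes, while you spell out the index bookkeeping more carefully. One caveat worth noting (which concerns the printed statement rather than your reasoning): in the pure-$X$ cases, your own sum shows that the $m=0$ coefficient is only controlled by the \emph{first} line of \eqref{bounds on the coefficients}, which yields $\lesssim\varepsilon$ rather than $\lesssim\delta\varepsilon$; so what the argument actually delivers for $\|X^l w\|$, $\|X^l\psi_2\|$, $\|X^l\wb\|$ (no $T$-derivative) is $O(\varepsilon)$, consistent with how these quantities are in fact used (e.g.\ $\|\Xh\psi\|_{L^\infty(\Sigma_\delta)}\lesssim\varepsilon$ in Proposition \ref{prop:pointwise bound on derivatives of psi}), and the $\delta\varepsilon$ printed in the second lines of \eqref{eq: bound on Riemann Invariants on Sigma delta} and \eqref{eq: bound on wb on Sigma delta} should be read as $\varepsilon$.
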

\begin{proof}
Since the estimates can be derived in similar manners, we only provide the proof for $w$. We apply ${T}^kX^l$ to \eqref{Taylor expansions of Riemann invariants up to order N} to derive
\[
{T}^kX^l\big(w(\delta,u,\vartheta)\big) = \sum_{k=0}^N\frac{1}{j!} X^l\big(w_{;j}(\vartheta)\big) \cdot T^k(u^j).
\]
Therefore, the conclusion follows immediately from \eqref{bounds on the coefficients}.
\end{proof}
We now show that the pointwise bounds on Riemann invariants required by $\mathbf{(I_\infty)}
$ hold:
\begin{proposition}\label{prop:pointwise bound on derivatives of psi}
Let $\psi \in \{\wb,w,\psi_2\}$. We have the following pointwise estimates:
\begin{equation*}
\begin{cases}
	&\|L\psi\|_{L^\infty(\Sigma_\delta)} + \|\Xh\psi\|_{L^\infty(\Sigma_\delta)} \lesssim \varepsilon;\\
	&\|T(w)\|_{L^\infty(\Sigma_\delta)} + \|T(\psi_2)\|_{L^\infty(\Sigma_\delta)} +  \|T\wb + \frac{2}{\gamma+1}\|_{L^\infty(\Sigma_\delta)} \lesssim \varepsilon \delta; \\
	&\|LZ^{\alpha}\psi\|_{L^\infty(\Sigma_\delta)}+\|\Xh Z^{\alpha}\psi\|_{L^\infty(\Sigma_\delta)}+\delta^{-1}\|TZ^{\alpha}\psi\|_{L^\infty(\Sigma_\delta)} \lesssim \varepsilon, \ \  Z \in \{\Xh, T\}, \ 1 \leqslant |\alpha| \leqslant N-1;
\end{cases}
\end{equation*}
\end{proposition}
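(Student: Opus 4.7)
The plan is to reduce every bound to pointwise estimates on the $T$- and $X$-derivatives of $(\wb,w,\psi_2)$ furnished by Proposition~\ref{prop: correct data size}, together with the geometric bounds in Propositions~\ref{prop: bounds on geometry on sigma delta 1}, \ref{prop: on gslahsed}, and Remark~\ref{remark: improvement with T}. First I would convert $\Xh$-derivatives to $X$-derivatives via $\Xh=\slashed g^{-1/2}X$: since $\|\slashed g-1\|+\|Z^\alpha\slashed g\|\lesssim\delta\varepsilon$, the conversion contributes only lower-order errors. Differentiating the Taylor expansion \eqref{Taylor expansions of Riemann invariants up to order N} term by term in $\vartheta$ and invoking \eqref{bounds on the coefficients} gives $\|X\wb\|\lesssim\varepsilon$ (dominated by $\|X(\wb_{;0})\|\lesssim\varepsilon$, while every later Taylor coefficient carries a $u$-factor multiplied by a $\delta\varepsilon$-quantity), and analogously $\|X\psi\|\lesssim\varepsilon$ for $\psi\in\{w,\psi_2\}$.

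For the $T$-derivative bounds, Proposition~\ref{prop: correct data size} already supplies $\|Tw\|,\|T\psi_2\|\lesssim\delta\varepsilon$ as well as $\|TZ^\alpha\psi\|\lesssim\delta\varepsilon$ for $1\leqslant|\alpha|\leqslant N-1$ (after the $\Xh\leftrightarrow X$ conversion). The delicate case is $\|T\wb+\tfrac{2}{\gamma+1}\|\lesssim\delta\varepsilon$. Differentiating the Taylor expansion in $u$, the tail is $O(\delta\varepsilon)$, so it suffices to show $\wb_{;1}(\vartheta)=-\tfrac{2}{\gamma+1}+O(\delta\varepsilon)$. I would unpack $\wb_{;1}$ through \eqref{eq:definition for the Taylor coefficients}, producing six Leibniz terms; applying $\|\Th^1+1\|\lesssim\delta^2\varepsilon^2$, $\|\Th^2\|\lesssim\delta\varepsilon$, $\|T\Th^i\|\lesssim\delta\varepsilon$, the normalization $TU^{(0)}\equiv-\tfrac{2}{\gamma+1}$ from \eqref{eq: formula for U0}, and $\|U^{(\lambda)}_1\|\lesssim\delta\varepsilon$ for $\lambda=-1,-2$ from \eqref{eq: bound on U1}, the leading contribution is exactly $-\tfrac{2}{\gamma+1}$, with the $O(\delta\varepsilon)$ error arising principally from the $T\Th^2\cdot U^{(-1)}_0$ term.

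For the $L$-derivative estimates I would invoke the Euler equations \eqref{Euler equations:form 2} and substitute $\Th=\kappa^{-1}T$ to obtain
\[
L\wb = -\frac{c(\Th^1+1)}{\kappa}T\wb + \frac{c\Th^2}{2\kappa}T\psi_2 + \frac{c\Xh^2}{2}\Xh\psi_2 - c\,\Xh^1\Xh\wb,
\]
and analogous formulas for $Lw$ and $L\psi_2$. The apparent $\kappa^{-1}\sim\delta^{-1}$ singularity in the first two terms is defused by the vanishing geometric factors $\Th^1+1=O(\delta^2\varepsilon^2)$ and $\Th^2=O(\delta\varepsilon)$, each producing an $O(\varepsilon)$ contribution; the remaining terms are bounded using $\Xh^1=\Th^2=O(\delta\varepsilon)$ and $\Xh^2=-\Th^1\approx 1$ together with the first-order $\Xh$-derivative estimates already established. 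Entirely analogous cancellations handle $Lw$ and $L\psi_2$.

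The higher-order bounds in the third line follow by induction on $|\alpha|$. After commuting $Z^\alpha$ past the Euler equations via \eqref{eq:commutator formulas}, whose connection coefficients satisfy $\delta\varepsilon$-type bounds, the pairing $\kappa^{-1}\cdot(\text{vanishing geometric factor})\cdot T\psi$ is preserved at every order, and the resulting $T$- and $\Xh$-derivatives of $\psi$ are controlled by Proposition~\ref{prop: correct data size}. The main obstacle I anticipate is the combinatorial bookkeeping of this pairing at higher orders: each application of $T$ or $\Xh$ to the small factors $\Th^1+1$ or $\Th^2$ must be tracked carefully via Proposition~\ref{prop: bounds on geometry on sigma delta 1} and Remark~\ref{remark: improvement with T} to verify that the smallness continues to compensate the $\kappa^{-1}$ loss. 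This is structural rather than analytical, and no estimates beyond those already in Section~\ref{section: data construction} are required.
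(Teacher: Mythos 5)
Your proposal is essentially correct and follows the same route as the paper's proof: use the Taylor-coefficient bounds from Proposition~\ref{prop: correct data size} for the tangential ($T$, $\Xh$) derivatives, isolate $\wb_{;1}$ via \eqref{eq:definition for the Taylor coefficients} and the geometric bounds on $\Th^i$ to get $T\wb+\tfrac{2}{\gamma+1}=O(\delta\varepsilon)$, then read $L\psi$ off the Euler equations \eqref{Euler equations:form 2} and observe that the $\kappa^{-1}$ factors are neutralised by the vanishing of $\Th^1+1$ and $\Th^2$, with higher orders handled by commuting $Z^\alpha$ and the schematic formula \eqref{eq:commutator:schematic L commute with d^k}. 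One small inaccuracy worth noting: in the paper's computation the dominant residual in $\wb_{;1}+\tfrac{2}{\gamma+1}$ is actually the $\tfrac{1+\Th^1}{\gamma+1}$ term coming from $(1-\Th^1)T(U^{(0)})$, while the $T\Th^2\cdot U^{(-1)}_0$ term you single out is $O(\delta\varepsilon^2)$ because $U^{(-1)}_0=\Th^2(\wb-w)+\Th^1\psi_2=O(\varepsilon)$; both are below the required $\delta\varepsilon$ threshold, so this does not affect the conclusion.
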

\begin{proof}
The bounds on $\Xh(\wb),\Xh(w),\Xh(\psi_2), T(w)$ and $T(\psi_2)$ are already presented in \eqref{eq: bound on Riemann Invariants on Sigma delta}. 

First of all, we consider $T\wb + \frac{2}{\gamma+1}$. By \eqref{eq:definition for the Taylor coefficients}, we have
\[T\wb  =  \big(\wb_{;1}(\vartheta) +\frac{2}{\gamma+1}\big) +  \sum_{k=2}^N\frac{1}{(k-1)!} \wb_{;k}(\vartheta)u^{k-1}.
\]
By \eqref{bounds on the coefficients}, it suffices to show that $\|\wb_{;1}(\vartheta) +\frac{2}{\gamma+1}\|_{L^\infty}\lesssim \varepsilon \delta$. According to the defining equation \eqref{eq:definition for the Taylor coefficients} of $\wb_{;1}(\vartheta)$, we have
\begin{align*}
2\wb_{;1}(\vartheta)
&=T(\Th^1)\big(U^{(-2)}-U^{(0)}\big)+  T(\Th^2) U^{(-1)}+  \Th^2 T(U^{(-1)})+ (1+\Th^1)T(U^{(-2)})+(1-\Th^1)T(U^{(0)})\\
&=(1-\Th^1)T(U^{(0)})+O(\varepsilon \delta),
\end{align*}
where we have used Proposition \ref{prop: bounds on geometry on sigma delta 1} and \eqref{eq: bound on Un with X}. Since $T(U^{(0)})=-\frac{2}{\gamma+1}$(see \eqref {eq:Taylor coefficients of U0}), we have
\begin{align*}
\wb_{;1}(\vartheta) +\frac{2}{\gamma+1}
&=\frac{(1+\Th^1)}{\gamma+1} +O(\varepsilon \delta).
\end{align*}
Therefore, $\|\wb_{;1}(\vartheta) +\frac{2}{\gamma+1}\|_{L^\infty}\lesssim \varepsilon \delta$ follows from Proposition \ref{prop: bounds on geometry on sigma delta 1}.

Secondly, we derive the bounds on $L(\psi)$. For $\psi=\wb$, we use the first equation from \eqref{Euler equations:form 2}, i.e.,
\begin{equation}\label{eq: derive bounds on sigma delta aux 11}L (\wb) = -c  {T}(\wb)\frac{\widehat{T}^1+1}{\kappa}+\frac{1}{2}c\kappa^{-1} T(\psi_2)\widehat{T}^2 +\frac{1}{2}c \Xh(\psi_2)\Xh^2-c\Xh(\wb)\Xh^1.
\end{equation}
By Proposition \ref{prop: bounds on geometry on sigma delta 1} and \eqref{eq: bound on Riemann Invariants on Sigma delta}, each term on the righthand side is of size $O(\varepsilon)$. This shows that $\|L(\wb)\|_{L^\infty(\Sigma_\delta)} \lesssim \varepsilon$. We can use the other two equations in \eqref{Euler equations:form 2} to bound $L(w)$ and $L(\psi_2)$ exactly in the same manner.

Finally, we deal with the higher order derivatives. Since $\Xh=\slashed{g}^{-\frac{1}{2}}X$, it is obvious that the estimates on $\Xh Z^{\alpha}\psi$ and $TZ^{\alpha}\psi$ follow from \eqref{eq: bound on Riemann Invariants on Sigma delta}, \eqref{eq: bound on wb on Sigma delta} and \eqref{eq: higher order estimates on slashed g}. It remains to estimate $LZ^{\alpha}\psi$ and we will commute $Z^\alpha$ with \eqref{Euler equations:form 2}. We only handle the case where $\psi=\wb$ and the rest cases can be treated exactly in the same manner. We apply $Z^\alpha$ to \eqref{eq: derive bounds on sigma delta aux 11} to derive:

\begin{align*}
Z^\alpha L (\wb) &= Z^\alpha\big[-c \widehat{T}(\wb)(\widehat{T}^1+1)+\frac{1}{2}c \widehat{T}(\psi_2)\widehat{T}^2 +\frac{1}{2}c \Xh(\psi_2)\Xh^2-c\Xh(\wb)\Xh^1\big]=O(\varepsilon).
\end{align*}
By Proposition \ref{prop: bounds on geometry on sigma delta 1}, \eqref{eq: bound on Riemann Invariants on Sigma delta} and the estimates on $\Xh Z^{\alpha}\psi$ and $TZ^{\alpha}\psi$, the rightand side of the above equation is bounded by $\varepsilon$. To obtain the estimates on $LZ^\alpha(\wb)$, we use the \eqref{eq:commutator:schematic L commute with d^k} to derive 
\[ LZ^\alpha   (\wb)=Z^\alpha L (\wb)+\sum_{|\beta|\leqslant |\alpha|} \mathscr{P}_j\cdot  Z^\beta(\wb).\]
By Proposition \ref{prop: bounds on geometry on sigma delta 1}, \eqref{eq: bound on Riemann Invariants on Sigma delta} and the estimates on $\Xh Z^{\alpha}\psi$ and $TZ^{\alpha}\psi$, the coefficients functions $\mathscr{P}_j$ are all of size $O(\varepsilon)$. Therefore, $\|LZ^{\alpha}\psi\|_{L^\infty(\Sigma_\delta)}\lesssim \varepsilon$. This completes the proof of the proposition.
\end{proof}
\begin{remark}\label{remark: closing I infty 2}
By Proposition \ref{prop:pointwise bound on derivatives of psi}, we have checked all the inequalities in $\mathbf{(I_{\infty,2})}$, see \eqref{initial Iinfty2}.
\end{remark}
\begin{remark}\label{remark: closing I 2 lowest energy}
The above $L^\infty$ bounds imply the $L^2$-bounds:
\[\mathcal{E}(\psi)(\delta,u^*)+\underline{\mathcal{E}}(\psi)(\delta,u^*)\lesssim C_0 \varepsilon^2 \delta^2, \ \ \psi \in \{w,\psi_2\}.\]
On the other hand, by the continuity on $C_0$, the following estimates hold automatically
\[\mathcal{F}(\psi)(t,0)+\underline{\mathcal{F}}(\psi)(t,0)\lesssim \varepsilon^2 t^2, \ \ t\in [\delta,t^*], \ \psi \in \{w,\psi_2\}.\]
Therefore, we have also checked the lowest order energy ansatz in $\mathbf{(I_2)}$, see \eqref{initial I2}.
\end{remark}
\subsection{The irrotational condition}
The computations of the  vorticity in this subsection are inspired by those in Section 1.3 of \cite{ChristodoulouShockDevelopment}.We consider {\bf the spacetime vorticity} $2$-form $\omega = -d\beta$ where the $1$-form $\beta$ is defined by 
\begin{equation}\label{def: vorticity}
\beta = \big(h+\frac{1}{2}|v|^2\big)dt - v^idx^i.
\end{equation}
Therefore, $\omega$ can be computed explicitly as
\[
\omega = \big(\partial_t v^i + \partial_i\big(h+\frac{1}{2}|v|^2\big)\big)dt \wedge dx^i + \frac{1}{2}\omega_{ij}dx^i \wedge dx^j.
\]
with $\omega_{ij} = \partial_i v^j - \partial_j v^i$, i.e., the restriction of $\omega$ on $\Sigma_t$ is the usual vorticity. In two dimension case, the vorticity 2-form $\omega$ restricts to the classical vorticity $\overline{\omega}$ function as follows
\[\omega\big|_{\Sigma_t}=\overline{\omega} \cdot dx^1\wedge dx^2,\]
where $\overline{\omega}=\partial_1v^2-\partial_2 v^1$.  We recall that the material vector field $B$ is given by $B=\partial_t+v$. By virtue of the fact that $\partial_t v+ v^i\cdot \nabla_i v=-\nabla h$, we have 
\[\iota_{_B} \omega = 0.\]
While the classical vorticity satisfies the following transport equation:
\begin{equation}\label{eq: transport equation for omegab}
B (\overline{\omega}) + {\rm div}(v) \cdot \overline{\omega} = 0.
\end{equation}
\begin{remark}
For irrotational flow with potential $\phi$, we have $\beta = d\phi$. Hence, $\omega\equiv 0$. 
\end{remark}

We now study the equation $\iota_{_B} \omega = 0$. Since $B = L + c\kappa^{-1}T$, we have
\[\iota_{_L} \omega = - c\kappa^{-1}\iota_{_T} \omega.\]
Therefore, contracting with $T$ and $X$, we have $\omega(L,T) = -c\kappa^{-1} \omega(T,T) \equiv 0$ and $\omega(L,X) = -c\kappa^{-1} \omega(T,X)$. As a conclusion, we see that $\omega(L,X)$ determines $\omega$. In particular, to determine $\omega\big|_{C_0}$, it suffices to determine $\omega(L,X)\big|_{C_0}$.

We now use the fact that the smooth solution $(v_r,c_r)$ on $\mathcal{D}_0$ is irrotational. This means that the corresponding vorticity 2-form $\omega_r\big|_{C_0}\equiv 0$. Since $(v,c)$ is continuous across $C_0$,  in view of the definition \eqref{def: vorticity}, the vorticity $1$-forms $\beta$ and $\beta_r$ (defined from  $(v_r,c_r)$) agree on $C_0$. Therefore, since $L$ and $X$ are tangential to $C_0$, we have
\[\omega|_{C_{0}}(L,X) =d( \beta\big|_{C_{0}})(L,X) = d( \beta_r\big|_{C_{0}})(L,X)\equiv 0.
\]
Thus, $\omega\big|_{C_{0}} \equiv 0$. In particular, this implies $\overline{\omega}\big|_{C_{0}} \equiv 0$.

\begin{lemma}
We use $\mathfrak{d}^k(\overline{\omega})$ to denote all possible $\mathfrak{d}_1\circ \cdots \circ\mathfrak{d}_k (\overline{\omega})$'s where $\mathfrak{d}_1, \cdots, \mathfrak{d}_k \in \{\Xh,T,L\}$. Then, for all $1\leqslant k\leqslant N-1$, we have
\[\mathfrak{d}^k(\overline{\omega})\big|_{C_{0}}\equiv 0.\]
\end{lemma}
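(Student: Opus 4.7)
The plan is to proceed by induction on $k$, exploiting two ingredients: (i) $L$ and $\Xh$ are tangent to the null hypersurface $C_0$ (the former by construction, the latter because $X = \partial/\partial\vartheta$ is tangent to the sections $S_{t,0}$ of $C_0$), so any smooth function vanishing identically on $C_0$ has vanishing $L$- and $\Xh$-derivatives on $C_0$; and (ii) the transport equation $B(\overline{\omega}) = -(\mathrm{div}\, v)\overline{\omega}$ together with $B = L + c\kappa^{-1}T$ allows a $T$-derivative to be traded for an $L$-derivative plus lower-order factors of $\overline{\omega}$.

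The base case $k=0$ is precisely $\overline{\omega}|_{C_0}\equiv 0$, already established. For the inductive step, assume $\mathfrak{d}^{k'}(\overline{\omega})|_{C_0}\equiv 0$ for every $k'\leqslant k$ and every admissible word $\mathfrak{d}_1\circ\cdots\circ\mathfrak{d}_{k'}$. Fix a word $\mathfrak{d}_1\circ\cdots\circ\mathfrak{d}_{k+1}$ and set $D=\mathfrak{d}_2\circ\cdots\circ\mathfrak{d}_{k+1}$, $f=D(\overline{\omega})$; by the induction hypothesis, $f|_{C_0}\equiv 0$. If $\mathfrak{d}_1\in\{L,\Xh\}$, tangentiality immediately gives $\mathfrak{d}_1(f)|_{C_0}=0$. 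The only nontrivial case is $\mathfrak{d}_1=T$.

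For this case, write
\[
T(f) \;=\; c^{-1}\kappa\bigl(B(f)-L(f)\bigr).
\]
Tangentiality of $L$ forces $L(f)|_{C_0}=0$, so it suffices to show $B(f)|_{C_0}=0$. Commuting yields
\[
B(f) \;=\; B\circ D(\overline{\omega}) \;=\; D\bigl(B(\overline{\omega})\bigr)+[B,D](\overline{\omega})
\;=\; -D\bigl((\mathrm{div}\,v)\overline{\omega}\bigr)+[B,D](\overline{\omega}).
\]
By Leibniz, $D\bigl((\mathrm{div}\,v)\overline{\omega}\bigr)$ is a sum of terms each carrying a factor of $\mathfrak{d}^{j}(\overline{\omega})$ with $j\leqslant k$, multiplied by smooth coefficients built from $v$ and its derivatives; all such factors vanish on $C_0$ by the induction hypothesis. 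The commutator $[B,D]$ is a differential operator of order at most $k$ acting on $\overline{\omega}$, since $D$ has order $k$ and $B$ is first-order; expanding it recursively via the commutators $[B,L],\,[B,T],\,[B,\Xh]$ (which, using $B=L+c\kappa^{-1}T$, the commutator formulas in \eqref{eq:commutator formulas}, and the fact that $c,\kappa$ are smooth, become schematic combinations $\sum \mathscr{P}_{\ast}\cdot\mathfrak{d}^{\ast}$) writes $[B,D](\overline{\omega})$ as a sum of smooth-coefficient terms involving $\mathfrak{d}^{j}(\overline{\omega})$ with $j\leqslant k$. By the induction hypothesis, all these vanish on $C_0$. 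Hence $B(f)|_{C_0}=0$, and therefore $T(f)|_{C_0}=0$, completing the induction.

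The only genuine subtlety is the commutator bookkeeping: one must verify that iterating the commutator $[B,\,\cdot\,]$ against the three elementary derivatives never produces a term with $k+1$ derivatives landing on $\overline{\omega}$, so that every term in $[B,D](\overline{\omega})$ is covered by the inductive hypothesis. This is a standard schematic computation using \eqref{eq:commutator formulas} and \eqref{eq:commutator:schematic L commute with d^k} plus the smoothness of $c$, $\kappa$, $v$, and it imposes no obstruction beyond the regularity bound $k\leqslant N-1$ needed to make all the $k$-fold derivatives well-defined.
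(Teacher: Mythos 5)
Your proof is correct. The only point that needs a careful check is the commutator bookkeeping, which you flag: one verifies that $[B,L]$, $[B,T]$, $[B,\Xh]$ are each smooth-coefficient linear combinations of $T$ and $\Xh$ (using $B=L+c\kappa^{-1}T$ and \eqref{eq:commutator formulas}), so that expanding $[B,D]=\sum_{i}\mathfrak{d}_2\cdots\mathfrak{d}_{i-1}[B,\mathfrak{d}_i]\mathfrak{d}_{i+1}\cdots\mathfrak{d}_{k+1}$ and distributing the outer derivatives by Leibniz never puts more than $k$ derivatives on $\overline{\omega}$. That works, and the induction closes.

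Your route differs from the paper's in how the $\mathfrak{d}_1=T$ case is organized. The paper rewrites the transport equation as $T(\overline{\omega})=-c^{-1}\kappa L(\overline{\omega})-c^{-1}\kappa\,\mathrm{div}(v)\,\overline{\omega}$ and then runs a three-case argument on the leading letter: if it is $L$ or $\Xh$, use tangentiality; if it is $T$ and some $L$ or $\Xh$ appears later in the word, commute that tangential derivative to the front; if the word is a pure power $T^{n+1}$, apply the transport equation to reduce to $T^nL(\overline{\omega})$ and then invoke the second case. These cases are interleaved, with case 3 depending on having already settled case 2 for words of the same length. You instead trade $T$ for $B-L$ at the outset, use tangentiality to kill $L(f)$, and handle $B(f)=D(B\overline{\omega})+[B,D]\overline{\omega}=-D((\mathrm{div}\,v)\overline{\omega})+[B,D]\overline{\omega}$ in one stroke, since both pieces carry at most $k$ derivatives on $\overline{\omega}$. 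The ingredients — tangentiality, the transport equation, the commutator relations — are identical, but your decomposition avoids the case split and the interleaving entirely, at the cost of being slightly more abstract about the structure of $[B,D]$. Both are fine; yours is arguably the cleaner bookkeeping.
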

\begin{proof}
We remark that, since $\overline{\omega}\big|_{C_{0}} \equiv 0$, for all $l_1,l_2\geqslant 0$, we have $L^{l_1} \Xh^{l_2}\big(\overline{\omega}\big)\big|_{C_{0}}=0$.

We  prove inductively on $k$. The transport equation \eqref{eq: transport equation for omegab} plays a central role. For $k=1$, it suffices to check $T (\overline{\omega}) =0$.  We can use $B = L + c\kappa^{-1}T$ to rewrite \eqref{eq: transport equation for omegab}  as
\begin{equation}\label{eq:vorticity aux 2}
T (\overline{\omega}) =-c^{-1}\kappa L(\overline{\omega})-c^{-1}\kappa {\rm div}(v) \cdot \overline{\omega}.
\end{equation}
Since both $L(\overline{\omega})$ and $\overline{\omega}$ vanish on $C_0$,  we have $T (\overline{\omega}) =0$.

We assume that the proposition holds for all $k\leqslant n$ where $n<N-1$. To prove for $n+1$, we write $\mathfrak{d}^{n+1}(\overline{\omega})$ as 
\[\mathfrak{d}^{n+1}(\overline{\omega})=\mathfrak{d}_0\big(\mathfrak{d}_1\circ\mathfrak{d}_2\circ\cdots \circ\mathfrak{d}_k (\overline{\omega})\big)=\mathfrak{d}_0\big(\mathfrak{d}^{n}(\overline{\omega})\big).\]
We consider the following cases:
\begin{itemize}
\item $\mathfrak{d}_0=\Xh$ or $L$.  

These vector fields are tangential to $C_0$. By the inductive hypothesis, $\mathfrak{d}^{n}(\overline{\omega})\equiv 0$ on $C_0$. Thus, $\mathfrak{d}^{n+1}(\overline{\omega})=\mathfrak{d}_0\big(\mathfrak{d}^{n}(\overline{\omega})\big) \equiv 0$.
\item $\mathfrak{d}_0=T$ and $\{\mathfrak{d}_1,\cdots, \mathfrak{d}_n\} \cap \{L, \Xh\}\neq \emptyset$.

We may assume $\mathfrak{d}_{i_0} \in \{\Xh, L\}$ where $1\leqslant i_0\leqslant n$. According to \eqref{eq:commutator formulas}, $[L,T]$ and $[\Xh,T]$ are proportional to $\Xh$. Therefore, by commuting $\mathfrak{d}_{i_0}$ with $\mathfrak{d}_{i_0-1},\cdots, \mathfrak{d}_{0}$ and $T$ one by one successively, we can move the operator $\mathfrak{d}_{i_0}$ to the front as follows:
\begin{align*}
\mathfrak{d}^{n+1}(\overline{\omega})=T\mathfrak{d}^{n}(\overline{\omega})=\mathfrak{d}_{i_0}\big[T\big(\mathfrak{d}^{n-1}(\overline{\omega})\big)\big]+\sum_{k\leqslant n}f_k(t,\vartheta) \cdot \mathfrak{d}^{k}(\overline{\omega}),
\end{align*}
where the $f_k(t,\vartheta)$'s are smooth functions. By the inductive hypothesis, the sum vanishes on $C_0$; the first term also vanishes thanks to the previous case. Hence, $\mathfrak{d}^{n+1} \equiv 0$ on $C_0$ in this case.

\item $\mathfrak{d}_0=T$ and $\mathfrak{d}_1=\cdots=\mathfrak{d}_n=T$.

We want to show $T^{n+1}(\overline{\omega})=0$. According to \eqref{eq:vorticity aux 2}, we have \begin{align*}
T^{n+1}(\overline{\omega})&=-T^n\left(c^{-1} \kappa L(\overline{\omega})+c^{-1}\kappa {\rm div}(v) \cdot \overline{\omega}\right)=-c^{-1}\kappa T^n L(\overline{\omega}).
\end{align*}
In the last equality, we have used the inductive hypothesis to the lower order terms. According to the second case, we also have $T^n L(\overline{\omega})=0$. Thus, $T^{n+1} \equiv 0$ on $C_0$
\end{itemize}
This completes the proof of the lemma.
\end{proof}
We recall that the Riemann invariants hence $v$ is a finite Taylor series of order $N$ in $u$. Since $\overline{\omega}=dv$ on $\Sigma_\delta$, $\overline{\omega}$ is a finite Taylor series of order $N-1$ in $u$, i.e., we have  
\[\overline{\omega}|_{\Sigma_0} = \sum_{k\leqslant N-1}\frac{\overline{\omega}_k(\vartheta)}{k!}u^k.\]
The lemma shows that $\overline{\omega}_k(\vartheta) = T^k \overline{\omega}|_{S_{\delta,0}} \equiv 0$ for $k\leqslant N-1$. Thus,  $\overline{\omega}\equiv 0$ on $\Sigma_\delta$. On the other hand, we have already showed that $\overline{\omega}\equiv 0$ on $C_0$. Since $B$ is timelike with respect to the acoustical metric, by \eqref{eq: transport equation for omegab}, $\overline{\omega}\equiv 0$ holds in the domain of dependence of $\Sigma_\delta\cup C_0$. As a conclusion, the fluid will remain irrotational in the region of front rarefaction waves.

\begin{remark}\label{remark: closing I_{irrotational}}
	We have verified $\mathbf{(I_{irrotational})}$.
\end{remark}

\subsection{The initial energy bounds}
We have used the vector fields from $\mathring{\mathscr{Z}}=\{\mathring{T}, \mathring{X}\}$ to define the higher order energy $\mathscr{E}_{n}(\psi)(\delta,u)$ and flux $\mathscr{F}_{n}(\psi)(t,0)$.
The frames $(\Xh,T)$ and $(\Xr, \Tr)$ on $\Sigma_t$ are related by
\begin{equation}\label{eq: transformation between Xh T and Xr Tr}
 \begin{cases} \Xr &= -\Th^1 \Xh + \frac{\Th^2}{\kappa} T,\\ 
 \Tr &= -\kappar\Th^2 \Xh-\frac{\kappar}{\kappa}\Th^1 T.
 \end{cases}
\end{equation}

\begin{proposition}\label{prop:pointwise bound on circled derivatives of psi 1}
For all multi-indices $\alpha$ with $1\leqslant |\alpha|\leqslant N-1$, for all $\psi \in \{\wb,w,\psi_2\}$ and $\Zr\in \mathring{\mathscr{Z}}$, we have
\begin{equation}\label{eq:pointwise bound on circled derivatives of psi 1}
\delta\|\Xh( \Zr^\alpha \psi)\|_{L^\infty(\Sigma_\delta)}+\|T( \Zr^\alpha \psi)\|_{L^\infty(\Sigma_\delta)}\lesssim \varepsilon \delta.
\end{equation}
\end{proposition}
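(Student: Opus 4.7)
The plan is to expand $\Zr^\alpha \psi$ in the frame $(T,\Xh)$ via the transformation formulas \eqref{eq: transformation between Xh T and Xr Tr}, and then apply $T$ or $\Xh$ using the Leibniz rule, reducing everything to bounds already obtained in Propositions \ref{prop: bounds on geometry on sigma delta 1}, \ref{prop: on gslahsed} and \ref{prop:pointwise bound on derivatives of psi}.

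First I would argue by induction on $|\alpha|$ to establish a schematic identity
\[
\Zr^\alpha \psi = \sum_{1 \leqslant |\beta| \leqslant |\alpha|} f_{\alpha,\beta}\, Z^\beta \psi,
\]
where $Z^\beta$ denotes a composition of operators from $\{T,\Xh\}$ of length $|\beta|$, and each $f_{\alpha,\beta}$ is a polynomial in $\Th^1, \Th^2, \kappa^{\pm 1}, \kappar$ and their $(T,\Xh)$-derivatives of total order at most $|\alpha|-1$. The base case is immediate from \eqref{eq: transformation between Xh T and Xr Tr}, viz. $\Xr = -\Th^1 \Xh + \tfrac{\Th^2}{\kappa}T$ and $\Tr = -\kappar \Th^2 \Xh - \tfrac{\kappar \Th^1}{\kappa}T$; since $\kappar = \delta$, $\Th^1 = -1 + O(\varepsilon^2\delta^2)$, $\Th^2 = O(\varepsilon\delta)$, $\kappa \sim \delta$ on $\Sigma_\delta$, the leading coefficients are of size $O(1)$. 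The inductive step is just Leibniz.

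Next, I would bound $f_{\alpha,\beta}$ and its first $Z$-derivatives using Proposition \ref{prop: bounds on geometry on sigma delta 1} (together with Remark \ref{remark: improvement with T} for extra smallness coming from the fact that $\Th^i$ and $\kappa$ are independent of $x_1$ on $\Sigma_\delta$). The key point is that any $Z$-derivative hitting $\Th^i$, $\kappa$ or $\kappa^{-1}$ produces at least one factor of $\varepsilon\delta$, so
\[
\|f_{\alpha,\beta}\|_{L^\infty(\Sigma_\delta)} \lesssim 1, \qquad \|Zf_{\alpha,\beta}\|_{L^\infty(\Sigma_\delta)} \lesssim \varepsilon\delta, \quad Z \in \{T,\Xh\}.
\]
Then I apply $T$ (resp.\ $\Xh$) to the expansion:
\[
T\Zr^\alpha\psi = \sum_{\beta} (Tf_{\alpha,\beta})\, Z^\beta\psi + \sum_\beta f_{\alpha,\beta}\, TZ^\beta\psi,
\]
and similarly for $\Xh$. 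By Proposition \ref{prop:pointwise bound on derivatives of psi}, for $|\beta|\geqslant 1$ we have $\|TZ^\beta\psi\|_{L^\infty} \lesssim \varepsilon\delta$ and $\|\Xh Z^\beta\psi\|_{L^\infty} \lesssim \varepsilon$, while $\|Z^\beta\psi\|_{L^\infty}\lesssim 1$ in the worst case. Combining these with the coefficient bounds, each term is $O(\varepsilon\delta)$ for $T\Zr^\alpha\psi$ and $O(\varepsilon)$ for $\Xh\Zr^\alpha\psi$, so $\delta\|\Xh\Zr^\alpha\psi\|_{L^\infty} + \|T\Zr^\alpha\psi\|_{L^\infty} \lesssim \varepsilon\delta$.

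The main subtlety to watch is the term $T\wb = -\tfrac{2}{\gamma+1} + O(\varepsilon\delta)$, which is of size $O(1)$ rather than $O(\varepsilon\delta)$. Whenever it appears in the first sum (necessarily with $|\beta|=1$ and $Z^\beta = T$), it is multiplied by $Tf_{\alpha,\beta} = O(\varepsilon\delta)$, so the product is still acceptable. All other $\psi$-derivatives are already small by Proposition \ref{prop:pointwise bound on derivatives of psi}, and so no further obstruction arises and the induction closes.
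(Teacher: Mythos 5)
Your approach is essentially the paper's: replace the $\Zr$'s in $\Zr^\alpha\psi$ by the $(T,\Xh)$-frame via \eqref{eq: transformation between Xh T and Xr Tr}, apply the outer $T$ or $\Xh$ by Leibniz, and use the extra factor of $\kappa\sim\delta$ in $T=\kappa\widehat{T}$ (i.e.\ Remark~\ref{remark: improvement with T}) to show $Tf_{\alpha,\beta}=O(\varepsilon\delta)$, which absorbs the $O(1)$ term $T\wb$. One small correction: the intermediate claim $\|\Xh f_{\alpha,\beta}\|\lesssim\varepsilon\delta$ overstates the bound, since for instance $\Xh\bigl(\tfrac{\Th^2}{\kappa}\bigr)$ contains $\tfrac{\Xh\Th^2}{\kappa}\sim\tfrac{\delta\varepsilon}{\delta}=\varepsilon$, so only $\|\Xh f_{\alpha,\beta}\|\lesssim\varepsilon$ is available; this does not affect your argument because the $\Xh$-part of \eqref{eq:pointwise bound on circled derivatives of psi 1} only requires $\|\Xh(\Zr^\alpha\psi)\|\lesssim\varepsilon$, and you in fact use it only in that way.
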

\begin{proof}
Each coefficient  function  $f$ from the righthand side of \eqref{eq: transformation between Xh T and Xr Tr} are from the set  (neglecting the irrelevant constants):
\[f\in \big\{\Th^1,\frac{\Th^2}{\kappa},\kappar\Th^2,\frac{\kappar}{\kappa}\Th^1\big\}.\]
We certainly have $\|f\|_{L^\infty(\Sigma_\delta)}\lesssim 1$. By Proposition \ref{prop: bounds on geometry on sigma delta 1}, for all multi-indices $\alpha$ with $1\leqslant |\alpha| \leqslant N$, for all $Z\in \mathscr{Z}=\{T,\Xh\}$, we have
\[\|Z^\alpha(f)\|_{L^\infty(\Sigma_\delta)}\lesssim \varepsilon.\]
Therefore, we can use \eqref{eq: transformation between Xh T and Xr Tr} to replace all the $\Zr$ so that $T( \Zr^\alpha \psi)$ are expressed as a linear combination of the terms of the following forms
\[(\mathbf{A}). \ Z^{\alpha_1}(f)\cdot\cdots \cdot Z^{\alpha_k}(f)\cdot Z^\beta(f)\cdot TZ^\gamma(\psi)  \ \ (\mathbf{B}). \ Z^{\alpha_1}(f)\cdot\cdots \cdot Z^{\alpha_k}(f)\cdot TZ^\beta(f)\cdot Z^\gamma(\psi)\] 
where $Z\in\mathscr{Z}=\{T,\Xh\}$ and $\sum_{i\leqslant k}\alpha_i+\beta+\gamma=\alpha$. We remark that the operator $T$ is the first operator in $T( \Zr^\alpha \psi)$.
\begin{itemize}
\item For type $(\mathbf{A})$ terms, we notice that $\gamma\neq 0$. Therefore,  \eqref{prop:pointwise bound on derivatives of psi} implies that 
\[\|Z^\beta(f)\cdot TZ^\gamma(\psi)\|_{L^\infty(\Sigma_\delta)}\lesssim \varepsilon \delta.\]
Thus, all the type $(\mathbf{A})$ terms are bounded by $O(\varepsilon \delta)$.

\item  For type $(\mathbf{B})$ terms, we first bound $TZ^\beta(f)$. Indeed, Remark \ref{remark: improvement with T} implies that for all $f$ we have 
\[\|TZ^\beta(f)\|_{L^\infty(\Sigma_\delta)}\lesssim \varepsilon\delta.\]
Thus, all the type $(\mathbf{B})$ terms are also bounded by $O(\varepsilon \delta)$.
\end{itemize}
The above argument gives the desired estimates on $T( \Zr^\alpha \psi)$ in \eqref{eq:pointwise bound on circled derivatives of psi 1}. The bounds on  $\Xh( \Zr^\alpha \psi)$ can be derived exactly in the same manner. This finishes the proof of the proposition.
\end{proof}
\begin{corollary}\label{coro: circled derivatives bounds}
For all multi-indices $\alpha$ with $1\leqslant |\alpha|\leqslant N-1$, for all $\psi \in \{\wb,w,\psi_2\}$ and $\Zr\in \mathring{\mathscr{Z}}$, we have
\begin{equation}\label{eq:pointwise bound on circled derivatives of psi 2}
\delta\|\Xr( \Zr^\alpha \psi)\|_{L^\infty(\Sigma_\delta)}+\|\Tr( \Zr^\alpha \psi)\|_{L^\infty(\Sigma_\delta)}\lesssim \varepsilon \delta.
\end{equation}
Moreover, we have
\begin{equation}\label{eq:pointwise bound on circled derivatives of psi 2 zeroth order}
\delta\|\Xr(\wb)\|_{L^\infty(\Sigma_\delta)}+\delta\|\Xr(\psi)\|_{L^\infty(\Sigma_\delta)}+\|\Tr(\psi)\|_{L^\infty(\Sigma_\delta)}\lesssim \varepsilon \delta, \ \ \psi\in \{w,\psi_2\}.
\end{equation}
\end{corollary}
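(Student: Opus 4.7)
The plan is to reduce the estimates for $\Xr$ and $\Tr$ derivatives to the corresponding estimates for $\Xh$ and $T$ derivatives supplied by Proposition \ref{prop:pointwise bound on circled derivatives of psi 1} and Proposition \ref{prop:pointwise bound on derivatives of psi}, using the explicit frame conversion
\[
\Xr = -\Th^1\,\Xh + \tfrac{\Th^2}{\kappa}\,T, \qquad \Tr = -\kappar\Th^2\,\Xh - \tfrac{\kappar}{\kappa}\Th^1\,T,
\]
from \eqref{eq: transformation between Xh T and Xr Tr}. Here $\kappar = t = \delta$ on $\Sigma_\delta$, and the $L^\infty$ sizes of the scalar coefficients are controlled by Lemma \ref{lemma: bounds on geometry on sigma delta 1} and Proposition \ref{prop: bounds on geometry on sigma delta 1}, namely $\|\Th^1\|_{L^\infty} \lesssim 1$, $\|\Th^2\|_{L^\infty} \lesssim \varepsilon\delta$, $\kappa \approx \delta$ (so $\|\Th^2/\kappa\|_{L^\infty} \lesssim \varepsilon$), $\|\kappar \Th^2\|_{L^\infty} \lesssim \varepsilon\delta^2$, and $\|\kappar\Th^1/\kappa\|_{L^\infty} \lesssim 1$.

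For the first estimate (with $1\leqslant|\alpha|\leqslant N-1$), I will simply apply the conversion formula to the scalar function $\Zr^\alpha\psi$ and estimate term by term. Proposition \ref{prop:pointwise bound on circled derivatives of psi 1} gives $\|\Xh(\Zr^\alpha\psi)\|_{L^\infty(\Sigma_\delta)} \lesssim \varepsilon$ and $\|T(\Zr^\alpha\psi)\|_{L^\infty(\Sigma_\delta)} \lesssim \varepsilon\delta$. Hence
\[
\|\Xr(\Zr^\alpha\psi)\|_{L^\infty} \lesssim 1\cdot\varepsilon + \varepsilon\cdot\varepsilon\delta \lesssim \varepsilon,
\]
which gives $\delta\|\Xr(\Zr^\alpha\psi)\|_{L^\infty}\lesssim\varepsilon\delta$, and similarly
\[
\|\Tr(\Zr^\alpha\psi)\|_{L^\infty} \lesssim \varepsilon\delta^2\cdot\varepsilon + 1\cdot\varepsilon\delta \lesssim \varepsilon\delta.
\]

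For the zeroth-order statement I use Proposition \ref{prop:pointwise bound on derivatives of psi} instead. For $\psi\in\{w,\psi_2\}$ the same computation goes through verbatim since $\|T\psi\|_{L^\infty}\lesssim\varepsilon\delta$. The only subtlety is $\Xr(\wb)$: here $T\wb$ is not small but only bounded, $\|T\wb\|_{L^\infty}\lesssim 1$ (since $T\wb+\tfrac{2}{\gamma+1}=O(\varepsilon\delta)$). Nevertheless the factor $\Th^2/\kappa$ in the conversion supplies the needed smallness,
\[
\|\Xr(\wb)\|_{L^\infty} \lesssim \|\Xh(\wb)\|_{L^\infty} + \|\tfrac{\Th^2}{\kappa}\|_{L^\infty}\|T\wb\|_{L^\infty} \lesssim \varepsilon + \varepsilon\cdot 1 \lesssim \varepsilon,
\]
so $\delta\|\Xr(\wb)\|_{L^\infty}\lesssim\varepsilon\delta$. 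This is precisely why the zeroth-order statement omits the term $\|\Tr(\wb)\|_{L^\infty}$: the analogous estimate for $\Tr(\wb)$ would need $\|\tfrac{\kappar}{\kappa}\Th^1\cdot T\wb\|_{L^\infty}$, and the coefficient $\tfrac{\kappar}{\kappa}\Th^1$ is of order $1$ rather than small, so $\Tr(\wb)$ is bounded but not $O(\varepsilon\delta)$. There is no real obstacle in this proof; the entire argument is an algebraic substitution combined with the pointwise estimates already established.
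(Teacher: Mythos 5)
Your proof is correct and takes essentially the same approach as the paper: the paper's proof is the single sentence ``this is straightforward from the proposition and the formula \eqref{eq: transformation between Xh T and Xr Tr}'', and you have supplied precisely the intended term-by-term estimates, including the explanation of why $\Tr(\wb)$ is excluded from the zeroth-order statement. Nothing to correct.
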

\begin{proof}
This is straightforward from the proposition and the formula \eqref{eq: transformation between Xh T and Xr Tr}.
\end{proof}

\subsubsection{Auxiliary estimates on $\yr, \zr, \chir$ and $\etar$}\label{Section: lambda on C0}
We define $\Lambda = \{\yr, \zr, \chir,\etar\}$ and we use $\lambda$ to denote a generic object from $\Lambda$. The goal of the current subsection is to derive estimates for $\lambda$ on $\Sigma_\delta$.

In view of the definition that $\chir=-\Xr(\psi_2)$ and $\etar=-\Tr(\psi_2)$, according to Corollary \ref{coro: circled derivatives bounds}, for $\lambda \in \{\chir,\etar\}$, for all multi-indices $\alpha$ with $|\alpha|\leqslant N$, we have
\begin{equation}\label{pointwise bounds for chir etar}
\|\mathring{Z}^\alpha(\lambda)\|_{L^\infty(\Sigma_\delta)}\lesssim  \begin{cases}\varepsilon, \ \ & \text{if}~\Zr^\beta=\Xr^\beta ~\text{and}~\lambda =\chir;\\
\varepsilon \delta, \ \ & \text{otherwise}.
\end{cases} 
\end{equation}

For $\lambda =\zr$ (see \eqref{def: yring zring} to recall the definition), it suffices to bound $z$.  Since $v^1+c=\frac{\gamma+1}{2}\wb-\frac{3-\gamma}{2}w$, by Proposition \ref{prop:pointwise bound on derivatives of psi}, we have
\[ \|1+T(v^1+c)\|_{L^\infty(\Sigma_\delta)} \lesssim \varepsilon \delta. \]
On the other hand, using \eqref{eq: transformation between Xh T and Xr Tr}, we can derive
\begin{equation}\label{eq: a precise computation for z}
z
=1+T(v^1+c)-\kappar\Th^2 \Xh(v^1+c)-\big(\frac{\kappar}{\kappa}\Th^1+1\big) T(v^1+c).
\end{equation}
By Proposition \ref{prop:pointwise bound on derivatives of psi} and \eqref{eq:precise estimates on Th}, the last two terms are bounded by $O(\delta \varepsilon)$. This shows that $\|z\|_{L^\infty(\Sigma_\delta)} \lesssim \varepsilon \delta$. Since $\Zr^\alpha(z)=\Tr\Zr^\alpha(v^1+c)$, by Corollary \ref{coro: circled derivatives bounds}, we have $\|\Zr^\alpha(z)\|_{L^\infty(\Sigma_\delta)} \lesssim \varepsilon \delta$. Hence, for all multi-indices $\alpha$ with $|\alpha|\leqslant N$, we have
\begin{equation*}
\|\mathring{Z}^\alpha(\zr)\|_{L^\infty(\Sigma_\delta)}\lesssim  \varepsilon.
\end{equation*}

For $\lambda =\yr$, its bounds rely on the full strength of the results from Section \ref{section:initial foliatiion}. We will show that $\|\mathring{Z}^\alpha(\yr)\|_{L^\infty(\Sigma_\delta)}\lesssim  \varepsilon$ or equivalently $\|\mathring{Z}^\alpha(y)\|_{L^\infty(\Sigma_\delta)}\lesssim  \delta\varepsilon$  (see \eqref{def: yring zring} for definitions), where $\alpha$ is any multi-index with $|\alpha|\leqslant N-1$.

 According to \eqref{eq:pointwise bound on circled derivatives of psi 1}, for $y=\Xr(v^1+c)$, we have
\[\|T\mathring{Z}^\alpha(y)\|_{L^\infty(\Sigma_\delta)}\lesssim  \delta\varepsilon.\] 
Therefore, by integrating along the integral curve of $T=\frac{\partial}{\partial u}$ from $S_{\delta,0}$, it suffices to prove that  
\[\|\mathring{Z}^\alpha(y)\|_{L^\infty(S_{\delta,0})}\lesssim  \delta\varepsilon,\]
for all multi-indices $\alpha$ with $|\alpha|\leqslant N-1$. \footnote{The top derivatives on $\yr$ is at most $N-1$ since we have integrate along $T$.  This is consistent with the nonlinear energy estimates in \cite{LuoYu1}. }By Corollary \ref{coro: circled derivatives bounds}, it suffices to consider the case $\mathring{Z}^\alpha=\Xr^\alpha$. We can repeat the proof of Proposition \ref{prop:pointwise bound on circled derivatives of psi 1}, i.e., we use \eqref{eq: transformation between Xh T and Xr Tr} to replace all the $\Xr$'s, so that it suffices to show that 
\[\|\Xh^\alpha(y)\|_{L^\infty(S_{\delta,0})}\lesssim  \delta\varepsilon.\] 
Since $X=\sqrt{\slashed{g}}\Xh$, by the bounds in \eqref{eq: higher order estimates on slashed g}, it suffices to show that 
\[\|X^\alpha(y)\|_{L^\infty(S_{\delta,0})}\lesssim  \delta\varepsilon.\]
We compute that
\begin{align*}
y&=\Xr(v^1+c)=-\Th^1 \Xh(v^1+c) + \frac{\Th^2}{\kappa} T(v^1+c)\\
&=-(\Th^1+1) \Xh(v^1+c) + \frac{\Th^2}{\kappa} (T(v^1+c)+1) -\frac{\Th^2}{\kappa} + \Xh(v^1+c)\\
&=\underbrace{-\frac{\Th^2}{\kappa} + X(v^1+c)}_{y'}+ \underbrace{(1-\sqrt{\slashed{g}})\Xh(v^1+c)-(\Th^1+1) \Xh(v^1+c) + \frac{\Th^2}{\kappa} (T(v^1+c)+1)}_{y_{\rm err}}.
\end{align*}
By  \eqref{eq:precise estimates on Th},\eqref{eq: higher order estimates on slashed g}, Proposition \ref{prop:pointwise bound on derivatives of psi} and Corollary \ref{coro: circled derivatives bounds},  we have $\|X^\alpha(y_{\rm err})\|_{L^\infty(S_{\delta,0})}\lesssim  \delta\varepsilon$. Thus, 
it suffices to show that 
\[\|X^\alpha(y')\|_{L^\infty(S_{\delta,0})}\lesssim  \delta\varepsilon.\]
According to \eqref{eq:Th on Sigma delta} and \eqref{eq: kappa on Sigma delta}, we have
\[y'=\frac{A(\delta,x_2)}{\delta} + X(v^1+c).\]
We recall that  $A(\delta,\slashed{\vartheta})=\int_0^\delta a(\tau,\slashed{\vartheta})d\tau$, see \eqref{eq: def for a and A}. We decompose the function $a(\tau, \slashed{\vartheta})$ as follows:
\begin{align*}
a(\tau,\slashed{\vartheta})&=  X \big(\psi_1+c \Th^1 \big) (\tau,\slashed{\vartheta}) +X \big(\psi_1+c \Th^1 \big) (\tau,\slashed{\vartheta}) \big(\frac{\partial \slashed{\vartheta}}{\partial x_2}-1\big)\\
&=\underbrace{-X \big(v^1+c \big)(\tau,\slashed{\vartheta}) }_{a'(\tau,\vartheta)}+ \underbrace{X \big(c (\Th^1+1) \big) (\tau,\slashed{\vartheta}) +X \big(\psi_1+c \Th^1 \big) (\tau,\slashed{\vartheta}) \big(\frac{\partial \slashed{\vartheta}}{\partial x_2}-1\big)}_{a_{\rm err}}.
\end{align*}
We also decompose $y'$ accordingly as
\begin{align*}
y'&=\frac{1}{\delta}\int_0^\delta a(\tau,\slashed{\vartheta})d\tau + X(v^1+c)\\
&=\underbrace{\frac{1}{\delta}\int_0^\delta a'(\tau,\slashed{\vartheta})d\tau+ X(v^1+c)}_{y''}+\frac{1}{\delta}\int_0^\delta a_{\rm err}(\tau,\slashed{\vartheta})d\tau.
\end{align*}
By \eqref{eq:x1x2 on C0} and Proposition \ref{prop: bounds on geometry on sigma delta 1}, we have $\|X^\alpha(a_{\rm err})\|_{L^\infty(S_{\delta,0})}\lesssim  \delta\varepsilon$. Therefore, we can ignore the contribution of $a_{\rm err}$ in $y'$ and it suffices to bound
\begin{align*}
X^\alpha(y'')&=\frac{1}{\delta}\int_0^\delta \frac{\partial^\alpha a'(\tau,\slashed{\vartheta})}{\partial \slashed{\vartheta}^\alpha}d\tau+ X^{\alpha+1}(v^1+c)\\
&=\frac{1}{\delta}\int_0^\delta \big[\frac{\partial^{\alpha+1} (v^1+c)(\delta,\slashed{\vartheta})}{\partial \slashed{\vartheta}^{\alpha+1}}-\frac{\partial^{\alpha+1}(v^1+c)(\tau,\slashed{\vartheta})}{\partial \slashed{\vartheta}^{\alpha+1}}\big]d\tau.
\end{align*}
Therefore, the intermediate value theorem yields $\|X^\alpha(y'')\|_{L^\infty(S_{\delta,0})}\lesssim  \delta\varepsilon$. This completes the estimates for $\yr$. Combined with the bounds on $\zr$, we obtain that
\begin{equation}\label{pointwise bounds for yr zr}
\|\mathring{Z}^\alpha(\yr)\|_{L^\infty(\Sigma_\delta)}+\|\mathring{Z}^\alpha(\zr)\|_{L^\infty(\Sigma_\delta)}\lesssim  \varepsilon.
\end{equation}
\subsubsection{Pointwise bounds on $L( \Zr^\alpha \psi)$}
We recall the following set of formulas which has already appeared in \cite{LuoYu1}. By $[\Lr, \Xr]=\yr\cdot \Tr-\chir\cdot\Xr$ and $[\Lr, \Tr]=\zr\cdot \Tr-\etar\Xr$, for any multi-index $\alpha$, we have the following schematic commutation formula:
\begin{equation}\label{eq: commutation formular for Lr Zr}
[\Lr, \Zr^\alpha]=\sum_{\substack{\alpha_1+\alpha_2=\alpha \\ |\alpha_1|\leqslant |\alpha|-1}}\Zr^{\alpha_1}(\lambda) \Zr^{\alpha_2}, \  \ \lambda \in \{\yr, \zr, \chir,\etar\}.
\end{equation}
\begin{proposition}\label{prop: bounds on L Zalpah psi on sigma delta}For all multi-indices $\alpha$ with $|\alpha|\leqslant N-1$, for all $\psi \in \{\wb,w,\psi_2\}$ and $\Zr\in \mathring{\mathscr{Z}}$, we have
\begin{equation}\label{eq:pointwise bound on L circled derivatives of psi 1}
\|L( \Zr^\alpha \psi)\|_{L^\infty(\Sigma_\delta)}\lesssim \varepsilon.
\end{equation}
\end{proposition}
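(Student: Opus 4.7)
My plan is to decompose $L$ in terms of the second null frame vector field $\Lr$. Since $L - \Lr = c(\Trh - \Th)$ in Cartesian coordinates, I obtain
\[
L(\Zr^\alpha \psi) = \Zr^\alpha(\Lr \psi) + [\Lr, \Zr^\alpha]\psi + c(\Trh - \Th)(\Zr^\alpha \psi),
\]
and I bound each of the three pieces by $O(\varepsilon)$. The case $|\alpha| = 0$ is already covered by Proposition \ref{prop:pointwise bound on derivatives of psi}, so the new content is $1 \leqslant |\alpha| \leqslant N-1$.

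For the first piece I insert the ringed Euler equations \eqref{Euler equations:form 3}, using $\Trh = \Tr/\delta$ on $\Sigma_\delta$, to get
\[
\Lr \wb = \tfrac{c}{2}\Xr \psi_2, \quad \Lr w = -\tfrac{2c}{\delta}\Tr(w) + \tfrac{c}{2}\Xr \psi_2, \quad \Lr \psi_2 = -\tfrac{c}{\delta}\Tr(\psi_2) + c\Xr(w+\wb).
\]
Applying $\Zr^\alpha$ and expanding by Leibniz, every resulting term is a product of a $\Zr$-derivative of $c = \tfrac{\gamma-1}{2}(w+\wb)$ with one of $\Zr^\beta \Xr \psi'$ or $\tfrac{1}{\delta}\Zr^\beta \Tr \psi'$. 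Corollary \ref{coro: circled derivatives bounds} provides exactly the bounds required: $\|\Zr^\beta \Xr \psi'\|_{L^\infty(\Sigma_\delta)} \lesssim \varepsilon$, while $\|\Zr^\beta \Tr \psi'\|_{L^\infty(\Sigma_\delta)} \lesssim \delta\varepsilon$ absorbs the $1/\delta$. The coefficients $\Zr^{\beta_1}c$ are $O(1)$ for $\beta_1 = 0$ and $O(\varepsilon)$ for $|\beta_1|\geqslant 1$, yielding $\|\Zr^\alpha(\Lr \psi)\|_{L^\infty(\Sigma_\delta)} \lesssim \varepsilon$. For the commutator I invoke \eqref{eq: commutation formular for Lr Zr}, which writes $[\Lr,\Zr^\alpha]\psi$ as a finite sum of products $\Zr^{\alpha_1}(\lambda)\cdot \Zr^{\alpha_2}(\psi)$ with $\lambda \in \{\yr,\zr,\chir,\etar\}$ and $|\alpha_1|\leqslant|\alpha|-1$. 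The pointwise bounds \eqref{pointwise bounds for chir etar}--\eqref{pointwise bounds for yr zr} give $\|\Zr^{\alpha_1}\lambda\|_{L^\infty(\Sigma_\delta)} \lesssim \varepsilon$, and $\|\Zr^{\alpha_2}\psi\|_{L^\infty(\Sigma_\delta)} \lesssim 1$ (trivially for $\alpha_2 = 0$, by Corollary \ref{coro: circled derivatives bounds} for $|\alpha_2|\geqslant 1$), closing this piece by $O(\varepsilon)$.

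The main obstacle is the third piece $c(\Trh-\Th)(\Zr^\alpha\psi)$. In Cartesian coordinates $\Trh-\Th = -(1+\Th^1)\partial_1 - \Th^2\partial_2$, which in the ringed frame becomes $(1+\Th^1)\Trh - \Th^2 \Xr = \tfrac{1+\Th^1}{\delta}\Tr - \Th^2 \Xr$ on $\Sigma_\delta$, producing a dangerous $1/\delta$ factor. The $\Xr$-contribution $-c\Th^2 \Xr(\Zr^\alpha\psi)$ is easy: the sharp bound $\|\Th^2\|_{L^\infty(\Sigma_\delta)} \lesssim \delta\varepsilon$ from Proposition \ref{prop: bounds on geometry on sigma delta 1} combined with Corollary \ref{coro: circled derivatives bounds} gives $O(\delta\varepsilon^2)$. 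The $\Tr$-contribution is more delicate precisely because $\Tr \wb = -\kappar\Th^2 \Xh\wb - \tfrac{\kappar}{\kappa}\Th^1 T\wb$ is only of size $O(1)$ at zeroth order (due to $T\wb \approx -\tfrac{2}{\gamma+1}$). Here the quadratic smallness $\|\Th^1+1\|_{L^\infty(\Sigma_\delta)} \lesssim \delta^2\varepsilon^2$ from Proposition \ref{prop: bounds on geometry on sigma delta 1} is exactly what is needed: it compensates both the $1/\delta$ and the non-smallness of $\Tr\wb$, giving a net $O(\delta\varepsilon^2)$ contribution. For $|\alpha|\geqslant 1$, Corollary \ref{coro: circled derivatives bounds} provides the extra $\delta$ factor in $\|\Tr\Zr^\alpha\psi\|_{L^\infty(\Sigma_\delta)}$, giving the same $O(\delta\varepsilon^2)$ bound. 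Combining the three pieces yields $\|L(\Zr^\alpha\psi)\|_{L^\infty(\Sigma_\delta)} \lesssim \varepsilon$, as claimed.
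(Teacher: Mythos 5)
Your proposal is correct and follows essentially the same route as the paper's own proof: apply $\Zr^\alpha$ to the ringed Euler system \eqref{Euler equations:form 3} and bound the result with Corollary \ref{coro: circled derivatives bounds}, treat the commutator via \eqref{eq: commutation formular for Lr Zr} together with the pointwise bounds on $\yr,\zr,\chir,\etar$, and then pass from $\Lr$ to $L$ using $L-\Lr=c\big(\frac{\Th^1+1}{\kappar}\Tr-\Th^2\Xr\big)$ with the sharp estimates $\|\Th^1+1\|\lesssim\delta^2\varepsilon^2$, $\|\Th^2\|\lesssim\delta\varepsilon$. One harmless slip: since $\Tr(c)$ contains $\Tr\wb\approx-\frac{2}{\gamma+1}$, the coefficient $\Zr^{\beta_1}(c)$ is only $O(1)$ (not $O(\varepsilon)$) when $\Zr^{\beta_1}=\Tr$, but your estimate only needs the $O(1)$ bound there because the $\varepsilon$-smallness is supplied by the accompanying factors $\Xr\Zr^{\beta}\psi'$ or $\delta^{-1}\Tr\Zr^{\beta}\psi'$.
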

\begin{proof}
Since $\Xr$ and $\Trh$ commute with vector fields from $\mathring{\mathscr{Z}}$,  we can apply $\Zr^\alpha\in  \mathring{\mathscr{Z}}$ to \eqref{Euler equations:form 3} to derive
\begin{equation}\label{eq:Euler commutated}
\begin{cases}
\Zr^\alpha \Lr (\wb) &= \displaystyle\sum_{\alpha_1+\alpha_2=\alpha} \Zr^{\alpha_1}(c) \cdot \Xr(\Zr^{\alpha_2}(\psi_2)),\\
\Zr^\alpha \Lr (w) &= 	\displaystyle\sum_{\alpha_1+\alpha_2=\alpha}\big[\Zr^{\alpha_1}(c)  \Trh(\Zr^{\alpha_2}(w))+ \Zr^{\alpha_1}(c) \Xr(\Zr^{\alpha_2}(\psi_2))\big],\\
\Zr^\alpha\Lr (\psi_2) &= \displaystyle\sum_{\alpha_1+\alpha_2=\alpha} \big[\Zr^{\alpha_1}(c)\Trh(\Zr^{\alpha_2}(\psi_2))+ \Zr^{\alpha_1}(c) \Xr(\Zr^{\alpha_2}(w+\wb))\big].
\end{cases}
\end{equation}
where we ignore the irrelevant constants coefficients. By Corollary \ref{coro: circled derivatives bounds},  for all multi-indices $\alpha$ with $|\alpha|\leqslant N$ and for all $\psi \in \{\wb, w,\psi_2\}$, we have
\[  \|\Zr^\alpha \Lr (\psi)\|_{L^\infty(\Sigma_\delta)} \lesssim \varepsilon.\]
To bound $\Lr( \Zr^\alpha \psi)$, we use \eqref{eq: commutation formular for Lr Zr} to derive
\begin{align*}
\Lr( \Zr^\alpha \psi)=\Zr^\alpha \Lr (\psi)+ [\Lr, \Zr^\alpha]\psi=\Zr^\alpha \Lr (\psi)+\sum_{\substack{\alpha_1+\alpha_2=\alpha \\ |\alpha_1|\leqslant |\alpha|-1}}\Zr^{\alpha_1}(\lambda) \Zr^{\alpha_2}(\psi).
\end{align*}
By \eqref{pointwise bounds for chir etar} and \eqref{pointwise bounds for yr zr}, for all $\lambda \in \Lambda$, we have $\|\Zr^\alpha (\lambda)\|_{L^\infty(\Sigma_\delta)} \lesssim \varepsilon$. Hence,
\[  \|\Lr \Zr^\alpha (\psi)\|_{L^\infty(\Sigma_\delta)} \lesssim \varepsilon.\]
Finally, by $L-\Lr =c\big(\frac{\Th^1+1}{\kappar}\Tr -\Th^2\Xr\big)$ and Corollary \ref{coro: circled derivatives bounds}, the above inequality implies that
\[  \|L \Zr^\alpha (\psi)\|_{L^\infty(\Sigma_\delta)} \lesssim \varepsilon.\]
This finishes the proof of the proposition.
\end{proof}

We combine all the estimates to conclude that
\begin{proposition}\label{prop:pointwise bound on data on Sigma delta}
For all multi-indices $\alpha$ with $1\leqslant |\alpha|\leqslant N-1$, for all $\psi \in \{\wb,w,\psi_2\}$ and $\Zr\in \mathring{\mathscr{Z}}$, we have
\[
\delta\|L( \Zr^\alpha \psi)\|_{L^\infty(\Sigma_\delta)}+\delta\|\Xh( \Zr^\alpha \psi)\|_{L^\infty(\Sigma_\delta)}+{\color{black}\|\Lb( \Zr^\alpha \psi)\|_{L^\infty(\Sigma_\delta)}}\lesssim \varepsilon \delta.
\]
\end{proposition}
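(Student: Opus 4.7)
The proposition is a collation of bounds that have essentially already been established in the preceding subsections, so the plan is to assemble them rather than prove anything new. The three terms on the left-hand side correspond to the three independent directions $L$, $\Xh$, and $\Lb$ on $\Sigma_\delta$, and each will be bounded via a different strategy adapted to that direction.

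First, for $\delta\|L(\Zr^\alpha\psi)\|_{L^\infty(\Sigma_\delta)}$, I would simply invoke Proposition \ref{prop: bounds on L Zalpah psi on sigma delta}, which already gives $\|L(\Zr^\alpha\psi)\|_{L^\infty(\Sigma_\delta)} \lesssim \varepsilon$ for $|\alpha|\leqslant N-1$. Multiplying by $\delta$ yields the desired estimate. For $\delta\|\Xh(\Zr^\alpha\psi)\|_{L^\infty(\Sigma_\delta)}$, this is precisely the first summand in the conclusion of Proposition \ref{prop:pointwise bound on circled derivatives of psi 1}, and nothing further is required.

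The third term $\|\Lb(\Zr^\alpha\psi)\|_{L^\infty(\Sigma_\delta)}$ is handled by using the defining relation $\Lb = c^{-1}\kappa L + 2T$, so that
\[
\Lb(\Zr^\alpha\psi) = c^{-1}\kappa\, L(\Zr^\alpha\psi) + 2\,T(\Zr^\alpha\psi).
\]
The first summand is controlled by combining the bound $\|\kappa\|_{L^\infty(\Sigma_\delta)}\lesssim \delta$ from Lemma \ref{lemma: bounds on geometry on sigma delta 1}, the lower and upper bound $c\sim 1$ (which follows from the smallness part of Definition \ref{def:data} and the continuity across $S_{\delta,0}$), and the bound $\|L(\Zr^\alpha\psi)\|_{L^\infty(\Sigma_\delta)}\lesssim \varepsilon$ from Proposition \ref{prop: bounds on L Zalpah psi on sigma delta}; this gives $\|c^{-1}\kappa L(\Zr^\alpha\psi)\|_{L^\infty(\Sigma_\delta)} \lesssim \delta\varepsilon$. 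The second summand is bounded by $\lesssim \delta\varepsilon$ directly from Proposition \ref{prop:pointwise bound on circled derivatives of psi 1}.

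Since every ingredient is already in hand, there is no genuine obstacle; the only mild subtlety is bookkeeping the multi-index ranges to ensure $|\alpha|\leqslant N-1$ remains compatible with the inputs (Proposition \ref{prop: bounds on L Zalpah psi on sigma delta} is stated for $|\alpha|\leqslant N-1$, and Proposition \ref{prop:pointwise bound on circled derivatives of psi 1} likewise). The combined estimate then closes the last pointwise bound needed before moving to the initial energy and flux estimates.
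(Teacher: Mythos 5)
Your proof is correct and follows exactly the same route as the paper's: the $L$ and $\Xh$ bounds are quoted from Propositions \ref{prop: bounds on L Zalpah psi on sigma delta} and \ref{prop:pointwise bound on circled derivatives of psi 1} respectively, and the $\Lb$ bound comes from decomposing $\Lb = c^{-1}\kappa L + 2T$ and combining $\kappa\lesssim\delta$, $c\sim 1$, and the bounds on $L(\Zr^\alpha\psi)$ and $T(\Zr^\alpha\psi)$. The multi-index ranges also match, so nothing is missing.
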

\begin{proof}
The bound of $\Lb \Zr^\alpha (\psi)$ is immediate from the formula $\Lb=2T+c^{-1}\kappa L$, \eqref{eq:pointwise bound on circled derivatives of psi 1} and the bound on $L \Zr^\alpha (\psi)$ in Proposition \ref{prop: bounds on L Zalpah psi on sigma delta}. 
\end{proof}
\begin{remark}\label{remark: closing I 2 higher energy}
The above $L^\infty$ implies that $L^2$ energy bounds:
\[\mathscr{E}_{n}(\psi)(\delta,u^*) \lesssim \varepsilon^2 \delta^2, \ \ \psi \in \{w,\wb,\psi_2\},  \ \ 1\leqslant n\leqslant \Ntop.\]
Therefore, we have checked  the energy ansatz for $\mathscr{E}_{n}$ in $\mathbf{(I_2)}$, see \eqref{initial I2}.
\end{remark}

\subsection{The initial flux bounds}

We still have to check the bounds on the flux through $C_0$ in $L^2$. The idea is to derive ODE systems along $L$ direction for all the $k$-jets of $\psi\in \{\wb,w,\psi_2\}$. To this purpose, we introduce another set of polynomial rings. They are based on three families of unknowns $\{\mathfrak{A}_{n}\}_{n\geqslant0}, \{\mathfrak{B}_{n}\}_{n\geqslant0}$ and $\{\mathfrak{C}_{n}\}_{n\geqslant0}$ defined inductively as follows:
\begin{itemize}
\item $n=0$. 
\[\mathfrak{A}_{0}=\big\{ \wb,w,\psi_2\big\},\ \ \mathfrak{B}_{0}=\big\{\widehat{T}^i, i=1,2\big\},\ \ \mathfrak{C}_{0}=\mathfrak{A}_{0}\cup \mathfrak{B}_{0}.\]
\item $n=1$. 
\[\mathfrak{A}_{1}=\big\{T(\psi_0),T(\psi_1),T(\psi_2),\kappa\big\},\ \  \mathfrak{B}_{1}=\big\{T(\Th^i),  i=1,2\big\},\ \ \mathfrak{C}_{1}=\mathfrak{A}_{1}\cup \mathfrak{B}_{1}.\]
\item $n\geqslant 2$.
\[\mathfrak{A}_{n}=\big\{T(x)\big| x\in \mathfrak{A}_{n-1}\big\},\ \  \mathfrak{B}_{n}=\big\{T(x)\big| x\in \mathfrak{B}_{n-1}\big\},\ \ \mathfrak{C}_{n}=\mathfrak{A}_{n}\cup \mathfrak{B}_{n}.
\]
\end{itemize}

Schematically, we use $\mathfrak{a}_n$, $\mathfrak{b}_n$ and $\mathfrak{c}_n$ to denote a generic element from $\mathfrak{A}_n$, $\mathfrak{B}_n$ and $\mathfrak{C}_n$ respectively. A key concept in the following is the {\bf \emph{prime} notation}: We use $\mathfrak{a}'_n$ to denote a tangential derivative of a given $\mathfrak{a}_n$. The word \emph{tangential} is relative to the hypersurface $C_0$. More concretely, $\mathfrak{a}'_n$ is of the following form:
\[\mathfrak{a}'_n=(\mathfrak{d}_1\circ\mathfrak{d}_2\circ\cdots \circ\mathfrak{d}_k)(\mathfrak{a}_n)\]
where $k\geqslant 0$ and $\mathfrak{d}_i$ is either $\Xh$ or $L$. We use  $\mathfrak{A}'_n$ to denote the set of all the $\mathfrak{a}'_n$'s. We can also  define $\mathfrak{b}'_n$, $\mathfrak{c}'_n$, $\mathfrak{B}'_n$  and $\mathfrak{C}'_n$ in the similar manner.

\begin{remark}
Given a function $\mathfrak{a}_n$ on $C_0$, since $\Xh$ and $L$ are tangential to $C_0$, all the corresponding $\mathfrak{a}'_n$ are determined by the value of $\mathfrak{a}_n$. Intuitively, the number $n$ for $\mathfrak{a}'_n,\mathfrak{c}'_n$ and $\mathfrak{c}'_n$ counts the number of $T$-derivatives. 
\end{remark}

We also introduce the following schematic polynomial notations through examples:
\begin{itemize}
\item We use ${\mathscr{P}}(\mathfrak{c}_{0})$ to denote a polynomial from the polynomial ring $\mathbb{R}[\mathfrak{A}_{0}]$.

For example, $c ={\mathscr{P}}(\mathfrak{c}_{0})$ and $c^2 ={\mathscr{P}}(\mathfrak{c}_{0})$.

\item We use ${\mathscr{P}}(\mathfrak{c}'_{0})$ to denote a polynomial from the polynomial ring $\mathbb{R}[\mathfrak{C}'_{0}]$.

For example, in view of \eqref{defining eq of theta and chi}, we have $\theta = {\mathscr{P}}(\mathfrak{c}'_{0})$ and $\chi  = {\mathscr{P}}(\mathfrak{c}'_{0})$.

\item For $\ell \geqslant 1$, we use ${\mathscr{P}}(\mathfrak{c}'_{\leqslant \ell-1},\mathfrak{a}'_{\leqslant \ell})$ to denote a polynomial from the polynomial ring $\mathbb{R}[c^{-1}, \mathfrak{C}'_{\leqslant \ell-1},\mathfrak{A}'_{\leqslant \ell}]$.  We emphasize the appearance of $c^{-1}$.
\end{itemize}

\subsubsection{Determining the higher jets on $C_0$}\label{Section:Determine the higher jets on C0}
We will use an inductive argument to determine all the $k$-jets of $\psi\in \{\wb,w,\psi_2\}$ along $C_0$, where $0\leqslant k\leqslant N$.  Indeed, we will show that all the $\mathfrak{c}_k$'s are determined by the initial data.

First of all, we observe that all the $\mathfrak{c}_0$'s are already given. In fact, $\wb,w$ and $\psi_2$ are determined by the continuity of the solution from $(v_r,c_r)$ on $\mathcal{D}_0$. The functions $\Th^1$ and $\Th^2$ are determined by the geometry of $C_0$.

Secondly, we will determine all the $\mathfrak{c}_1$'s. It is useful to rewrite the commutator formula and $[T,\Xh]$ in \eqref{eq:commutator formulas} as follows:
 \begin{equation}\label{eq: schematic commutator with mathscr P T d}
 [T,\mathfrak{d}]={\mathscr{P}}(\mathfrak{c}'_0,\mathfrak{a}'_1)\Xh, \ \ \mathfrak{d}\in \{T, \Xh\}.
 \end{equation}
where the polynomial ${\mathscr{P}}(\mathfrak{c}'_0,\mathfrak{a}'_1)$ has no constant term. We remark that the presence of $\mathfrak{a}'_1$ is due to the $\kappa$ appearing in $[T,\Xh]$, see \eqref{eq:commutator formulas}.

We also recall that the source terms on the righthand side of  \eqref{Main Wave equation: order 0} can be written as $\kappa^{-1}{\mathscr{P}}(\mathfrak{c}'_0)\cdot \mathfrak{a}_1$. In view of \eqref{eq:wave operator in null frame}, the equations \eqref{Main Wave equation: order 0} for $\psi\in \{\wb, w, \psi_2\}$ can be rewritten as
\[2L(T\psi)+L(c^{-1}\kappa L\psi) = c\kappa \Xh^2(\psi) - \frac{1}{2}\left(\chi \underline{L}\psi + \chib Lf\right)- 2  \zeta \Xh(\psi)  + \frac{c\kappa}{2}\Xh\left(\log(h)\right)\Xh(\psi)+{\mathscr{P}}(\mathfrak{c}'_0)\cdot \mathfrak{a}_1.\]

Together with the equation \eqref{structure eq 1: L kappa}  for $\kappa$, the conclusion is that, for all $\mathfrak{a}_1 \in \mathfrak{A}_1$, we have the schematic expression:
\begin{equation}\label{eq: L a_1}
L(\mathfrak{a}_1)={\mathscr{P}}(\mathfrak{c}'_0) \cdot \mathfrak{a}_1.
\end{equation}
\begin{remark}
We use the wave equation to obtain the transport equation for $L(T\wb)$. It is not clear if we use the Euler equations for $\wb$. This is because the corresponding eigenvalue is basically zero.
\end{remark}
The key structure for \eqref{eq: L a_1} is that ${\mathscr{P}}(\mathfrak{c}'_0)$ only involves $\mathfrak{c}'_0$ terms which have already been explicitly given on $C_0$. By integrating this equation from $S_{\delta,0}$, we  obtain the values of $\mathfrak{a}_1$ on $S_{t,0}$ for all $t\in [\delta,t^*]$.

Next, to derive an equation for $L(\mathfrak{b}_1)$, we commute $T$  with  \eqref{structure eq 3: L T on Ti Xi Li}. By \eqref{eq:commutator formulas}, we obtain that
\begin{align*}
L\big(T(\widehat{T}^{k})\big)&=[L,T]\big(T^{k}\big) +\big(\widehat{T}^j \Xh(\psi_j) + \Xh(c)\big)T\big(\Xh^k\big)+T\big(\widehat{T}^j\big) \Xh(\psi_j) \Xh^k+\big(\widehat{T}^j  T(\Xh(\psi_j)) + T(\Xh(c))\big)\Xh^k\\
&={\mathscr{P}}(\mathfrak{c}'_0) \mathfrak{b}_1+{\mathscr{P}}(\mathfrak{c}'_0,\mathfrak{a}'_1).
\end{align*}
In the above and the following computations, the formula \eqref{eq: schematic commutator with mathscr P T d} and $[L,T]={\mathscr{P}}(\mathfrak{c}'_0,\mathfrak{a}'_1)\Xh$ are useful. We notice that these commutator formulas do not involve $\mathfrak{b}_1$ and the  $\mathfrak{a}'_1$ terms have already been determined in the previous step.  Therefore, we obtain that
\begin{equation}\label{eq: L b_1}
L(\mathfrak{b}_1) ={\mathscr{P}}(\mathfrak{c}'_0) \mathfrak{b}_1+{\mathscr{P}}(\mathfrak{c}'_0,\mathfrak{a}'_1).
\end{equation}
The coefficient functions for the ODE \eqref{eq: L b_1} has been explicitly given on $C_0$ and all the coefficients except for $\mathfrak{b}_1$ have been already determined. We then integrate from $S_{\delta,0}$ and we obtain the values of $\mathfrak{b}_1$ on $C_0$.

To further proceed, we need the following lemma to handle the commutators:
\begin{lemma}For all $\ell,k,m\geqslant 1$, for all $\mathfrak{d} \in \{L,\Xh\}$, we have
\begin{equation}\label{eq: commutator formula ofr T ell d k}
[T^\ell,\mathfrak{d}^k](\mathfrak{c}'_m)={\mathscr{P}}(\mathfrak{c}'_{\leqslant \ell-1},\mathfrak{a}'_{\leqslant \ell})\cdot \mathfrak{c}'_{\leqslant m+\ell-1}.
\end{equation}
\end{lemma}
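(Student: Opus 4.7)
The plan is a double induction on $\ell$ and $k$, built on three structural observations. First, by \eqref{eq: schematic commutator with mathscr P T d} the elementary commutators $[T,L]$ and $[T,\Xh]$ both have the schematic form ${\mathscr{P}}(\mathfrak{c}'_0,\mathfrak{a}'_1)\,\Xh$, with no constant term in the polynomial coefficient. Second, any tangential operator $\mathfrak{d}\in\{L,\Xh\}$ preserves the polynomial classes: $\mathfrak{d}(\mathfrak{c}'_k)\subset\mathfrak{c}'_k$, $\mathfrak{d}(\mathfrak{a}'_k)\subset\mathfrak{a}'_k$ and $\mathfrak{d}\big({\mathscr{P}}(\mathfrak{c}'_{\leqslant r},\mathfrak{a}'_{\leqslant s})\big)\subset{\mathscr{P}}(\mathfrak{c}'_{\leqslant r},\mathfrak{a}'_{\leqslant s})$, since appending another tangential factor to the defining string of a primed object produces another primed object of the same index. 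Third, the transversal operator $T$ raises the index by one modulo a controlled correction,
\[
T(\mathfrak{c}'_m)\subset \mathfrak{c}'_{m+1}+{\mathscr{P}}(\mathfrak{c}'_0,\mathfrak{a}'_1)\cdot\mathfrak{c}'_m,
\]
with the analogous statement for $\mathfrak{a}'_k$. To see this, write $\mathfrak{c}'_m=\mathfrak{d}_1\circ\cdots\circ\mathfrak{d}_j(\mathfrak{c}_m)$ with $\mathfrak{d}_i\in\{L,\Xh\}$ and push $T$ to the innermost slot via \eqref{eq: schematic commutator with mathscr P T d}; each commutator contributes a factor from ${\mathscr{P}}(\mathfrak{c}'_0,\mathfrak{a}'_1)$ multiplied by a tangential derivative of $\mathfrak{c}'_m$, which lies in $\mathfrak{c}'_m$ by the second observation, while the fully commuted term becomes a tangential derivative of $T(\mathfrak{c}_m)\in\mathfrak{c}_{m+1}$, hence an element of $\mathfrak{c}'_{m+1}$.

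Fixing $\ell=1$, I would induct on $k$. The base case $k=1$ is immediate from the first two observations: $[T,\mathfrak{d}](\mathfrak{c}'_m)={\mathscr{P}}(\mathfrak{c}'_0,\mathfrak{a}'_1)\,\Xh(\mathfrak{c}'_m)\in{\mathscr{P}}(\mathfrak{c}'_{\leqslant 0},\mathfrak{a}'_{\leqslant 1})\cdot\mathfrak{c}'_m$. For the inductive step I would expand
\[
[T,\mathfrak{d}^{k+1}]=[T,\mathfrak{d}]\mathfrak{d}^{k}+\mathfrak{d}\,[T,\mathfrak{d}^{k}],
\]
apply the base case to the first term and the induction hypothesis to the second, and then invoke the second observation to conclude that both pieces lie in ${\mathscr{P}}(\mathfrak{c}'_{\leqslant 0},\mathfrak{a}'_{\leqslant 1})\cdot\mathfrak{c}'_m$.

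To promote from $\ell$ to $\ell+1$ for arbitrary $k$, I would use
\[
[T^{\ell+1},\mathfrak{d}^{k}]=T\,[T^{\ell},\mathfrak{d}^{k}]+[T,\mathfrak{d}^{k}]\,T^{\ell}.
\]
Applied to $\mathfrak{c}'_m$, the first summand lies in $T\big({\mathscr{P}}(\mathfrak{c}'_{\leqslant \ell-1},\mathfrak{a}'_{\leqslant \ell})\cdot\mathfrak{c}'_{\leqslant m+\ell-1}\big)$ by the inductive hypothesis. Distributing $T$ by the Leibniz rule and invoking the third observation on both the coefficient polynomial and the rightmost factor yields an element of ${\mathscr{P}}(\mathfrak{c}'_{\leqslant \ell},\mathfrak{a}'_{\leqslant \ell+1})\cdot\mathfrak{c}'_{\leqslant m+\ell}$, exactly the target class for $\ell+1$. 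For the second summand, iterating the third observation gives $T^{\ell}(\mathfrak{c}'_m)\subset\mathfrak{c}'_{\leqslant m+\ell}$, and then the $\ell=1$ case already established sends this into ${\mathscr{P}}(\mathfrak{c}'_0,\mathfrak{a}'_1)\cdot\mathfrak{c}'_{\leqslant m+\ell}$, which is a sub-class of the same target.

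The main obstacle is not algebraic but notational: one must verify carefully that the index shifts produced by the three observations and by the induction hypothesis compose to give exactly $\ell-1$ in the $\mathfrak{c}'$ slot of the coefficient polynomial, $\ell$ in the $\mathfrak{a}'$ slot, and $m+\ell-1$ in the trailing factor, rather than strictly larger numbers. Once this schematic arithmetic is set up and the preparatory observations are recorded as lemmas, the double induction closes with no further computation.
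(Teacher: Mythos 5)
Your argument is correct and follows essentially the same route as the paper: both reduce the claim to the basic commutator $[T,\mathfrak{d}]={\mathscr{P}}(\mathfrak{c}'_0,\mathfrak{a}'_1)\Xh$, use that tangential operators $L,\Xh$ preserve the primed classes, and iterate in $\ell$ to track how $T$ raises the index. The paper carries out the $\ell=1$ case via the closed-form sum $[T,\mathfrak{d}^k]\mathfrak{c}'_m=\sum_j\mathfrak{d}^{k-1-j}[\mathfrak{d},T]\mathfrak{d}^j(\mathfrak{c}'_m)$ and then unwinds $T^\ell\mathfrak{d}^k$ layer by layer (``we can then repeat this process''), whereas you package the same manipulations as a genuine double induction using the operator identity $[T^{\ell+1},\mathfrak{d}^k]=T[T^\ell,\mathfrak{d}^k]+[T,\mathfrak{d}^k]T^\ell$; this is an organizational refinement, not a different method. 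One small imprecision: iterating your third observation yields $T^{\ell}(\mathfrak{c}'_m)\subset {\mathscr{P}}(\mathfrak{c}'_{\leqslant \ell-1},\mathfrak{a}'_{\leqslant \ell})\cdot\mathfrak{c}'_{\leqslant m+\ell}$ rather than the bare inclusion $T^{\ell}(\mathfrak{c}'_m)\subset\mathfrak{c}'_{\leqslant m+\ell}$ you wrote, so the subsequent application of the $\ell=1$ case must be to a product; this works because the inductive argument for $\ell=1$ goes through unchanged when applied across a Leibniz expansion, but it is worth stating explicitly.
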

\begin{proof}
We start with the case where $\ell=1$. For $\mathfrak{d}=L$ or $\Xh$ and for any $\mathfrak{c}'_m$, we have
\begin{align*}
[T,\mathfrak{d}^k] \mathfrak{c}'_m&=\sum_{j=0}^{k-1}\mathfrak{d}^{k-1-j}[\mathfrak{d},T]\mathfrak{d}^j(\mathfrak{c}'_m)=\sum_{j=0}^{k-1}\mathfrak{d}^{k-1-j}({\mathscr{P}}(\mathfrak{c}'_0,\mathfrak{a}'_1)\Xh)\mathfrak{d}^j(\mathfrak{c}'_m).
\end{align*}
By the definition of the prime notations, we conclude that
\begin{align*}
[T,\mathfrak{d}^k] \mathfrak{c}'_m&={\mathscr{P}}(\mathfrak{c}'_0,\mathfrak{a}'_1)\cdot \mathfrak{c}'_m.
\end{align*}
We then consider the case for $\ell=2$. By the formula for $\ell=1$, we compute that
\begin{align*}
T^2\mathfrak{d}^k(\mathfrak{c}'_m)&=T\left(\mathfrak{d}^k T(\mathfrak{c}'_m) +{\mathscr{P}}(\mathfrak{c}'_0,\mathfrak{a}'_1)\cdot \mathfrak{c}'_m\right)\\
&=T\left(\mathfrak{d}^k T(\mathfrak{c}'_m)\right)+{\mathscr{P}}(\mathfrak{c}'_0,\mathfrak{a}'_1)\cdot \mathfrak{c}'_{m+1}+{\mathscr{P}}(\mathfrak{c}'_{\leqslant 1},\mathfrak{a}'_{\leqslant 2})\cdot \mathfrak{c}'_{m}.
\end{align*}
We can use again the  formula for $\ell=1$ and we obtain
\begin{align*}
T^2\mathfrak{d}^k(\mathfrak{c}'_m)&=\mathfrak{d}^k T^2(\mathfrak{c}'_m)+{\mathscr{P}}(\mathfrak{c}'_0,\mathfrak{a}'_1)\cdot \mathfrak{c}'_{m+1}+{\mathscr{P}}(\mathfrak{c}'_{\leqslant 1},\mathfrak{a}'_{\leqslant 2})\cdot \mathfrak{c}'_{m}\\
&=\mathfrak{d}^k T^2(\mathfrak{c}'_m)+{\mathscr{P}}(\mathfrak{c}'_{\leqslant 1},\mathfrak{a}'_{\leqslant 2})\cdot \mathfrak{c}'_{\leqslant m+1}.
\end{align*}
We recall that $\mathfrak{c}'_{\leqslant m+1}$ denote terms of type $\mathfrak{c}'_{\ell}$ with $\ell\leqslant m+1$. We can then repeat this process to prove the lemma. 
\end{proof}

We apply the lemma to  t\eqref{eq: L a_1} and \eqref{eq: L b_1}, i.e., the following system of equations:
\begin{equation}\label{eq: inductive equation on C0 case 1}
\begin{cases}
L(\mathfrak{a}_1)&={\mathscr{P}}(\mathfrak{c}'_0) \cdot \mathfrak{a}_1,\\
L(\mathfrak{b}_1)&={\mathscr{P}}(\mathfrak{c}'_0) \mathfrak{b}_1+{\mathscr{P}}(\mathfrak{c}'_0,\mathfrak{a}'_1).
\end{cases}
\end{equation}
To determine the values of $\mathfrak{c}_{k}$ on $C_0$ for $0\leqslant n\leqslant N-1$, we prove inductively for $n$ that
\begin{equation}\label{eq: inductive equation on C0 case n}
\begin{cases}
L\big(T^{n}\big(\mathfrak{a}_1\big)\big)&={\mathscr{P}}(\mathfrak{c}'_0) \cdot T^{n}\big(\mathfrak{a}_1\big)+{\mathscr{P}}(\mathfrak{c}'_{\leqslant n}),\\
L\big(T^{n}\big(\mathfrak{b}_1\big)\big)&={\mathscr{P}}(\mathfrak{c}'_0) \cdot T^{n}\big(\mathfrak{b}_1\big)+{\mathscr{P}}(\mathfrak{c}'_{\leqslant n},\mathfrak{a}'_{\leqslant n+1}).
\end{cases}
\end{equation}
The base cases $n=0$ is trivial. From the case $n$ to case $n+1$, it is straightforward if we  apply $[L,T]={\mathscr{P}}(\mathfrak{c}'_0,\mathfrak{a}'_1)\Xh$ and \eqref{eq: commutator formula ofr T ell d k} to \eqref{eq: inductive equation on C0 case n}.

To compute the values of  $\mathfrak{c}_{n}$ on $C_0$, we proceed inductively on $n$. The value of   $\mathfrak{c}_{0}$  and $\mathfrak{c}_{1}$ on $C_0$ are known. We make the inductive hypothesis that $n'\leqslant n$, the values of $\mathfrak{c}_{n'}$ on $C_0$ are given. Thus, we also have the values of $\mathfrak{c}'_{n'}$ on $C_0$. The values of $\mathfrak{c}_{n+1}$ are obtained by solving \eqref{eq: inductive equation on C0 case n} for $n+1$:
\[
\begin{cases}
L\big(T^{n+1}\big(\mathfrak{a}_1\big)\big)&={\mathscr{P}}(\mathfrak{c}'_0) \cdot T^{n+1}\big(\mathfrak{a}_1\big)+{\mathscr{P}}(\mathfrak{c}'_{\leqslant n+1}),\\
L\big(T^{n+1}\big(\mathfrak{b}_1\big)\big)&={\mathscr{P}}(\mathfrak{c}'_0) \cdot T^{n+1}\big(\mathfrak{b}_1\big)+{\mathscr{P}}(\mathfrak{c}'_{\leqslant n+1},\mathfrak{a}'_{\leqslant n+2}).
\end{cases}
\]
Since all the coefficient functions are known, we can first integrate the first equation from $S_{\delta,0}$ to obtain $T^{n+1}\left(\mathfrak{a}_1\right)$. As a consequence,  we can determine the values of $T^{n+1}\left(\mathfrak{a}_1\right)$ hence the values of $\mathfrak{a}'_{\leqslant n+2}$ in the second equation. Finally, we integrate the second equation  form $S_{\delta,0}$ to obtain $T^{n+1}\left(\mathfrak{b}_1\right)$. 

\subsubsection{Preliminary bounds without $L$-directions}

The inductive process gives the following proposition:
\begin{proposition}For all multi-indices $\alpha$ with $1\leqslant |\alpha|\leqslant N-1$, for all $\psi \in \{\wb, w,\psi_2\}$ and $Z\in {\mathscr{Z}}$, the following pointwise estimates hold on $C_0$:
\begin{equation}\label{eq:preliminary pointwise bounds on C0 for Z}
\big\|Z^\alpha(\psi)\big\|_{L^\infty(C_0)}+\big\|Z^\alpha(\kappa)\big\|_{L^\infty(C_0)}+\big\|Z^\alpha(\Th^i)\big\|_{L^\infty(C_0)}\lesssim \varepsilon.
\end{equation}
We also have 
\begin{equation}\label{eq:preliminary pointwise bounds on C0 for Z zeroth order}
\big\|T\wb\big\|_{L^\infty(C_0)}+\big\|\Th^1\big\|_{L^\infty(C_0)}+\big\|\kappa\big\|_{L^\infty(C_0)} \lesssim 1,  \ \ \big\|\Th^2\big\|_{L^\infty(C_0)}\lesssim \varepsilon.
\end{equation}
\end{proposition}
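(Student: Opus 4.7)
The plan is to propagate pointwise bounds on all jets $\mathfrak{c}_n$ along $C_0$ by integrating the linear ODE system \eqref{eq: inductive equation on C0 case n} forward along the integral curves of $L$, starting from $S_{\delta,0}$ where the initial values have already been controlled in the preceding subsections. I would run a single induction on $n$, the number of $T$-derivatives, and in parallel with the tangential $\{L,\Xh\}$-derivatives of the same quantity; this is possible because the commutator lemma \eqref{eq: commutator formula ofr T ell d k} shows that tangential derivatives of the ODE system produce ODEs of the same schematic shape with source terms expressible in lower-order jets.

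For the base case $n=0$, the quantities $\wb, w, \psi_2$ on $C_0$ are the traces of the smooth solution $(v_r,c_r)$ on $\mathcal{D}_0$, which by Definition \ref{def:data} is an $\varepsilon$-perturbation of the constant background state $(\mathring{v}_r,\mathring{c}_r)$; standard propagation of regularity for the Euler equations yields $\|Z^\alpha(\psi-\text{background})\|_{L^\infty(C_0)}\lesssim \varepsilon$ for all tangential $Z\in\{L,\Xh\}$ and $|\alpha|\leqslant N-1$. Since $\widehat{T}^i$ is the unit normal of $S_{t,0}\hookrightarrow\Sigma_t$ and is therefore determined by $(v_r,c_r)|_{C_0}$, the same argument gives $\|\Th^1+1\|_{L^\infty(C_0)}+\|\Th^2\|_{L^\infty(C_0)}\lesssim\varepsilon$ and the analogous bounds on $Z^\alpha(\Th^i)$. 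The $O(1)$ bounds on $\Th^1,\kappa$ and $T\wb$ are then read off from the one-dimensional background: $\Th^1\approx -1$, $\kappa$ is the background inverse density, and $T\wb\approx -\tfrac{2}{\gamma+1}$ by \eqref{eq:Taylor coefficients of U0}.

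For the inductive step, assuming the bounds have been established on $\mathfrak{c}_{n'}$ for $n'\leqslant n$ together with their tangential derivatives, I would treat the system \eqref{eq: inductive equation on C0 case n} for order $n+1$ as a linear ODE along $L$ in the unknowns $T^{n+1}(\mathfrak{a}_1)$ and $T^{n+1}(\mathfrak{b}_1)$. The coefficient ${\mathscr{P}}(\mathfrak{c}_0')$ is $O(1)$ by the base case, while the inhomogeneities ${\mathscr{P}}(\mathfrak{c}'_{\leqslant n+1})$ and ${\mathscr{P}}(\mathfrak{c}'_{\leqslant n+1},\mathfrak{a}'_{\leqslant n+2})$ are controlled by the inductive hypothesis; crucially, the $\mathfrak{a}$-terms of order $n+2$ appearing in the second source are actually the quantities one is solving for (in the first equation) or in a triangular arrangement that lets one solve the $\mathfrak{a}$-equation first and then the $\mathfrak{b}$-equation. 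The initial data at $S_{\delta,0}$ for these ODEs is bounded by $C\varepsilon$ by combining Propositions \ref{prop:pointwise bound on derivatives of psi}, \ref{prop: bounds on geometry on sigma delta 1}, \ref{prop: bounds on L Zalpah psi on sigma delta}, and \ref{prop:pointwise bound on data on Sigma delta}. Gr\"onwall's inequality along $L$ on $[\delta,t^*]$ then yields the desired $O(\varepsilon)$ bound on $S_{t,0}$. Commuting with $\{L,\Xh\}$ and iterating closes the full statement.

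The main obstacle will be the careful bookkeeping of which jets carry genuine $\varepsilon$-smallness versus only $O(1)$-size. In particular the top-order $\mathfrak{a}$-terms inside ${\mathscr{P}}(\mathfrak{c}'_{\leqslant n+1},\mathfrak{a}'_{\leqslant n+2})$ must be shown to enter \emph{linearly} with coefficients either $O(1)$ from the background or $O(\varepsilon)$ from a perturbative factor, so that the Gr\"onwall constant is uniform in $\varepsilon$ and $\delta$ and the resulting bound is $O(\varepsilon)$ rather than $O(1)$. A clean way to make this transparent is to split each $\mathfrak{c}_n$ into its one-dimensional background value plus a perturbation of size $\varepsilon$ and to derive the linearized ODE for the perturbation; the linearized coefficients are computed from the explicit background rarefaction wave \eqref{eq:1D-rarefaction-wave}, which is smooth on $C_0$, so the Gr\"onwall estimate transparently preserves the $O(\varepsilon)$-smallness of the source and of the data on $S_{\delta,0}$.
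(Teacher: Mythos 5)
Your proposal follows essentially the same route as the paper: the paper also establishes these bounds by integrating the hierarchical ODE system \eqref{eq: inductive equation on C0 case 1}--\eqref{eq: inductive equation on C0 case n} (and its $\Xh$-differentiated versions) along $L$ on $C_0$ from $S_{\delta,0}$, inducting on the number of $T$-derivatives and solving the $\mathfrak{a}$-equation before the $\mathfrak{b}$-equation, with the $O(\varepsilon)$-smallness coming from the $\varepsilon$-closeness of the data at $S_{\delta,0}$ to the constant/background state. Your Gr\"onwall-on-the-perturbation formulation (splitting off the one-dimensional background, which is needed since $T\wb$, $\Th^1$, $\kappa$ are only $O(1)$) is just a more explicit rendering of the paper's appeal to continuous dependence on initial conditions for the ODE system, so the argument is correct.
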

\begin{proof}

We prove \eqref{eq:preliminary pointwise bounds on C0 for Z} inductively on $k$ where $k$ is the index for the unknowns from $\mathfrak{C}_k$. For $|\alpha|\leqslant 1$, the bounds on $\Xh(\mathfrak{c}_0)$ are clear since they are induced from the  $\mathfrak{a}_0$'s of $(v_r,c_r)$ on $\mathcal{D}_0$. In particular, we can integrate the equations for $L(\Th^i)$ in \eqref{structure eq 3: L T on Ti Xi Li} do derive the estimates in \eqref{eq:preliminary pointwise bounds on C0 for Z zeroth order} for $\Th^i$'s. To determine $T(\wb), T(w), T(\psi_2), \kappa, T(\Th^1)$ and $T(\Th^2)$,  we integrate the first equation in \eqref{eq: inductive equation on C0 case 1} and then integrate the second equation in \eqref{eq: inductive equation on C0 case 1}. Since the data for $\mathfrak{c}_1$ on $S_{\delta,0}$ satisfy the estimates in \eqref{eq:preliminary pointwise bounds on C0 for Z} and \eqref{eq:preliminary pointwise bounds on C0 for Z zeroth order}, this gives the bounds for $\mathfrak{c}_0$ and $\mathfrak{c}_1$ appearing in \eqref{eq:preliminary pointwise bounds on C0 for Z}.  The bound on $\kappa$ in \eqref{eq:preliminary pointwise bounds on C0 for Z zeroth order} also follow immediately from integrating the equation.

We notice that the constructions in Section \ref{Section:Determine the higher jets on C0} are based on solving a given system of ordinary differential equations. In fact, we can differentiate \eqref{eq: inductive equation on C0 case n} along the $\Xh$ direction to derive
\[
\begin{cases}
L\big(\Xh^\ell T^{n}\big(\mathfrak{a}_1\big)\big)&={\mathscr{P}}(\mathfrak{c}'_0) \sum_{j=0}^\ell \Xh^j T^{n}\big(\mathfrak{a}_1\big)+{\mathscr{P}}(\mathfrak{c}'_{\leqslant n}),\\
L\big(\Xh^\ell T^{n}\big(\mathfrak{b}_1\big)\big)&={\mathscr{P}}(\mathfrak{c}'_0) \sum_{j=0}^\ell \Xh^j T^{n}\big(\mathfrak{b}_1\big)+{\mathscr{P}}(\mathfrak{c}'_{\leqslant n},\mathfrak{a}'_{\leqslant n+1}).
\end{cases}
\]
These equations have smooth coefficient functions on $C_0$ and have given initial data on $S_{\delta,0}$. Therefore, all the $\mathfrak{c}'_k$'s depend in a continuous way on the $k$-jets (with  $0\leqslant k\leqslant N$) of the data given by on $S_{\delta,0}$.  Since the data are $\varepsilon$-close to the constant states, see Proposition \eqref{prop: bounds on geometry on sigma delta 1}, the property of continuous dependence on initial conditions for ODE system  proves \eqref{eq:preliminary pointwise bounds on C0 for Z}. This completes the proof of the proposition.
\end{proof}

The bounds in the above proposition can be improved as follows:
\begin{corollary}\label{coro:bounds on C0} On  $C_0$, for all $\delta \leqslant t \leqslant t^*$, the initial data satisfy the following estimates:
\begin{equation}\label{eq: pointwise bounds on C0 order 0}
\big\|T\wb+\frac{2}{\gamma+1}\big\|_{L^\infty(S_{t,0})}+\big\|\Th^1+1\big\|_{L^\infty(S_{t,0})}+\big\|\kappa-t\big\|_{L^\infty(S_{t,0})}+\big\|\Th^2\big\|_{L^\infty(S_{t,0})} \lesssim \varepsilon t.
\end{equation}
For all multi-indices $\alpha,\beta$ with $1\leqslant |\alpha|\leqslant N-1$ and $0\leqslant |\beta|\leqslant N-2$, for all $\psi \in \{\wb, w,\psi_2\}$ and $Z\in {\mathscr{Z}}$, 
\begin{equation}\label{eq: pointwise bounds on C0 order high}
\begin{cases}
&\|Z^\alpha(\psi)\|_{L^\infty(S_{t,0})} \lesssim \varepsilon,\\
&\|TZ^\beta(\psi)\|_{L^\infty(S_{t,0})} \lesssim \varepsilon t, \ \ \text{except for} \ \ TZ^\beta(\psi)=T\wb.
\end{cases}
\end{equation}
For all multi-indices $\alpha$ with $1\leqslant |\alpha|\leqslant N-1$ and $Z\in {\mathscr{Z}}$, 
\begin{equation}\label{eq: pointwise bounds on C0 on kappa Thi}
\|Z^\alpha(\Th^1)\|_{L^\infty(S_{t,0})}+\|Z^\alpha(\Th^2)\|_{L^\infty(S_{t,0})}+\|Z^\alpha(\kappa)\|_{L^\infty(S_{t,0})}\lesssim \varepsilon t.
\end{equation}
\end{corollary}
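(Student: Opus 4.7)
The plan is to refine the bounds from the preceding proposition by revisiting the inductive ODE system of Section \ref{Section:Determine the higher jets on C0} and carefully tracking the $t$-dependence. The key input is that the data on $S_{\delta,0}$ already carries the sharper zeroth-order bounds: Proposition \ref{prop: bounds on geometry on sigma delta 1} gives $\|\Th^1+1\|_{L^\infty(S_{\delta,0})}\lesssim \varepsilon \delta^2$, $\|\Th^2\|_{L^\infty(S_{\delta,0})}\lesssim \varepsilon \delta$, together with sharp bounds on $\kappa/\delta-1$, while Proposition \ref{prop:pointwise bound on derivatives of psi} supplies $\|T\wb+\frac{2}{\gamma+1}\|_{L^\infty(S_{\delta,0})}\lesssim \varepsilon \delta$. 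All of these encode a hierarchy with an extra factor of $\delta$ for every $T$-derivative, and the goal is to propagate this hierarchy along $L=\partial_t$ from $t=\delta$ to $t=t^*$, replacing $\delta$ by $t$.

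First I would establish the zeroth-order bounds \eqref{eq: pointwise bounds on C0 order 0}. For $\Th^1$ and $\Th^2$, the equations in \eqref{structure eq 3: L T on Ti Xi Li} read $L(\Th^k)=(\Th^j\Xh(\psi_j)+\Xh(c))\Xh^k$; since the preliminary estimates give the right-hand side as $O(\varepsilon)$ and the data at $t=\delta$ is $O(\varepsilon\delta)$, integrating $L=\partial_t$ from $\delta$ to $t$ yields the sharp $O(\varepsilon t)$ bound. For $\kappa$, I would rewrite \eqref{structure eq 1: L kappa} as $L\kappa = 1 + (m'-1) + e'\kappa$ where $m'=-\frac{\gamma+1}{2}(Tw+T\wb)$; the asymptotic $T\wb\approx -\frac{2}{\gamma+1}$ forces $m'-1 = O(\varepsilon)$, so Gronwall with initial value $\kappa(\delta,\cdot)\approx\delta$ produces $\kappa=t+O(\varepsilon t)$. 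For $T\wb$, I recast the wave equation \eqref{Main Wave equation: order 0} in the schematic form $L(T\wb) = \mathscr{P}(\mathfrak{c}'_0)\cdot T\wb + \mathscr{P}(\mathfrak{c}'_0)$; comparing to the one-dimensional background in \eqref{eq:1D-rarefaction-wave diagonalized} where $T\wb\equiv -\frac{2}{\gamma+1}$ is an exact stationary solution, subtracting this equilibrium and integrating from $S_{\delta,0}$ yields the desired $O(\varepsilon t)$ bound.

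For the higher-order bounds \eqref{eq: pointwise bounds on C0 order high} and \eqref{eq: pointwise bounds on C0 on kappa Thi}, I would run the induction already set up in Section \ref{Section:Determine the higher jets on C0}, namely the system \eqref{eq: inductive equation on C0 case n}, but now as an $L^\infty$ estimate rather than just a compatibility computation. The initial data $\|Z^\alpha(\mathfrak{a}_1)\|_{L^\infty(S_{\delta,0})} + \|Z^\alpha(\mathfrak{b}_1)\|_{L^\infty(S_{\delta,0})}\lesssim \varepsilon\delta$ for $|\alpha|\le N-2$ comes from Propositions \ref{prop: bounds on geometry on sigma delta 1}, \ref{prop: on gslahsed}, \ref{prop:pointwise bound on derivatives of psi} and Section \ref{Section: lambda on C0}. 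Commuting \eqref{eq: inductive equation on C0 case n} with powers of $\Xh$ and applying Gronwall along $L=\partial_t$, with the source polynomials $\mathscr{P}(\mathfrak{c}'_{\le n})$ of size $O(\varepsilon)$ if they contain no $T$-derivative and $O(\varepsilon t)$ otherwise by the already established bounds at lower induction level, should yield the stated estimates.

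The hard part will be verifying that the $O(\varepsilon t)$ (rather than $O(\varepsilon)$) smallness is actually preserved under the induction for every quantity containing at least one $T$-derivative, and in particular for the higher derivatives of $\kappa$. This requires a careful inspection of the source terms in \eqref{eq: inductive equation on C0 case n}: every summand that could obstruct the sharp bound must be shown to either contain a factor already controlled to size $O(\varepsilon t)$, or to vanish at leading order in the asymptotic limit. The structural mechanism is the same one that makes the Taylor coefficients $U^{(-1)}_n$ and $U^{(-2)}_n$ carry an extra $\delta$ factor in \eqref{eq: bound on Un with X} on $\Sigma_\delta$; this property must now be shown to propagate along $L$ onto all of $C_0$. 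Once this bookkeeping is settled, the Gronwall step closes the final set of estimates uniformly for $t\in[\delta,t^*]$.
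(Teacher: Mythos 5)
Your proposal is correct and follows the same approach the paper takes: the terse official proof simply observes that the sharper $\delta$-weighted bounds already hold on $S_{\delta,0}$ and that the extra factor of $t$ is inherited by integrating the transport system from Section \ref{Section:Determine the higher jets on C0} along $L$ from $\delta$ to $t$. You fill in the mechanism explicitly (the 1D constant state $T\wb\equiv -\frac{2}{\gamma+1},\ Tw\equiv T\psi_2\equiv 0,\ \kappa\equiv t$ as near-equilibrium of the inductive ODE system), which is exactly the bookkeeping the paper leaves implicit.
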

\begin{proof}It is obvious that the bounds \eqref{eq: pointwise bounds on C0 order 0}, \eqref{eq: pointwise bounds on C0 order high} and \eqref{eq: pointwise bounds on C0 on kappa Thi} hold on $S_{\delta,0}$. Therefore, compared to the estiamtes in \eqref{eq:preliminary pointwise bounds on C0 for Z} and \eqref{eq:preliminary pointwise bounds on C0 for Z zeroth order}, the improvement of a $t$-factor comes from the initial data and the integration between $\delta$ to $t$.
\end{proof}
\begin{remark}
The estimate of $\|\Th^1+1\|$ can be further improved as follows:
\begin{equation}\label{eq: pointwise bounds improved on Th1}
\big\|\Th^1+1\big\|_{L^\infty(S_{t,0})} \lesssim \varepsilon t^2.
\end{equation}
It holds for $t=\delta$. Since the terms on the righthand side for $L\big( \Th^1+1 \big)$ in \eqref{structure eq 3: L T on Ti Xi Li} are all of size $O(t\varepsilon)$, we can the integrate $L\big( \Th^1+1 \big)$ to obtain \eqref{eq: pointwise bounds improved on Th1}. 

Similarly, by writing $L(\kappa)$ as
\[L \kappa = 1-\big(1+\frac{\gamma+1}{\gamma-1}Tc\big) + e' \kappa,\]
the estimate  of $\kappa$ can be improved as follows:
\begin{equation}\label{eq: improved kappa on C0}
\big\|\kappa-t\big\|_{L^\infty(S_{t,0})} \lesssim \varepsilon t^2.
\end{equation}
For all multi-indices $\alpha$ with $1\leqslant |\alpha|\leqslant N-1$, the same argument gives
\begin{equation}\label{eq: improved kappa on C0 derivatives}
\big\|Z^\alpha(\kappa)\big\|_{L^\infty(S_{t,0})} \lesssim \varepsilon t^2.
\end{equation}
Similarly, by Remark \ref{remark: improvement with T}, we have
\begin{equation}\label{eq: improved T Ti on C0 derivatives}
\big\|TZ^\alpha(\Th^1)\big\|_{L^\infty(S_{t,0})}+\big\|TZ^\alpha(\Th^2)\big\|_{L^\infty(S_{t,0})} \lesssim \varepsilon t^2.
\end{equation}
Finally, we repeat the argument for 
$L\slashed{g}=L(\slashed{g})=2 \slashed{g} \cdot \chi$ to derive
\begin{equation}\label{eq:slashed g on C0}
\|\slashed{g}-1\|_{L^\infty(S_{0,t})}+\|Z^\alpha(\slashed{g})\|_{L^\infty(S_{0,t})}\lesssim t \varepsilon, \ \ 1\leqslant |\alpha|\leqslant N-1.
\end{equation}
\end{remark}

\begin{corollary}\label{coro: pointwise bounds on C0 with circles}
For all multi-indices $\alpha$ with $1\leqslant |\alpha|\leqslant N-1$, for all $\delta \leqslant t \leqslant t^*$, for all $\psi \in \{\wb,w,\psi_2\}$ and $\Zr\in \mathring{\mathscr{Z}}$, we have
\begin{equation}\label{eq: pointwise bounds on C0 with circles}
t\|\Xh( \Zr^\alpha \psi)\|_{L^\infty(S_{t,0})}+\|T( \Zr^\alpha \psi)\|_{L^\infty(S_{t,0})}\lesssim \varepsilon t.
\end{equation}
\end{corollary}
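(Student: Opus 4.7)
The plan is to adapt the proof of Proposition \ref{prop:pointwise bound on circled derivatives of psi 1}---which gave the analogous bound on $\Sigma_\delta$---to the slice $S_{t,0}$ by substituting the pointwise estimates on $\Sigma_\delta$ with the corresponding ones from Corollary \ref{coro:bounds on C0} and its refinements \eqref{eq: pointwise bounds improved on Th1}--\eqref{eq:slashed g on C0}. Using the change-of-frame formulas \eqref{eq: transformation between Xh T and Xr Tr}, every $\Zr \in \mathring{\mathscr{Z}}$ decomposes as $f\Xh + g T$ with coefficients $f, g$ drawn from $\{-\Th^1,\ \Th^2/\kappa,\ -\kappar\Th^2,\ -\kappar\Th^1/\kappa\}$. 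On $S_{t,0}$ each of these is uniformly $O(1)$ because $\kappar = t$ while $\kappa = t + O(\varepsilon t^2)$ by \eqref{eq: improved kappa on C0}; moreover, for any $|\beta| \geqslant 1$ one has $\|Z^\beta f\|_{L^\infty(S_{t,0})} \lesssim \varepsilon t$ from \eqref{eq: pointwise bounds on C0 on kappa Thi} and \eqref{eq: improved kappa on C0 derivatives} via Leibniz (the potentially dangerous $1/\kappa$ factor is tamed by $Z^\beta \kappa = O(\varepsilon t^2)$).

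After full Leibniz expansion, $T(\Zr^\alpha \psi)$ decomposes into a linear combination of type-(A) monomials $\prod Z^{\alpha_i}(f) \cdot Z^\beta(f) \cdot TZ^\gamma(\psi)$ and type-(B) monomials $\prod Z^{\alpha_i}(f) \cdot TZ^\beta(f) \cdot Z^\gamma(\psi)$, with $\sum \alpha_i + \beta + \gamma = \alpha$ and $Z \in \{\Xh, T\}$. For type (A), when $\gamma \ne 0$ or $\psi \in \{w, \psi_2\}$, the estimate \eqref{eq: pointwise bounds on C0 order high} supplies $\|TZ^\gamma \psi\|_{L^\infty(S_{t,0})} \lesssim \varepsilon t$ with coefficient product of size $O(1)$; the exceptional configuration $\gamma = 0$, $\psi = \wb$, where $T\wb = O(1)$, is saved by the fact that $|\alpha| \geqslant 1$ forces at least one factor in $\prod Z^{\alpha_i}(f) \cdot Z^\beta(f)$ to carry a $Z$-derivative, hence to be $O(\varepsilon t)$. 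For type (B), $\|TZ^\beta f\|_{L^\infty(S_{t,0})} \lesssim \varepsilon t$ holds by \eqref{eq: improved T Ti on C0 derivatives}--\eqref{eq: improved kappa on C0 derivatives} together with the $\kappar$-cancellation, while $\|Z^\gamma \psi\|_{L^\infty(S_{t,0})} \lesssim 1$ by \eqref{eq: pointwise bounds on C0 order high}. Combining yields $\|T(\Zr^\alpha \psi)\|_{L^\infty(S_{t,0})} \lesssim \varepsilon t$.

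The bound $t\|\Xh(\Zr^\alpha \psi)\|_{L^\infty(S_{t,0})} \lesssim \varepsilon t$ follows from the same decomposition with the outer $T$ replaced by $\Xh$. Type (A) then reads $\prod Z^{\alpha_i}(f) \cdot Z^\beta(f) \cdot \Xh Z^\gamma(\psi)$, and since $\|\Xh Z^\gamma \psi\|_{L^\infty(S_{t,0})} \lesssim \varepsilon$ uniformly for all three Riemann invariants (no exceptional case arises), one obtains $O(\varepsilon)$ and absorbs the extra factor $t$ on the left-hand side; type (B) is treated exactly as before. The main obstacle is managing the exceptional term $T\wb = O(1)$ inside type (A) with $\gamma = 0$; the pigeonhole argument above, which exploits $|\alpha| \geqslant 1$ to force a $Z$-derivative onto some coefficient factor, furnishes the compensating smallness and closes the estimate.
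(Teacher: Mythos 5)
Your proposal takes the same route as the paper: change frame via \eqref{eq: transformation between Xh T and Xr Tr}, Leibniz-expand into type-(A) and type-(B) monomials, and substitute the $C_0$-bounds (Corollary \ref{coro:bounds on C0} and its refinements) for the $\Sigma_\delta$-bounds. The treatment of type-(B) terms and the handling of the $\Xh$-version are correct.

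There is, however, a flaw in the rescue argument you give for the supposedly exceptional type-(A) configuration $\gamma = 0$, $\psi = \wb$. You claim that $|\alpha| \geqslant 1$ forces some coefficient factor $Z^{\alpha_i}(f)$ or $Z^\beta(f)$ to carry a derivative and hence be $O(\varepsilon t)$, but that bound fails for $f = \Th^2/\kappa$: Leibniz gives $Z(\Th^2/\kappa) = \kappa^{-1}Z(\Th^2) - \Th^2\kappa^{-2}Z(\kappa)$, and the first term is only $O(\varepsilon t \cdot t^{-1}) = O(\varepsilon)$, since $\kappa^{-1} \sim t^{-1}$ absorbs the gain in $Z(\Th^2) = O(\varepsilon t)$; this is exactly why the paper's proof records only $\|Z^\alpha(f)\|_{L^\infty(S_{t,0})} \lesssim \varepsilon$ rather than $\varepsilon t$. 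So your pigeonhole would leave a term of size $O(\varepsilon)\cdot|T\wb| = O(\varepsilon)$, not $O(\varepsilon t)$, and would not close. Fortunately the configuration $\gamma = 0$ is vacuous in type (A) when $|\alpha| \geqslant 1$: each $\Zr \in \{\Xr, \Tr\}$ is a vector field with no zeroth-order part, so $\Zr^\alpha$ is a differential operator of order $|\alpha|$ with no constant term; its Leibniz expansion always places at least one $Z$-derivative on $\psi$, and applying the outer $T$ then yields $\gamma \geqslant 1$. This is precisely what the paper means by ``we notice that $\gamma \neq 0$''. Your conclusion holds, but the flawed pigeonhole rescue should be discarded in favor of the observation that the exceptional case never arises.
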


\begin{proof}
We repeat the proof of Proposition \ref{prop:pointwise bound on circled derivatives of psi 1}. For all $f\in \big\{\Th^1,\frac{\Th^2}{\kappa},\kappar\Th^2,\frac{\kappar}{\kappa}\Th^1\big\}$ (appeared in \eqref{eq: transformation between Xh T and Xr Tr}), for all multi-indices $\alpha$ with $1\leqslant |\alpha| \leqslant N$ with $Z\in \mathscr{Z}=\{T,\Xh\}$, by Corollary \ref{coro:bounds on C0},  we have 
\[\|f\|_{L^\infty(S_{t,0})}\lesssim 1, \ \ \|Z^\alpha(f)\|_{L^\infty(S_{t,0})}\lesssim \varepsilon.\]

To derive the bounds on $T( \Zr^\alpha \psi)$,  we use \eqref{eq: transformation between Xh T and Xr Tr} to replace all the $\Zr$ so that it is  expressed as a linear combination of the terms of the following forms
\[(\mathbf{A}). \ Z^{\alpha_1}(f)\cdot\cdots \cdot Z^{\alpha_k}(f)\cdot Z^\beta(f)\cdot TZ^\gamma(\psi)  \ \ (\mathbf{B}). \ Z^{\alpha_1}(f)\cdot\cdots \cdot Z^{\alpha_k}(f)\cdot TZ^\beta(f)\cdot Z^\gamma(\psi)\] 
where $Z\in\mathscr{Z}=\{T,\Xh\}$ and $\sum_{i\leqslant k}\alpha_i+\beta+\gamma=\alpha$. We remark that the operator $T$ in $(\mathbf{A})$ or $(\mathbf{B})$ is form the first operator in $T( \Zr^\alpha \psi)$. For type $(\mathbf{A})$ terms, we notice that $\gamma\neq 0$ so that $\|TZ^\gamma(\psi)\|_{L^\infty(S_{t,0})}\lesssim \varepsilon t$; For type $(\mathbf{B})$ terms, by \eqref{eq: improved T Ti on C0 derivatives}, we have $\|TZ^\beta(f)\|_{L^\infty(S_{t,0})}\lesssim \varepsilon t$. Therefore, for both type of terms, by Corollary \ref{coro:bounds on C0},  they are bounded by $O(\varepsilon t)$. Hence, we obtain the desired estimates on $T( \Zr^\alpha \psi)$ in \eqref{eq: pointwise bounds on C0 with circles}. 

The bounds on  $\Xh( \Zr^\alpha \psi)$ can be derived exactly in the same manner. This completes the proof of the corollary.
\end{proof}
\begin{corollary}\label{coro: pointwise bounds on C0 with circles}
For all multi-indices $\alpha$ with $1\leqslant |\alpha|\leqslant N-1$,  for all $\delta \leqslant t \leqslant t^*$, for all $\psi \in \{\wb,w,\psi_2\}$ and $\Zr\in \mathring{\mathscr{Z}}$, we have
\[t\|\Xr( \Zr^\alpha \psi)\|_{L^\infty(S_{t,0})}+\|\Tr( \Zr^\alpha \psi)\|_{L^\infty(S_{t,0})}\lesssim \varepsilon t.
\]Moreover, we have
\[
t\|\Xr(\wb)\|_{L^\infty(S_{t,0})}+t\|\Xr(\psi)\|_{L^\infty(S_{t,0})}+\|\Tr(\psi)\|_{L^\infty(S_{t,0})}\lesssim \varepsilon t, \ \ \psi\in \{w,\psi_2\}.
\]
\end{corollary}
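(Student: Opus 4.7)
The proof is essentially an algebraic manipulation: substitute the change-of-frame formulas \eqref{eq: transformation between Xh T and Xr Tr} into $\Xr(\Zr^\alpha \psi)$ and $\Tr(\Zr^\alpha \psi)$, and then apply the previous corollary (the one giving bounds on $\Xh(\Zr^\alpha\psi)$ and $T(\Zr^\alpha\psi)$) together with the pointwise bounds on $\Th^1, \Th^2, \kappa$ from Corollary \ref{coro:bounds on C0}. Since $\kappar=t$ on $C_0$, the coefficient functions appearing in \eqref{eq: transformation between Xh T and Xr Tr} are controlled by $\|\Th^1\|_{L^\infty(S_{t,0})}\lesssim 1$, $\|\Th^2\|_{L^\infty(S_{t,0})}\lesssim \varepsilon t$, $\kappa\sim t$, and $\kappar/\kappa = t/\kappa \lesssim 1$.

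For the higher-order part ($1\leqslant |\alpha|\leqslant N-1$), I would start from
\[
\Xr(\Zr^\alpha\psi) = -\Th^1\,\Xh(\Zr^\alpha\psi) + \frac{\Th^2}{\kappa}\,T(\Zr^\alpha\psi),
\]
take $L^\infty$ norms on $S_{t,0}$, and plug in $\|\Xh(\Zr^\alpha\psi)\|_{L^\infty(S_{t,0})}\lesssim \varepsilon/t$ and $\|T(\Zr^\alpha\psi)\|_{L^\infty(S_{t,0})}\lesssim \varepsilon t$ from the previous corollary. This gives $\|\Xr(\Zr^\alpha\psi)\|_{L^\infty(S_{t,0})}\lesssim \varepsilon/t + (\varepsilon t/t)\cdot \varepsilon t \lesssim \varepsilon/t$, hence $t\|\Xr(\Zr^\alpha\psi)\|_{L^\infty}\lesssim \varepsilon$, which is better than required (so certainly $\lesssim \varepsilon t$ after multiplication, but the intended bound in the statement is $\lesssim \varepsilon t$ after one multiplies by $t$). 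Similarly from
\[
\Tr(\Zr^\alpha\psi) = -t\,\Th^2\,\Xh(\Zr^\alpha\psi) - \frac{t\,\Th^1}{\kappa}\,T(\Zr^\alpha\psi),
\]
one gets $\|\Tr(\Zr^\alpha\psi)\|_{L^\infty(S_{t,0})} \lesssim t\cdot \varepsilon t\cdot(\varepsilon/t) + (t/\kappa)\cdot \varepsilon t \lesssim \varepsilon t$, using $\kappa\gtrsim t$ (from \eqref{eq: improved kappa on C0}).

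For the zeroth-order ``moreover'' part, the same substitution is applied, but now using the direct $L^\infty$ bounds on the Riemann invariants: $\|\Xh\psi\|_{L^\infty(S_{t,0})}\lesssim \varepsilon$ for $\psi\in\{\wb,w,\psi_2\}$, $\|T(w)\|+\|T(\psi_2)\|\lesssim \varepsilon t$, and $\|T\wb\|\lesssim 1$ (since $T\wb + \tfrac{2}{\gamma+1} = O(\varepsilon t)$). The only subtlety is $\Xr(\wb)$: here the $T(\wb)$ contribution is $O(1)$ rather than $O(\varepsilon t)$, but it comes multiplied by $\Th^2/\kappa = O(\varepsilon)$, so one still obtains $\|\Xr(\wb)\|_{L^\infty(S_{t,0})}\lesssim \varepsilon$, hence $t\|\Xr(\wb)\|_{L^\infty(S_{t,0})}\lesssim \varepsilon t$. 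For $\psi\in\{w,\psi_2\}$, the computations for $\Xr(\psi)$ and $\Tr(\psi)$ are identical to the higher-order case.

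There is essentially no obstacle here — the corollary is a clean consequence of the preceding pointwise bounds and the explicit linear transition between the two frames. The only point requiring mild care is tracking the sharpness of the coefficient $\Th^2/\kappa$, which provides the crucial $\varepsilon$-gain in the $\Xr(\wb)$ estimate and prevents the $T\wb = O(1)$ contribution from spoiling the bound.
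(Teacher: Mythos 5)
Your approach is the same as the paper's: apply the frame change \eqref{eq: transformation between Xh T and Xr Tr} to the outermost derivative and then use the $(\Xh,T)$-bounds from the preceding corollary together with the pointwise bounds on $\Th^1,\Th^2,\kappa$ on $C_0$. The moreover-part is handled correctly, including the key observation that the $O(1)$ contribution of $T\wb$ in $\Xr(\wb)$ is tamed by the coefficient $\Th^2/\kappa = O(\varepsilon)$.

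However, there is a slip in the $\Xr(\Zr^\alpha\psi)$ estimate that you have not noticed is fatal to the argument as written. The preceding corollary states $t\|\Xh(\Zr^\alpha\psi)\|_{L^\infty(S_{t,0})}\lesssim\varepsilon t$, i.e.\ $\|\Xh(\Zr^\alpha\psi)\|_{L^\infty(S_{t,0})}\lesssim\varepsilon$; you instead inserted $\lesssim\varepsilon/t$, which misdivides by an extra factor of $t$. This leads you to conclude only $\|\Xr(\Zr^\alpha\psi)\|\lesssim\varepsilon/t$, i.e.\ $t\|\Xr(\Zr^\alpha\psi)\|\lesssim\varepsilon$, which you describe as ``better than required'' — but it is in fact \emph{weaker}: the statement requires $t\|\Xr(\Zr^\alpha\psi)\|\lesssim\varepsilon t$, i.e.\ $\|\Xr(\Zr^\alpha\psi)\|\lesssim\varepsilon$, and $\varepsilon/t\geqslant\varepsilon$ for $t\leqslant 1$. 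Once you use the correct input bound $\|\Xh(\Zr^\alpha\psi)\|\lesssim\varepsilon$, the computation reads
\[
\|\Xr(\Zr^\alpha\psi)\|\lesssim |\Th^1|\,\|\Xh(\Zr^\alpha\psi)\|+\frac{|\Th^2|}{\kappa}\,\|T(\Zr^\alpha\psi)\|\lesssim 1\cdot\varepsilon + \frac{\varepsilon t}{t}\cdot\varepsilon t\lesssim\varepsilon,
\]
which is exactly the needed bound, and the rest of the argument goes through as you describe.
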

\begin{proof}
This is straightforward from   \eqref{eq: pointwise bounds on C0 with circles}: indeed, it suffices to use the   \eqref{eq: transformation between Xh T and Xr Tr} replace the first $\Zr$'s in the above two inequalities by the $Z$'s.\end{proof}

\subsubsection{Auxiliary estimates on $\yr, \zr, \chir$ and $\etar$}
We derive estimates for $\lambda\in  \{\yr, \zr, \chir,\etar\}$ on $C_0$.

For $ \chir=-\Xr(\psi_2)$ or $ \etar=-\Tr(\psi_2)$, Corollary \ref{coro: pointwise bounds on C0 with circles} shows that, for all multi-indices $\alpha$ with $|\alpha|\leqslant N$, we have
\begin{equation}\label{eq: bounds for lambda on C0}
\|\mathring{Z}^\alpha(\lambda)\|_{L^\infty(S_{t,0})}\lesssim  \begin{cases}\varepsilon, \ \ & \text{if}~\Zr^\beta=\Xr^\beta ~\text{and}~\lambda =\chir;\\
\varepsilon t, \ \ & \text{otherwise}.
\end{cases} 
\end{equation}

For $\lambda =\zr$,   it suffices to bound $z$. Therefore, we can use Corollary \ref{coro:bounds on C0} to bound each term on the righthand side of \eqref{eq: a precise computation for z}. This shows that $\|z\|_{L^\infty(S_{t,0})} \lesssim \varepsilon t$. Next, by applying $\Zr^\alpha$ to \eqref{eq: a precise computation for z} and using the fact that $\Zr^\alpha(z)=\Tr\Zr^\alpha(v^1+c)$ as well as  Corollary \ref{coro: pointwise bounds on C0 with circles}, we have $\|\Zr^\alpha(z)\|_{L^\infty(S_{t,0})} \lesssim \varepsilon t$. Therefore, we obtain the estimate for $\zr$:
\begin{equation}\label{eq: bounds for zr on C0}
\|\mathring{Z}^\alpha(\zr)\|_{L^\infty(S_{t,0})}\lesssim   \varepsilon.
\end{equation}

For $\lambda=\yr$, it suffices  to  show that $\|\mathring{Z}^\alpha(y)\|_{L^\infty(S_{t,0})}\lesssim  t\varepsilon$,  for all multi-indices $\alpha$ with $|\alpha|\leqslant N-1$ and $t\in [\delta,1]$. By Corollary \ref{coro: pointwise bounds on C0 with circles}, we may assume $\mathring{Z}^\alpha=\Xr^\alpha$. By the proof of Corollary \ref{coro: pointwise bounds on C0 with circles}, we may use \eqref{eq: transformation between Xh T and Xr Tr} to replace all the $\Xr$'s, so that it suffices to show that $\|\Xh^\alpha(y)\|_{L^\infty(S_{t,0})}\lesssim  t\varepsilon$. Since $X=\sqrt{\slashed{g}}\Xh$, by the bounds in \eqref{eq:slashed g on C0}, it suffices to show that $\|X^\alpha(y)\|_{L^\infty(S_{t,0})}\lesssim t\varepsilon$.
According to the computations in Section \ref{Section: lambda on C0}, we have
\begin{align*}
y&=\underbrace{-\frac{\Th^2}{\kappa} + X(v^1+c)}_{y'}+ \underbrace{(1-\sqrt{\slashed{g}})\Xh(v^1+c)-(\Th^1+1) \Xh(v^1+c) + \frac{\Th^2}{\kappa} (T(v^1+c)+1)}_{y_{\rm err}}.
\end{align*}
By Corollary \ref{coro:bounds on C0},  Corollary \ref{coro: pointwise bounds on C0 with circles} and \eqref{eq:slashed g on C0},  we have $\|X^\alpha(y_{\rm err})\|_{L^\infty(S_{t,0})}\lesssim  t\varepsilon$. Thus,  it suffices to show 
\[\|X^\alpha(y')\|_{L^\infty(S_{t,0})}\lesssim  t\varepsilon.\]
By \eqref{eq:Th on Sigma delta} and \eqref{eq: kappa on Sigma delta}, we have
\[y'(t)=\frac{A(t,x_2)}{\kappa} + X(v^1+c).\]
where $A(t,\slashed{\vartheta})=\int_0^t a(t,\slashed{\vartheta})d\tau$. As in Section \ref{Section: lambda on C0}, we also have 
\begin{align*}
a(\tau,\slashed{\vartheta})&=\underbrace{-X \big(v^1+c \big)(\tau,\slashed{\vartheta}) }_{a'(\tau,\vartheta)}+ \underbrace{X \big(c (\Th^1+1) \big) (\tau,\slashed{\vartheta}) +X \big(\psi_1+c \Th^1 \big) (\tau,\slashed{\vartheta}) \big(\frac{\partial \slashed{\vartheta}}{\partial x_2}-1\big)}_{a_{\rm err}},
\end{align*}
which leads to
\begin{align*}
y'&=\underbrace{\frac{1}{t}\int_0^t a'(\tau,\slashed{\vartheta})d\tau+ X(v^1+c)}_{y''}+\underbrace{\big(\frac{1}{\kappa}-\frac{1}{t}\big)\int_0^t a'(\tau,\slashed{\vartheta})d\tau}_{y'_{\rm err}}+\frac{1}{\kappa}\int_0^t a_{\rm err}(\tau,\slashed{\vartheta})d\tau.
\end{align*}

By \eqref{eq:x1x2 on C0} and Corollary \ref{coro:bounds on C0},  we have $\|X^\alpha(a_{\rm err})\|_{L^\infty(S_{t,0})}\lesssim  t\varepsilon$. By \eqref{eq: improved kappa on C0} and\eqref{eq: improved kappa on C0 derivatives}, we have $\|X^\alpha(y'_{\rm err})\|_{L^\infty(S_{t,0})}\lesssim  t\varepsilon$. Hence, it suffices to bound
\begin{align*}
X^\alpha(y'')&=\frac{1}{\delta}\int_0^\delta \big[\frac{\partial^{\alpha+1} (v^1+c)(\delta,\slashed{\vartheta})}{\partial \slashed{\vartheta}^{\alpha+1}}-\frac{\partial^{\alpha+1} (v^1+c)(\tau,\slashed{\vartheta})}{\partial \slashed{\vartheta}^{\alpha+1}}\big]d\tau.
\end{align*}
We now can use the intermediate value theorem to conclude that $\|X^\alpha(y'')\|_{L^\infty(S_{\delta,t})}\lesssim  t\varepsilon$. Therefore,
\begin{equation}\label{eq: bounds for yr on C0}
\|\mathring{Z}^\alpha(\yr)\|_{L^\infty(S_{t,0})}\lesssim   \varepsilon.
\end{equation}

\subsubsection{The initial energy flux}

\begin{proposition}For all multi-indices $\alpha$ with $|\alpha|\leqslant N-1$, for all $\psi \in \{\wb,w,\psi_2\}$ and $\Zr\in \mathring{\mathscr{Z}}$, we have
\begin{equation}\label{eq:pointwise bound on L circled derivatives of psi 1}
\|L( \Zr^\alpha \psi)\|_{L^\infty(C_0)}\lesssim \varepsilon.
\end{equation}
\end{proposition}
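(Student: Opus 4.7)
The proof will essentially mirror that of Proposition \ref{prop: bounds on L Zalpah psi on sigma delta}, transposing the argument from $\Sigma_\delta$ to $C_0$. All the required ingredients have been prepared in the preceding subsections: pointwise bounds for $\mathring{Z}^\alpha(\psi)$ on $C_0$ (Corollary \ref{coro: pointwise bounds on C0 with circles}), for the connection coefficients $\lambda\in\{\yr,\zr,\chir,\etar\}$ (equations \eqref{eq: bounds for lambda on C0}, \eqref{eq: bounds for zr on C0}, \eqref{eq: bounds for yr on C0}), and for $c$ and $\Th^i$ via Corollary \ref{coro:bounds on C0}.

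The plan is as follows. First, since $\Xr$ and $\Tr$ mutually commute, I apply $\Zr^\alpha$ directly to the diagonal form \eqref{Euler equations:form 3} of the Euler equations, exactly as in \eqref{eq:Euler commutated}, obtaining schematic identities of the form
\[
\Zr^\alpha \Lr(\psi) = \sum_{\alpha_1+\alpha_2=\alpha} \Zr^{\alpha_1}(c)\cdot\mathfrak{d}\big(\Zr^{\alpha_2}(\psi')\big),
\]
where $\mathfrak{d}\in\{\Xr,\Trh\}$ and $\psi'\in\{\wb,w,\psi_2\}$. Each factor on the right-hand side is controlled in $L^\infty(C_0)$: Corollary \ref{coro:bounds on C0} gives $\|\Zr^{\alpha_1}(c)\|_{L^\infty(C_0)}\lesssim 1$, and Corollary \ref{coro: pointwise bounds on C0 with circles} together with the formula $\Trh=\kappar^{-1}\Tr$ and the bound $\kappa\sim t$ give $\|\mathfrak{d}\Zr^{\alpha_2}(\psi')\|_{L^\infty(S_{t,0})}\lesssim \varepsilon$. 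Hence $\|\Zr^\alpha \Lr(\psi)\|_{L^\infty(C_0)}\lesssim \varepsilon$.

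Next, I use the commutator formula \eqref{eq: commutation formular for Lr Zr} to trade $\Zr^\alpha \Lr$ for $\Lr \Zr^\alpha$:
\[
\Lr\Zr^\alpha(\psi) = \Zr^\alpha \Lr(\psi) + \sum_{\substack{\alpha_1+\alpha_2=\alpha\\ |\alpha_1|\leqslant|\alpha|-1}} \Zr^{\alpha_1}(\lambda)\,\Zr^{\alpha_2}(\psi), \qquad \lambda\in\{\yr,\zr,\chir,\etar\}.
\]
The estimates \eqref{eq: bounds for lambda on C0}, \eqref{eq: bounds for zr on C0}, \eqref{eq: bounds for yr on C0} give $\|\Zr^{\alpha_1}(\lambda)\|_{L^\infty(C_0)}\lesssim \varepsilon$ (possibly with an extra factor $t$ in all cases except $\chir$ under pure $\Xr$ derivatives, which is harmless here), while Corollary \ref{coro: pointwise bounds on C0 with circles} yields $\|\Zr^{\alpha_2}(\psi)\|_{L^\infty(C_0)}\lesssim 1$ (in fact $\lesssim \varepsilon$ if $|\alpha_2|\geqslant 1$, and bounded by the background constants otherwise). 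Therefore $\|\Lr\Zr^\alpha(\psi)\|_{L^\infty(C_0)}\lesssim \varepsilon$.

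Finally, I convert $\Lr$ to $L$ via the identity $L-\Lr = c\big(\tfrac{\Th^1+1}{\kappar}\Tr - \Th^2\Xr\big)$. Writing
\[
L\Zr^\alpha(\psi) = \Lr\Zr^\alpha(\psi) + c\,\tfrac{\Th^1+1}{\kappar}\Tr\Zr^\alpha(\psi) - c\,\Th^2\,\Xr\Zr^\alpha(\psi),
\]
and using $|\Th^1+1|\lesssim \varepsilon t^2$ (the improved bound \eqref{eq: pointwise bounds improved on Th1}), $|\Th^2|\lesssim \varepsilon t$ together with Corollary \ref{coro: pointwise bounds on C0 with circles} (which gives $\|\Tr\Zr^\alpha(\psi)\|_{L^\infty(S_{t,0})}\lesssim \varepsilon t$ and $\|\Xr\Zr^\alpha(\psi)\|_{L^\infty(S_{t,0})}\lesssim \varepsilon$), both correction terms are $O(\varepsilon)$. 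Combined with the bound on $\Lr\Zr^\alpha(\psi)$, this yields the desired estimate $\|L(\Zr^\alpha\psi)\|_{L^\infty(C_0)}\lesssim \varepsilon$.

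The argument is entirely parallel to the $\Sigma_\delta$-version; the only delicate point is verifying that the mild degeneracy in $t$ carried by some of the factors (e.g.\ $\Th^1+1$, $\Th^2$, and $\Tr\Zr^\alpha(\psi)$) cancels against the $\kappar^{-1}\sim t^{-1}$ weight coming from $\Trh$ in the transition formula, and this is exactly what the quadratic improvement \eqref{eq: pointwise bounds improved on Th1} ensures. No new ideas beyond the bookkeeping developed in Sections \ref{Section: lambda on C0}--\ref{Section:Determine the higher jets on C0} are required.
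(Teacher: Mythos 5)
Your proposal is correct and follows the paper's own proof essentially step by step: apply $\Zr^\alpha$ to the ringed Euler system \eqref{Euler equations:form 3} and bound $\Zr^\alpha\Lr(\psi)$ via Corollary \ref{coro: pointwise bounds on C0 with circles} and Corollary \ref{coro:bounds on C0}; use the commutator formula \eqref{eq: commutation formular for Lr Zr} with the $\lambda$-estimates \eqref{eq: bounds for lambda on C0}, \eqref{eq: bounds for zr on C0}, \eqref{eq: bounds for yr on C0} to obtain $\Lr\Zr^\alpha(\psi)$; then pass from $\Lr$ to $L$ via $L-\Lr=c\big(\tfrac{\Th^1+1}{\kappar}\Tr-\Th^2\Xr\big)$. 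The only small imprecision is the blanket claim that $\|\Tr\Zr^\alpha\psi\|_{L^\infty(S_{t,0})}\lesssim\varepsilon t$, which Corollary \ref{coro: pointwise bounds on C0 with circles} excludes in the case $(\alpha,\psi)=(0,\wb)$ — but there $\Tr\wb$ is merely $O(1)$, and since $|\Th^1+1|/\kappar\lesssim\varepsilon$ the correction term is still $O(\varepsilon)$, so the conclusion is unaffected.
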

\begin{proof}
According to \eqref{eq:Euler commutated}  and Corollary \ref{coro: pointwise bounds on C0 with circles}, for all multi-indices $\alpha$ with $|\alpha|\leqslant N$, for all $\psi \in \{\wb, w,\psi_2\}$, we have
\[  \|\Zr^\alpha \Lr (\psi)\|_{L^\infty(C_0)} \lesssim \varepsilon.\]
Since we have $\|\Zr^\alpha \lambda\|_{L^\infty(C_0)} \lesssim \varepsilon$ for all $\lambda\in \{\etar,\chir,\yr,\zr\}$,  according to \eqref{eq: commutation formular for Lr Zr}, we have
\begin{align*}
\|\Lr( \Zr^\alpha \psi)\|_{L^\infty(C_0)}\leqslant \|\Zr^\alpha \Lr (\psi)\|_{L^\infty(C_0)}+\sum_{\substack{\alpha_1+\alpha_2=\alpha \\ |\alpha_1|\leqslant |\alpha|-1}}\|\Zr^{\alpha_1}(\lambda) \Zr^{\alpha_2}(\psi)\|_{L^\infty(C_0)}\lesssim \varepsilon.
\end{align*}
Finally, by $L-\Lr =c\big(\frac{\Th^1+1}{\kappar}\Tr -\Th^2\Xr\big)$, we conclude that
\[  \|L \Zr^\alpha (\psi)\|_{L^\infty(C_0)} \lesssim \varepsilon.\]
This finishes the proof of the proposition.
\end{proof}
As a summary, we have
\begin{proposition}\label{prop:pointwise bound on data on Sigma delta}
For all multi-indices $\alpha$ with $1\leqslant |\alpha|\leqslant N-1$, for all $\psi \in \{\wb,w,\psi_2\}$ and $\Zr\in \mathring{\mathscr{Z}}$, we have
\[
t\|L( \Zr^\alpha \psi)\|_{L^\infty(S_{t,0})}+t\|\Xh( \Zr^\alpha \psi)\|_{L^\infty(S_{t,0})}+\|\Lb( \Zr^\alpha \psi)\|_{L^\infty(S_{t,0})}\lesssim t\varepsilon.
\]
\end{proposition}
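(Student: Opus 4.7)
The statement is an aggregation of bounds that have already been derived on $C_0$ in the two immediately preceding results, so my plan is to reduce everything to those bounds plus the frame relation $\Lb = 2T + c^{-1}\kappa L$. More precisely, the $L$-bound and $\Xh$-bound are essentially already contained in the previous Proposition (pointwise bound on $L(\Zr^\alpha\psi)$) and in Corollary \ref{coro: pointwise bounds on C0 with circles}; multiplying by $t$ (noting $t \leqslant t^* = 1$) immediately yields
\[
t\|L(\Zr^\alpha\psi)\|_{L^\infty(S_{t,0})} + t\|\Xh(\Zr^\alpha\psi)\|_{L^\infty(S_{t,0})} \lesssim t\varepsilon.
\]

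The only genuinely new content is the estimate on $\|\Lb(\Zr^\alpha\psi)\|_{L^\infty(S_{t,0})}$. For this I would expand in the first null frame using the definition $\Lb = 2T + c^{-1}\kappa L$ from Section \ref{section_acoustical geometry}. This gives
\[
\|\Lb(\Zr^\alpha\psi)\|_{L^\infty(S_{t,0})} \leqslant 2\|T(\Zr^\alpha\psi)\|_{L^\infty(S_{t,0})} + \|c^{-1}\kappa\|_{L^\infty(S_{t,0})}\|L(\Zr^\alpha\psi)\|_{L^\infty(S_{t,0})}.
\]
The first term is bounded by $\varepsilon t$ by Corollary \ref{coro: pointwise bounds on C0 with circles}. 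For the second, Corollary \ref{coro:bounds on C0} (sharpened in \eqref{eq: improved kappa on C0}) gives $\|\kappa\|_{L^\infty(S_{t,0})} \lesssim t$ and $\|c^{-1}\|_{L^\infty(S_{t,0})} \lesssim 1$, while the $L$-factor is bounded by $\varepsilon$ from the previous Proposition. Combining, the second term is $\lesssim t\varepsilon$ as well.

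Nothing substantive appears to go wrong: this is a clean three-line consequence of what is already available, and there is no real obstacle. The only minor point to double-check is that the $L^\infty$ bound on $c^{-1}$ on $C_0$ is controlled uniformly (which follows from $c$ being an $\varepsilon$-perturbation of $\mathring{c}_r > 0$ per Definition \ref{def:data}, so $c^{-1}$ is bounded independent of $\delta$), and that the previous proposition's statement on $\|L(\Zr^\alpha\psi)\|_{L^\infty(C_0)}$ applies on every slice $S_{t,0}$ for $t \in [\delta,t^*]$, which is explicit in its formulation. Assembling the three estimates yields the claim.
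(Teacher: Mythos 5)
Your proof is correct and matches the paper's: the paper also disposes of the $L$- and $\Xh$-terms by citing the preceding bounds on $C_0$, and obtains the $\Lb$-term immediately from $\Lb = 2T + c^{-1}\kappa L$ together with $\|T(\Zr^\alpha\psi)\|_{L^\infty(S_{t,0})}\lesssim\varepsilon t$, $\kappa\approx t$, and $\|L(\Zr^\alpha\psi)\|_{L^\infty(C_0)}\lesssim\varepsilon$. You have simply written out the one-line argument the paper compresses into a sentence.
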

\begin{proof}
The bound of $\Lb \Zr^\alpha (\psi)$ is immediate from the formula $\Lb=2T+c^{-1}\kappa L$, \eqref{eq:pointwise bound on circled derivatives of psi 1} and the bounds on $L \Zr^\alpha (\psi)$. 
\end{proof}

\begin{remark}\label{remark: closing I 2 flux bounds}
The above $L^\infty$ bounds imply the following $L^2$ flux bounds:
\[\mathscr{F}_{n}(\psi)(t,0) \lesssim \varepsilon^2 t^2, \ \ \psi \in \{w,\wb,\psi_2\},  \ \ 1\leqslant n\leqslant \Ntop.\]
Therefore, we have checked  the ansatz for $\mathscr{F}_{n}$ in $\mathbf{(I_2)}$, see \eqref{initial I2}.
\end{remark}

By  Remark \ref{remark: closing I infty 1},  Remark \ref{remark: closing I infty 2},  Remark \ref{remark: closing I 2 lowest energy},  Remark \ref{remark: closing I_{irrotational}}, Remark \ref{remark: closing I 2 higher energy} and Remark \ref{remark: closing I 2 flux bounds}, the data constructed in \eqref{Taylor expansions of Riemann invariants up to order N} satisfies all the required ansatz. Therefore, we have completed the proof of {\bf Theorem 1} as well as {\bf Corollary 1}.

\section{Existence of solutions}\label{section:existence}

For all $\delta>0$ and the data constructed on $\Sigma_\delta$, we have a unique solution $(v_\delta,c_\delta)$ defined on $\mathcal{D}(\delta)$. For the sake of simplicity, we drop the dependence on $\delta$ and write the solution as $(v,c)$.

\subsection{The region of convergence}\label{section:region of convergence}

For the constant state {\color{black}$(\mathring{v}_r,\mathring{c}_r)$} on $x_1\geqslant 0$, the corresponding characteristic boundary of the future development is a ray in $(t,x_1)$ plane. It is hence a flat hypersurface ${C}^{\rm cst}_{0}$ inside $\mathbb{R}^3$. The slope of the ray can be computed as 
\[ {k}^{\rm cst}=\mathring{v}_r+\mathring{c}_r.\]

We compute the null vector field $L^{\rm cst}$ and the acoustical function $u^{\rm cst}$ associated to the constant state {\color{black}$(\mathring{v}_r,\mathring{c}_r)$}. In view of the associated 1-dimensional rarefaction wave computed in \eqref{eq:1D-rarefaction-wave},
we have
\[L^{\rm cst}=\partial_t+\frac{x_1}{t}\partial_1, \quad u^{\rm cst}=k^{\rm cst}-\frac{x_1}{t}. \] 


We can rewrite $L$ as
\[ L=\partial_t+\big(v^1+c\big)\partial_1+\big(v^2-c\Th^2\big)\partial_2-c\big(\Th^1+1\big)\partial_1.\]
Therefore, we can compare $L$ and $L^{\rm cst}$ as follows
\[L-L^{\rm cst}=\big(v^1+c-\frac{x_1}{t}\big)\partial_1+\big(v^2-c\Th^2\big)\partial_2-c\big(\Th^1+1\big)\partial_1.\]
In view of the expression \eqref{eq:x1x2 on C0 0} on $C_0$, we have
\[ \big(v^1+c - \frac{\slashed{x}_1}{t}\big)(t,\vartheta) = \frac{1}{t}\int_0^t (v^1+c)(t,\vartheta) - (v^1+c)(\tau,\vartheta)d\tau + {\color{black}\frac{1}{t}\int_0^t \big(c\cdot (\Th^1+1)\big)(\tau,\vartheta)d\tau}. \]
Therefore, by \eqref{ineq: L infty bound for limiting Thi kappa} and the intermediate value theorem  we have 
\[ \|v^1+c-\frac{x_1}{t}\|_{L^\infty(C_0 \cap \Sigma_t)}\lesssim \varepsilon t. \]
Since $\Tr\big(v^1+c-\frac{x_1}{t}\big)=\Tr(v^1+c)+1=z$,  in view of the estimates derived in Section 7.1 of \cite{LuoYu1}, for all $t\in [\delta,t^*]$, we have 
\[\|\Tr\big(v^1+c-\frac{x_1}{t}\big)\|_{L^\infty(\Sigma_t)}\lesssim \varepsilon t.\]
We can integrate the above inequality for $\Tr\big(v^1+c-\frac{x_1}{t}\big)$ form $C_0$ and this leads to
\[\|v^1+c-\frac{x_1}{t}\|_{L^\infty(\Sigma_t)}\lesssim \varepsilon t.\]
In view of the pointwise bounds on $\psi_2, \Th^2$ and $\Th^1+1$ in \eqref{eq:basic L estimates} and \eqref{ineq: L infty bound for limiting Thi kappa},
we conclude that
\begin{equation}\label{eq: compare L Lcst}
	\|(L-L^{\rm cst})(x_1)\|_{L^\infty(\Sigma_t)} \lesssim \varepsilon t, \quad \|(L-L^{\rm cst})(x_2)\|_{L^\infty(\Sigma_t)} \lesssim \varepsilon, \quad \big|L-L^{\rm cst}\big|\lesssim \varepsilon,
\end{equation}
on the entire $\mathcal{D}(\delta)$, where the norm $|\cdot |$ is relative to the Euclidean metric on $\Sigma_t$.

Now we compare the function $u$ with its counterpart $u^{\rm cst}$. First, since $u\big|_{S_{\delta,0}}\equiv 0$ by \eqref{eq: u vanishes at C0} and the data is $O(\varepsilon)$-close to the constant state {\color{black}$(\mathring{v}_r,\mathring{c}_r)$},  we have
\[ u^{\rm cst}|_{S_{\delta,0}} = u - u^{\rm cst}|_{S_{\delta,0}} = -k^{\rm cst}-\frac{1}{\delta}\int_{0}^\delta I(\tau,x_2)d\tau = -k^{\rm cst} + (v^1+c)|_{S_{\delta,0}} + O(\varepsilon t) \lesssim O(\varepsilon). \]
By \eqref{def: for u initially}, \eqref{eq: gradient of u on Sigma delta} and \eqref{eq: bound on a}, we have on $\Sigma_{\delta}$ that
\begin{equation}
\nabla\big[u-\big(k^{\rm cst}-\frac{x_1}{\delta}\big)\big] = -\frac{1}{\delta}\big(0,A(\delta,x_2)\big) \lesssim O(\varepsilon).
\end{equation}
for all $u\in [0, u^*]$.  
Furthermore, since $L^{\rm cst}(u^{\rm cst}) = 0$, by \eqref{eq: compare L Lcst}  we have 
\begin{equation}
	L(u^{\rm cst}) = (L-L^{\rm cst})(k^{\rm cst}-\frac{x_1}{t}) \lesssim O(\varepsilon).
\end{equation}
In view of these estimates, We conclude that
\begin{equation}\label{eq: compare u ucst}
	\|u - u^{\rm cst}\|_{L^\infty(\Sigma_t)} \lesssim \varepsilon.
\end{equation}


\begin{center}
\includegraphics[width=3.2in]{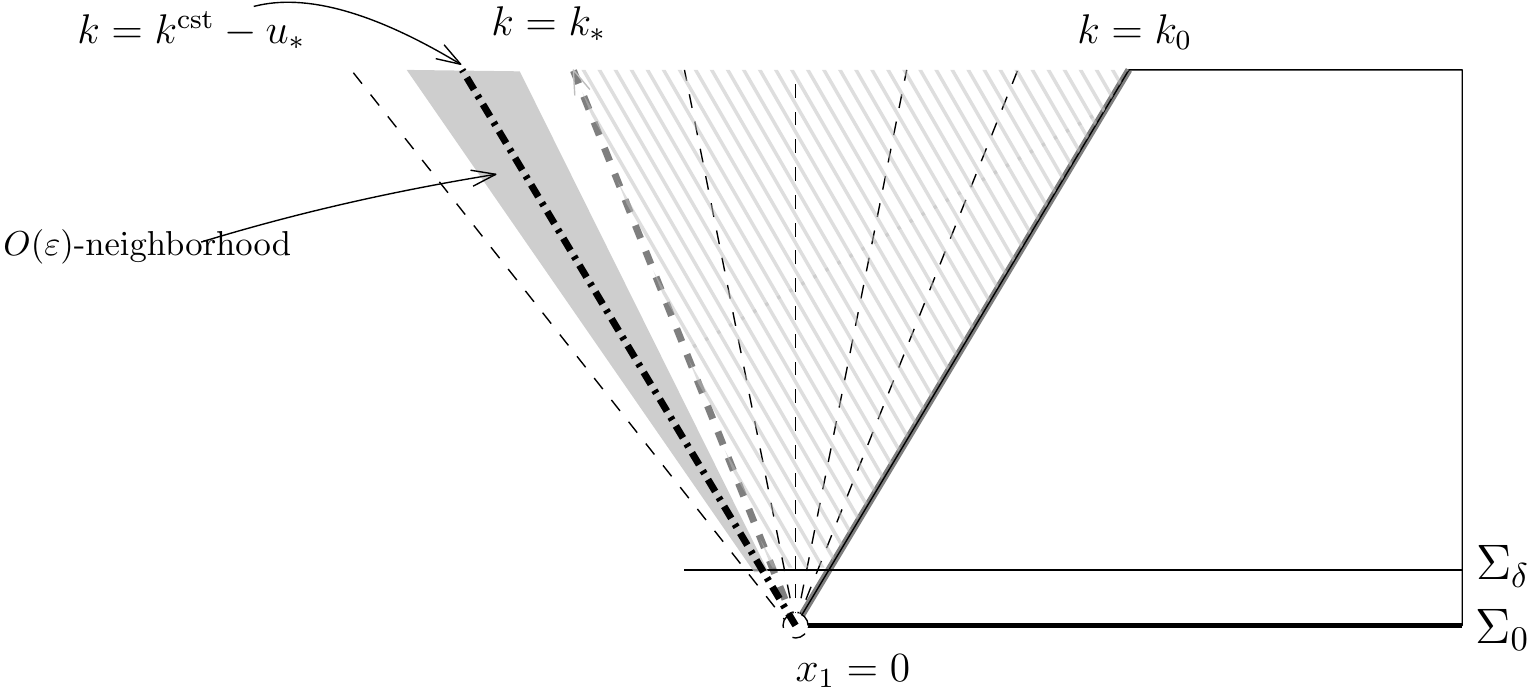}
\end{center}

By definition, $C_{u^*}$ is the characteristic hypersurface emanated from the level set  $u=u^*$ inside $\Sigma_\delta$ along the direction of $L$. {\color{black}More precisely, we consider the
	vector field $L$ along $S_{\delta,u^*}$ and extend each vector to a geodesic to generate $C_{u^*}$. Similarly, the hypersurface $C^{\rm cst}_{u^*}$ is generated by extending each vector $L^{\rm cst}$ along the surface $u^{\rm cst}=u^*$, i.e., the circle $\frac{x^1}{\delta} = {k}^{\rm cst} - u^*$, to a geodesic of the background solution which corresponds to the constant state $(\mathring{v}_r,\mathring{c}_r)$ on the right.}
In the above picture, it is  depicted as the dash-dotted line and it is the center in the grey region. It can be written down explicitly as follows:
 \[C^{\rm cst}_{u^*}=\big\{(t,x_1,x_2)\big|\frac{x_1}{t}+u^*-k^{\rm cst}=0, t\in [\delta, t^*], x_2\in [0,2\pi]\big\}.\]

By \eqref{eq: compare L Lcst} and \eqref{eq: compare u ucst}, the continuous dependence on the initial data of the ODE implies that $C_{u^*}$ is in the $O(\varepsilon t)$-neighborhood (depicted as the grey region in the picture) of $C^{\rm cst}_{u^*}$, i.e., on each $\Sigma_t$, the distance between $C^{\rm cst}_{u^*}$ and {\color{black}$C_{u^*}$} is bounded above by {\color{black}$A_0 \varepsilon t$} where $A_0$ is a universal constant.
We define 
\begin{equation}\label{def: varepsilon0}
\varepsilon_0= A_0 \varepsilon. 
\end{equation} 
We also define
\[k_*=k^{\rm cst}-u^*+\varepsilon_0.\]
For all $\delta>0$, we then consider the following region:
\[\mathcal{W}_\delta=\big\{(t,x_1,x_2)\in \mathcal{D}(\delta)\big| \delta\leqslant t\leqslant t^*,  \frac{x_1}{t}\geqslant k_*\big\}.\]
This is the shaded region in the above picture. It is clear that $\mathcal{W}_\delta$ is a compact domain. It depends only on the data $(v_r,c_r)$ on $x_1\geqslant 0$ and it is independent of $\delta$. We also have $\mathcal{W}_\delta\subset \mathcal{D}_{\delta'}$ for all $\delta\geqslant \delta'$. 
We also define
\[\mathcal{W}=\bigcup_{0<\delta<\frac{1}{2}}\mathcal{W}_\delta.\]
 In the rest of the section, we will construct centered rarefaction waves on $\mathcal{W}$ associated to the solution $(v_r,c_r)$ on $\mathcal{D}_0$.

\subsection{Uniform bounds for solutions on $\mathcal{K}_k$}
We fix the parameter $\delta$ and we consider the rarefaction wave already constructed on $\mathcal{D}(\delta)$. The estimates derived in the section will be independent of $\delta$. We recall that, according to the \emph{a priori} energy estimates derived in the first paper \cite{LuoYu1}, for all $\delta \leqslant t\leqslant t^*$, we have
{\color{black}
\[\begin{cases}
&\sum_{1\leqslant|\alpha|\leqslant \Ntop}\mathcal{E}\big(\Zr^\alpha(\psi)\big)(t,u^*)+\Eb\big(\Zr^\alpha(\psi)\big)(t,u^*)\lesssim  \varepsilon^2 t^2, \ \ \psi \in \{w,\wb,\psi_2\};\\
    &\mathcal{E}(\psi)(t,u^*)+\underline{\mathcal{E}}(\psi)(t,u^*)\lesssim  \varepsilon^2 t^2, \ \ \psi \in \{w,\psi_2\}.
\end{cases}
\]
}
We also have $\|\wb\|_{L^\infty}\lesssim 1$.

In the following convergence argument, the smallness of $\varepsilon$ does not play a role so that we can use the rough bound $\varepsilon \lesssim 1$. Therefore, by \eqref{def: energy L}, \eqref{def: energy Lb} and $\kappa \approx t$, we can rewrite the above estimates in the following rough form:
\begin{equation}\label{eq: space rough bound on Ddelta}
\int_{0}^{u^*}\!\!\!\int_{0}^{2\pi}\!\!\! |\Zr^\alpha(\psi)|^2 \sqrt{\slashed{g}}du'd\vartheta'\lesssim 1,
\end{equation}
where $\psi \in \{w,\wb,\psi_2\}$ and an arbitrary multi-index $\alpha$ with $ 0\leqslant |\alpha| \leqslant \Ntop+1$. In fact, according to $T=\frac{1}{2}\big(\Lb-c^{-1} \kappa L\big)$, we first obtain the following estimates:
\[\int_{0}^{u}\!\!\!\int_{0}^{2\pi}\!\!\! |\Xh\Zr^\alpha(\psi)|^2+|T\Zr^\alpha(\psi)|^2  \sqrt{\slashed{g}}du'd\vartheta'\lesssim 1,\]
for $\psi \in \{w,\wb,\psi_2\}$ and $\alpha$ is an arbitrary multi-index with $ 0\leqslant |\alpha| \leqslant \Ntop$. We then apply the inverse of \eqref{eq: transformation between Xh T and Xr Tr} to convert the $\Xh$ and $T$ derivatives to $\Xr$ and $\Tr$ derivatives. This proves \eqref{eq: space rough bound on Ddelta}.

By integrating over time, for all $\psi \in \{w,\wb,\psi_2\}$ and for all multi-indices $\alpha$ with $ 0\leqslant |\alpha| \leqslant \Ntop+1$, we have the following spacetime estimates:
\begin{equation}\label{eq: spacetime rough bound on Ddelta}
\int_{\delta}^{t^*}\!\!\!\int_{0}^{u}\!\!\!\int_{0}^{2\pi}\!\!\! |\Zr^\alpha(\psi)|^2 \sqrt{\slashed{g}}{\color{black}dt'}du'd\vartheta'\lesssim 1.
\end{equation}

We consider the following compact regions 
\[\mathcal{K}_k=\mathcal{W}_{2^{-k}}=\big\{(t,x_1,x_2)\in \mathcal{W}\big|2^{-k} \leqslant  t  \leqslant t^*\big\},\]
where $k\geqslant 1$ is a positive integer. It provides an exhaustion of $\mathcal{W}$ by compact sets. In fact, we have $\mathcal{K}_l\subset \mathcal{K}_{l+1}$ for all $l$ and $\bigcup_{k\geqslant 1}\mathcal{K}_k=\mathcal{W}$,
\begin{center}
\includegraphics[width=2.3in]{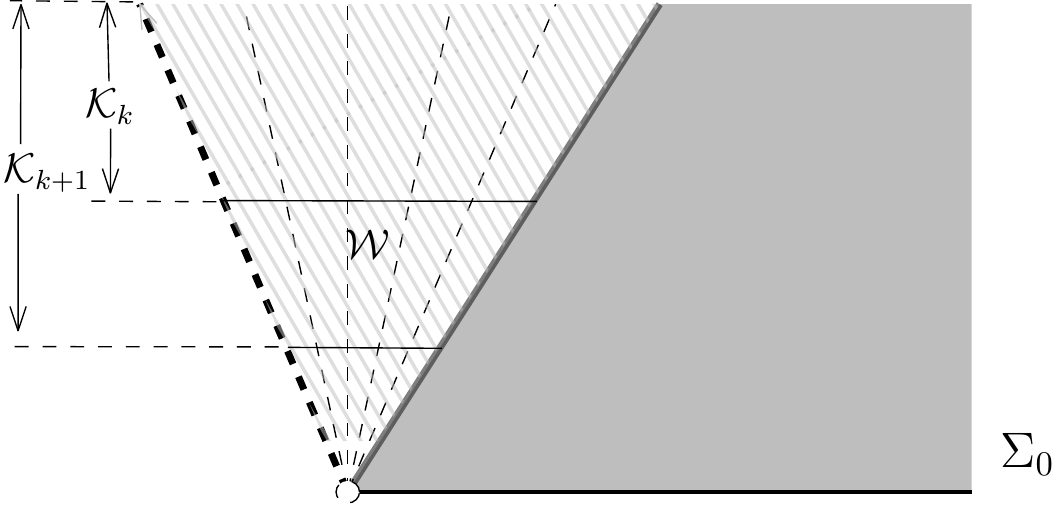}
\end{center}
We will rewrite \eqref{eq: spacetime rough bound on Ddelta} in terms of the Cartesian coordinates. We recall that the Jacobian of the coordinates transformation $\Phi:(t,u,\vartheta)\mapsto (t,x_1,x_2)$ between the acoustical coordinates $(u,t,\vartheta)$ and the Cartesian coordinates $(x_0,x_1,x_2)$ can be computed as
\begin{equation}\label{eq: jacobian}
\big|\det \big(d\Phi\big)\big|=\kappa \sqrt{\slashed{g}}.
\end{equation}
For a given $k$ with $\mathcal{K}_k\subset \mathcal{D}_{\delta}$, since $t\geqslant 2^{-k}$ in the region $\mathcal{K}_k$, we have {\color{black}$2^{-k} \lesssim \kappa \lesssim 1$.} Therefore, by \eqref{eq: spacetime rough bound on Ddelta} and \eqref{eq: jacobian}, we have
{\color{black}
\[\int_{\mathcal{K}_k}|\Zr^\alpha(\psi)|^2 dtdx_1dx_2 =  \int_{2^{-k}}^{t^*}\!\!\int_{0}^{u}\!\!\!\int_{0}^{2\pi}\!\!\! |\Zr^\alpha(\psi)|^2  \kappa\sqrt{\slashed{g}}dtdud\vartheta \lesssim 1.\]
}
We then use $\partial_1$ and $\partial_2$ to replace the $\Zr$'s in the above inequality. Indeed, because $\Tr=-t\partial_1$ and $2^{-k} \lesssim t$ in $\mathcal{K}_k$, for all $\psi \in \{w,\wb,\psi_2\}$ and multi-indices $\alpha$ with $ 0\leqslant |\alpha| \leqslant \Ntop+1$, we obtain that
\[
\int_{\mathcal{K}_k}|\partial^\alpha(\psi)|^2 dtdx_1dx_2 \lesssim {\color{black}2^{2|\alpha|k}},
\]
where $\partial \in \{\partial_1,\partial_2\}$. In view of the definition of $w$ and $\wb$, we conclude that
\begin{equation}\label{eq:bounds in Kk}
\int_{\mathcal{K}_k}|\partial^\alpha(v)|^2+|\partial^\alpha(c)|^2 dtdx_1dx_2 \lesssim {\color{black}2^{2|\alpha|k}},
\end{equation}
for all multi-indices $\alpha$ with $ 0\leqslant |\alpha| \leqslant \Ntop+1$.
\begin{remark}
We have also derived pointwise bounds on  $\Zr^\alpha(\psi)$ for $\psi \in \{w,\wb,\psi_2\}$ and multi-index $\alpha$ with $ 0\leqslant |\alpha| \leqslant \Ninf$ in \cite{LuoYu1}. Therefore, we can repeat the above argument exactly in the same manner and we obtain the following pointwise bounds on $v$ and $c$
\begin{equation}\label{eq:infty bounds in Kk}
\|\partial^\alpha(v)\|_{L^\infty(\mathcal{K}_k)}+\|\partial^\alpha(c)\|_{L^\infty(\mathcal{K}_k)}   \lesssim {\color{black}2^{2|\alpha|k}}.
\end{equation}
\end{remark}

We now turn to the bounds on $\partial_t$ derivatives of $v$ and $c$. 
Using the Euler equations, the time derivatives of $v$ and $c$ can be expressed in their spatial derivatives:
\begin{equation}\label{Euler in v and c another form}
\begin{cases}
\partial_t c &=-v \cdot \nabla  c-\frac{\gamma-1}{2}c\nabla \cdot v,\def\E{{\mathcal E}}
\\
 \partial_t v  &= -v \cdot \nabla v-\frac{2}{\gamma-1}c \nabla c.
\end{cases}
\end{equation} 
Using \eqref{Euler in v and c another form} to compute $\partial_t^{\alpha}\partial^\beta_1\partial_2^\gamma \psi$ for  $\psi \in \{v,c\}$ and multi-indices $\alpha,\beta$ and $\gamma$ with $0\leqslant |\alpha|+|\beta|+|\gamma| \leqslant \Ntop$
we have following rough estimates for $(v,c)$ in $\mathcal{K}_k$:
\begin{proposition}
For all $k\geqslant 1$ and for all $\delta<2^{-k}$, for the solution $(v,c)$ constructed in $\mathcal{D}(\delta)$, 
we have the following spacetime $L^2$-bounds:
\begin{equation}\label{eq:final bounds in Kk}
\big(\sum_{|\alpha|\leqslant \Ntop+1}\int_{\mathcal{K}_k}|\partial^\alpha(c)|^2 +|\partial^\alpha(v)|^2dtdx_1dx_2\big)^{\frac{1}{2}} \lesssim {\color{black}2^{k(\Ntop+1)}},
\end{equation}
where $\partial \in \{\partial_t,\partial_1,\partial_2\}$. \end{proposition}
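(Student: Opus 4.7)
The plan is to reduce the mixed space-time derivative estimate to the purely spatial $L^2$ bound \eqref{eq:bounds in Kk} and the pointwise bound \eqref{eq:infty bounds in Kk}, by using the Euler equations in the form \eqref{Euler in v and c another form} to trade each time derivative for a nonlinear combination of spatial derivatives. Writing $\psi \in \{v,c\}$, the system \eqref{Euler in v and c another form} is schematically $\partial_t\psi = \mathcal{N}(\psi)\cdot\nabla\psi$, where $\nabla = (\partial_1,\partial_2)$ and $\mathcal{N}$ is a smooth function of $\psi$ alone.

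The first step is to establish, by induction on the number $\alpha_0$ of time derivatives in $\alpha = (\alpha_0,\alpha_1,\alpha_2)$, the representation
\[
\partial^\alpha \psi \;=\; \sum_{\iota \in \mathcal{I}(\alpha)} c_\iota\, Q_\iota(\psi)\,\prod_{j=1}^{m_\iota}\nabla^{\beta_{\iota,j}}\psi,
\]
where $\mathcal{I}(\alpha)$ is a finite index set, $Q_\iota$ is a polynomial in the components of $\psi$, and the total spatial order of each monomial satisfies $\sum_{j=1}^{m_\iota}|\beta_{\iota,j}|\leqslant |\alpha|$. The base case $\alpha_0 = 0$ is trivial; for $\alpha_0\geqslant 1$ one writes $\partial_t\partial^{\alpha - e_0}\psi = \partial^{\alpha-e_0}(\partial_t\psi)$, substitutes \eqref{Euler in v and c another form}, and applies Leibniz together with the induction hypothesis. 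Each such substitution replaces one time derivative by exactly one extra spatial derivative, so the total spatial order is preserved at $|\alpha|$.

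With this algebraic expansion in hand, I would bound each monomial $Q_\iota(\psi)\prod_j\nabla^{\beta_{\iota,j}}\psi$ in $L^2(\mathcal{K}_k)$ as follows: sort the factors so that $|\beta_{\iota,1}|\geqslant|\beta_{\iota,2}|\geqslant\cdots\geqslant|\beta_{\iota,m_\iota}|$, place the leading factor $\nabla^{\beta_{\iota,1}}\psi$ in $L^2$ using \eqref{eq:bounds in Kk} (which gives a bound of order $2^{k|\beta_{\iota,1}|}$), and place all the remaining factors, together with $Q_\iota(\psi)$, in $L^\infty$. Since $\sum_j|\beta_{\iota,j}|\leqslant |\alpha|\leqslant \Ntop+1$ and the leading factor carries the largest multi-index, the remaining ones satisfy $|\beta_{\iota,j}|\leqslant |\alpha|/2\leqslant (\Ntop+1)/2$, which is $\leqslant \Ninf$ for the choice of $\Ntop$ compatible with the pointwise hierarchy of \cite{LuoYu1}; hence each remaining factor is bounded by \eqref{eq:infty bounds in Kk}. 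Combining the exponents of $2^k$ across the factors and summing the finitely many monomials produces a bound of the form $2^{k(\Ntop+1)}$, which is \eqref{eq:final bounds in Kk}.

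The main obstacle is the bookkeeping for the polynomial expansion, that is, verifying the inductive claim on total spatial order and the fact that after isolating the leading factor every other factor has spatial order bounded by the $L^\infty$ threshold. This reduces to the combinatorial observation that if $|\beta_1|\geqslant|\beta_2|\geqslant\cdots\geqslant |\beta_m|$ and $\sum_j|\beta_j|\leqslant|\alpha|$, then $|\beta_j|\leqslant |\alpha|/2$ for $j\geqslant 2$. No new smallness in $\varepsilon$ is required, since only the $\delta$-independent rough bound is needed; in particular all implicit constants depend on $\Ntop$ but not on $k$ or $\delta$.
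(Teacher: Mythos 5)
Your proposal is correct and takes essentially the same route as the paper: the paper's own justification consists of the single remark preceding the proposition, namely that one uses \eqref{Euler in v and c another form} to rewrite each $\partial_t$-derivative in terms of spatial derivatives and then invokes \eqref{eq:bounds in Kk} and \eqref{eq:infty bounds in Kk}, and your inductive expansion plus the ``one factor in $L^2$, the rest in $L^\infty$'' Hölder argument (with the observation that at most one factor can exceed the $L^\infty$ threshold $\Ninf$, since two factors of order $>|\alpha|/2$ would overshoot the total order) is exactly the natural bookkeeping that makes that remark rigorous. One small caveat: for the exponents to come out as $2^{k(\Ntop+1)}$ one needs the $L^\infty$ bound to be $2^{|\alpha|k}$, which is what the derivation from $\Tr=-t\partial_1$ and $t\gtrsim 2^{-k}$ on $\mathcal{K}_k$ actually gives, rather than the $2^{2|\alpha|k}$ literally printed in \eqref{eq:infty bounds in Kk}; with the latter your Hölder split would lose an extra power of $2^k$.
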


\subsection{Convergence argument and existence}\label{section: convergence and existence}
We define a sequence of parameters $\{\delta_l\}_{l\geqslant 1}$, where $\delta_1=\frac{1}{2^{10}}$ and $\delta_{l+1}=\frac{1}{2}\delta_l$. For a given $l$,  we have already proved the existence of the initial data on $\Sigma_{\delta_k}$ for $0\leqslant u \leqslant u^*$ and we can solve the Euler equations to obtain $(v^{(l)},c^{(l)})$ on $\mathcal{D}(\delta_l)$. We use an upper index with parenthesis, i.e., $\,^{(l)}$, to indicate a quantity constructed with respect to the given initial data on  $\Sigma_{\delta_l}$. We also recall that, in the previous subsection, we have constructed the compact regions $\mathcal{K}_k$ with $\mathcal{K}_k\subset \mathcal{D}(\delta_l)$ for all $l\geqslant k$. 

We now use a limiting argument combined with the diagonal process to construct solutions on $\mathcal{W}$.
\begin{itemize}
\item For $k=1$, we consider the restrictions of all $(v^{(l)},c^{(l)})$ on $\mathcal{K}_1$. By \eqref{eq:final bounds in Kk}, we have
\[
\sum_{|\alpha|\leqslant \Ntop+1}\int_{\mathcal{K}_1}|\partial^\alpha\big(c^{(l)}\big)|^2 +|\partial^\alpha\big(v^{(l)}\big)|^2dtdx_1dx_2 \lesssim 2^{2(\Ntop+1)},
\]
i.e., $\big\|\big(c^{(l)},v^{(l)}\big)\big\|_{H^{\Ntop+1}(\mathcal{K}_1)}\lesssim 2^{2(\Ntop+1)}$. Hence, we obtain a uniformly bounded sequence of (vector valued) functions in $H^{\Ntop+1}(\mathcal{K}_1)$. Accroding to the Rellich–Kondrachov theorem, the embedding $H^{\Ntop+1}(\mathcal{K}_1) \hookrightarrow H^{\Ntop}(\mathcal{K}_1)$ is compact. We then can extract a subsequence $\big\{\big(c^{(l_i)},v^{(l_i)}\big)\big\}_{i\geqslant 1}$ so that it converges weakly in $H^{\Ntop+1}(\mathcal{K}_1)$ and strongly in $H^{\Ntop}(\mathcal{K}_1)$. Since $\mathcal{K}_1\subset \mathcal{D}_{\delta_{l_i}}$ for all $i\geqslant k$, we may relabel $\big\{\big(c^{(l_i)},v^{(l_i)}\big)\big\}_{i\geqslant 1}$ as $\big\{\big(c^{(l)},v^{(l)}\big)\big\}_{i\geqslant 1}$.

\item For  $k=2$, we repeat the above process so that we obtain a sequence of solutions (still use $l$ as labels) $\big\{\big(c^{(l)},v^{(l)}\big)\big\}_{i\geqslant 1}$ so that it converges weakly in $H^{\Ntop+1}(\mathcal{K}_2)$ and strongly in $H^{\Ntop}(\mathcal{K}_2)$. 

\item We now apply the standard diagonal process and we repeat the above operations at each step. This yields a sequence of solutions $\big\{\big(c^{(l)},v^{(l)}\big)\big\}_{l\geqslant 1}$ so that for all $k\geqslant 1$, it converges (eventually) weakly in $H^{\Ntop+1}(\mathcal{K}_k)$ and strongly in $H^{\Ntop}(\mathcal{K}_k)$. 
\end{itemize}
Since $\mathcal{K}_k\subset \mathcal{K}_{k+1}$ for all $k$ and $\bigcup_{k\geqslant 1}\mathcal{K}_k=\mathcal{W}$, the sequence $\big\{\big(c^{(l)},v^{(l)}\big)\big\}_{l\geqslant 1}$ defines the limit functions $(v,c)$ on $\mathcal{W}$ and $\big\{\big(c^{(l)},v^{(l)}\big)\big\}_{l\geqslant 1}$ converges to $(v,c)$ in the $H^{\Ntop}$-topology on any compact set of $\mathcal{W}$. Moreover, {\color{black}recalling the notation $\Ninf:=\Ntop-1$ from the first paper \cite{LuoYu1}} and using Sobolev embeddings for $H^{\Ntop}$-functions into the $C^{\Ninf}$-functions, $(v,c)$ is a classical solution to the Euler equations defined on $\mathcal{W}$. This proves the existence part of {\bf Theorem 2}.
\begin{remark}[Convergence of $\Th$ and $\kappa$] \label{remark: existence of Th}We consider the solution defined on $\mathcal{D}(\delta)$. The components $\Th^i$ of $\Th$ and $\kappa$ satisfy the transport equations
\[\begin{cases}
L(\widehat{T}^{i})&= \big(\widehat{T}^j\cdot \Xh(\psi_j) + \Xh(c)\big)\Xh^i,\\
L(\kappa)&=-\frac{\gamma+1}{\gamma-1}Tc+c^{-1}\widehat{T}^i\cdot L (\psi_i)\kappa.
\end{cases}
\]
We can commute $Z^\alpha$ with the equation to derive
\[L\big(Z^\alpha(f)\big)= \sum_{|\beta|\leqslant |\alpha|}\mathbf{P}_{\beta}\cdot Z^\beta(f),\]
where $f\in \{\Th^1,\Th^2,\kappa\}$ and $\mathbf{P}_{\beta}$ is a polynomial in terms of unkowns from the following set:
\[\big\{Z^{\beta'}(\psi), Z^{\beta''}(\Th^i), Z^{\beta'''}(\kappa) \big| \psi\in\{\wb,w,\psi_2\}, |\beta'|\leqslant |\alpha|+1, |\beta''|\leqslant |\beta|\big\}.\]
Therefore, by integrating these equations from $\Sigma_\delta$ we can derive the bound on $f\in \{\Th^1,\Th^2,\kappa\}$ and their derivatives. We can then apply the convergence argument in this section to show that the sequence of $f$'s from $\mathcal{D}_{\delta_l}$ indeed gives a limit (from a subsequence). Therefore, we have $\widehat{T}^{1}$, $\widehat{T}^{2}$ and $\kappa$ as well defined (at least $C^{\Ntop-1}$-)functions on $\mathcal{W}$. Moreover, since the convergence keeps the pointwise estimates, they enjoy the following estimates on $\mathcal{W}$:
\begin{equation}\label{ineq: L infty bound for limiting Thi kappa}
\|\Th^1+1\|_{L^\infty(\Sigma_t)}+\|\Th^2\|_{L^\infty(\Sigma_t)}+\|\frac{\kappa}{t}-1\|_{L^\infty(\Sigma_t)}\lesssim \varepsilon t.
\end{equation}
The proof of the above statements is routine and omitted.
\end{remark}

\subsection{Description of the solution}\label{section: description of the solution}

We have proved in Section 4.3.2 of \cite{LuoYu1} that, for all $\delta_l>0$, for the solution $\big(c^{(l)},v^{(l)}\big)$ constructed on $\mathcal{D}(\delta_l)$, for all multi-indices $\alpha$ with $|\alpha|\leqslant \Ninf$ and for all $t\in[\delta_l,t^*]$, for all $\psi^{(l)} \in \{w^{(l)},\wb^{(l)},\psi^{(l)}_2\}$, except for the case $\Zr^\alpha (\psi^{(l)})=\Tr(\wb^{(l)})$, we have
	\begin{equation*}\label{ineq: L infty bound}
		\|\mathring{Z}^\alpha(\psi^{(l)})\|_{L^\infty(\Sigma_t)}\lesssim  \begin{cases}\varepsilon, \ \ & {\color{black}\text{if}~\Zr^\alpha=\Xr^{|\alpha|};}\\
			\varepsilon t, \ \ & \text{otherwise}.
		\end{cases}
\end{equation*}
 By Lemma 4.2 of the first paper \cite{LuoYu1}, we also have
\[\|\Tr(\wb^{(l)})+\frac{2}{\gamma-1}\|_{L^\infty(\Sigma_t)}\lesssim \varepsilon t.\]
Since $\big\{\big(c^{(l)},v^{(l)}\big)\big\}_{l\geqslant 1}$ converges to $(v,c)$ in the $H^{\Ntop}$-topology, by the continuity of the Sobolev embedding theorem, we obtain the following estimates for the solution $(v,c)$ defined on $\mathcal{W}$:
\begin{proposition}\label{prop: L infty bound for limiting solution}
For all multi-indices $\alpha$ with $|\alpha|\leqslant \Ninf$, for all $\psi \in \{w ,\wb ,\psi _2\}$, for all $t\in (0,t^*]$, except for the case $\Zr^\alpha (\psi)=\Tr(\wb)$, we have
	\begin{equation}\label{ineq: L infty bound for limiting solution}
		\|\mathring{Z}^\alpha(\psi)\|_{L^\infty(\Sigma_t)}\lesssim  \begin{cases} \varepsilon, \ \ & {\color{black}\text{if}~\Zr^\alpha=\Xr^{|\alpha|};}\\
			\varepsilon t, \ \ & \text{otherwise}.
		\end{cases}
\end{equation}
and
\begin{equation}\label{ineq: L infty bound for limiting solution wb}
\|\Tr(\wb)+\frac{2}{\gamma-1}\|_{L^\infty(\Sigma_t)}\lesssim \varepsilon t.
\end{equation}
\end{proposition}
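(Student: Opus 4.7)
\medskip

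\noindent\textbf{Proof proposal.} The plan is a direct passage to the limit. Since the circled vector fields $\Tr=-t\,\partial_1$ and $\Xr=\partial_2$ are background objects independent of the solution, each $\Zr^\alpha(\psi)$ is a polynomial in the Cartesian partial derivatives $\partial_t^a\partial_1^b\partial_2^c\psi$ of order $\leqslant|\alpha|$ with coefficients smooth in $t$. Consequently, convergence in $C^{\Ninf}$ on compact subsets of $\mathcal{W}$ automatically yields pointwise convergence of every $\Zr^\alpha(\psi^{(l)})$ to $\Zr^\alpha(\psi)$ for $|\alpha|\leqslant \Ninf$. The key input on the approximating side is provided by the uniform pointwise estimates for $(v^{(l)},c^{(l)})$ on $\mathcal{D}(\delta_l)$ proved in the first paper \cite{LuoYu1} (recalled in the paragraph preceding the proposition): for every multi-index $\alpha$ with $|\alpha|\leqslant \Ninf$, every $\psi^{(l)}\in\{w^{(l)},\wb^{(l)},\psi_2^{(l)}\}$, and every $t\in[\delta_l,t^*]$,
\[
\|\mathring{Z}^\alpha(\psi^{(l)})\|_{L^\infty(\Sigma_t)}\lesssim \varepsilon\quad\text{if}\ \Zr^\alpha=\Xr^{|\alpha|},\qquad \|\mathring{Z}^\alpha(\psi^{(l)})\|_{L^\infty(\Sigma_t)}\lesssim \varepsilon t\quad\text{otherwise},
\]
with the implicit constants independent of $l$, together with the auxiliary bound $\|\Tr(\wb^{(l)})+\tfrac{2}{\gamma-1}\|_{L^\infty(\Sigma_t)}\lesssim \varepsilon t$.

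\medskip

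First, I would fix an arbitrary $t\in(0,t^*]$ and choose $k$ large enough that $2^{-k}<t$; then $\Sigma_t\cap\mathcal{W}\subset \mathcal{K}_k$ is a compact subset of $\mathcal{W}$. For every $l\geqslant k$ we have $\mathcal{K}_k\subset \mathcal{D}(\delta_l)$, so the uniform bound from \cite{LuoYu1} applies to the restriction of $(v^{(l)},c^{(l)})$ to $\mathcal{K}_k\cap\Sigma_t$. Next, the diagonal-extraction argument in Section \ref{section: convergence and existence} produced a subsequence, still denoted $\{(c^{(l)},v^{(l)})\}$, that converges to $(v,c)$ strongly in $H^{\Ntop}(\mathcal{K}_k)$ for every $k$; since $\Ntop\geqslant\Ninf+2$, the Sobolev embedding on the compact domain $\mathcal{K}_k$ gives convergence in $C^{\Ninf}(\mathcal{K}_k)$. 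Translating back through the fixed expression of $\mathring{Z}^\alpha$ in Cartesian derivatives, we obtain $\mathring{Z}^\alpha(\psi^{(l)})\to\mathring{Z}^\alpha(\psi)$ uniformly on $\mathcal{K}_k$ for all $|\alpha|\leqslant\Ninf$, and similarly $\Tr(\wb^{(l)})\to\Tr(\wb)$ uniformly on $\mathcal{K}_k$.

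\medskip

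Finally, since uniform bounds are preserved by pointwise (and a fortiori uniform) convergence, passing to the limit $l\to\infty$ in the inequalities of \cite{LuoYu1} restricted to $\Sigma_t\cap\mathcal{W}\subset\mathcal{K}_k$ yields \eqref{ineq: L infty bound for limiting solution} and \eqref{ineq: L infty bound for limiting solution wb} on $\Sigma_t$. As $t\in(0,t^*]$ was arbitrary (with the implicit constants untouched by the choice of $k$), the proposition follows. The only non-routine points are ensuring that the exceptional case $\Zr^\alpha(\psi)=\Tr(\wb)$ really must be singled out—inherited directly from the corresponding exception in \cite{LuoYu1}—and checking that the coefficients relating $\Zr^\alpha$ to Cartesian derivatives are smooth and bounded on each $\mathcal{K}_k$ (which is immediate since these coefficients are polynomials in $t$). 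No obstacle beyond invoking the a priori bounds of \cite{LuoYu1} and the standard interplay between weak convergence in $H^{\Ntop}$ and uniform convergence of derivatives up to order $\Ninf$ arises.
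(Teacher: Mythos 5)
Your proposal is correct and follows essentially the same route as the paper: the paper likewise recalls the uniform pointwise bounds for the approximating solutions $(v^{(l)},c^{(l)})$ from the first paper and passes them to the limit using the $H^{\Ntop}$-convergence on the compact exhaustion $\mathcal{K}_k$ together with Sobolev embedding. The only minor discrepancy is your assumption $\Ntop\geqslant\Ninf+2$, whereas the paper's convention is $\Ninf=\Ntop-1$; this is harmless here since strong $H^{\Ntop}(\mathcal{K}_k)$ convergence already gives (along a subsequence) a.e. convergence of all derivatives up to order $\Ninf$, which suffices to transfer the $L^\infty$ bounds to the limit.
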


In view of Remark \ref{remark: existence of Th}, we also define $\Th=\Th^1 \partial_1+\Th^2\partial_2$ on $\mathcal{W}$. Therefore, we can define $T=\kappa\cdot \Th$ on $\mathcal{W}$. {\color{black}Using the definition of $T$, we define $u\equiv 0$ on $C_0$ and solve $Tu=1$ to define $u$ on $\mathcal{W}$. This provides a characteristic foliations $C_u$ on $\mathcal{W}$.} Since $C_0$ is in the $O(\varepsilon t)$-neighborhood of $C^{\rm cst}_0$ where $C^{\rm cst}_0$ is the counterpart of $C_0$ for the constant states $(\mathring{v}_r,\mathring{c}_r)$ (see Section \ref{section:region of convergence}), by \eqref{ineq: L infty bound for limiting Thi kappa} and the property of continuous dependence on the initial data for ODEs, we obtain that for each $t>0$, $u$ is $O(\varepsilon)$-close to $u^{\rm cst}=k^{\rm cst}-\frac{x_1}{t}$. This shows that the rarefaction wave fronts $C_u$ are in the $O(\varepsilon t)$-neighborhood  of its counterparts $C^{\rm cst}_u$ for the constant states.

\section{Applications to the stability of Riemann problem}
 
To apply the constructions in {\bf Corollary 1} and {\bf Theorem 2} to the Riemann problem, we have to derive uniform estimates on $L^n(\psi)$ for sufficiently large $n$ and for $\psi \in \{\wb,w,\psi_2\}$.

\subsection{The rough bounds on $L$ derivatives}\label{Section:The rough bounds on L derivatives}

We consider the solution constructed on $\mathcal{D}(\delta)$. We use the diagonalized Euler equations \eqref{eq:Euler in diagonalized form} to bound all the $L^k \Zr^\alpha (U^{(\lambda)})$'s where $\lambda \in \{0,-1,-2\}$. These bounds will suffer a loss in $t$. 


Recall that in Proposition 7.5 of \cite{LuoYu1}, we have derived the following estimates: for all multi-indices $\alpha$ with $|\alpha|\leqslant \Ninf$, for all $t\in [\delta,t^*]$, for all $\psi,\psi'\in \{\wb,w,\psi_2\},  \Zr\in \{\Xr, \Tr\}$ except for the case $(\alpha, \psi')=(0,\wb)$, 
\begin{equation}\label{eq:basic L estimates with and without L}
\|L \Zr^\alpha \psi \|_{L^\infty(\Sigma_t)}+\|\Xh\Zr^\alpha \psi \|_{L^\infty(\Sigma_t)}+t^{-1}\|T\Zr^\alpha \psi' \|_{L^\infty(\Sigma_t)}\lesssim \varepsilon.
\end{equation}

\begin{lemma}
For any multi-index $\alpha$ with $|\alpha|\leqslant \Ninf - 1$, for all $t\in [\delta,t^*]$, for all $\psi,\psi'\in \{\wb,w,\psi_2\},  \Zr\in \{\Xr, \Tr\}$ except for the case $(\alpha, \psi')=(0,\wb)$, we have (independent of $\delta$)
\begin{align}\label{eq:basic L estimates}
	\|L Z^\alpha \psi \|_{L^\infty(\Sigma_t)}+\|\Xh Z^\alpha \psi \|_{L^\infty(\Sigma_t)}+t^{-1}\|T Z^\alpha \psi' \|_{L^\infty(\Sigma_t)}\lesssim \varepsilon.
\end{align}

\end{lemma}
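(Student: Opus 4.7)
The plan is to deduce the acoustical-frame $L^\infty$ estimate claimed here from its circled-frame counterpart \eqref{eq:basic L estimates with and without L}, which is valid at one higher order $|\alpha|\leqslant\Ninf$, by algebraically converting between the two frames. Inverting \eqref{eq: transformation between Xh T and Xr Tr} using $(\Th^1)^2+(\Th^2)^2=1$ yields
\[ \Xh=-\Th^1\,\Xr-\frac{\Th^2}{\kappar}\Tr,\qquad T=\kappa\,\Th^2\,\Xr-\frac{\kappa}{\kappar}\,\Th^1\,\Tr, \]
so each $Z\in\{\Xh,T\}$ is a combination of $\Xr,\Tr$ whose coefficients lie in $\{\Th^1,\Th^2,\kappa,\Th^i/\kappar,\kappa/\kappar\}$, each of $L^\infty$-size $O(1)$ by \eqref{ineq: L infty bound for limiting Thi kappa}.

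Iterating the substitution and collecting terms via the Leibniz rule, $Z^\alpha\psi$ can be written as a finite sum
\[ Z^\alpha\psi=\sum_{|\beta|\leqslant|\alpha|} f_{\alpha,\beta}\,\Zr^\beta\psi, \]
where each $f_{\alpha,\beta}$ is a polynomial in $Z^\gamma$-derivatives of the coefficient functions above with $|\gamma|\leqslant|\alpha|-|\beta|\leqslant\Ninf-1$. The loss of one derivative (from $\Ninf$ in \eqref{eq:basic L estimates with and without L} down to $\Ninf-1$ here) is exactly what keeps every such coefficient within the range of the propagated $L^\infty$ bounds on $\Th^i$ and $\kappa$ inherited from the \emph{a priori} analysis of \cite{LuoYu1}, giving $\|f_{\alpha,\beta}\|_{L^\infty(\Sigma_t)}\lesssim 1$ with the additional $O(t\varepsilon)$ smallness of $f_{\alpha,\beta}-(\textrm{constant part})$ whenever any derivative lands. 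Moreover, because $Z^\alpha$ is a differential operator of order $|\alpha|$ that annihilates constants, $f_{\alpha,0}\equiv 0$ whenever $|\alpha|\geqslant 1$. For the outer operators $D\in\{L,\Xh\}$, the expansion
\[ DZ^\alpha\psi=\sum_\beta (Df_{\alpha,\beta})\,\Zr^\beta\psi+\sum_\beta f_{\alpha,\beta}\,D\Zr^\beta\psi \]
is then controlled directly: the second sum is $O(\varepsilon)$ by \eqref{eq:basic L estimates with and without L}, while $Df_{\alpha,\beta}$ picks up the $\varepsilon$-smallness of $\Th^1+1,\Th^2$ and $\kappa/t-1$, whose $Z$-derivatives enjoy the same smallness by \cite{LuoYu1}.

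The main obstacle is the $T$-direction estimate $t^{-1}\|TZ^\alpha\psi'\|_{L^\infty}\lesssim\varepsilon$, which demands an extra $t$-factor in every contribution. Expanding
\[ TZ^\alpha\psi'=\sum_\beta (Tf_{\alpha,\beta})\,\Zr^\beta\psi'+\sum_\beta f_{\alpha,\beta}\,T\Zr^\beta\psi', \]
the potentially dangerous case $\beta=0,\psi'=\wb$ in the second sum---where the circled bound fails---is excluded automatically: if $|\alpha|\geqslant 1$ then $f_{\alpha,0}\equiv 0$ as observed above, and the case $|\alpha|=0$ is precisely the exclusion built into the hypothesis. For the first sum, the structural point is that $T$ applied to any of the null-frame coefficients gains an extra factor of $t$, i.e.\ $\|TZ^\gamma\Th^i\|_{L^\infty},\|TZ^\gamma\kappa\|_{L^\infty}\lesssim t\varepsilon$, mirroring Remark \ref{remark: improvement with T} on the initial slice and propagated throughout $\mathcal{D}(\delta)$ in \cite{LuoYu1}. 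This supplies the required $t$-gain. Since every bound invoked is independent of $\delta$, the estimate is uniform and the lemma follows.
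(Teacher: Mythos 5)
Your frame-transformation approach matches the route the paper cites, and your bookkeeping for the $\Xh$- and $T$-directions is sound (including the crucial observation that $f_{\alpha,0}\equiv 0$ for $|\alpha|\geqslant 1$, which removes the dangerous $\Tr\wb$ term exactly where the hypothesis does not exclude it, and the use of the extra $t$-gain in $T$-derivatives of $\Th^i,\kappa$). However, the $L$-direction estimate has a genuine gap. You assert that $Df_{\alpha,\beta}$ is controlled because the coefficient functions and ``their $Z$-derivatives enjoy the same smallness,'' but for $D=L$ you are taking an $L$-derivative, not a tangential one, and the coefficients $f_{\alpha,\beta}$ carry inverse powers of $\kappar=t$. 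Concretely, already at $|\alpha|=1$ with $Z^\alpha=\Xh$ and $\psi=\wb$, the expansion reads $\Xh\wb=-\Th^1\,\Xr\wb-\frac{\Th^2}{\kappar}\,\Tr\wb$, and applying $L$ produces the term $L\big(\frac{\Th^2}{\kappar}\big)\Tr\wb$ with $\Tr\wb=O(1)$. But $L\big(\frac{\Th^2}{\kappar}\big)=\frac{L\Th^2}{t}-\frac{\Th^2}{t^2}$, and each piece separately is of size $O(\varepsilon/t)$ since $L\Th^2=-\kappa^{-1}\zeta\,\Xh^2=O(\varepsilon)$ and $\Th^2=O(\varepsilon t)$. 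The naive estimate for this contribution is therefore $O(\varepsilon/t)$, divergent as $t\to 0$; the finiteness of $L\Xh\wb$ rests on a cancellation between the two $O(\varepsilon/t)$ pieces which your argument neither invokes nor could establish from the bounds you cite (it requires, e.g., control of $L^2\Th^2$ together with the precise matching of the initial data on $\Sigma_\delta$).

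This is precisely the role of the ``commutator estimates'' the paper explicitly cites alongside the frame transformation, and which your proposal does not use. The robust way to obtain the $L$-direction bound is to commute $L$ past $Z^\alpha$ via $[L,\Xh]=-\chi\,\Xh$ and $[L,T]=-(\zeta+\eta)\Xh$, writing $LZ^\alpha\psi=Z^\alpha L\psi+[L,Z^\alpha]\psi$, then bound $L\psi$ from the Euler equations \eqref{Euler equations:form 2} and the commutator coefficients from the connection-coefficient estimates; this sidesteps differentiating the singular ratios $\Th^2/\kappar$, $\kappa/\kappar$, $1/\kappar$ by $L$ altogether. Without that, or an explicit verification of the cancellation, the $L$-direction estimate in your proposal is not justified.
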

\begin{proof}
	The proof is based on the transformation \eqref{eq: transformation between Xh T and Xr Tr} and the commutator estimates in Section 7.1 of \cite{LuoYu1}. We refer to Section 9 of \cite{LuoYu1} for details. 
\end{proof}

We can derive the following rough estimates on multiple $L$-derivatives:
\begin{corollary}
	For any positive integer $k\geqslant 1$ and all multi-indices $\alpha$ with $k+|\alpha|\leqslant \Ninf$, the following bounds hold on  $\mathcal{D}(\delta)$:
	\begin{equation}
		\|L^k Z^\alpha(\wb)  \|_{L^\infty(\Sigma_t)}\lesssim t^{-k+2}\varepsilon,  \ \ \|L^k   Z^\alpha(w)  \|_{L^\infty(\Sigma_t)} +\|L^k   Z^\alpha(\psi_2)  \|_{L^\infty(\Sigma_t)} \lesssim t^{-k+1}\varepsilon.
	\end{equation}
\end{corollary}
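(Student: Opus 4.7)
The argument is by induction on $k$. The base case $k=1$ for $w$ and $\psi_2$ is immediate from the preceding Lemma \eqref{eq:basic L estimates}. For $k=1$ and $\wb$, one substitutes the first equation of \eqref{Euler equations:form 2}, commutes $Z^\alpha$ through, and observes that each term in the resulting expression carries at least one of the factors $\widehat T^1+1$, $\Th^2$, or $\Xh^1=\Th^2$; all three are of size $O(\varepsilon t)$ (with analogous bounds on their $Z$-derivatives) by \eqref{ineq: L infty bound for limiting Thi kappa}, while the remaining factors are controlled by \eqref{eq:basic L estimates}, yielding the desired $t\varepsilon$ bound.

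For the inductive step we pass to the diagonalized variables $U = P^{-1}V$ of Section \ref{section_acoustical geometry} and invoke the iterated formula \eqref{eq:Euler:L^n U in diagonalized form} of Lemma \ref{lemma:  L^n U computation} with $n = k+1$:
\begin{equation*}
L^{k+1}U = \bigl(\tfrac{c}{\kappa}\Lambda\bigr)^{k+1}T^{k+1}U + \frac{1}{\kappa^{k+1}}\sum_{j=1}^{k}\mathscr{P}_{k+1-j}\,T^j U + \frac{1}{\kappa^{k+1}}\sum_{j=1}^{k+1}\mathscr{P}_{k+1-j}\,T^j P + \frac{1}{\kappa^{k}}\mathscr{P}_{k+1}.
\end{equation*}
Hitting both sides by $Z^\alpha$ and using the schematic commutator \eqref{eq:commutator:schematic L commute with d^k} to push $Z^\alpha$ past each $L$, every resulting term takes the form $\kappa^{-m}\,\mathscr{P}_{m'}\,T^j Z^{\alpha'}U^{(\lambda)}$ or $\kappa^{-m}\,\mathscr{P}_{m'}\,T^j Z^{\alpha'}P$ with $j+m'\leqslant k+1$ and $|\alpha'|\leqslant |\alpha|$. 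The principal term $(c\Lambda/\kappa)^{k+1}T^{k+1}Z^\alpha U^{(\lambda)}$ is $O\bigl(t^{-(k+1)}\cdot\varepsilon t\bigr) = O(t^{-k}\varepsilon)$ for $\lambda\in\{-1,-2\}$, using $\|T^{k+1}Z^\alpha U^{(\lambda)}\|_{L^\infty(\Sigma_t)}\lesssim \varepsilon t$ inherited from \eqref{eq:basic L estimates}; via \eqref{eq:explicit formula for U converse} this supplies exactly the $t^{-(k+1)+1}\varepsilon$ bound for $L^{k+1}Z^\alpha w$ and $L^{k+1}Z^\alpha \psi_2$. For $\lambda=0$ the principal term is annihilated by $\Lambda^{k+1}$, so the controlling contributions are the subleading pieces, which carry an extra factor of $t$ (via $T^jP\sim O(\varepsilon t)$ and $\kappa\sim t$), delivering the improved $t^{-(k+1)+2}\varepsilon$ rate for $\wb$; the coefficients of $U^{(-1)}, U^{(-2)}$ in the expansion \eqref{eq:explicit formula for U converse} for $\wb$ are themselves $O(\varepsilon t)$, so their contributions meet the sharper bound as well. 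All remaining pieces are absorbed by the induction hypothesis together with $\|T^j Z^{\alpha'}P\|_{L^\infty(\Sigma_t)}\lesssim \varepsilon t$ and routine estimates on $\mathscr{P}_{m'}$.

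\textbf{Main obstacle.} The delicate point is the bookkeeping of the schematic polynomials $\mathscr{P}_j\in\mathbb{R}[\mathfrak{Z}_{\leqslant j}]$ introduced in Section \ref{section: algebraic prepa}: each absorbs mixed powers of $\kappa^{-1}$ through the factors $(\text{derivatives of }\kappa)/\kappa$, and one must verify that every such $\kappa^{-1}$ is compensated by a matching $t$-gain coming either from a $T$-derivative of $\psi$ (via \eqref{eq:basic L estimates}) or from a $\Th^i$-factor in $T^jP$. The $\wb$-estimate is the most delicate, since the extra $t$-factor must be threaded through the $\lambda=0$ projection and the conversion \eqref{eq:explicit formula for U converse}. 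A clean way to organize the argument is to prove simultaneously an auxiliary mixed bound on $\|L^iT^jZ^\alpha \psi\|_{L^\infty(\Sigma_t)}$ with appropriate $t$-weights and close the induction on the total derivative count $i+j+|\alpha|\leqslant \Ninf$, in the same spirit as the jet hierarchy constructed in Section \ref{section: determine the jets}.
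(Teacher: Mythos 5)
Your proposal uses the right tool — the iterated diagonalized formula \eqref{eq:Euler:L^n U in diagonalized form} from Lemma \ref{lemma:  L^n U computation} — and this is exactly what the paper does: both obtain the $w,\psi_2$ bounds from the $\lambda=-1,-2$ components, and both rely on the zero eigenvalue of $\Lambda$ for the $\wb$ improvement. One small cosmetic remark: the outer induction on $k$ is unnecessary here. The inductive work is already packaged inside Lemma \ref{lemma:  L^n U computation}; once \eqref{eq:Euler:L^n U in diagonalized form} is in hand, $L^n U$ is given in closed form and one only needs to bound the four schematic terms $\mathbf{P}_{n,i}$, which is what the paper does directly.

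The substantive issue is in your $\wb$-argument. You note correctly that the principal term $\mathbf{P}_{k+1,0}^{(0)}$ is annihilated by $\Lambda^{k+1}$, but the claimed mechanism for the $t$-gain in the subleading pieces — ``$T^jP\sim O(\varepsilon t)$ and $\kappa\sim t$'' — does not deliver the extra power of $t$ you need. A naive bound gives $\mathbf{P}_{n,2}=\kappa^{-n}\sum\mathscr{P}_{n-j}T^jP\sim t^{-n}\cdot O(1)\cdot O(\varepsilon t)=t^{-n+1}\varepsilon$, and $\mathbf{P}_{n,3}=\kappa^{-(n-1)}\mathscr{P}_n\sim t^{-n+1}\cdot O(1)$, where $n=k+1$; both fall short of the target $t^{-n+2}\varepsilon=t^{-k+1}\varepsilon$ by one power of $t$. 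The culprit is that the schematic polynomial $\mathscr{P}_n$ is permitted to contain $T\wb\approx-\tfrac{2}{\gamma+1}$, which is $O(1)$ rather than $O(\varepsilon t)$, and the $\kappa$-powers do not by themselves compensate. The paper avoids this entirely by not using the vector formula \eqref{eq:Euler:L^n U in diagonalized form} for the $\lambda=0$ component. Instead it takes the $\lambda=0$ component of the \emph{first-order} equation \eqref{eq:Euler in diagonalized form} (equivalently, \eqref{eq: LU0}), which crucially involves only $\Xh$-derivatives of $U$ and zeroth-order factors — no $T$-derivatives whatever — and then iterates $L$ on that scalar identity, using the explicit expression \eqref{equation: recall of zeta over kappa} for $\zeta/\kappa$. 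Every factor that then appears, $L^{k_1}(c)$, $L^{k_2}(\Xh U)$, $L^{k_1}(\zeta/\kappa)$, carries either the $\varepsilon$-smallness of an $\Xh$-derivative or the already-known $L^j$-bounds for $w,\psi_2$, and those supply the missing $t$. If you insist on using \eqref{eq:Euler:L^n U in diagonalized form} for $\wb$, you would additionally need to verify that $\mathbf{P}_{n,1}^{(0)}$ and $\mathbf{P}_{n,2}^{(0)}$ vanish identically (they inherit $\Lambda$ factors through the recursion — cf.\ Remark \ref{rem: contribution to P_n1} and the term $\tfrac{c}{\kappa}\Lambda P^{-1}T(P)\cdot U$ in \eqref{eq:Euler in diagonalized form}) and that the surviving piece $\mathbf{P}_{n,3}^{(0)}$ enjoys the structure of \eqref{eq: LU0}; that is precisely the bookkeeping the paper's choice of formula renders unnecessary.
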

\begin{proof}
	These bounds are direct consequences of \eqref{eq:basic L estimates} and the equation \eqref{eq:Euler:L^n U in diagonalized form} which is valid on $\Sigma_t$. In fact we have $\|\mathbf{P}_{k,i}\|_{L^\infty(\Sigma_t)}\lesssim  t^{-k+1}\varepsilon$ for each term $\mathbf{P}_{k,i}$ in \eqref{eq:Euler:L^n U in diagonalized form}. We commute $Z^\alpha$ to \eqref{eq:Euler:L^n U in diagonalized form} to derive the bounds for arbitrary $\alpha$. 
	
	To obtain the improved bounds for $\wb$, we take the $0$-eigenvalue component of \eqref{eq:Euler:L^n U in diagonalized form}, namely,
	\begin{align*}L^{k+1}\big(U^{(0)}\big)=\sum_{k_1+k_2=k}L^{k_1}(c)  L^{k_2}(\Xh(U))+\big(L^{k_1}\big(\frac{\zeta}{\kappa}\big) +L^{k_1}(c) \big)L^{k_2}U,
	\end{align*}
	and make use of the following computations
	\begin{equation}\label{equation: recall of zeta over kappa}
		\frac{\zeta}{\kappa}=-\frac{\gamma+1}{2}\Xh(U^{(0)})-\frac{\gamma-3}{2}\Xh(U^{(-2)})+\theta \cdot U^{(-1)}, \ \theta= \Th^2\Xh(\Th^1)-\Th^1\Xh(\Th^2).
	\end{equation}
	Since these bounds are not used in other parts of the paper, we omit the details.
\end{proof}

\subsection{Retrieving the uniform bounds on $L$-derivatives}\label{Section:The precise bounds on L derivatives}

\subsubsection{An ODE systems for $L^k(\psi)$}

 The main idea in this section is to use the acoustical wave equations for $\psi\in \{\wb,w,\psi_2\}$ in the null frame $(\Lb,L,\Xh)$. We will integrate along the $\Lb$ direction to retrieve the uniform bounds on $L^k(\psi)$ from $C_0$ where  $L^k(\psi)$ are uniformly bounded from the continuity across $C_0$. 
 
Formally, we consider an acoustical wave equation for a smooth function $\psi$ with source term $\varrho$, i.e.,  $ \Box_{g} (\psi) =\varrho$. It can be decomposed in the null frame $(L,\Lb,\Xh)$ as follows:
\[
 \Box_{g} (\psi) = \Xh^2 (\psi) - \mu^{-1}L\big(\underline{L}(\psi)\big) - \mu^{-1}\big(\frac{1}{2}\chi\cdot \underline{L}(\psi) +\frac{1}{2}\chib\cdot L(\psi)\big)- 2 \mu^{-1}\zeta \cdot \Xh(\psi)=\varrho.
\] 
Since $ [L,\Lb]=- 2(\zeta + \eta)\Xh+L(c^{-1}\kappa)L$, we can rewrite the equation as
\begin{equation*}
\Lb (L \psi)+\big(\frac{1}{2}\chib +L(c^{-1}\kappa)\big)L\psi=-\mu\rho+\mu\Xh^2(\psi)-\frac{1}{2}\chi\Lb(\psi)+2\eta\Xh(\psi).
\end{equation*}
By  $\Lb=2T+c^{-1}\kappa L$ and $\chib=-2c^{-1}\kappa\Xh^j\Xh(\psi_j)-c^{-1}\kappa \chi$, we move the terms involving $L\psi$ to the lefthand side and we obtain
\begin{equation}\label{eq: Lb L equation aux 1}
\Lb (L \psi)+\big( L(c^{-1}\kappa)-c^{-1}\kappa\Xh^j\Xh(\psi_j)\big)L\psi=-\mu\rho+\mu\Xh^2(\psi)-\chi T(\psi)+2\eta\Xh(\psi).
\end{equation}
In applications, we will take $\psi\in \{\wb,w,\psi_2\}$ so that the corresponding source terms $\varrho$ are given by \eqref{Main Wave equation: order 0}. In these circumstances, $\varrho$ can be written as a linear combination of the following terms:
\[\big\{c^{-1}g(D f_1,D f_2)\big| f_1,f_2\in \{\wb,w,\psi_2 \}\big\}.\]
We compute that
\[\mu g(D f_1,D f_2)=-\frac{1}{2}\Lb(f_1) L(f_2)-\frac{1}{2}\Lb(f_2)L(f_1)+ \mu\Xh(f_1)\Xh(f_2).\]
We then move all the terms involving $L\psi$ to the lefthand side of the equation \eqref{eq: Lb L equation aux 1}. Ignoring the irrelevant coefficients, this leads to the following type of schematic expression:
\begin{align*}
&\Lb (L \psi)+\big(L(c^{-1}\kappa)-c^{-1}\kappa\Xh^j\Xh(\psi_j)\big)L\psi+\sum_{\psi',\psi''\in \{\wb,w,\psi_2\}}c^{-1}\Lb(\psi'')L(\psi')\\
=& \sum_{\psi',\psi''\in \{\wb,w,\psi_2\}}c^{-1}\mu\Xh(\psi')\Xh(\psi'')+\mu\Xh^2(\psi)+\chi T(\psi)+\eta\Xh(\psi).
\end{align*}
If one regards $\psi$ as the vector valued function $(\wb,w,\psi_2)$, $\psi$ then satisfies the following schematic equation:
\begin{equation*}
\Lb (L \psi)+\big(c^{-1}\kappa\Xh^j\Xh(\psi_j) +L(c^{-1}\kappa)+c^{-1}\Lb(\psi)\big)L\psi= c^{-1}\mu\Xh(\psi)\Xh(\psi)+\mu\Xh^2(\psi)+\chi T(\psi)+\eta\Xh(\psi).
\end{equation*}
We then use  $\Lb(\psi)=c^{-1}\kappa L\psi+2T\psi$, $L(c^{-1}\kappa)=-c^{-2}\kappa Lc+ c^{-1}m'+c^{-1}\kappa e'$, \eqref{Euler equations:form 1}, \eqref{eq: m' e'}, \eqref{defining eq of theta and chi} and \eqref{structure quantities: connection coefficients} to conclude that
\begin{equation}\label{eq: Lb L equation order 0}
\Lb (L \psi)+\mathbf{P}_0 \cdot L\psi+\mathbf{P}_0 \cdot (L\psi)^2= \mathbf{P_0}.
\end{equation}
Here for $n \geq 0$, $\mathbf{P}_n$'s are  polynomials with $\mathbb{R}$-coefficients in the unknowns from the following set
\[\mathbf{V}_n:=\big\{c^{-1}, L^kZ^\alpha(f_1),Z^\beta(f_2)\big|f_1\in \{\wb,w,\psi_2\}, f_2\in \{\kappa,\Th^1,\Th^2\}, Z\in \{T,\Xh\}, k \leqslant n, k+|\alpha|\leqslant n+2, |\beta|\leqslant 1\big\}.\]
We take the $L$-derivative of \eqref{eq: Lb L equation order 0} to derive the following schematic formula:
\begin{equation}\label{eq: Lb L equation order 1}
\Lb (L^2 \psi)+\mathbf{P}_1 \cdot L^2\psi = \mathbf{P_1}.
\end{equation}
In fact, by applying $L$ to \eqref{eq: Lb L equation order 0}, we have
\begin{align*}
\Lb (L^2 \psi)+[L,\Lb]L\psi+\mathbf{P}_0 \cdot L^2\psi + L(\mathbf{P}_0) \cdot (L\psi) +\mathbf{P}_0 \cdot L\psi \cdot L^2\psi+L(\mathbf{P}_0) \cdot (L\psi)^2= L(\mathbf{P_0}).
\end{align*}
By definition, $\mathbf{P}_0$ and $\mathbf{P}_0 \cdot L\psi$ can be written as $\mathbf{P}_1$. By $[L,\Lb]=- 2(\zeta + \eta)\Xh+L(c^{-1}\kappa)L$, we see that $[L,\Lb](L\psi)=\mathbf{P}_1\cdot L^2\psi+\mathbf{P}_1$. Thus, we can rewrite the above equation as
\begin{align*}
\Lb (L^2 \psi)+\mathbf{P}_1 \cdot L^2\psi= L(\mathbf{P_0}) + L(\mathbf{P}_0) \cdot (L\psi) +L(\mathbf{P}_0) \cdot (L\psi)^2+\mathbf{P}_1.
\end{align*}
In view of the following schematic formula, we have proved \eqref{eq: Lb L equation order 1}.
\begin{lemma}
	For $n \geq 1$ we have the following schematic formula
	\begin{equation}\label{eq:LP_n}
		L(\mathbf{P}_{n-1}) = \mathbf{P}_{n}.
	\end{equation}
\end{lemma}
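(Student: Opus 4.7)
The plan is to reduce the claim $L(\mathbf{P}_{n-1}) = \mathbf{P}_n$ to a generator-wise statement via the Leibniz rule, using the fact that $\mathbf{P}_n$ is by definition a polynomial in the finite generating set $\mathbf{V}_n$. I would first record the elementary inclusion $\mathbf{V}_{n-1} \subset \mathbf{V}_n$ (the bounds $k \leq n-1$ and $k+|\alpha| \leq n+1$ are weaker than $k \leq n$ and $k+|\alpha| \leq n+2$, while the $c^{-1}$ and $Z^\beta(f_2)$ generators are the same), so that $\mathbb{R}[\mathbf{V}_{n-1}] \subset \mathbb{R}[\mathbf{V}_n]$. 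By the product rule it then suffices to check that $L(v) \in \mathbb{R}[\mathbf{V}_n]$ for each generator $v \in \mathbf{V}_{n-1}$.

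Next I would verify this on the three types of generators. For $v = c^{-1}$, compute $L(c^{-1}) = -c^{-2}L(c) = -\tfrac{\gamma-1}{2}c^{-2}(Lw+L\wb)$, and each factor is manifestly in $\mathbf{V}_n$. For $v = L^k Z^\alpha f_1$ with $k\leq n-1$ and $k+|\alpha|\leq n+1$, simply $L(v) = L^{k+1} Z^\alpha f_1$ with $k+1\leq n$ and $k+1+|\alpha|\leq n+2$, so $L(v)\in \mathbf{V}_n$ directly. For $v = Z^\beta f_2$ with $|\beta|\leq 1$: when $|\beta|=0$, the structure equations \eqref{structure eq 1: L kappa}--\eqref{eq: m' e'} and \eqref{structure eq 3: L T on Ti Xi Li} express $L\kappa$ and $L\Th^k$ as polynomials in $c^{-1}, \kappa, \Th^j, Lw, L\wb, L\psi_2, \Xh w, \Xh\wb, Tw, T\wb$, all of which are either listed generators of $\mathbf{V}_n$ or are $L^k Z^\alpha f_1$ with $k+|\alpha| \leq 1$. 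When $|\beta|=1$, I would write $L(Z f_2) = Z(L f_2) + [L,Z] f_2$ and treat each piece: the first uses the $|\beta|=0$ identity and then applies $Z\in\{T,\Xh\}$ to it, commuting $Z$ past $L$ on the $L\psi_i$ factors via $[Z,L]$ to obtain terms of the form $L^1 Z^1 \psi_i$ with $k+|\alpha|=2\leq n+2$; the second uses the commutators \eqref{eq:commutator formulas}, namely $[L,\Xh]=-\chi\Xh$ and $[L,T]=-(\zeta+\eta)\Xh$, and the explicit formulas \eqref{defining eq of theta and chi} and \eqref{structure quantities: connection coefficients} which write $\chi$, $\zeta$, and $\eta$ as polynomials in $\{c^{-1},\kappa,\Th^j,\Xh(\psi_i),\Xh(c),\Xh(\kappa),\Xh(\Th^i)\}$---precisely generators of $\mathbf{V}_n$.

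The only delicate point in the accounting is the $|\beta|=1$ case: after applying $Z$ to the structure equation for $Lf_2$, and after the commutator contribution, one obtains quantities such as $Z\Xh(\psi_j)$, $Z\Xh(\Th^i)$, and $ZL\psi_i$. These need to fit into $\mathbf{V}_n$, which is why the generating set includes $L^k Z^\alpha f_1$ up to total order $k+|\alpha|\leq n+2$ (not $n+1$) and allows $Z^\beta f_2$ with $|\beta|\leq 1$. The verification is thus essentially a matter of confirming that no generator of $\mathbf{V}_{n-1}$, after one application of $L$, exceeds these thresholds; I expect this bookkeeping to be the only nontrivial aspect, and it is ultimately governed by the fact that the null structure equations cost at most one derivative, with $Lf_2$ involving no second derivatives of $\kappa$ or $\Th^i$.
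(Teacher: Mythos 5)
Your proposal is correct and follows essentially the same path as the paper's proof: reduce via Leibniz to the three types of generators of $\mathbf{V}_{n-1}$, dispatch $L(c^{-1})$ and $L(L^k Z^\alpha f_1)$ directly, and handle $L(Z^\beta f_2)$ by commuting $L$ past $Z^\beta$ and substituting the structure equations \eqref{structure eq 1: L kappa} and \eqref{structure eq 3: L T on Ti Xi Li} together with the commutators \eqref{eq:commutator formulas}. The only cosmetic difference is that you retain $L\psi_i$ as a generator in the expansion of $L\kappa$ rather than eliminating it via \eqref{Euler equations:form 1}; both are valid since $L\psi_i\in\mathbf{V}_n$, and the accounting is otherwise identical.
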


\begin{proof}
	Since $\mathbf{P}_{n-1}$ is an  polynomials with variables from the set $\mathbf{V}_{n-1}$, it suffices to compute  $L(c^{-1})$, $L\big(L^k\big(Z^\alpha(f_1)\big)\big)$ and $L\big(Z^\beta(f_2)\big)$ where $f_1\in \{\wb,w,\psi_2\}$, $f_2\in \{\kappa,\Th^1,\Th^2\}$, $Z\in \{T,\Xh\}$ and $k\leqslant n-1$. It is straightforward that $L(c^{-1})$ and $L\big(L^k\big(Z^\alpha(f_1)\big)\big)$ can be represented as $\mathbf{P}_{n}$. 
	
	We now compute $L\big(Z^\beta(f_2)\big)$ with $|\beta|\leqslant n+1$. First of all, since $[L,\Xh]=-\chi\cdot \Xh$ and $[L,T]=- (\zeta + \eta)\Xh$, according to the commutation formula $
	[L,Z^\beta] =\sum_{|\beta_1|+|\beta_2|=|\beta|-1}Z^{\beta_1}[L,Z] Z^{\beta_2}$,
	we obtain that
	\begin{align*}
		L\big(Z^\beta(f_2)\big)=Z^\beta\big(L(f_2)\big)+[L,Z^\beta](f_2)=Z^\beta\big(L(f_2)\big)+\mathbf{P}_{n}.
	\end{align*}
	We the use \eqref{structure eq 1: L kappa} and \eqref{structure eq 3: L T on Ti Xi Li} to replace $L(f_2)$ on the righthand side.  Because $L(f_2)$ can be written as a polynomial in  the variables from the following set
	\[\big\{c^{-1}, Z^\alpha(f_1), f_2\big|f\in \{\wb,w,\psi_2\}, f_2\in \{\kappa,\Th^1,\Th^2\}, Z\in \{T,\Xh\}, |\alpha|\leqslant 1\big\},\]
	it is straightforward to see that $Z^\beta\big(L(f_2)\big)=\mathbf{P}_{n}$. Hence, $
	L\big(Z^\beta(f_2)\big)= \mathbf{P}_{n}$ and we are done.
\end{proof}

\begin{lemma}\label{lemma:5.3}
For all $n\geqslant 2$, we have the following schematic formula:
\begin{equation}\label{eq: Lb L equation order n}
\Lb (L^{n} \psi)+\mathbf{P}_{1} \cdot L^n\psi = \mathbf{P}_{n-1}.
\end{equation}
\end{lemma}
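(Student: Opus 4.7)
The proof proceeds by induction on $n$, with base case $n = 2$ already established as equation \eqref{eq: Lb L equation order 1}. For the inductive step, I would apply $L$ to both sides of the hypothesis $\Lb(L^n\psi) + \mathbf{P}_1 \cdot L^n\psi = \mathbf{P}_{n-1}$, use $L\Lb = \Lb L + [L, \Lb]$ together with the commutator formula $[L,\Lb] = -2(\zeta+\eta)\Xh + L(c^{-1}\kappa)L$ from \eqref{eq:commutator formulas}, and invoke the lemma $L(\mathbf{P}_{n-1}) = \mathbf{P}_n$ (equation \eqref{eq:LP_n}) to rewrite the right-hand side. This yields
$$\Lb(L^{n+1}\psi) + \mathbf{P}_1 \cdot L^{n+1}\psi + L(c^{-1}\kappa) L^{n+1}\psi + L(\mathbf{P}_1)\cdot L^n\psi - 2(\zeta+\eta)\Xh(L^n\psi) = \mathbf{P}_n,$$
so it remains to absorb the three spurious terms on the left into either the fixed coefficient $\mathbf{P}_1 \cdot L^{n+1}\psi$ or the right-hand side $\mathbf{P}_n$.

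Two of these absorptions are routine. Using \eqref{structure eq 1: L kappa}, $L(\kappa) = m' + e'\kappa$ lies in $\mathbf{P}_1$, and combining with the identity $L(c^{-1}\kappa) = -c^{-2}\kappa L(c) + c^{-1}L(\kappa)$ together with the Euler equations \eqref{Euler equations:form 1} gives $L(c^{-1}\kappa) \in \mathbf{P}_1$, so $L(c^{-1}\kappa) L^{n+1}\psi$ enlarges the $\mathbf{P}_1$-coefficient of $L^{n+1}\psi$ but stays within that form. Likewise, by \eqref{eq:LP_n} we have $L(\mathbf{P}_1) = \mathbf{P}_2$; since $L^n\psi$ itself belongs to $\mathbf{V}_n$ (with $k = n$, $|\alpha| = 0$), the product $L(\mathbf{P}_1)\cdot L^n\psi$ is a polynomial in $\mathbf{V}_n$ and therefore sits in $\mathbf{P}_n$.

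The main obstacle is the term $-2(\zeta+\eta)\Xh(L^n\psi)$. A short check of the formulas in \eqref{structure quantities: connection coefficients} shows $\zeta+\eta \in \mathbf{P}_0$, so it is enough to prove $\Xh(L^n\psi) \in \mathbf{P}_n$. Writing $\Xh(L^n\psi) = L^n(\Xh\psi) + [\Xh, L^n]\psi$, the first piece is a variable in $\mathbf{V}_n$ (with $k = n$, $|\alpha| = 1$, satisfying $k + |\alpha| = n+1 \leq n+2$). For the commutator I would expand
$$[\Xh, L^n] = \sum_{k_1 + k_2 = n-1} L^{k_1}\big([\Xh, L]\big) L^{k_2} = -\sum_{k_1+k_2=n-1} L^{k_1}\big(\chi\,\Xh L^{k_2}(\cdot)\big),$$
and apply Leibniz to separate the factors $L^j(\chi)$ and $L^{k_1-j}(\Xh L^{k_2}\psi)$. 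Iterating \eqref{eq:LP_n} gives $L^j(\chi) \in \mathbf{P}_j$, and a secondary induction on $k_2$ (using $[\Xh, L] = -\chi\Xh$ again to commute $\Xh$ inward until only pure $L$-derivatives act on $\psi$ or $\Xh\psi$) reduces each remaining factor to a variable of $\mathbf{V}_n$. Tracking the derivative counts verifies the constraints $k \leq n$ and $k + |\alpha| \leq n + 2$ at every stage, so the whole sum lies in $\mathbf{P}_n$. The bookkeeping in this last step is the only real subtlety; once it is carried out, combining all three absorptions produces $\Lb(L^{n+1}\psi) + \mathbf{P}_1 \cdot L^{n+1}\psi = \mathbf{P}_n$, closing the induction.
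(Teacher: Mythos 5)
Your proof is correct and follows the same induction-on-$n$ strategy as the paper: apply $L$ to the inductive hypothesis, expand $[L,\Lb]=-2(\zeta+\eta)\Xh+L(c^{-1}\kappa)L$, and absorb the resulting terms. The paper states $[L,\Lb](L^n\psi)=\mathbf{P}_1\cdot L^{n+1}\psi+\mathbf{P}_n$ without comment; you fill in the one nontrivial point (that $\Xh(L^n\psi)\in\mathbf{P}_n$ via $\Xh L^n\psi=L^n\Xh\psi+[\Xh,L^n]\psi$ and a secondary induction), which is the right justification even though the paper leaves it implicit.
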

\begin{proof}
We use an induction argument on $n$. We have already proved the case for $n=2$. Assume that \eqref{eq: Lb L equation order n} holds for $n$. To obtain the case for $n+1$, we take the $L$-derivative of \eqref{eq: Lb L equation order n} to derive
\[\Lb (L^{n+1} \psi)+\mathbf{P}_{1} \cdot L^{n+1}\psi+[L,\Lb]\big(L^{n} \psi\big)+L(\mathbf{P}_{1})\cdot L^{n}\psi=L(\mathbf{P}_{n-1}).\]
Since $[L,\Lb]=- 2(\zeta + \eta)\Xh+L(c^{-1}\kappa)L$, we have $[L,\Lb](L^n\psi)=\mathbf{P}_1\cdot L^{n+1}\psi+\mathbf{P}_n$. {\color{black}By writing $L\mathbf{P}_{n-1}$ as $\mathbf{P}_{n}$, the above analysis closes the induction argument hence the proof of the lemma.} 
%
\end{proof}


  
 \begin{corollary}
For all $n\geqslant 2$ and for all multi-indices $\alpha_0$, we have the following schematic formula:
\begin{equation}\label{eq: Lb L equation order n Z alpha}
\Lb (L^{n} Z^{\alpha_0}\psi)+ \mathbf{Q}_{1} \sum_{{\color{black}|\alpha_1|\leqslant |\alpha_0|}}L^n Z^{\alpha_1} \psi=\mathbf{Q}_{n-1}.
\end{equation}
The function $\mathbf{Q}_{n-1}$ is a $\mathbb{R}$-coefficients polynomials with unknowns from the following set
\[ \mathbf{\widetilde{V}}_{n-1} = \big\{c^{-1}, L^kZ^\alpha(f_1),Z^\beta(f_2),\frac{Z^\gamma(c^{-1}\kappa)}{c^{-1}\kappa}\big|f_1\in \{\wb,w,\psi_2\}, f_2\in \{\kappa,\Th^1,\Th^2\}, Z\in \{T,\Xh\}, k \leqslant n-1\big\}.\]
\end{corollary}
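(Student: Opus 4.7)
The plan is to derive \eqref{eq: Lb L equation order n Z alpha} from the already-established $\alpha_0=0$ case \eqref{eq: Lb L equation order n} of Lemma \ref{lemma:5.3} by induction on $|\alpha_0|$. For the inductive step, I would apply a single commutator $Z\in\{\Xh,T\}$ to both sides of the equation holding for $\alpha_0$, and then use the commutator identities from \eqref{eq:commutator formulas} to reorganise the result. The two main rearrangements are
\[
Z\,\Lb(L^n Z^{\alpha_0}\psi)=\Lb\bigl(Z\cdot L^n Z^{\alpha_0}\psi\bigr)+[Z,\Lb]\bigl(L^nZ^{\alpha_0}\psi\bigr),\qquad Z\cdot L^nZ^{\alpha_0}\psi=L^n Z^{\alpha_0+1}\psi+[Z,L^n]Z^{\alpha_0}\psi,
\]
which will peel off the desired principal piece $\Lb(L^n Z^{\alpha_0+1}\psi)$. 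Since $[L,\Xh]$ and $[L,T]$ are proportional to $\Xh$, the commutator $[Z,L^n]$ only generates terms with strictly fewer than $n$ factors of $L$, so those contributions automatically fall into $\mathbf{Q}_{n-1}$. Similarly, expanding $Z^{\alpha_0+1}(\mathbf{P}_1\cdot L^n\psi)$ by the Leibniz rule and gathering the factors with exactly $n$ $L$-derivatives produces the coefficient term $\mathbf{Q}_1\cdot\sum_{|\alpha_1|\leqslant|\alpha_0|+1}L^n Z^{\alpha_1}\psi$ on the left.

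The obstruction comes from the mixed commutator $[\Xh,\Lb]=-\chib\cdot\Xh-\Xh(c^{-2}\mu)L$ and its analogue for $T$, which contain an $L$ term. Iterated into $[Z^{\alpha_0+1},\Lb](L^n\psi)$, these will produce contributions carrying one extra $L$-derivative, schematically of the form $Z^{\gamma}(c^{-1}\kappa)\cdot L^{n+1}Z^{\beta}\psi$. Such $L^{n+1}$ quantities are not permitted in $\mathbf{Q}_{n-1}$, so they must be eliminated.

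To remove them, I would substitute using the base identity $\Lb=c^{-1}\kappa L+2T$ combined with \eqref{eq: Lb L equation order n} (applied to $Z^{\beta}\psi$ after the relevant commutator rearrangement), which gives
\[
c^{-1}\kappa\, L^{n+1}Z^{\beta}\psi=\Lb(L^n Z^{\beta}\psi)-2T(L^n Z^{\beta}\psi)=-\mathbf{P}_1\cdot L^n Z^{\beta}\psi-2T(L^n Z^{\beta}\psi)+\mathbf{P}_{n-1}.
\]
Dividing through by $c^{-1}\kappa$ expresses each $L^{n+1}Z^{\beta}\psi$ as a linear combination of $L^nZ^{\alpha_1}\psi$ terms and a remainder of type $\mathbf{Q}_{n-1}$, at the cost of introducing coefficients involving the ratio $Z^{\gamma}(c^{-1}\kappa)/(c^{-1}\kappa)$. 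This is precisely the reason the enlarged variable set $\mathbf{\widetilde V}_{n-1}$ contains this generator, which was absent from $\mathbf{V}_{n-1}$. After this substitution, all $L^{n+1}$ terms have been converted into acceptable $L^n Z^{\alpha_1}\psi$ pieces (folded into the principal coefficient with a modified $\mathbf{Q}_1$) plus elements of $\mathbf{Q}_{n-1}$, completing the inductive step.

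The main obstacle I anticipate is the bookkeeping: one must verify that after repeated application of Leibniz, the commutator formula $[Z^{\beta},\Lb]$, and the substitution step above, every resulting coefficient lies in the polynomial ring generated by $\mathbf{\widetilde V}_{n-1}$ and in particular that no denominators beyond single factors of $c^{-1}\kappa$ (normalised as $Z^{\gamma}(c^{-1}\kappa)/(c^{-1}\kappa)$) are produced. This reduces to a careful induction matching the schematic identity $L(\mathbf{P}_{n-1})=\mathbf{P}_n$ from \eqref{eq:LP_n} with its $Z$-commuted analogue; the combinatorics are straightforward once one tracks the number of $L$-derivatives produced by each commutator insertion, but they must be carried out systematically to confirm the stated polynomial dependence.
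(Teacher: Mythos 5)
Your approach matches the paper's: induct on $|\alpha_0|$, apply one $Z_0\in\{T,\Xh\}$ to the equation for $\alpha_0$, commute $[Z_0,L^n]$ and $[Z_0,\Lb]$, and neutralize the $L$-term in $[Z_0,\Lb]$ by rewriting $L=(\Lb-2T)/(c^{-1}\kappa)$ (equivalently $c^{-1}\kappa L^{n+1}Z^\beta\psi=\Lb L^nZ^\beta\psi-2TL^nZ^\beta\psi$), so that the $\Lb L^nZ^\beta\psi$ piece is absorbed by the inductive hypothesis while the $T$-piece folds into the principal $\mathbf{Q}_1$-coefficient term, which is exactly why $Z^\gamma(c^{-1}\kappa)/(c^{-1}\kappa)$ enters $\mathbf{\widetilde V}_{n-1}$. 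One small correction: in your displayed substitution you should invoke the inductive hypothesis \eqref{eq: Lb L equation order n Z alpha} with $\alpha_0=\beta$ rather than the base case \eqref{eq: Lb L equation order n} (which is stated only for $\psi\in\{\wb,w,\psi_2\}$), and accordingly the right-hand side should read $-\mathbf{Q}_{1}\sum_{|\alpha_1|\leqslant|\beta|}L^nZ^{\alpha_1}\psi-2T(L^nZ^\beta\psi)+\mathbf{Q}_{n-1}$ with $\mathbf{Q}$'s rather than $\mathbf{P}$'s.
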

 \begin{proof}
We use an induction argument on $|\alpha_0|$. The case for $|\alpha_0|=0$ and arbitrary $n$ has been already proved  by the above lemma. 
We make the following the induction hypothesis: the identity \eqref{eq: Lb L equation order n Z alpha} holds for all multi-indices $\alpha$ with $|\alpha|\leqslant m-1$.  It suffices to show that,  for an arbitrary $\alpha_0$ with $|\alpha_0|=m$, for all $Z_0\in \{T,\Xh\}$ and $n\geqslant 1$, we have
\begin{equation}\label{eq: Lb L equation order n Z alpha aux 1}
\Lb (L^{n} Z_0Z^{\alpha_0}\psi)+\mathbf{Q}_{1} \sum_{|\alpha_1|\leqslant |\alpha_0|+1}L^n Z^{\alpha_1} \psi=\mathbf{Q}_{n-1}.
\end{equation}
We apply $Z_0$-derivative to \eqref{eq: Lb L equation order n Z alpha} and we obtain
\begin{equation}\label{eq: Lb L equation order n Z alpha aux 2}
\begin{split}
\Lb (L^{n} Z_0 Z^{\alpha_0}\psi) &+\underbrace{\mathbf{Q}_1 \sum_{|\alpha_1|\leqslant|\alpha|} L^{n} Z_0 Z^{\alpha_1}\psi}_{A_0}=\underbrace{[Z_0,\Lb] (L^{n} Z^{\alpha_0}\psi)}_{A_1}+\underbrace{ \Lb ([Z_0,L^{n}] Z^{\alpha_0}\psi)}_{A_2}\\
&+\underbrace{Z_0(\mathbf{Q}_{1}) \sum_{|\alpha_1|\leqslant|\alpha|}L^n Z^{\alpha_1} \psi}_{A_3}+\underbrace{\mathbf{Q}_1\sum_{|\alpha_1|\leqslant|\alpha|} [L^{n}, Z_0] Z^{\alpha_1}\psi}_{A_4} +\underbrace{ Z_0( \mathbf{Q}_{n-1}) }_{A_{5}}.
\end{split}
\end{equation}
We will use the following two schematic formulas:
\begin{itemize}

\item[a)] For all $\psi \in \{\wb,w,\psi_2\}$, 
we have 
\[[L^n,Z] (Z^\beta\psi) =\mathbf{Q}_{n-1}.\]

To prove the above commutator formulas, we rewrite the commutators $[L,\Xh]=-\chi\cdot \Xh$ and $[L,T]=- (\zeta + \eta)\Xh$ as $[L,T]=\mathbf{Q}_0\cdot \Xh$. For all  $\psi\in \{\wb,w,\psi_2\}$ and all multi-indices $\beta$ with $|\beta|\leqslant |\alpha|$, we  consider the following schematic commutator formula:
\begin{align*}
[L^n,Z] (Z^\beta\psi)&=\sum_{n_1+n_2=n-1}L^{n_1}\big([L,Z] L^{n_2}(Z^\beta\psi)\big) =\sum_{n_1+n_2=n-1}L^{n_1}\big(\mathbf{Q}_0 \cdot \Xh L^{n_2}(Z^\beta\psi)\big) \\
&=\sum_{n_1+n_2=n-1}L^{n_1}\big(\mathbf{Q}_0 \cdot \mathbf{Q}_{n_2}\big) =\mathbf{Q}_{n-1}.
\end{align*}
In the last step,  we have used $L(\mathbf{P}_{k-1})=\mathbf{P}_k$ in \eqref{eq:LP_n}.

\item[b)] For all $Z\in \{\Xh,T\}$, we have $Z(\mathbf{Q}_{n-1})=\mathbf{Q}_{n-1}$. 

Since $\mathbf{Q}_{n-1}$ is a linear combination of monomials with factors from $\mathbf{\widetilde{V}}_{n-1}$, by Leibniz rule, it suffices to check for each single term from  the set $\mathbf{\widetilde{V}}_{n-1}$.  We have the following three cases:
\begin{itemize}
\item 
Consider  $Z(c^{-1})$ and $Z\big(L^k\big(Z^\alpha(f_1)\big)\big), f_1\in \{\wb,w,\psi_2\}$. It is obvious that $Z(c^{-1})=\mathbf{Q}_{n-1}$. 
According to the above commutator formula $[L^n,Z] (Z^\beta(\psi))=\mathbf{Q}_{n-1}$, $Z\big(L^k\big(Z^\alpha(f_1)\big)\big)$ can also be represented as $\mathbf{Q}_{n-1}$. 
\item  For $Z^{\beta}(f_2), f_2\in \{\kappa,\Th^1,\Th^2\}$, it is obvious that $ZZ^{\beta}(f_2) = \mathbf{Q}_{n-1}$.
\item  For $f_3 = \frac{Z^\gamma(c^{-1}\kappa)}{c^{-1}\kappa}$, it is clear that $Z(f_3)=\mathbf{Q}_{n-1}$.
\end{itemize}

\end{itemize}

We now deal with the $A_i$'s in \eqref{eq: Lb L equation order n Z alpha aux 2} one by one. By a) and b), it is obvious that $A_3,A_4$ and $A_5$ are of type $\mathbf{Q}_n$. It remains to calculate $A_1$ and $A_2$.

For $A_1$, we first study the following commutator formulas
\[  [\Lb,\Xh]=-\chib\cdot\Xh-\Xh(c^{-2}\mu)L, \ \ [\Lb,T]=-c^{-1}\kappa(\zeta+\eta)\cdot\Xh-T(c^{-1}\kappa)L.\]
By expressing $L$ in terms of $\Lb$ and $T$, they can be all written in the following schematic form:
\[[\Lb,Z]=\mathbf{Q}_1\cdot \Xh +2\frac{Z(c^{-1}\kappa)}{c^{-1}\kappa}T  -\frac{Z(c^{-1}\kappa)}{c^{-1}\kappa}\Lb=\mathbf{Q}_1\cdot \Xh +\mathbf{Q}_1\cdot T  +\mathbf{Q}_1\cdot\Lb.\]
Therefore, by the formula in a), we have
\begin{align*}
A_1&=[Z_0,\Lb] (L^{n} Z^{\alpha_0}\psi)=\mathbf{Q}_1\big(\Xh L^{n} Z^{\alpha_0}\psi + TL^{n} Z^{\alpha_0}\psi+\Lb L^{n} Z^{\alpha_0}\psi\big)\\
&=\mathbf{Q}_1\big(L^{n} \Xh Z^{\alpha_0}\psi + L^{n} TZ^{\alpha_0}\psi+\mathbf{Q}_{n-1}+\Lb L^{n} Z^{\alpha_0}\psi\big).
\end{align*}
The first two terms can be classified into $A_0$. The third one is of type $\mathbf{Q}_{n-1}$. For the last term, we use the inductive hypothesis that
\[\Lb (L^{n} Z^{\alpha_0}\psi)=\mathbf{Q}_{1} \sum_{|\alpha_1|\leqslant\alpha_0}L^n Z^{\alpha_1} \psi+\mathbf{Q}_{n-1}.\]
Therefore, schematically, we have
\[A_1=A_0+\mathbf{Q}_{n-1}.\]

For $A_2$, we rewrite $[L,Z_0]$ as $[L,Z_0]=f\cdot \Xh$, where $f=-\chi$ for $Z_0=\Xh$ and $f=-(\zeta+\eta)$ for $Z_0=T$. We remark that $f$ can be expressed explicitly by \eqref{defining eq of theta and chi} and \eqref{structure quantities: connection coefficients}. Hence, 
\begin{align*}
[L^n,Z_0] (Z^{\alpha_0}\psi)&=\sum_{n_1+n_2=n-1}L^{n_1}\big(f\cdot \Xh L^{n_2}(Z^\beta\psi)\big) =\sum_{n_1+n_2+n_3=n-1}L^{n_1}(f)  \cdot L^{n_2} \Xh L^{n_3}Z^\beta(\psi).
\end{align*}
Therefore,
\begin{align*}
A_2&=\sum_{n_1+n_2+n_3=n-1}\Lb L^{n_1}(f)  \cdot L^{n_2} \Xh L^{n_3}Z^\beta(\psi)+L^{n_1}(f) \cdot \Lb L^{n_2} \Xh L^{n_3}Z^\beta(\psi).\end{align*}
By writing $\Lb=2T+c^{-1}\kappa L$ and the inductive hypothesis as well as the explicit expression of $f$ and the equations \eqref{structure eq 1: L kappa} \eqref{structure eq 3: L T on Ti Xi Li}, all the terms in $A_2$ are of type $\mathbf{Q}_{n-1}$.

In view of \eqref{eq: Lb L equation order n Z alpha aux 2}, the above analysis completes the inductive argument. Hence, we have proved the  corollary.
\end{proof}
\subsubsection{The uniform bounds on $L$-derivatives}

We now consider the effective domain in $\mathcal{D}^+(\delta)\subset \mathcal{D}(\delta)$, see Section \ref{section:effective domain} for definitions and pictures.
\begin{proposition}\label{prop: estimates on Lk}
For any positive integer $k\geqslant 1$ and all multi-indices $\alpha$ with $k+|\alpha|\leqslant \Ninf-1$, for all $\psi \in \{\wb,w,\psi_2\}$ and for all $Z\in \{\Xh, T\}$, we have
\begin{equation}\label{eq: Lk estimates  on effective domain}
\|L^k Z^\alpha (\psi) \|_{L^\infty(\mathcal{D}^+(\delta))} \lesssim \varepsilon.
\end{equation}
\end{proposition}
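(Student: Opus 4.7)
The plan is to run an induction on $k$ combined with integration along $\underline{L}$-integral curves starting from $C_0$, using the transport equation \eqref{eq: Lb L equation order n Z alpha} as the engine. The base case $k=1$ (with any admissible $\alpha$) is already contained in the uniform bound \eqref{eq:basic L estimates}. For the inductive step, I fix $k\geqslant 2$ and a multi-index $\alpha$ with $k+|\alpha|\leqslant \Ninf-1$, and I regard the family $F_{\alpha_1,\psi}:=L^k Z^{\alpha_1}\psi$ for $|\alpha_1|\leqslant|\alpha|$ and $\psi\in\{\wb,w,\psi_2\}$ as solving the coupled linear system
\[
\underline{L}(F_{\alpha_0,\psi})+\mathbf{Q}_1\sum_{|\alpha_1|\leqslant|\alpha_0|}F_{\alpha_1,\psi}=\mathbf{Q}_{k-1},
\]
which is the content of \eqref{eq: Lb L equation order n Z alpha}.

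Next I would control the coefficients of this ODE system uniformly in $\delta$. The factor $\mathbf{Q}_1$ is a polynomial in $c^{-1}$, in at-most-first-order $Z$-derivatives of $\kappa,\Th^i$, in $Z^\beta\psi$ with $|\beta|\leqslant 2$, and in the normalized ratios $Z(c^{-1}\kappa)/(c^{-1}\kappa)$; each of these is $O(1)$ by \eqref{eq:basic L estimates} and the bounds propagated in \cite{LuoYu1}. The source $\mathbf{Q}_{k-1}$ only involves $L^{k'}Z^\beta\psi$ with $k'\leqslant k-1$ together with lower-order geometric factors, so by the inductive hypothesis (and \eqref{eq:basic L estimates} when $k'=0$) one has $\|\mathbf{Q}_{k-1}\|_{L^\infty(\mathcal{D}^+(\delta))}\lesssim\varepsilon$. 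The structural fact that no $L^k$-terms appear on the right-hand side is exactly what the schematic bookkeeping in Lemma \ref{lemma:5.3} and its corollary was designed to record.

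Then I would verify that the $\underline{L}$-flow foliates $\mathcal{D}^+(\delta)$ with $C_0$ as the initial hypersurface. Since $L$ is tangent to $C_0$ and $g(L,\underline{L})=-2\mu\neq 0$, the field $\underline{L}$ is transverse to $C_0$; moreover the lower boundary $\overline{C}_\delta$ of $\mathcal{D}^+(\delta)$ is by construction generated by $\underline{L}$-curves issuing from $S_{\delta,0}\subset C_0$, so every point of $\mathcal{D}^+(\delta)$ lies on a unique integral curve of $\underline{L}$ starting at $C_0$, with affine length bounded uniformly in $\delta$. On $C_0$ the solution agrees with $(v_r,c_r)$, which is smooth with jets of size $O(\varepsilon)$, and since $L,T,\Xh$ extend smoothly across $C_0$ (because $\Th$ is the unit normal to $S_{t,0}$ in $\Sigma_t$ and $\kappa$ is smooth there), we have the uniform initial bound $\|F_{\alpha_0,\psi}\|_{L^\infty(C_0)}\lesssim\varepsilon$. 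Gronwall applied to the coupled linear system along each $\underline{L}$-curve then yields $\|F_{\alpha_0,\psi}\|_{L^\infty(\mathcal{D}^+(\delta))}\lesssim\varepsilon$, closing the induction.

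The main obstacle is really the bookkeeping: one must check that the symbolic class $\mathbf{Q}_{k-1}$ produced by the abstract polynomial calculus of Section 5.2 genuinely contains only $L$-derivatives of strictly lower order, so that the inductive hypothesis is applicable and no circular dependence arises; similarly that $\mathbf{Q}_1$ never hides a factor of $L^k\psi$ in disguise. A minor secondary point is the verification that the affine length of the $\underline{L}$-orbits from $C_0$ to an arbitrary point of $\mathcal{D}^+(\delta)$ is bounded uniformly in $\delta$ (this is where the restriction to the \emph{effective} domain $\mathcal{D}^+(\delta)$, rather than the whole $\mathcal{D}(\delta)$, is essential, since the complementary region $\mathcal{D}^-(\delta)$ is not reached from $C_0$ by $\underline{L}$-flow and no corresponding bound is claimed there).
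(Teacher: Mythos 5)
Your proposal is correct and follows essentially the same route as the paper's proof: induction on the number of $L$-derivatives with base case \eqref{eq:basic L estimates}, the coupled transport system \eqref{eq: Lb L equation order n Z alpha} along $\Lb$ with $\mathbf{Q}_1$ bounded and $\mathbf{Q}_{k-1}$ controlled by the inductive hypothesis, the observation that every point of $\mathcal{D}^+(\delta)$ is reached from $C_0$ by a backward $\Lb$-curve of uniformly bounded parameter length, and Gronwall from the data on $C_0$. One small imprecision: for $\alpha$ containing $T$ the initial bound $\|L^kZ^{\alpha}\psi\|_{L^\infty(C_0)}\lesssim\varepsilon$ does not follow from ``the jets agree with those of $(v_r,c_r)$,'' since transversal derivatives jump across $C_0$ (e.g. $T\wb\to-\frac{2}{\gamma+1}$ from the rarefaction side); it comes instead from the jet construction and estimates on $C_0$ in Section \ref{Section:Determine the higher jets on C0} (Corollary \ref{coro:bounds on C0} and the subsequent flux bounds), which is also what the paper implicitly invokes.
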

\begin{proof}
We prove inductively on the double indices $k$ and $\alpha$. For all $k=1$ and arbitrary $\alpha$, \eqref{eq: Lk estimates  on effective domain} hold on the entire $\mathcal{D}(\delta)$ hence on $\mathcal{D}^+(\delta)$. We make the inductive hypothesis that  the estimate \eqref{eq: Lk estimates  on effective domain} holds for all $k$ and $\alpha$ with $k<n$. We now prove that  \eqref{eq: Lk estimates  on effective domain} holds for $k=n$ and all $\alpha$ where $k+|\alpha|\leqslant \Ninf-1$.

We use \eqref{eq: Lb L equation order n Z alpha} for $n=k$ and $\alpha_0$ runs over for all $\alpha$ with $k+|\alpha|\leqslant \Ninf-1$. Therefore, schematically, we have the following system of equations for $L^{n} Z^{\alpha}\psi$ along integral curves of $\Lb$:
\[
\Lb (L^{k} Z^{\alpha}\psi)+ \mathbf{Q}_{1} \sum_{|\alpha'|\leqslant \alpha}L^k Z^{\alpha'}\psi =\mathbf{Q}_{k-1}.
\]
By the inductive hypothesis and the definition of $\mathbf{Q}_{k-1}$, we have $|\mathbf{Q}_{k-1}|\lesssim \varepsilon$ and $|\mathbf{Q}_{1}|\lesssim \varepsilon$. We then integrate the above equations.  By the Gronwall's inequality and the fact that $\|L^{k} Z^{\alpha}\psi\|_{L^\infty(C_0)}\lesssim \varepsilon$, this gives the desired estimates for all $L^{k} Z^{\alpha}\psi$. Therefore, the inductive argument is finished and we complete the proof of the proposition.
\end{proof}

We now give an estimate of the size of the irrelevant domain $\mathcal{D}^-(\delta)$. By definition, $\mathcal{D}^-(\delta)$ is the union of integral curves of $\Lb$ emanated from $\Sigma_\delta$. By $\Lb=2T+c^{-1}\kappa L$, we know that $\Lb(u)=2$ and we can use $u$ to parameterize such an integral curve. 
We now compute the maximal possible time for such a curve. Since $\Lb(t)=c^{-1}\kappa$, the maximal time is bounded above by
\[\int_0^{u^*}c^{-1}\kappa du'\lesssim \frac{1}{\mathring{c}_r}\int_0^{u^*}\kappa du'.\] 
On the other hand, we have $|\Lb(\kappa)(u,\vartheta,t)|\lesssim \kappa$. Therefore, since $|\kappa-\delta|\lesssim \varepsilon \delta$ on $\Sigma_{\delta}$, by regarding $\kappa$ as a function in $u$, we have
\[\kappa(u)\lesssim \delta+\int_0^{u^*} \Lb\kappa(u')du' \ \ \Rightarrow \ \ |\kappa(u)|\lesssim \delta. \]
Hence, there exists a universal constant $C_1$ depending only on $(\mathring{v}_r,\mathring{c}_r)$ so that the maximal time on $\mathcal{D}^-(\delta)$ is at most $t_\delta=C_1\cdot\delta$.
\begin{center}
\includegraphics[width=1.5in]{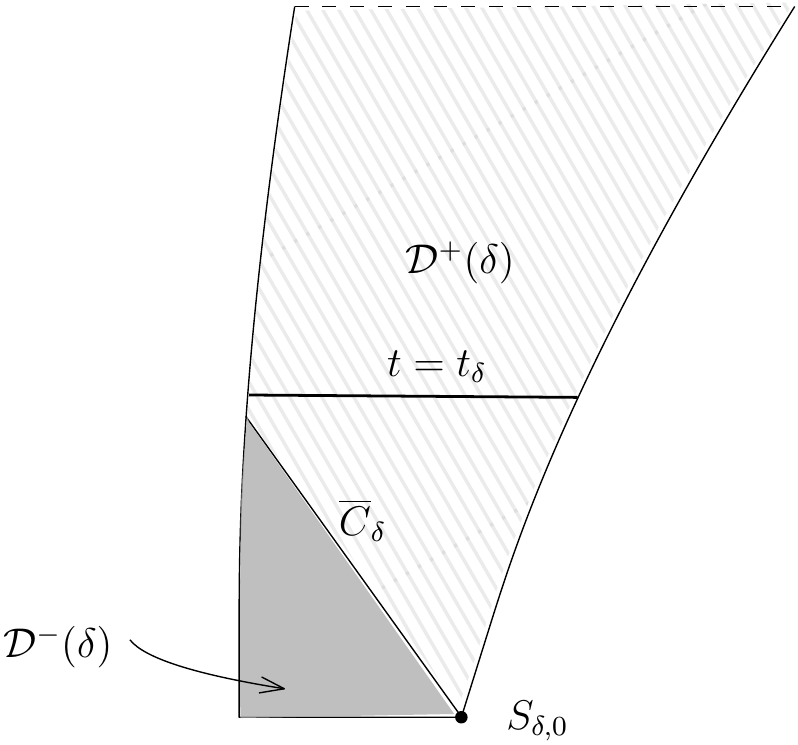}
\end{center}

Therefore, by the argument in Section \ref{section: convergence and existence}, the limit of  the effective domains $\mathcal{D}^+(\delta_l)$ also gives $\mathcal{W}$. By passing to the limit, Proposition \ref{prop: estimates on Lk}
gives the following proposition:
\begin{proposition}\label{prop: estimates on Lk final}
For the solution $(v,c)$ on $\mathcal{W}$, for any positive integer $k\geqslant 1$ and all multi-indices $\alpha$ with $k+|\alpha|\leqslant \Ninf-1$, for all $\psi \in \{\wb,w,\psi_2\}$ and for all $Z\in \{\Xh, T\}$, we have
\begin{equation}
\|L^k Z^\alpha (\psi) \|_{L^\infty(\mathcal{W})} \lesssim \varepsilon.
\end{equation}
\end{proposition}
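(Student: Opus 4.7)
\medskip

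\noindent\textbf{Proof proposal.} The plan is to pass Proposition \ref{prop: estimates on Lk} to the limit along the sequence $\delta_l = 2^{-(l+9)}$ used in Section \ref{section: convergence and existence}, using the fact that the effective domains $\mathcal{D}^+(\delta_l)$ exhaust $\mathcal{W}$. I would proceed as follows. Fix an arbitrary compact subset $\mathcal{K}\subset \mathcal{W}$. Since $\mathcal{K}$ is compact and disjoint from $\mathbf{S}_*$, there exists a uniform lower bound $t_{\mathcal{K}}>0$ such that $\mathcal{K}\subset \{t\geqslant t_{\mathcal{K}}\}$. From the computation preceding Proposition \ref{prop: estimates on Lk final}, every integral curve of $\Lb$ issued from $\Sigma_{\delta_l}$ reaches a maximal time of at most $C_1\delta_l$; hence, as soon as $C_1\delta_l<t_{\mathcal{K}}$, the region $\mathcal{K}$ lies entirely in the future of the causal hypersurface $\overline{C}_{\delta_l}$, i.e. $\mathcal{K}\subset \mathcal{D}^+(\delta_l)$.

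Next, applied to the approximating solution $(v^{(l)},c^{(l)})$ on $\mathcal{D}(\delta_l)$, Proposition \ref{prop: estimates on Lk} yields, for every $k\geqslant 1$ and every multi-index $\alpha$ with $k+|\alpha|\leqslant \Ninf-1$,
\[
\|L^{(l),k}\,(Z^{(l)})^\alpha (\psi^{(l)})\|_{L^\infty(\mathcal{K})}\;\lesssim\;\varepsilon,
\]
with an implicit constant independent of $l$. Here the null-frame vector field $L^{(l)}$ and the commutator vector fields $Z^{(l)}\in\{\Xh^{(l)},T^{(l)}\}$ are constructed from the geometry of $(v^{(l)},c^{(l)})$; by Remark \ref{remark: existence of Th} together with the convergence argument of Section \ref{section: convergence and existence}, the coefficient functions $\Th^{(l)}$ and $\kappa^{(l)}$ (as well as their derivatives up to the required order) converge locally in $C^{\Ninf-1}$ to their limits $\Th$ and $\kappa$. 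Therefore, after expressing $L^{(l)}$, $\Xh^{(l)}$, $T^{(l)}$ in the Cartesian frame $(\partial_t,\partial_1,\partial_2)$ via relations analogous to \eqref{eq: transformation between Xh T and Xr Tr}, each $L^{(l),k}(Z^{(l)})^\alpha\psi^{(l)}$ is a polynomial combination of spacetime partial derivatives of $(v^{(l)},c^{(l)},\kappa^{(l)},\Th^{(l)})$ of total order at most $\Ninf-1$.

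Now I would invoke the $H^{\Ntop}$-strong convergence of $(v^{(l)},c^{(l)})$ to $(v,c)$ on each compact set $\mathcal{K}_k$ (see Section \ref{section: convergence and existence}), combined with the Sobolev embedding $H^{\Ntop}(\mathcal{K})\hookrightarrow C^{\Ninf}(\mathcal{K})$. This gives $C^{\Ninf}$-convergence of $(v^{(l)},c^{(l)})$ on compact subsets of $\mathcal{W}$, which, together with the analogous convergence for $\Th^{(l)}$ and $\kappa^{(l)}$, implies that
\[
L^{(l),k}(Z^{(l)})^\alpha \psi^{(l)}\;\longrightarrow\;L^{k}Z^\alpha \psi\quad \text{uniformly on }\mathcal{K},
\]
for all $k,\alpha$ with $k+|\alpha|\leqslant \Ninf-1$. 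Passing the uniform bound to the limit gives $\|L^k Z^\alpha\psi\|_{L^\infty(\mathcal{K})}\lesssim \varepsilon$. Since $\mathcal{K}\subset \mathcal{W}$ was arbitrary and the constant is uniform, this gives the claimed bound on all of $\mathcal{W}$.

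The main obstacle is verifying cleanly that the derivatives $L^{(l),k}(Z^{(l)})^\alpha\psi^{(l)}$ truly converge to $L^{k}Z^\alpha\psi$, uniformly on compact sets of $\mathcal{W}$; this requires carrying the convergence through both the nonlinear coefficient functions $\Th^{(l)},\kappa^{(l)},c^{(l)}$ defining the null frame and the high-order spacetime derivatives of $\psi^{(l)}$. The key enabling facts are (i) that we need only $\Ninf-1$ total derivatives, which is comfortably below the $\Ntop$ Sobolev regularity preserved by the limiting argument, and (ii) that the bounds $\|\Th^{(l),1}+1\|_{L^\infty}+\|\Th^{(l),2}\|_{L^\infty}+\|\kappa^{(l)}/t-1\|_{L^\infty}\lesssim \varepsilon t$ from \eqref{ineq: L infty bound for limiting Thi kappa} are uniform in $l$, so no loss occurs in relating $L^{(l)}$ to the Cartesian frame. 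Once this frame-convergence is granted, the proof reduces to the exhaustion argument described above.
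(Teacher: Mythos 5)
Your proposal is correct and follows essentially the same route as the paper: the paper likewise notes that $\mathcal{D}^-(\delta)$ is confined to times $t\lesssim C_1\delta$, so the effective domains $\mathcal{D}^+(\delta_l)$ exhaust $\mathcal{W}$, and then passes the uniform bounds of Proposition \ref{prop: estimates on Lk} to the limit via the convergence argument of Section \ref{section: convergence and existence} (together with Remark \ref{remark: existence of Th} for the frame quantities). Your write-up merely makes explicit the compact-exhaustion and frame-convergence details that the paper leaves implicit.
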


{\color{black}

\subsection{The singular boundary and canonical construction of acoustical coordinates}\label{Section: app to Riemann 1}

\subsubsection{The construction using a coordinate system}
The solution $(\wb,w, \psi_2)$ or $(v,\rho)$ is now on $\mathcal{W}$. In view of the domains defined in Section \ref{section:region of convergence} and the definition of $u$ (see the discussion after Proposition \ref{prop: L infty bound for limiting solution}), there exists a $\overline{u}$ so that $0\leqslant u^*-\overline{u}\leqslant \varepsilon_0$ and for each $u \in [0,\overline{u}]$, the characteristic hypersurface $C_u$ is complete in the following sense: for all $t>0$, $C_u\cap \Sigma_t$ is a complete circle. We then define 
\[\overline{\mathcal{W}}=\bigcup_{u\in [0,\overline{u}]}C_u.\]
In particular, the functions $t$ and $u$ are defined on $\overline{\mathcal{W}}$. To construct the acoustical coordinates (not canonical at the moment),  we can take an arbitrary $\delta$ and use the constructions in Section \ref{section: definition of vartheta on Sigmadelta} to define $\vartheta$ on $\Sigma_\delta$ and then extend it to the entire $\overline{\mathcal{W}}$. Hence, the Riemann invariants $(\wb,w, \psi_2)$ can be viewed as functions defined on $(t,u,\vartheta)\in (0,t^*]\times [0,\overline{u}]\times [0,2\pi]$. We now extend $(\wb,w, \psi_2)$ to the singularity $\mathbf{S}_*$ or intuitively to $\Sigma_0$ which corresponds to the limiting initial hypersurface as $t \to 0$. We can take $(u,\vartheta)$ as the coordinate system on $\mathbf{S}_*$. We will show that the solution on $\mathbf{S}_*$ is in a much simpler form in $(u,\vartheta)$.

According to Proposition \ref{prop: L infty bound for limiting solution} and Proposition \ref{prop: estimates on Lk final}, for all $\psi \in \{\wb,w,\psi_2\}$ for all $(t,u,\vartheta)\in (0,t^*]\times [0,\overline{u}]\times [0,2\pi]$, for all multi-indices $\alpha, \beta$ and for all $k\geqslant 1$, if $|\alpha|+|\beta|+k\leqslant \Ninf-1$, we have
\[
\big|\big(L^k T^\alpha \Xh^\beta \psi\big)(t,u,\vartheta)\big|\lesssim \varepsilon.
\]
Hence, for all $t>t'>0$, if $|\alpha|+|\beta|+k\leqslant \Ninf-2$, we have
\[
\big|\big(L^k T^\alpha \Xh^\beta \psi\big)(t,u,\vartheta)-\big(L^k T^\alpha \Xh^\beta \psi\big)(t',u,\vartheta)\big|\leqslant \int_{t'}^{t}\big|\big(L^{k+1} T^\alpha \Xh^\beta \psi\big)(\tau,u,\vartheta)\big|d\tau \lesssim |t-t'|\varepsilon.
\]
Therefore, for any decreasing sequence $\{t_i\}_{i\geqslant 1}$ where $t_i\in(0,t^*)$ and $\lim_{i\rightarrow \infty} t_i=0$, the sequence of functions  $\big\{\big(\wb(t_i,u,\vartheta),w(t_i,u,\vartheta),\psi_2(t_i,u,\vartheta)\big)\big\}_{i\geqslant 1}$ is a Cauchy sequence in the space of $C^{\Ntop-2}$ functions defined for $(u,\vartheta)\in [0,\overline{u}] \times [0,2\pi]$. Therefore, by taking the limit as $i\rightarrow \infty$, we obtain the limiting data defined on the singularity:
\begin{equation}
\big(\wb(0,u,\vartheta),w(0,u,\vartheta),\psi_2(0,u,\vartheta)\big)=\lim_{t_i\rightarrow 0}\big(\wb(t_i,u,\vartheta),w(t_i,u,\vartheta),\psi_2(t_i,u,\vartheta)\big).
\end{equation}
Therefore, we have extended $ (\wb,w,\psi_2)$ to the entire region $[0,t^*]\times [0,\overline{u}]\times [0,2\pi]$ in such way that for all fixed $t\in [0,t^*]$, $\big(\wb(t ,u,\vartheta),w(t,u,\vartheta),\psi_2(t,u,\vartheta) \big)\in C^{\Ntop-2}\big([0,\overline{u}]\times [0,2\pi]\big)$. 

On the other hand, we have
\begin{align*}
&\lim_{t'\rightarrow 0}\frac{1}{t-t'}\big[\big(L^{k-1} T^\alpha \Xh^\beta \psi\big)(t,u,\vartheta)-\big(L^{k-1} T^\alpha \Xh^\beta \psi\big)(t',u,\vartheta)\big]\\
=& \lim_{t'\rightarrow 0}\frac{1}{t-t'}\int_{t'}^{t} \big(L^{k} T^\alpha \Xh^\beta \psi\big)(\tau,u,\vartheta) d\tau=\frac{1}{t}\int_{0}^{t} \big(L^{k} T^\alpha \Xh^\beta \psi\big)(\tau,u,\vartheta) d\tau.
\end{align*}
Since for $|\alpha|+|\beta|+k\leqslant \Ninf-2$, $\lim_{\tau \rightarrow 0}\big(L^{k} T^\alpha \Xh^\beta \psi\big)(\tau,u,\vartheta)$ exists,  we can further let $t\rightarrow 0$ in the above equation. This shows that $\big(\wb(t ,u,\vartheta),w(t,u,\vartheta),\psi_2(t,u,\vartheta) \big)\in C^{\Ntop-3}\big([0,t^*]\times [0,\overline{u}]\times [0,2\pi]\big)$. Hence,
\begin{equation*}
(\wb ,w ,\psi_2)\in C^{\Ntop-3} ([0,t^*]\times [0,\overline{u}]\times [0,2\pi] ) \cap C^0\big([0,t^*];C^{\Ntop-2} ( [0,\overline{u}]\times [0,2\pi])\big).
\end{equation*}

The above extension allows us to construct the canonical $\vartheta$ on $\overline{\mathcal{W}}$. In fact, for all $u\in [0,\overline{u}]$, the characteristic vector field  $L = \frac{\partial}{\partial t} +v-c\widehat{T}$ can be extended to $\mathbf{S}_*$. Therefore, it is a smooth vector field on $\overline{C_u}=C_u\cup \mathbf{S}_*$. We define $\vartheta = x_2$ on $\mathbf{S}_*$ and we extend it by $L$ to $\overline{C_u}$. This construction from the singularity defines the canonical $\vartheta$ on  $\overline{\mathcal{W}}$.

We now study the property of the limiting function $\big(\wb(0,u,\vartheta),w(0,u,\vartheta),\psi_2(0,u,\vartheta)\big)$. According to Proposition \ref{prop: L infty bound for limiting solution}, for all $\psi=w$ or $\psi _2$, we can let $t\rightarrow 0$ in  \eqref{ineq: L infty bound for limiting solution}. Since $\|T \psi \|_{L^\infty(\Sigma_t)}\lesssim \varepsilon t$, we obtain that
\[ T (\psi)(0,u,\vartheta)=\lim_{t\rightarrow 0}T(\psi)(t,u,\vartheta)=0.  \ \ \psi \in \{w,\psi_2\}\]
On the other hand, we have $T =\frac{\partial}{\partial u}$ and $X =\frac{\partial}{\partial \vartheta}$ on $\mathbf{S}_*$. Therefore, for all $u\in [0,\overline{u}]$, we obtain that, 
\[
w(u,\vartheta)=w(0,\theta)=w_r(0,\vartheta), \ \
\psi_2(u,\vartheta)=\psi_2(0,\theta)=-v^2_r(0,\vartheta).
\]
Similarly, by  Proposition \ref{prop: L infty bound for limiting solution}, we have $\|T^2 \wb \|_{L^\infty(\Sigma_t)}\lesssim \varepsilon t$. Hence,
\[ T^2(\wb)(0,u,\vartheta)=\lim_{t\rightarrow 0}T^2  (\wb)(t,u,\vartheta)=0.\]
This implies that $\partial_u \wb(u,\vartheta)=\partial_u \wb(0,\theta)=-\frac{2}{\gamma+1}$, and hence
\[\wb(u,\vartheta)= (\wb)_r(0,\vartheta)-\frac{2}{\gamma+1}u.
\]
\begin{proposition}\label{prop:data at the singularity}
On the singularity $\mathbf{S}_*$, we have the following limiting data:
\begin{equation}\label{eq:data at the singularity}
\begin{cases}
&\wb(u,\vartheta)= \wb_r(0,\vartheta)-\frac{2}{\gamma+1}u,\\
&w(u,\vartheta)=w_r(0,\vartheta),\\
&\psi_2(u,\vartheta)=-v^2_r(0,\vartheta),
\end{cases}
\end{equation}
where $\vartheta=x_2$.
\end{proposition}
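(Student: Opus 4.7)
The plan is to establish the proposition by combining the extension of the solution to the singular boundary $\mathbf{S}_*$ (as a function of the acoustical coordinates $(u,\vartheta)$) with the asymptotic vanishing of certain $T$-derivatives inherited from the \emph{a priori} bounds of Proposition \ref{prop: L infty bound for limiting solution}. Since the discussion immediately preceding the proposition already shows that $(\wb,w,\psi_2)$ extends to a function in $C^{\Ntop-3}([0,t^*]\times[0,\overline{u}]\times[0,2\pi])$ with $T=\partial_u$ and $X=\partial_\vartheta$ on $\mathbf{S}_*$, it remains to identify the initial profiles at $t=0$.

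First I would pass to the limit $t\to 0$ in the pointwise bounds
\[
\|T(w)\|_{L^\infty(\Sigma_t)}+\|T(\psi_2)\|_{L^\infty(\Sigma_t)}\lesssim \varepsilon t
\]
from Proposition \ref{prop: L infty bound for limiting solution} to conclude that $T(w)(0,u,\vartheta)=T(\psi_2)(0,u,\vartheta)=0$ for all $(u,\vartheta)\in[0,\overline{u}]\times[0,2\pi]$. Because $T=\partial_u$ on $\mathbf{S}_*$, this immediately yields that $w(u,\vartheta)$ and $\psi_2(u,\vartheta)$ are independent of $u$. The boundary values are fixed by the continuity of the solution across $C_0$ together with the identification of $\vartheta$ with $x_2$ on $\mathbf{S}_*\subset\Sigma_0$: at $u=0$ we have $w(0,\vartheta)=w_r(0,\vartheta)$ and $\psi_2(0,\vartheta)=-v_r^2(0,\vartheta)$. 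This yields the last two equations of \eqref{eq:data at the singularity}.

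For $\wb$, the analogous limiting argument applies at the level of $T^2\wb$. Specifically, I would apply Proposition \ref{prop: L infty bound for limiting solution} with the multi-index corresponding to $\Zr^\alpha=\Tr$ acting on $\Tr(\wb)$ (which is not the excluded case $\Zr^\alpha(\wb)=\Tr(\wb)$), giving $\|T^2(\wb)\|_{L^\infty(\Sigma_t)}\lesssim \varepsilon t$, so that $T^2(\wb)(0,u,\vartheta)=0$. Combined with the limit of \eqref{ineq: L infty bound for limiting solution wb}, which forces $T(\wb)(0,u,\vartheta)=-\tfrac{2}{\gamma+1}$, we conclude that $\partial_u\wb(u,\vartheta)\equiv -\tfrac{2}{\gamma+1}$ along $\mathbf{S}_*$. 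Integrating in $u$ and using the boundary value $\wb(0,\vartheta)=\wb_r(0,\vartheta)$ from continuity across $C_0$ gives the desired formula.

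The main (mild) subtlety is ensuring that the commutation between the limit $t\to 0$ and the $T$-derivatives is justified; this is handled by the uniform higher regularity from Proposition \ref{prop: estimates on Lk final} and Proposition \ref{prop: L infty bound for limiting solution}, which together with the Cauchy-sequence argument in $C^{\Ntop-2}$ presented before the proposition give enough smoothness to evaluate $T(w),T(\psi_2),T(\wb)$ and $T^2(\wb)$ on $\mathbf{S}_*$ as genuine pointwise limits. Once this is in place the proof is a direct integration in $u$ with boundary condition supplied by continuity across $C_0$, consistent with the one-dimensional asymptotic profile \eqref{eq:asymptotic-Riemann-invariants}.
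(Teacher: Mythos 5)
Your proposal is correct and follows essentially the same route as the paper: extend $(\wb,w,\psi_2)$ to $\mathbf{S}_*$ via the $C^{\Ntop-2}$ Cauchy-sequence argument, let $t\to 0$ in the pointwise bounds of Proposition \ref{prop: L infty bound for limiting solution} to conclude $T(w)=T(\psi_2)=0$ and $T\wb\equiv -\tfrac{2}{\gamma+1}$ (equivalently $T^2\wb=0$) on $\mathbf{S}_*$, then integrate in $u$ using the boundary values on $C_0$. The only cosmetic difference is that the paper deduces $\partial_u\wb$ is $u$-independent from $T^2\wb=0$ and then identifies its value, while you identify $T\wb(0,u,\vartheta)=-\tfrac{2}{\gamma+1}$ directly from the limit of \eqref{ineq: L infty bound for limiting solution wb}, which makes the $T^2\wb$ step technically redundant; you also correctly (and silently) normalize that bound to the constant $\tfrac{2}{\gamma+1}$ rather than the $\tfrac{2}{\gamma-1}$ appearing in the displayed formula.
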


\begin{remark}
In the above construction, we use a given acoustical coordinate system $(t,u,\vartheta)$ to describe the geometry (and the construction) of the singularity $\mathbf{S}_*$. In the rest of this subsection, we will provide a geometric construction and this will lead to a canonical acoustical coordinate system on the rarefaction wave region.
\end{remark}

\subsubsection{Geometric constructions}\label{section:geometric constructions}
 We consider the set $\mathscr{L}$ of past-pointed null geodesics in the rarefaction wave region  $\overline{\mathcal{W}}$ that end at the singularity $\mathbf{S}_*$.  We remark that such geodesics always exist: given any point $(t_0,u_0,\vartheta_0) \in \overline{\mathcal{W}}$, the curve $\gamma_{u_0,\vartheta_0}:t\mapsto (t_0-t,u_0,\vartheta_0)$ is such a null geodesic. 

We recall that 
\begin{align*}
	g
	&=-\mu (dt \otimes du+du \otimes dt)+\kappa^2du\otimes du+\slashed{g}(d\vartheta+\Xi du)\otimes(d\vartheta+\Xi du)
\end{align*}
where $\mu=c\kappa$. For  the canonical choice of $\vartheta$, since $\Xi$ vanishes at $\mathbf{S}_*$, we can rewrite $\Xi$ as
\[\Xi=\mu\cdot \widehat{\Xi}.\]
To study the null geodesics in $\mathscr{L}$, we follow the ideas of Christodoulou in the last section of \cite{ChristodoulouShockFormation} (more precisely, we follow the approach in \cite{ChristodoulouMiao} which studies the classical rather than the relativistic Euler equations). Since conformal changes of metrics neither affect the causality nor the set of null geodesics, we study $\mathscr{L}$ under the metric $\mu^{-1}g$ on the cotangent bundle of $\overline{\mathcal{W}}$ using the Hamiltonian formulation for geodesics.

Let $(t,u,\vartheta,p_t,p_u,p_\vartheta)$ be the canonical coordinate system on the cotangent bundle. The corresponding Hamiltonian function for geodesics in terms of $\mu^{-1}g$ can be computed as follows:
\begin{align*}
	H(t,u,\vartheta,p_t,p_u,p_\vartheta)&=\frac{1}{2}\mu \big(g^{-1}\big)^{\alpha\beta}p_\alpha p_\beta
	=-p_tp_u+\mu\left(-\frac{1}{2}c^{-2} p_t^2+\frac{1}{2}  \slashed{g}^{-1}p_\theta^2+\Xih p_tp_\vartheta\right).
\end{align*}
The canonical equation associated to the above Hamiltonian is given by
\begin{equation}\label{eq:Hamiltonian for geodesics}
	\begin{cases}
		\frac{dt}{d\tau}&=-p_u+\mu\left(- c^{-2}p_t+\Xih p_\vartheta\right),\\	
		\frac{du}{d\tau}&=-p_t,\\
		\frac{d\vartheta}{d\tau}&=  \mu\left(\slashed{g}^{-1}p_\vartheta +\Xih p_t\right),\\
		\frac{dp_t}{d\tau}&= -\frac{\partial \mu}{\partial t}\left(-\frac{1}{2}c^{-2} p_t^2+\frac{1}{2}  \slashed{g}^{-1}p_\vartheta^2+\Xih p_tp_\vartheta\right)-\mu\left(c^{-3} \frac{\partial c}{\partial t} p_t^2+\frac{1}{2}  \frac{\partial (\slashed{g}^{-1})}{\partial t}p_\vartheta^2+\frac{\partial\Xih}{\partial t} p_tp_\vartheta\right),\\
		\frac{dp_u}{d\tau}&= -\frac{\partial \mu}{\partial u}\left(-\frac{1}{2}c^{-2} p_t^2+\frac{1}{2}  \slashed{g}^{-1}p_\vartheta^2+\Xih p_tp_\vartheta\right)-\mu\left(c^{-3} \frac{\partial c}{\partial u} p_t^2+\frac{1}{2}  \frac{\partial (\slashed{g}^{-1})}{\partial u}p_\vartheta^2+\frac{\partial\Xih}{\partial u} p_tp_\vartheta\right),\\
		\frac{dp_\vartheta}{d\tau}&= -\frac{\partial \mu}{\partial \vartheta}\left(-\frac{1}{2}c^{-2} p_t^2+\frac{1}{2}  \slashed{g}^{-1}p_\vartheta^2+\Xih p_tp_\vartheta\right)-\mu\left(c^{-3} \frac{\partial c}{\partial \vartheta} p_t^2+\frac{1}{2}  \frac{\partial (\slashed{g}^{-1})}{\partial \vartheta}p_\vartheta^2+\frac{\partial\Xih}{\partial \vartheta} p_tp_\vartheta\right).
	\end{cases}
\end{equation}
For $\gamma \in \mathscr{L}$, we will regard it as a future-pointed null geodesics starting at a point $q_0\in \mathbf{S}_*$ or equivalently, we assume that $t(0)=0$. At the initial point $q_0\in \mathbf{S}_*$, in view of the expression of the Hamiltonian $H$, the null cone degenerates to two hyperplanes defined by
\[(p_t)_0 (p_u)_0=0,\]
where $(p_t)_0=p_t(0), (p_u)_0=p_u(0)$ and $(p_\vartheta)_0=p_\vartheta(0)$. Following \cite{ChristodoulouShockFormation} or \cite{ChristodoulouMiao}, we study the following trichotomy of null geodesics starting at $q_0\in \mathbf{S}_*$ (up to a scaling constant for the parametrization):
\begin{itemize}
	\item Outgoing null geodesics: $(p_t)_0=0, (p_u)_0=-1$;
	\item Incoming null geodesics: $(p_t)_0=-1, (p_u)=0$;
	\item Other null geodesics: $(p_t)_0=(p_u)_0=0$ and $|(p_\vartheta)_0|=1$.
\end{itemize}
The inverse density $\mu$ and all its $u$ or $\vartheta$ derivatives vanish at $\mathbf{S}_*$. This makes the geometry of the singular boundary in the rarefaction wave region very special. In contrast to the singular boundary studied in \cite{ChristodoulouShockFormation} or \cite{ChristodoulouMiao}, we have 
\begin{proposition}\label{prop:geometric constructions}
	All curves in $\mathscr{L}$ are outgoing null geodesics (up to the parametrization).
\end{proposition}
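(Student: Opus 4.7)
The plan is to analyze the Hamiltonian system \eqref{eq:Hamiltonian for geodesics} at the singular boundary and show that only the outgoing class actually connects $\mathbf{S}_*$ to the interior of $\overline{\mathcal{W}}$. The Hamiltonian constraint $H=0$ at any $q_0\in\mathbf{S}_*$, where $\mu(q_0)=0$, reduces to $(p_t)_0(p_u)_0=0$, which is precisely the source of the trichotomy.

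I would first dispense with the outgoing class. For initial data $(p_t)_0=0,\,(p_u)_0=-1$, equation \eqref{eq:Hamiltonian for geodesics} gives $\dot t|_{0}=1,\,\dot u|_{0}=0,\,\dot \vartheta|_{0}=0$, so the initial tangent vector at $q_0$ is $\partial_t$. Since $L=\partial_t$ in acoustical coordinates, uniqueness of the geodesic flow identifies the trajectory with the $L$-integral curve $\gamma_{u_0,\vartheta_0}(t)=(t,u_0,\vartheta_0)$. These curves generate $C_{u_0}$ and sweep out the entire interior of $\overline{\mathcal{W}}$, so every interior point is reached from $\mathbf{S}_*$ by an outgoing null geodesic; such curves obviously lie in $\mathscr{L}$.

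The heart of the argument is to show that the incoming and other classes never leave $\mathbf{S}_*$. Both satisfy $(p_u)_0=0$ and $t(0)=0$, so the claim reduces to the statement that the submanifold $\mathscr{N}:=\{t=0,\ p_u=0\}$ of the cotangent bundle is invariant under the Hamiltonian flow. Setting $Q:=-\tfrac{1}{2}c^{-2}p_t^2+\tfrac{1}{2}\slashed{g}^{-1}p_\vartheta^2+\Xih\,p_tp_\vartheta$, restriction of \eqref{eq:Hamiltonian for geodesics} to $\mathscr{N}$ gives
\begin{align*}
	\dot t\big|_{\mathscr{N}} &= -p_u+\mu\big(-c^{-2}p_t+\Xih\,p_\vartheta\big)\Big|_{\mathscr{N}}=0,\\
	\dot p_u\big|_{\mathscr{N}} &= -(\partial_u\mu)\, Q(p_t,p_\vartheta)-\mu\,\partial_uQ\Big|_{\mathscr{N}}=0.
\end{align*}
The first identity uses $p_u=0$ together with $\mu|_{t=0}=0$; the second uses, in addition, $\partial_u\mu|_{\mathbf{S}_*}=0$, which follows from $\mu=c\kappa$ and the vanishing of all $\partial/\partial u$ derivatives of $\kappa$ on $\mathbf{S}_*$ recorded in Remark \ref{remark:singular-boundary}. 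Uniqueness for the Hamiltonian ODE then forces any null geodesic in the incoming or other class to remain on $\mathbf{S}_*$ for all affine parameter, hence it cannot reach a point where $t>0$ and cannot belong to $\mathscr{L}$.

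Combining the two analyses shows that every element of $\mathscr{L}$ comes from the outgoing class, which proves the proposition. The main technical point is the invariance of $\mathscr{N}$, which rests on the higher-order degeneracy of $\mu$ along the spatial directions at $\mathbf{S}_*$; this is the extrinsic geometric feature that distinguishes the present rarefaction singular boundary from the codimension-one shock singularities studied in \cite{ChristodoulouShockFormation,ChristodoulouMiao}, and it is precisely what causes the trichotomy of outgoing, incoming, and other null geodesics to collapse to a single class here.
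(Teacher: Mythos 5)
Your proof is correct and follows essentially the same route as the paper: both reduce the question to the observation that the Hamiltonian flow preserves the set $\{t=0,\,p_u=0\}$, which hinges on the degeneracy $\mu|_{\mathbf{S}_*}=\partial_u\mu|_{\mathbf{S}_*}=0$. The only cosmetic difference is that the paper establishes this by exhibiting the explicit ansatz solution $t\equiv 0$, $\vartheta\equiv\vartheta_0$, $p_u\equiv 0$, $p_\vartheta\equiv (p_\vartheta)_0$ (which additionally needs $\partial_\vartheta\mu|_{\mathbf{S}_*}=0$, also true) and then invokes ODE uniqueness, whereas you phrase it as an invariance argument for the codimension-two submanifold $\mathscr{N}$ by verifying only the two defining equations $\dot t|_\mathscr{N}=0$ and $\dot p_u|_\mathscr{N}=0$; these are the same computation.
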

\begin{proof}
	Let $\gamma$ be an ingoing or other null geodesics. By definition, we have $p_u=0$ at $\tau =0$. We consider the following ansatz for solutions of the Hamiltonian systems \eqref{eq:Hamiltonian for geodesics}:
	\[ t(\tau)\equiv 0, \ \vartheta(\tau)\equiv \vartheta_0, \ p_u(\tau)\equiv 0, \ p_\vartheta(\tau)\equiv (p_\vartheta)_0,\]
	and the functions $u(\tau)$ and $p_t(\tau)$ may depend on $\tau$.
	
	Since $t\equiv 0$, we have $\mu\equiv 0, \frac{\partial \mu}{\partial u}\equiv 0$ and $\frac{\partial \mu}{\partial \vartheta}\equiv 0$. Therefore, except for the second and the fourth equations, the other equations in \eqref{eq:Hamiltonian for geodesics} are satisfied automatically. To solve \eqref{eq:Hamiltonian for geodesics}, it suffices to consider the following system
	\begin{equation}\label{eq:Hamiltonian for geodesics reduced}
		\begin{cases}
			\frac{du}{d\tau}&=-p_t,\\
			\frac{dp_t}{d\tau}&= \frac{1}{2}\frac{\partial \mu}{\partial t}(0, u(\tau),\vartheta_0)\left(c^{-2}(0,u(\tau),\vartheta_0) p_t^2-2\Xih(u(\tau),\vartheta_0)(p_\vartheta)_0 p_t-  (p_\vartheta)_0^2\right).
		\end{cases}
	\end{equation}
	This is a closed system in $u$ and $p_t$. Therefore, by the classical uniqueness results for ordinary differential equations, for any initial data of \eqref{eq:Hamiltonian for geodesics} in the following form:
	\[ \big(t, u, \vartheta, p_t, p_u , p_\vartheta\big)\big|_{\tau=0} =\big(0,u_0,\vartheta_0,(p_t)_0,0,(p_\vartheta)_0\big),\]
	the corresponding solution to \eqref{eq:Hamiltonian for geodesics} will be of the form
	\[ t(\tau)\equiv 0, \  u(\tau), \ \vartheta(\tau)\equiv \vartheta_0, \ \ p_t(\tau), \ \ p_u(\tau)\equiv 0, \ p_\vartheta(\tau)\equiv (p_\vartheta)_0.\]
	In particular, we have $t(\tau)\equiv 0$. Therefore, the null geodesic $\gamma$ stays completely in the singularity $\mathbf{S}_*$. On the other hand, by the definition of $\mathscr{L}$, the curve $\gamma$ indeed passes through the rarefaction wave region before it ends at $\mathbf{S}_*$. This leads to a contradiction. We then conclude that there are no incoming or other null geodesics in $\mathscr{L}$.
\end{proof}

For any $\gamma \in \mathscr{L}$, it is outgoing  and we use the parametrization that $(p_t)_0=0, (p_u)_0=-1$. By the first three equations of \eqref{eq:Hamiltonian for geodesics}, we have
\begin{equation}\label{eq:outcoming 1st order}
	\frac{dt}{d\tau}\big|_{\tau=0}=1, \ 	\frac{du}{d\tau}\big|_{\tau=0}=0,\ 
	\frac{d\vartheta}{d\tau}\big|_{\tau=0} =  0.
\end{equation}
Therefore, the last three equations of \eqref{eq:Hamiltonian for geodesics} lead to
\[
\frac{dp_t}{d\tau}\big|_{\tau=0}= -\frac{1}{2}\frac{\partial \mu}{\partial t}  p_\vartheta^2, \ 
\frac{dp_u}{d\tau}\big|_{\tau=0}= 0, \ 
\frac{dp_\vartheta}{d\tau}\big|_{\tau=0}= 0.
\]
By \eqref{eq:outcoming 1st order}, we have
\[\frac{d\mu}{d\tau}\big|_{\tau=0}=\frac{\partial\mu}{\partial t}(q_*).\]
Thus, by differentiating the first three equations of \eqref{eq:Hamiltonian for geodesics}, we have
\[
\frac{d^2t}{d\tau^2}\big|_{\tau=0}=\frac{\partial \mu}{\partial t} \Xih p_\vartheta, \ 	\frac{d^2 u}{d\tau^2}\big|_{\tau=0}=\frac{1}{2}\frac{\partial \mu}{\partial t}  p_\vartheta^2,\ 
\frac{d^2\vartheta}{d\tau^2}\big|_{\tau=0} =  \frac{\partial \mu}{\partial t}  p_\vartheta.
\]
This shows that, as $\tau \rightarrow 0$, we have
\begin{equation}\label{eq: outgoing null geodesics} 
	\begin{cases}
		&t=\tau+\frac{1}{2}\frac{\partial \mu}{\partial t} \Xih p_\vartheta \cdot \tau^2 +O(\tau^3), \\
		&u=u_0+\frac{1}{4}\frac{\partial \mu}{\partial t} p^2_\vartheta \cdot \tau^2 +O(\tau^3), \\
		&\theta=\vartheta_0+\frac{1}{2}\frac{\partial \mu}{\partial t}  p_\vartheta \cdot \tau^2 +O(\tau^3).
	\end{cases}
\end{equation}	
In particular, we have
\[\gamma'(\tau)=\frac{\partial}{\partial t}+\tau\big[\frac{1}{2}\frac{\partial \mu}{\partial t} p^2_\vartheta \frac{\partial}{\partial u}+\frac{\partial \mu}{\partial t}  p_\vartheta \frac{\partial}{\partial \vartheta}\big]+O(\tau^2).\]
This shows that for $\gamma \in \mathscr{L}$ ending at $q_0\in \mathbf{S}_*$, its tangent vector  converges asymptotically to the null generator of $C_{u_0}$ at $q_0$. 

We remark that the choice of the acoustical coordinate system is not unique. The above construction allows us to define the singularity $\mathbf{S}_*$ in a geometric way which is independent of the choice of the acoustical coordinate system $(t,u,\vartheta)$. It will eventually lead to a canonical construction of the acoustical coordinates in the rarefaction region. 
\begin{itemize}
\item The canonical construction of the singular set $\mathbf{S}_*$.

For two curves $\gamma_1, \gamma_2\in \mathscr{L}$, if $\gamma_1(\tau)-\gamma_2(\tau)\rightarrow 0$ and $\gamma'_1(\tau)-\gamma'_2(\tau)=O(\tau)$ as $\tau\rightarrow 0$, we say that $\gamma_1$ and $\gamma_2$ are equivalent. This defines an equivalent relation $\sim$ on $\mathscr{L}$. We define $\mathbf{S}_*$ as the set of  equivalent classes, i.e., $\mathbf{S}_*=\mathscr{L}/\sim$. Therefore, for each equivalent class, we may choose a unique curve $\gamma_{u_0,\vartheta_0}:t\mapsto (t_0-t,u_0,\vartheta_0)$ as the representative of this class $[\gamma_{u_0,\vartheta_0}] \in \mathbf{S}_*$. 

\item The trace of $\wb,w$ and $\psi_2$ on $\mathbf{S}_*$.

For all $\psi \in \{w,\wb, \psi_2\}$, if $\gamma_1 \sim \gamma_2$ are null geodesics in $\mathcal{L}$, then 
\[\lim_{\tau\rightarrow 0}\psi\big|_{\gamma_1}(\tau)=\lim_{\tau\rightarrow 0}\psi\big|_{\gamma_2}(\tau).\]
Therefore, for any $q=[\gamma]\in \mathbf{S_*}$, we can define the limiting value of $\psi \in \{w,\wb, \psi_2\}$ as
\[\psi(q)=\lim_{\tau\rightarrow 0}\psi\big|_{\gamma}(\tau).\]
This gives a coordinate-independent definition of $w,\wb$ and $\psi_2$ on $\mathbf{S}_*$.

\item The canonical definition of $u$ and $\vartheta$ on $\mathbf{S}_*$.

For any point $q=[\gamma]\in \mathbf{S_*}$, the limiting point $\lim_{\tau\rightarrow 0}\gamma(\tau)$ is a real point in the physical spacetime equipped with the Cartesian coordinates $(t,x_1,x_2)$. In fact, $\lim_{\tau\rightarrow 0}\gamma(\tau)=(0,0,x_2)$. We (re)define the canonical coordinate function $\vartheta$ on $\mathbf{S}_*$ as 
\[\vartheta(q)=x_2.\]

Finally, since we can define $\wb$ on $\mathbf{S}_*$, in view of \eqref{eq:data at the singularity}, we (re)define the canonical coordinate function $u$ on $\mathbf{S}_*$ as
\[u(q)= \frac{\gamma+1}{2}\big(\wb_r(0,\vartheta(q))-\wb(q)\big).\]

\end{itemize}

\subsubsection{Constructions of characteristic hypersurfaces from singularity and canonical acoustical coordinates}\label{section: construction of null hypersurfaces}

Let $\Gamma_f=\big\{(u,\vartheta)=(f(\vartheta),\vartheta)\big|\vartheta\in [0,2\pi]\big\}$ be a smooth graph in $\mathbf{S}_*$. We will construct a characteristic hypersurface emanated from $\Gamma_f$. Indeed, this requires the initial data of the Hamiltonian system to be
\[p_tdt+p_u du +p_\vartheta d\vartheta\big|_{\tau=0}=d(-u+f(\vartheta))=-du+f'(\vartheta)d\vartheta,\]
i.e., the initial data must be
\[(t,u,\vartheta, p_t,p_u,p_\vartheta)\big|_{\tau=0}=\big(0,f(\alpha),\alpha, 0,-1,f'(\alpha)\big), \ \ \alpha\in [0,2\pi].\]

For a given $\alpha\in [0,\pi]$,  the solution provides a specific outgoing null geodesic $\{L_{\alpha}(\tau)= (t(\tau),u(\tau),\vartheta(\tau))\}$ in $\mathcal{L}$. We then define 
\[C_{\Gamma_f}=\bigcup_{\alpha \in [0,2\pi]} L_\alpha.\]
This is the characteristic hypersurface emanated from $\Gamma_f$.

\begin{remark}
	All rarefaction wave fronts emanated from the singularity $\mathbf{S}_*$ must be of the form $C_{\Gamma_f}$, because they are ruled by the outgoing null geodesics.
\end{remark}

\begin{itemize}
	\item To illustrate this construction, we assume that  $f(\vartheta)\equiv u_0$ where $u_0\in [0,u^*)$. Therefore, $\Gamma_f=\big\{(u_0,\vartheta)\big|\vartheta\in [0,2\pi]\big\}$. 
	 For a given $\alpha\in [0,\pi]$, the initial data of \eqref{eq:Hamiltonian for geodesics} is given by 
	\[(t,u,\vartheta, p_t,p_u,p_\vartheta)\big|_{\tau=0}=\big(0,u_0,\alpha, 0,-1, 0\big).\] 
	It is straightforward to check that the corresponding solution of \eqref{eq:Hamiltonian for geodesics} is
	\[\big(t(\tau),u(\tau),\vartheta(\tau), p_t(\tau),p_u(\tau),p_\vartheta(\tau)\big)=\big(\tau,u_0,\alpha, 0,-1, 0\big).\] 
	This is the geodesic defined by the vector field $L$. Hence, $C_{\Gamma_f}=C_{u_0}$.
	
	\item Take a smooth function $h(\vartheta)$ so that $h\neq 0=f'(\vartheta)$. For a given $\alpha\in [0,2\pi]$, we consider the following the initial data of \eqref{eq:Hamiltonian for geodesics}:
	\[(t,u,\vartheta, p_t,p_u,p_\vartheta)\big|_{\tau=0}=\big(0,u_0,\alpha, 0,-1, h(\alpha)\big).\] 
	By \eqref{eq: outgoing null geodesics}, for sufficiently small $\tau$, the corresponding solution of \eqref{eq:Hamiltonian for geodesics} satisfies
	\[u(\tau)=u_0+\frac{1}{4}\frac{\partial \mu}{\partial t} h(\alpha)^2\cdot \tau^2 +O(\tau^3).\]
	The quadratic term in the above expression encodes the causality of the system. Indeed, if $h(\alpha)\neq 0$, for sufficiently small $\tau$, we have $u(\tau)>u_0$. This means that the curve lies in the future of $C_{u_0}$. In particular, the union of the outgoing null geodesics (before the caustics form) defined by the above set of initial data is generically a time-like hypersurface with respect to the acoustical metric.
\end{itemize}

We can use the above construction to extend $u$ and $\vartheta$ form $\mathbf{S}_*$ to the entire rarefaction wave region (by requring $\vartheta$ is constant along each outgoing null geodesic). This provides a canonical acoustical coordinate system on the rarefaction wave region.

}

\subsection{Applications to the Riemann problem}\label{Section: app to Riemann 2}
We now consider the Cauchy problem with the $\varepsilon$-perturbed Riemann data, see Definition \ref{def:data}. We have already proved that,  for the data $(v_r,c_r)$ given on $x_1>0$, we can construct a family of rarefaction waves connecting to it. It corresponds to {\bf the front rarefaction waves} and is depicted on the right of the following picture.
\begin{center}
\includegraphics[width=3.5in]{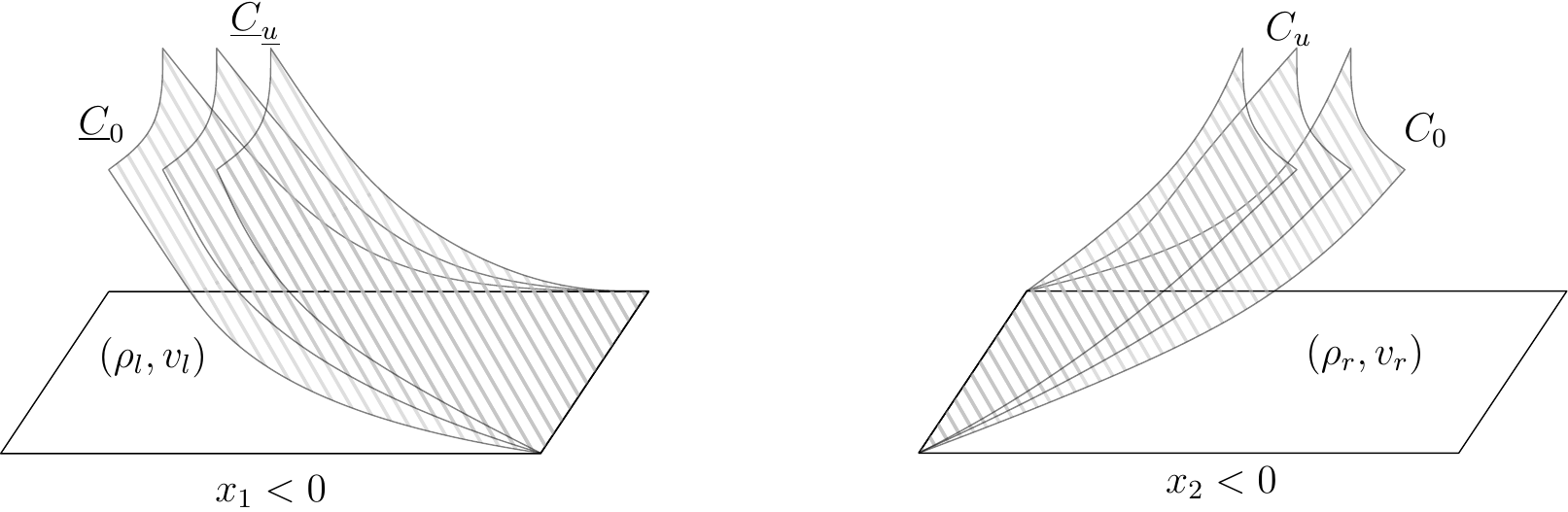}
\end{center}
For the data $(v_l,c_l)$ given on $x_1<0$, we can construct also a family of rarefaction waves connecting to it. This is {\bf the back rarefaction waves} and is depicted on the left of the above picture. These two families of rarefactions are associated to different families of characteristic hypersurfaces. We use $\Cb_{\ub}$ and $C_u$ to denote the back rarefaction wave fronts and  the front rarefaction wave fronts respectively. By definition, they are the characteristic hypersurfaces for the back  and front rarefaction waves respectively. The families of the front and back rarefaction wave fronts are universal. It is important to observe that certain part of them are not physical (with respect to the given data) so that they will not appear in the solution for the Cauchy problem. To illustrate this point, we consider the case of front rarefaction waves.
For a given front rarefaction wave front $C_u$, it cuts the singularity or equivalently the limiting surface $\mathbf{S}_*$ at $S_{0,u}$. We have shown that we can at least open up the front rarefaction waves up to $u=\overline{u}$, i.e., the solution exists for $u\in [0,\overline{u}]$. This is depicted in the right part of the following picture.
\begin{center}
\includegraphics[width=3.5in]{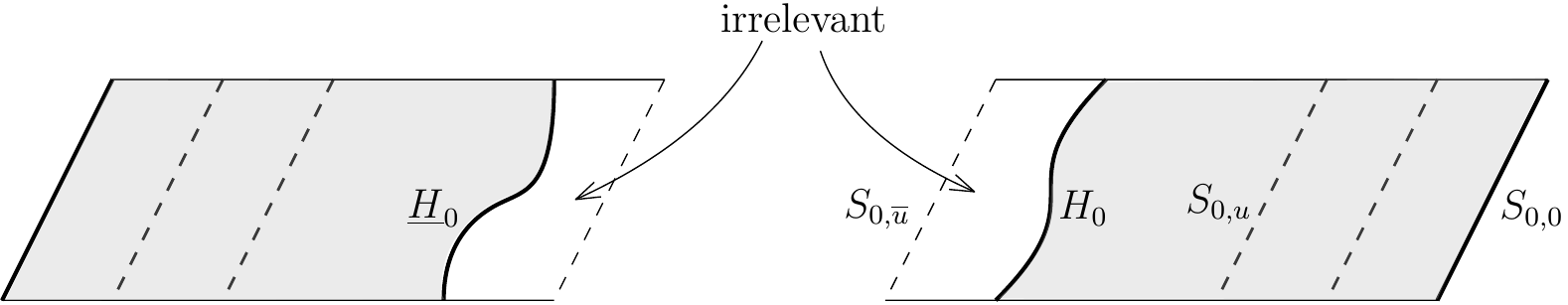}
\end{center}

We will show that there exists a curve $H_0$ between $S_{0,0}$ and $S_{0,\overline{u}}$ so that the region bounded by $H_0$ and $S_{0,\overline{u}}$ is not relevant to the perturbed Riemann problem. Let $H$ be the union of all the null geodesics emanated from $H_0$ along the $L$ direction. The  physically relevant front rarefaction wave region is the spacetime domain bounding $H$ and $C_0$. We also have a similar picture for back rarefaction waves.

We now define the inner boundary $H$ for the front rarefaction waves. According to \eqref{eq:data at the singularity}, we have $\wb(0,u,\vartheta)= \wb_r(0,\vartheta)-\frac{2}{\gamma+1}u$ at the singularity. In particular, $\wb$ decreases as $u$ increases. The curve $H_0$ consists of those points where $\wb$ decreases to $\wb_l$, $\wb_l$ being the Riemann invariants defined by the data on $x_1<0$, i.e., $\wb=\wb_l$. More precisely, we define
\begin{equation}\label{eq:H_0}
	H_0:=\big\{(u,\vartheta)\big|u=\frac{\gamma+1}{2}\big(\wb_r(0,\vartheta)-\wb_l(0,\vartheta)\big)\big\},
\end{equation}
where $\vartheta=x_2$.   In fact, for the one dimensional Riemann data $(\mathring{v}_l,\mathring{c}_l)$ and $(\mathring{v}_r,\mathring{c}_r)$ which leads to two families of centered rarefaction waves, there exists a $u'$ so that $u'=\frac{\gamma+1}{2}\big(\mathring{\wb}_r-\mathring{\wb}_l\big)$; see Section \ref{section: calculations for 1D Riemann}. Since the solution on $\mathbf{S}_*$ is also  $O(\varepsilon)$-close to the one dimensional picture, this shows the existence of $H_0$.

{\color{black}The rarefaction front $H$ is defined as the null hypersurface starting from $H_0$ with respect to the acoustical metric, see previous sections for the precise construction using the Hamiltonian formulation.}
\begin{center}
\includegraphics[width=2.5in]{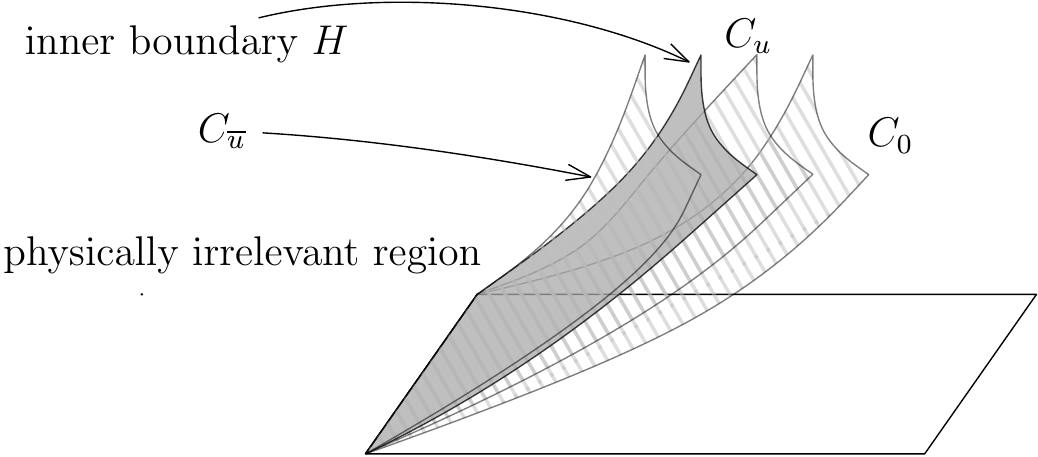}
\end{center}

For the back rarefaction waves, we can define $\Hb$ in a similar manner. The initial curve $\Hb_0$ is given by
\[\Hb_0:=\big\{(\ub,\vartheta)\big|\ub=\frac{\gamma+1}{2}\big(w_l(0,\vartheta)-w_r(0,\vartheta)\big)\big\},\]
where $\ub$ is the acoustical functions defined in the back rarefaction wave region. The $\Hb$ is defined as the null hypersurface emanated from $\Hb_0$. 

The above constructions give two characteristic hypersurfaces $H$ and $\Hb$. They are all emanated from the singularity
\[\mathbf{S}_*:=\big\{(t,x_1,x_2)\big|t=0,x_1=0\big\}\]
It remains to construct the solution to the Euler equations in the regions which is bounded by $H$, $\Hb$ and $\Sigma_{t^*}$. These three hypersurfaces are depicted in grey colors in the following picture.
\begin{center}
\includegraphics[width=2.2in]{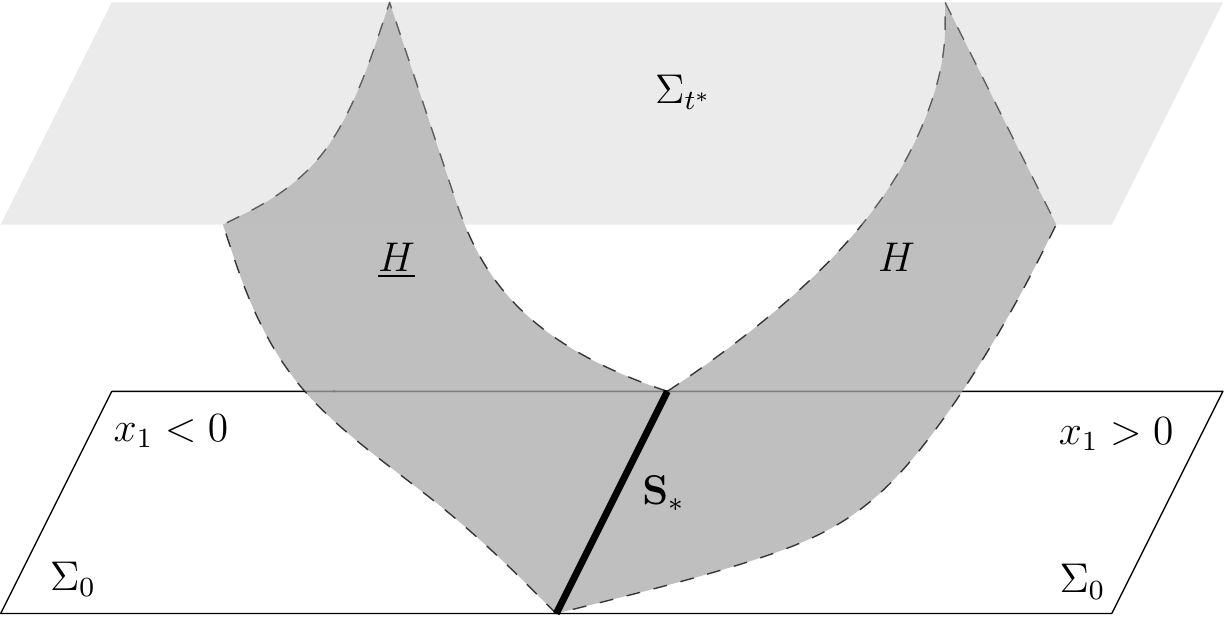}
\end{center}
The solutions have already been constructed on the right of $H$ and on the left of $\Hb$. Since $H$ is smooth in the front rarefaction wave region, the restriction of the solutions already constructed in this region gives $C^{\Ntop-3}$ data $(\wb^{(H)},w^{(H)},\psi_2^{(H)})$ on $H$ up to its back boundary $H_0$; similarly, the restriction of the solutions  constructed in the back rarefaction wave region gives $C^{\Ntop-2}$ data $(\wb^{(\Hb)},w^{(\Hb)},\psi_2^{(\Hb)})$ on $\Hb$ up to its back boundary $\Hb_0$. By the construction of $H_0$ and $\Hb_0$, we have 
\[\begin{cases}
&\wb^{(\Hb)}=\wb^{(H)},\\
&w^{(\Hb)}=w^{(H)},
\end{cases} \ \ \ \ \text{on}~\mathbf{S}_{*}=H\cap \Hb.
\]
We recall the irrotational condition \eqref{irrotational condition} ${\rm curl}(v)\stackrel{\mathscr{D}'}{=}0$ which holds in the distributional sense on $\Sigma_0$. Therefore, for all test function $\varphi(x_1,x_2)\in \mathscr{D}(\Sigma_0)$, we have
\[\langle {\rm curl}(v),\varphi\rangle_{\mathscr{D}'\times\mathscr{D}}=0 \ \ \Rightarrow \ \ \langle v_1,\partial_2\varphi\rangle_{\mathscr{D}'\times\mathscr{D}}=\langle v_2,\partial_1\varphi\rangle_{\mathscr{D}'\times\mathscr{D}}.\]
Therefore,
\[\int_{x_1\leqslant 0} (v_l)_1 \partial_2\varphi dx_1dx_2+\int_{x_1\geqslant 0} (v_r)_1 \partial_2\varphi dx_1dx_2=\int_{x_1\leqslant 0} (v_l)_2 \partial_1\varphi dx_1dx_2+\int_{x_1\geqslant 0} (v_r)_2 \partial_1\varphi dx_1dx_2.\]
Since $v_l$ and $v_r$ are smooth on $x_1<0$ and $x_1>0$ respectively, we can integrate by parts to remove the derivatives on the test function $\varphi$. Therefore, by the facts that $v_l$ and $v_r$ are irrotational on $x_1<0$ and $x_1>0$ respectively, we have
\[\int_{0}^{2\pi} (v_l)_2(0,x_2) \varphi(0,x_2) dx_2=\int_{0}^{2\pi} (v_r)_2(0,x_2) \varphi(0,x_2)dx_2.\]
Hence, $(v_l)_2$ and $(v_r)_2$ are equal on $\mathbf{S}_*$, i.e., $\psi_2^{(\Hb)}=\psi_2^{(H)}$ on $\mathbf{S}_{*}=H\cap \Hb$.

Therefore, we have characteristic initial data $(\wb^{(H)},w^{(H)},\psi_2^{(H)})$ on $H$ and $(\wb^{(\Hb)},w^{(\Hb)},\psi_2^{(\Hb)})$ on $\Hb$ in such a way that they coincide on $H\cap \Hb$. We can then solve a classical Goursat problem with such set of initial data for the Euler equations.  According to the estimates already derived in this work, the data on $H\cup \Hb$ is $O(\varepsilon)$-close to the standard one dimensional problem (see Section \ref{section: calculations for 1D Riemann}
). By the continuous dependence of the data for the Goursat problem of the Euler equations, we can solve the equation in the region bounded by $H, \Hb$ and $\Sigma_{t^*}$.

This completes the construction of the solution to Riemann problem hence the proof of {\bf Theorem 3}.

{\color{black}

\begin{remark}\label{rem: 1 family alinhac}
 Given specific initial data, we can also solve the Riemann problem with one family of rarefaction waves. This is similar to the result of  Alinhac  in  \cite{AlinhacWaveRare1}.	
	\begin{center}
\includegraphics[width=3in]{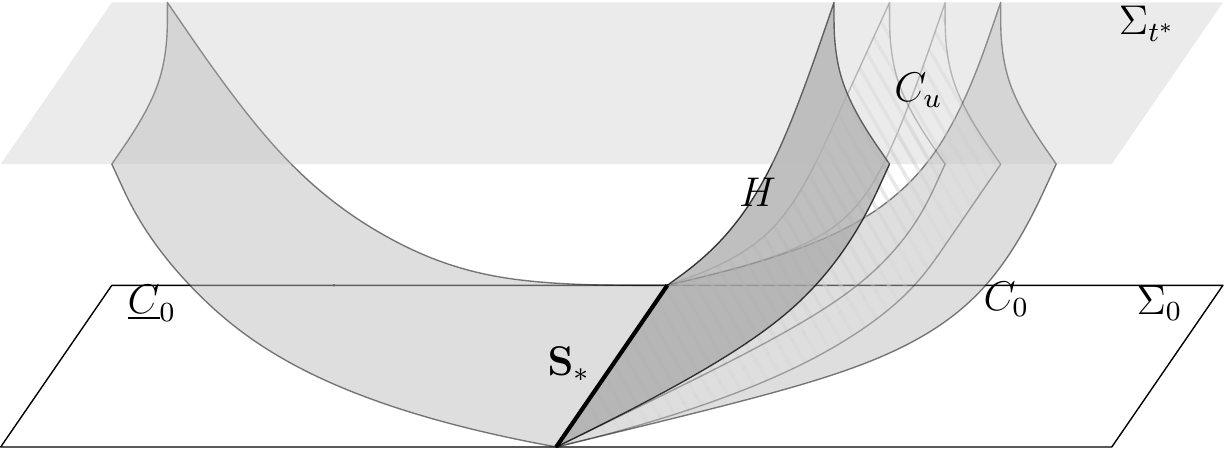}
\end{center}
We make the following extra assumptions on the initial data $(U_l, U_r)$:
\begin{itemize}
\item $
	 w_l \big|_{\mathbf{S}_*} = w_r\big|_{\mathbf{S}_*}$ and $ v^2_l\big|_{\mathbf{S}_*} = v^2_r\big|_{\mathbf{S}_*}$;
\item $\wb_l|_{\mathbf{S}_*} < \wb_r\big|_{\mathbf{S}_*}$ and there exists $H_0$ (as defined in \eqref{eq:H_0}) so that $\wb\big|_{H_0} = \wb_l\big|_{\mathbf{S}_*}$ (this can be achieved if we assume that $\wb_l|_{\mathbf{S}_*}$ is sufficiently close to  $\wb_r|_{\mathbf{S}_*}$).
\end{itemize}
Then, there exists a solution connecting $U_l$ and $U_r$ with only one family of front rarefaction waves. In this case the rarefaction fronts $\Cb_0$ and $\Hb$ in the picture of {\bf Theorem 3} coincide, see the above picture. This can be viewed as the limit of the Riemann problem of two rarefaction waves, see Section \ref{Section: uniqueness Thm 2} for details.

We remark that the solution in the above picture is not smooth across $\underline{C}_0$. If we use Alinhac's ''$k$-compatibility condition'' (see \cite{AlinhacWaveRare1}) on normal derivatives of order $k$, the solution will be $C^k$ across $\underline{C}_0$. In contrast, the above construction does not require	compatibility conditions on the initial data. 
\end{remark}

\subsection{Uniqueness}
Based on the method of relative entropy, the precise estimates obtained in the first paper \cite{LuoYu1} can also be applied to prove the uniqueness of the rarefaction waves constructed in  {\bf Theorem 2} and  {\bf Theorem 3}.

\subsubsection{The general set-up of the relative entropy method}
We follow Diperna's notations in \cite{DiPerna79} to review the relative entropy method. The method was originally introduced  to study the stability and uniqueness of classical solutions (so called weak-strong uniqueness) by Dafermos and Diperna. We consider the system of conservation laws in the following form
\begin{equation}\label{eq: general system}
\frac{\partial }{\partial t}U^a +\sum_{i=1}^n \frac{\partial }{\partial x^i}F^{ia}(U)=0, \ a=1,\cdots, m,
\end{equation}
where $U(t,x)=\begin{pmatrix}
  U^1 \\
  U^2\\
  \cdots\\
  U^m
   \end{pmatrix}: \mathbb{R}\times \mathbb{R}^n\rightarrow \mathbb{R}^m$. We use $\{x^i\}_{1\leqslant i\leqslant n}$ as a coordinate system for $x\in \mathbb{R}^n$ and use $\{y^a\}_{1\leqslant \alpha \leqslant m}$ as a coordinate system on the target $\mathbb{R}^m$. The $\{F^{ia}\}_{ 1\leqslant i  \leqslant n, 1\leqslant a \leqslant m}$ are smooth functions on $\mathbb{R}^m$. Let $F^i=\begin{pmatrix}
  F^{i1} \\
  F^{i2}\\
  \cdots\\
  F^{im}
   \end{pmatrix}$. Thus, \eqref{eq: general system} can be written in the following vector form:
   \begin{equation}\label{eq: general system vector form}
\frac{\partial }{\partial t}U  +\sum_{i=1}^n \frac{\partial }{\partial x_i}F^i(U)=0.
\end{equation}

We recall the following definitions of entropy solution for the above system, see \cite{Dafermos}.
\begin{itemize}
\item An {\bf entropy-entropy flux pair} consists of a smooth convex function $\eta:\mathbb{R}^m\rightarrow \mathbb{R}$ and a smooth function  $q:\mathbb{R}^m\rightarrow \mathbb{R}^n$ so that
\begin{equation}\label{eq: symmetry condition for eta q}
\frac{\partial q^i}{\partial y^b}(y)=\sum_{a=1}^m\frac{\partial \eta}{\partial y^a}(y)\frac{\partial F^{ia}}{\partial y^b}(y),
\end{equation}
for all $1\leqslant i\leqslant n$ and $1\leqslant b\leqslant m$.
\item We say that $U$ is an {\bf entropy solution} of \eqref{eq: general system}, if there exists an entropy-entropy flux pair $(\eta,q)$ so that 
\[\partial_t \eta(U) + \nabla_x\cdot q(U) \leqslant 0,\]
in the distributional sense. 

\item For a classical solution (locally Lipschitz)  $\overline{U}$  of the system,  we have
\[\partial_t \eta(\overline{U}) + \nabla_x \cdot q(\overline{U}) = 0\]
as an immediate consequence of the condition \eqref{eq: symmetry condition for eta q}.\end{itemize}
If we take $\frac{\partial}{\partial y^c}$ derivative in \eqref{eq: symmetry condition for eta q},  we obtain the following symmetry condition:
\begin{equation}\label{eq: symmetry condition}
\sum_{a=1}^m\frac{\partial^2\eta}{\partial y^c\partial y^a} \frac{\partial F^{ia}}{\partial y^b}=\sum_{a=1}^m\frac{\partial^2\eta}{\partial y^b\partial y^a} \frac{\partial F^{ia}}{\partial y^c},
\end{equation}
for all $1\leqslant b,c\leqslant m$ and $1\leqslant i\leqslant n$.

\begin{remark}
We can use the matrix $\left(\frac{\partial^2\eta}{\partial y_a\partial y_b}\right)$ as a Friedrichs symmetrizer of the hyperbolic system. In fact, if we assume that $U$ is smooth in a spacetime region, the hyperbolic system \eqref{eq: general system} can be written as
\[\frac{\partial }{\partial t}U^a +\sum_{i=1}^n \sum_{b=1}^m \frac{\partial F^{ia}}{\partial y^b}(U)\frac{\partial U^b}{\partial x^i}=0, \ a=1,\cdots, m.
\]
Let $A^{ia}{}_{b}=\frac{\partial F^{ia}}{\partial y^b}$, the system becomes
\[\frac{\partial }{\partial t}U^a +\sum_{i=1}^n \sum_{b=1}^m A^{ia}{}_{b}(U)\frac{\partial U^b}{\partial x^i}=0, \ a=1,\cdots, m.
\]
The Friedichs symmetrizer is a positive symmetric $2$-tensor $(g_{ab})$on the target $\mathbb{R}^m$ so that
\[A^{i}{}_{ab}=A^{i}{}_{ba}\]
where $A^{i}{}_{ab}=\sum_{c=1}^m g_{ac}A^{ic}{}_{b}$. Thus, the condition \eqref{eq: symmetry condition} implies that $\left(\frac{\partial^2\eta}{\partial y_a\partial y_b}\right)$ is a Friedrichs symmetrizer for \eqref{eq: general system}.
\end{remark}

The {\bf relative entropy} of an entropy solution $U$ with respect to a classical solution $\overline{U}$ is defined as
\[
	\alpha(U,\overline{U}) = \eta(U) - \eta(\overline{U}) - \sum_{a=1}^m\frac{\partial \eta}{\partial y^a}(\overline{U})(U^a - \overline{U}^a).
\]
It can also be written as
\begin{equation}\label{eq: def relative entropy alpha} \alpha(U,\overline{U}) =\sum_{a=1}^m\sum_{b=1}^m (U^a-\overline{U}^a) \big(\int_0^1\int_{0}^\tau \frac{\partial^2\eta}{\partial y^a\partial y^b}\big(\overline{U} + s(U - \overline{U})\big)  ds d\tau\big)  (U^b-\overline{U}^b). 
\end{equation}
In applications, $U$ and $\overline{U}$ are all bounded and $\left(\frac{\partial^2\eta}{\partial y^a\partial y^b}\right)$ has a positive lower bound. Therefore, we will have
\[\alpha(U,\overline{U}) \approx |U-\overline{U}|^2. \]

The {\bf relative entropy flux} $\beta(U,\overline{U})=\begin{pmatrix}
  \beta^1(U,\overline{U}) \\
  \beta^2(U,\overline{U})\\
  \cdots\\
\beta^n(U,\overline{U})
   \end{pmatrix}$ takes values in $\mathbb{R}^n$ and it is defined as
\begin{equation}\label{eq: def relative entropy beta}	
\beta^i(U,\overline{U}) = q^i(U) - q^i(\overline{U}) - \sum_{a=1}^m \frac{\partial \eta}{\partial y^a}(\overline{U}) \big(F^{ia}(U) - F^{ia}(\overline{U})\big), \ \ i =1,\cdots, n.
\end{equation}
The relative entropy $\beta^i(U,\overline{U})$ is indeed quadratic in $|U-\overline{U}|$. This is based on the following computations:
\begin{align*}
\beta^i(U,\overline{U}) =& q^i(U) - q^i(\overline{U}) -\sum_{b=1}^m \frac{\partial q^i}{\partial y^b}(\overline{U}) ( U^b - \overline{U}^{b})\\
&+\sum_{b=1}^m \frac{\partial q^i}{\partial y^b}(\overline{U}) ( U^b - \overline{U}^{b})- \sum_{a=1}^m \frac{\partial \eta}{\partial y^a}(\overline{U}) \big(F^{ia}(U) - F^{ia}(\overline{U})\big)\\
=&\sum_{a=1}^m\sum_{b=1}^m (U^a-\overline{U}^a) \big(\int_0^1\int_{0}^\tau \frac{\partial^2 q^i}{\partial y^a\partial y^b}\big(\overline{U} + s(U - \overline{U})\big)  ds d\tau\big)  (U^b-\overline{U}^b)\\
&+\sum_{b=1}^m \sum_{a=1}^m\frac{\partial \eta}{\partial y^a}( \overline{U})\frac{\partial F^{ia}}{\partial y^b}( \overline{U})( U^b - \overline{U}^{b})- \sum_{a=1}^m \frac{\partial \eta}{\partial y^a}(\overline{U}) \big(F^{ia}(U) - F^{ia}(\overline{U})\big).
\end{align*}
where we have used \eqref{eq: symmetry condition for eta q} in the last step. Hence,
\begin{equation}\label{eq:relative entropy beta quadratic}
\begin{split}
\beta^i(U,\overline{U}) =&\sum_{a=1}^m\sum_{b=1}^m (U^a-\overline{U}^a) \big(\int_0^1\int_{0}^\tau \frac{\partial^2 q^i}{\partial y^a\partial y^b}\big(\overline{U} + s(U - \overline{U})\big)  ds d\tau\big)  (U^b-\overline{U}^b)\\
&- \sum_{a=1}^m \frac{\partial \eta}{\partial y^a}(\overline{U}) \left(F^{ia}(U) - F^{ia}(\overline{U})-\sum_{b=1}^m\frac{\partial F^{ia}}{\partial y^b}( \overline{U})( U^b - \overline{U}^{b})\right).
\end{split}
\end{equation}

 In applications, $U$ and $\overline{U}$ are all bounded. The first term is clearly bounded  above by $ |U-\overline{U}|^2$. The second term can also be bounded in the similar manner. Therefore, we will have
\[\beta^i(U,\overline{U}) \lesssim |U-\overline{U}|^2, \ i=1,\cdots,n. \]

We consider the following distributional function
\begin{align*}
	\mu(U,\overline{U}) = \partial_t \alpha(U,\overline{U}) + \nabla_x \cdot \beta(U,\overline{U}).\end{align*}
Since $\overline{U}$ is a classical solution, we have $\partial_t \eta(\overline{U}) + \nabla_x \cdot q(\overline{U}) = 0$. Combined with the fact that both $U$ and $\overline{U}$ are solutions to \eqref{eq: general system}, we have
\begin{align*}
	\mu(U,\overline{U}) 	=& \partial_t \eta(U) + \nabla_x \cdot q(U) -\sum_{a=1}^m\sum_{b=1}^m\frac{\partial \overline{U}^{b}}{\partial t}\frac{\partial^2\eta}{\partial y^a\partial y^b}( \overline{U})\left(U^a-\overline{U}^a\right)\\
	&-\sum_{a=1}^m\sum_{b=1}^m\sum_{i=1}^n\frac{\partial \overline{U}^{b}}{\partial x^i}\frac{\partial^2\eta}{\partial y^a\partial y^b}( \overline{U})\left(F^{ia}(U)-F^{ib}(\overline{U})\right).
\end{align*}

By using the condition \eqref{eq: symmetry condition} and the equation \eqref{eq: general system}, it is straightforward to compute that
\begin{align*}
	\mu(U,\overline{U}) 	=& \partial_t \eta(U) + \nabla_x \cdot q(U) \\
	&-\sum_{a=1}^m\sum_{b=1}^m\sum_{i=1}^n\frac{\partial \overline{U}^{a}}{\partial x^i}\frac{\partial^2\eta}{\partial y^a\partial y^b}( \overline{U})\left[F^{ib}(U)-F^{ib}(\overline{U})-\sum_{c=1}^m\frac{\partial F^{ib}}{\partial y^c}(\overline{U})\cdot(U^c- \overline{U}^c)\right].
	 \end{align*}
To simplify the expression, we define functions $QF^{ib}$'s as follows:
\[QF^{ib}(U,\overline{U})=F^{ib}(U)-F^{ib}(\overline{U})-  \sum_{c=1}^m\frac{\partial F^{ib}}{\partial y^c}(\overline{U})\cdot(U^c- \overline{U}^c),\]
where $1\leqslant i\leqslant n, 1\leqslant b\leqslant m$. The letter $Q$ in $QF^{ib}$ stands for quadratic expressions, i.e.,  we may regard every $QF^{ib}(U,\overline{U})$ term as quadratic in $U-\overline{U}$. Since $\partial_t \eta(U) + \nabla_x\cdot q(U) \leqslant 0$, we conclude that
\begin{equation}\label{eq: entropy density inequality}
\mu(U,\overline{U}) 	\leqslant -\sum_{a=1}^m\sum_{b=1}^m\sum_{i=1}^n\frac{\partial \overline{U}^{a}}{\partial x^i}\frac{\partial^2\eta}{\partial y^a\partial y^b}( \overline{U})QF^{ib}(U,\overline{U})
\end{equation}
in the distributional sense.

We now apply the above theory to the Euler equations \eqref{eq: Euler in rho v} and we refer to \cite{Chen-Chen} for the detailed computations.  

We first use the density-momentum variables $U=(\rho, P)$ to rewrite \eqref{eq: Euler in rho v} in the form of \eqref{eq: general system}. We first define 
\[P=\rho v=\rho v^1 \frac{\partial}{\partial x^1}+\rho v^2 \frac{\partial}{\partial x^1}.\]
Thus, \eqref{eq: Euler in rho v}  becomes
\[\begin{cases}
&\partial_t \rho+\frac{\partial P^1}{\partial x^1}+\frac{\partial P^2}{\partial x^2}=0,\\
&\partial_t P^1+\frac{\partial}{\partial x^1}\big(\frac{(P^1)^2}{\rho}+p(\rho)\big)+\frac{\partial}{\partial x^2} \big(\frac{ P^1P^2}{\rho}\big)=0,\\
&\partial_t P^2+\frac{\partial}{\partial x^1} \big(\frac{ P^1P^2}{\rho}\big)+\frac{\partial}{\partial x^2}\big(\frac{(P^2)^2}{\rho}+p(\rho)\big)=0.
\end{cases}
\]
We then define $U$, $F^1$ and $F^2$ as in \eqref{eq: general system vector form} in the following form
\[U=\begin{pmatrix}
  \rho \\
 P^1\\
 P^2
\end{pmatrix}, \ \ F^1(U)=\begin{pmatrix}
 P^1\\
\frac{(P^1)^2}{\rho}+p(\rho)\\
\frac{ P^1P^2}{\rho}
\end{pmatrix}, \ \  F^2(U)=\begin{pmatrix}
 P^2\\
\frac{ P^1P^2}{\rho}\\
\frac{(P^2)^2}{\rho}+p(\rho)
\end{pmatrix}.\]
The matrix expressions for $\frac{\partial F^{ia}}{\partial y^b}(U)$ (where the index $a$ stands for the rows) are given by
\[\left(\frac{\partial F^{1a}}{\partial y^b}(U)\right)=\begin{pmatrix}
 0& 1 &0\\
-(v^1)^2+c^2& 2v^1 &0\\
-v^1v^2& v^2 &v^1
\end{pmatrix}, \ \ \left(\frac{\partial F^{2a}}{\partial y^b}(U)\right)=\begin{pmatrix}
 0& 0 &1\\
-v^1v^2& v^2 &v^1\\
-(v^2)^2+c^2& 0 &2v^2
\end{pmatrix}.\]

In view of \eqref{def: eta q}, we have
\begin{equation}
\begin{cases} 
&\eta(\rho,v) = \frac{1}{2}\frac{|P|^2}{\rho} + \frac{1}{\gamma-1}p(\rho), \\
& q(\rho,v) = \left(\frac{1}{2}\frac{|P|^2}{\rho} + \frac{\gamma}{\gamma-1}p(\rho)\right)\frac{P}{\rho}.\end{cases}
\end{equation}
We then compute
\[\left(\frac{\partial\eta}{\partial y^a}(U)\right)=\begin{pmatrix}
 \frac{1}{\gamma-1}c^2-\frac{1}{2}|v|^2 \\
v^1 \\
v^2
\end{pmatrix},
\]
and
\begin{equation}\label{eq: hassian eta}\left(\frac{\partial^2\eta}{\partial y^a\partial y^b}(U)\right)=\frac{1}{\rho}\begin{pmatrix}
 |v|^2+c^2& -v^1 & -v^2\\
-v^1& 1 &0\\
-v^2& 0 &1
\end{pmatrix}.
\end{equation}
In particular, the three eigenvalues of the above matrix are
\[\frac{1}{\rho}, \ \frac{1}{2\rho}(|v|^2+c^2+1\pm \sqrt{|v|^2+(c^2-1)^2+2|v|^2(c^2+1)}).\] 
They are roots (multiplied by $\rho^{-1}$) of the following cubic polynomial:
\[\big(\lambda^2-(|v|^2+c^2+1)\lambda+c^2\big)(\lambda-1).\]
As along as $\rho$ is bounded below by a positive constant, the above matrix is positive definite and bound below. 
For the symmetry condition \eqref{eq: symmetry condition for eta q}, we notice that
\[\left(\frac{\partial q^{1}}{\partial y^b}(U)\right)=\begin{pmatrix}
 \big(c^2-|v|^2\big)v^1,& \frac{1}{\gamma-1}c^2+\frac{1}{2}|v|^2+(v^1)^2, &v^1v^2
 \end{pmatrix}\]
 and
\[\left(\frac{\partial q^{2}}{\partial y^b}(U)\right)=\begin{pmatrix}
\big(c^2-|v|^2\big)v^2,& v^1v^2, & \frac{1}{\gamma-1}c^2+\frac{1}{2}|v|^2+(v^1)^2 
\end{pmatrix}.\]
Therefore,  condition \eqref{eq: symmetry condition for eta q} can  be checked in a straightforward way by the above matrices.

Finally, we also compute $QF^{ib}(U,\overline{U})$ in the vector form:
\begin{equation}\label{eq: QFone}
QF^1(U,\overline{U})=\begin{pmatrix}
 0\\
[p(\rho)-p(\overline{\rho})-p'(\overline{\rho})(\rho-\overline{\rho})]+\rho(v^1-\overline{v}^1)^2\\
\rho(v^1-\overline{v}^1)(v^2-\overline{v}^2)\end{pmatrix}
\end{equation}
and
\begin{equation}\label{eq: QFtwo}
QF^2(U,\overline{U})=\begin{pmatrix}
 0\\
\rho(v^1-\overline{v}^1)(v^2-\overline{v}^2)\\
[p(\rho)-p(\overline{\rho})-p'(\overline{\rho})(\rho-\overline{\rho})]+\rho(v^2-\overline{v}^2)^2\end{pmatrix}.
\end{equation}


\subsubsection{The uniqueness in {\bf Theorem 3}}

We consider two solutions $\overline{U}$ and $U$ to the Euler equations \eqref{eq: Euler in rho v}. We make the following assumptions:
\begin{itemize}
\item We fix $\overline{U}$ to be the solution constructed in {\bf Theorem 3}. It is defined on the following spacetime region:
\[\Omega=[0,t^*]\times\mathbb{R}\times \mathbb{R}\slash 2\pi\mathbb{Z}=\big\{(t,x_1,x_2)\big| 0\leqslant t\leqslant t^*, x_1\in \mathbb{R}, 0\leqslant x_2\leqslant 2\pi \big\}.\] 
The initial data $\overline{U}_0$ is the restriction of $\overline{U}$ on $\Sigma_0$.

\item Let $U$ be an entropy solution to the Euler equations \eqref{eq: Euler in rho v}. Without loss of generality, we assume that $U$ is also defined on $\Omega$. The initial data $U_0$ is the restriction of $U$ on $\Sigma_0$.

\item $\overline{U}_0 = {U}_0$.

\end{itemize}

Both $U$ and $\overline{U}$ are bounded on $\Omega$.
Since $\overline{\rho}$ is uniformly bounded above and below by a positive constant, $\left(\frac{\partial^2\eta}{\partial y^a\partial y^b}(\overline{U})\right)$ is a positive definite matrix and all its eigenvalues are bounded and away from $0$. In particular, in this case, in view of \eqref{eq: def relative entropy alpha}, the quantity $\alpha(U,\overline{U})$ can be used as a pointwise norm:
\begin{equation}\label{eq: alpha as a norm} 
\alpha(U,\overline{U}) \approx |U-\overline{U}|^2. 
\end{equation}
Similarly, according to \eqref{eq:relative entropy beta quadratic} and the fact that $q(U)$, $\eta(U)$ and $F_a^i(U)$ are smooth in $U$, we have
\[\beta(U,\overline{U}) \lesssim |U-\overline{U}|^2. \]
In particular, there exists a constant $s_0$ so that
\begin{equation}\label{eq: bound on beta} 
\beta(U,\overline{U}) \leqslant s_0\alpha(U,\overline{U}) .
\end{equation}
Similarly, according to \eqref{eq: QFone} and \eqref{eq: QFtwo}, we also have
\begin{equation}\label{eq: bound on QFi} 
|QF^1(U,\overline{U})|+|QF^2(U,\overline{U})|\lesssim \alpha(U,\overline{U}) .
\end{equation}

For a sufficiently large $L>0$ and a fixed $t\leqslant t^*$, we consider the region 
\[\Omega_L=\big\{(t,x_1,x_2)\in \Omega\big| 0\leqslant \tau \leqslant t,  |x_1|\leqslant L+s_0(t-\tau) \big\}.\]
\begin{center}
\includegraphics[width=2.5in]{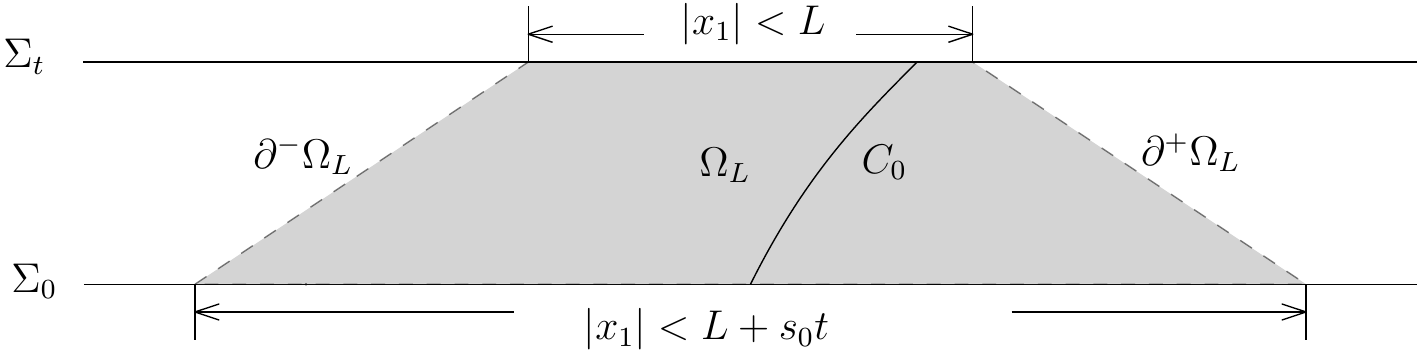}
\end{center}
The boundary of $\Omega_L$ consists of four components: the right one $\partial^+ \Omega_L$, the left one $\partial^- \Omega_L$, the top one $\Sigma_t$ (with $|x_1|\leqslant L$) and the bottom one $\Sigma_0$ (with $|x_1|\leqslant L+s_0 t$). They are depicted in the above picture.

We integrate the inequality \eqref{eq: entropy density inequality} on $\Omega_L$. According to the Stokes formula and the definition of $\mu(U,\overline{U})$, the lefthand side of  \eqref{eq: entropy density inequality} contributes four boundary integrals on  $\partial^\pm \Omega_L$, $\Sigma_t$  and $\Sigma_0$.  Since $U_0=\overline{U}_0$, the boundary integral of $\Sigma_0$ vanishes. Therefore, by choosing $L$ sufficiently large, we can assume that $\partial^+\Omega_L$ is on the righthand side of $C_0$ (the characteristic boundary of the future development of the data defined on $x_1\geqslant 0$). 
Since the integrands of boundary integrals on $\partial^\pm\Omega$ are proportional to $s_0\alpha(U,\overline{U})-\beta^1(U,\overline{U})$, in view of \eqref{eq: bound on beta}, the contribution of the boundary integrals on $\partial^+\Omega$ and $\partial^-\Omega$ are both negative. 
These discussions lead to
\begin{equation}\label{eq: entropy inequality}
\int_{\Sigma_t} \alpha(U,\overline{U})	\leqslant -\int_{\Omega_L}\sum_{i=1}^2\sum_{a=1}^3\sum_{b=1}^3\frac{\partial \overline{U}^{a}}{\partial x^i}\frac{\partial^2\eta}{\partial y^a\partial y^b}( \overline{U})QF^{ib}(U,\overline{U}).
\end{equation}

We recall that in the first paper \cite{LuoYu1},  for $\psi\in \{\wb,w,\psi_2\}$, we have proved that $|\Xr(\psi)|=|\frac{\partial \psi}{\partial x_2}|\lesssim \varepsilon$. Since $\overline{U}$ is a smooth function of $ \wb,w$ and $\psi_2$ (provided $\rho$ is bounded below by a positive constant), we have
\[|\frac{\partial \overline{U}^a}{\partial x^2}|\lesssim \varepsilon, \  a=1,2,3.\] 
By \eqref{eq: bound on QFi} and the fact that $\frac{\partial^2\eta}{\partial y_a\partial y_b}( \overline{U})$ is also bounded above by a universal constant, we can control the terms for $i=2$ on the righthand side of \eqref{eq: entropy inequality}. Hence,
\begin{equation}\label{eq: entropy inequality 1}
\int_{\Sigma_t} \alpha(U,\overline{U})	\leqslant -\int_{\Omega_L} \sum_{a=1}^3\sum_{b=1}^3\frac{\partial \overline{U}^{a}}{\partial x^1}\frac{\partial^2\eta}{\partial y^a \partial y^b}( \overline{U})QF^{1b}(U,\overline{U})+C_0\varepsilon\int_{0}^t \left(\int_{\Sigma_\tau} \alpha(U,\overline{U})\right)d\tau,
\end{equation}
where $C_0$ is a universal constant, and we use $\Sigma_\tau$ to denote the part $\Sigma_\tau$ with $|x_1|\leqslant L+s_0(t-\tau)$. 

The characteristic  hypersurfaces $C_0,H_0,\Hb_0$ and $\Cb_0$ divide $\Omega_L$ into five regions $\mathcal{O}_+$, $\mathcal{W}_+$, $\mathcal{W}_m$, $\mathcal{W}_-$ and $\mathcal{O}_-$, as depicted below:
\begin{center}
\includegraphics[width=3in]{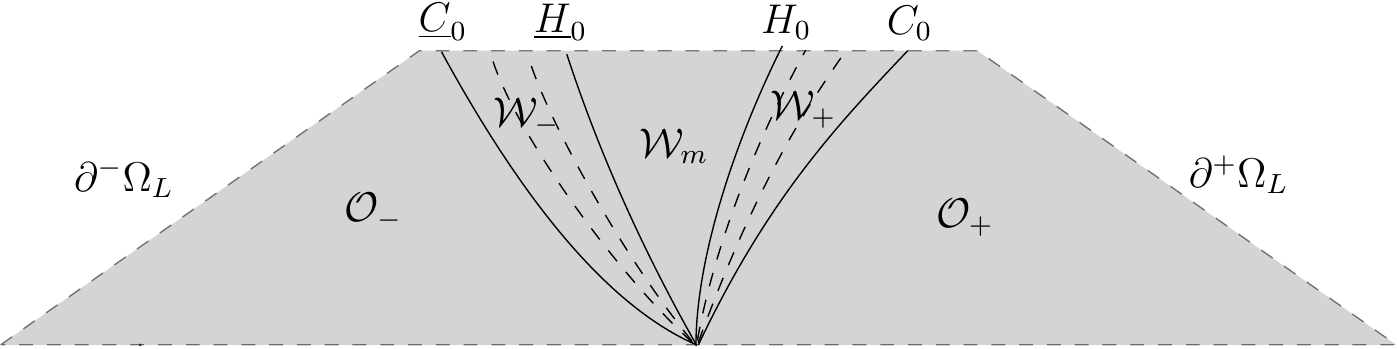}
\end{center}

By the construction in Section \ref{Section: app to Riemann 1} and  Section \ref{Section: app to Riemann 1}, we know that, on $\mathcal{W}_m$ and $\mathcal{O}_\pm$, 
\[|\frac{\partial \overline{U}^a}{\partial x^1}|\lesssim \varepsilon, \  a=1,2,3.\] 
{\color{black}Therefore, we can bound the integrals on $\mathcal{W}_m$ and $\mathcal{O}_\pm$ by the second term of  \eqref{eq: entropy inequality 1}.} This leads to
\begin{equation}\label{eq: entropy inequality 2}
\int_{\Sigma_t} \alpha(U,\overline{U})	\leqslant -\left(\int_{\mathcal{W}_-} +\int_{\mathcal{W}_+}\right)\sum_{a=1}^3\sum_{b=1}^3\frac{\partial \overline{U}^{a}}{\partial x^1}\frac{\partial^2\eta}{\partial y^a\partial y^b}( \overline{U})QF^{1b}(U,\overline{U})+C_0\varepsilon\int_{0}^t \left(\int_{\Sigma_\tau} \alpha(U,\overline{U})\right)d\tau,
\end{equation}
where $C_0$ is a universal constant and it may be different from the one in \eqref{eq: entropy inequality 1}. It remains to consider the integrals of \eqref{eq: entropy inequality 1} on the rarefaction wave zone  $\mathcal{W}_+$, and $\mathcal{W}_-$.

We consider the integral on $\mathcal{W}_+$ and the key is to compute $\frac{\partial \overline{U}^{a}}{\partial x^1}$ for $a=1,2,3$. We first compute the differential of the transformation  from  $V=\begin{pmatrix}
  \wb \\
 w\\
 \psi_2
\end{pmatrix}$ to $U=\begin{pmatrix}
  \rho \\
 P^1\\
 P^2
\end{pmatrix}$. We use $\frac{\partial U}{\partial V}$ to denote the differential. In view of \eqref{def: Riemann invariants},  it can be written as the following matrix
\begin{equation}\label{eq: partial U partial V}
\left(\frac{\partial U^a}{\partial V^b}\right)=\begin{pmatrix}
 \frac{\gamma-1}{2}\frac{1}{c'(\rho)} & \frac{\gamma-1}{2}\frac{1}{c'(\rho)} & 0\\
 \frac{\gamma-1}{2}\frac{1}{c'(\rho)} v^1+\rho & \frac{\gamma-1}{2}\frac{1}{c'(\rho)} v^1-\rho&0\\
 \frac{\gamma-1}{2}\frac{1}{c'(\rho)} v^2 &  \frac{\gamma-1}{2}\frac{1}{c'(\rho)} v^2 &-\rho
\end{pmatrix}.
\end{equation}
Moreover, all the entries are uniformly bounded. We recall that in \cite{LuoYu1},  for $\psi\in \{w,\psi_2\}$, we have proved that $|\Tr(\psi)|=|t\frac{\partial \psi}{\partial x^1}|\lesssim \varepsilon t$. Hence,
\[|\frac{\partial \psi}{\partial x^1}|\lesssim \varepsilon, \  \psi\in \{w,\psi_2\}.\] 
By expanding $\frac{\partial \overline{U}^{a}}{\partial x^1}$ as follows:
\begin{equation}\label{eq: compute partial1 of Ub}
\frac{\partial \overline{U}^{a}}{\partial x^1}=\frac{\partial \overline{U}^{a}}{\partial \wb}\frac{\partial \wb}{\partial x^1}+\frac{\partial \overline{U}^{a}}{\partial w}\frac{\partial w}{\partial x^1}
+\frac{\partial \overline{U}^{a}}{\partial \psi_2}\frac{\partial \psi_2}{\partial x^1},
\end{equation}
we see that the contributions in \eqref{eq: entropy inequality 2} of the last two terms are also bounded by a universal constant times $\varepsilon \alpha(U,\overline{U})$. Therefore,
\begin{equation}\label{eq: Wplus estimate}
\begin{split}
&- \int_{\mathcal{W}_+} \sum_{a=1}^3\sum_{b=1}^3\frac{\partial \overline{U}^{a}}{\partial x^1}\frac{\partial^2\eta}{\partial y^a\partial y^b}( \overline{U})QF^{1b}(U,\overline{U})\\
\leqslant& - \frac{\partial \wb}{\partial x^1}\int_{\mathcal{W}_+} \sum_{a=1}^3\sum_{b=1}^3 \frac{\partial \overline{U}^{a}}{\partial \wb}\frac{\partial^2\eta}{\partial y^a\partial y^b}( \overline{U})QF^{1b}(U,\overline{U})+C_0\varepsilon\int_{0}^t \left(\int_{\Sigma_\tau} \alpha(U,\overline{U})\right)d\tau\\
=&- \frac{\partial \wb}{\partial x^1}\big\{[p(\rho)-p(\overline{\rho})-p'(\overline{\rho})(\rho-\overline{\rho})]+\rho(v^1-\overline{v}^1)^2\big\}+C_0\varepsilon\int_{0}^t \left(\int_{\Sigma_\tau} \alpha(U,\overline{U})\right)d\tau.
\end{split}
\end{equation}
The last line is from a direct computation by using \eqref{eq: hassian eta}, \eqref{eq: QFone} and \eqref{eq: partial U partial V}. We recall that in \cite{LuoYu1}, we have proved that $|\Tr(\wb)+\frac{2}{\gamma+1}|\lesssim \varepsilon t$. Thus, although $|\frac{\partial \wb}{\partial x^1}|\rightarrow \infty$ as $t\rightarrow 0$, we have $\frac{\partial \wb}{\partial x_1}>0$. Since $\gamma>1$, we also have 
\[p(\rho)-p(\overline{\rho})-p'(\overline{\rho})(\rho-\overline{\rho})\geqslant 0.\]
Thus, we can drop the first term on the last line of \eqref{eq: Wplus estimate} and we obtain that
\[
- \int_{\mathcal{W}_+} \sum_{a=1}^3\sum_{b=1}^3\frac{\partial \overline{U}^{a}}{\partial x^1}\frac{\partial^2\eta}{\partial y^a\partial y^b}( \overline{U})QF^{1b}(U,\overline{U})
\leqslant C_0\varepsilon\int_{0}^t \left(\int_{\Sigma_\tau} \alpha(U,\overline{U})\right)d\tau.
\]
Similar argument also works for the integral on $\mathcal{W}_-$. Thus, \eqref{eq: entropy inequality} finally leads to
\begin{equation*}\label{eq: entropy inequality final}
\int_{\Sigma_t} \alpha(U,\overline{U})	\leqslant C_0\varepsilon\int_{0}^t \left(\int_{\Sigma_\tau} \alpha(U,\overline{U})\right)d\tau.
\end{equation*}
Then, by the Gronwall's inequality, $\alpha(U,\overline{U})\equiv 0$. Hence, $U=\overline{U}$. This completes the proof of Proposition \ref{prop: uniqueness to thm 3} hence the uniqueness of the solution in {\bf Theorem 3}.

\subsubsection{The uniqueness of a single family of centered rarefaction waves}\label{Section: uniqueness Thm 2}
We now provide a proof for Proposition \ref{prop: uniqueness to thm 2}. We start by showing that the solution $(v',c')$ must satisfy the same limiting data on the singularity $\mathbf{S}_*$ in Proposition \ref{prop:data at the singularity}, i.e., 
\begin{equation}\label{eq:data at the singularity'}
	\begin{cases}
		&\lim_{t \rightarrow 0}\wb'(t,u',\vartheta')= \wb_r(0,\vartheta')-\frac{2}{\gamma+1}u',\\
		&\lim_{t \rightarrow 0}w'(t,u',\vartheta')=w_r(0,\vartheta'),\\
		&\lim_{t \rightarrow 0}\psi_2'(t,u',\vartheta')=-v^2_r(0,\vartheta').		
	\end{cases}
\end{equation}
Let $(L',T',\Xh')$ be the null frame associated with the acoustical coordinate $(t,u',\vartheta')$. Since $\Th' := \frac{1}{\kappa'}T'$ and $\Xh'$ are normal and tangential vectors to  $S_{t,u'}' \subset \Sigma_t$ and that $S_{t,u'}'$ converges to $\mathbf{S}_*$, we have
\[ \lim_{t \rightarrow 0}\Th^1 =-1,  \ \lim_{t \rightarrow 0}\Th^2 =0, \ \ \lim_{t \rightarrow 0}\Xh^1 = 0, \ \lim_{t \rightarrow 0}\Xh^2 = 1. \]
As $t\rightarrow 0$, according to \eqref{eq:kappa-limit} and the Euler equations \eqref{Euler equations:form 2}, we have
\[ T'(w) = O(\kappa'), \ \ T'(\psi_2) = O(\kappa'), \ \ t \rightarrow 0. \]
This proved the last two identities in \eqref{eq:data at the singularity'} because $T' =\frac{\partial}{\partial u'}$ and $X' =\frac{\partial}{\partial \vartheta'}$ on $\mathbf{S}_*$. On the other hand, by \eqref{structure eq 1: L kappa},  we have  
\[ \lim_{t \rightarrow 0}m' = \lim_{t \rightarrow 0}\frac{\kappa'}{t} = 1. \] 
Hence, $T'\wb'\big|_{\mathbf{S}_*} \equiv -\frac{2}{\gamma+1}$ and this completes the proof of \eqref{eq:data at the singularity'}.

We now choose suitable data $U_l$ on $x_1<0$, see Remark \ref{rem: 1 family alinhac}. We only require the trace of $U_l$ on $\mathbf{S}_*$ is given by
\begin{equation}
	\wb_l(0,x^2) = \wb_r(0,x^2) - \frac{2}{\gamma+1}u^*, \ \ w_l(0,x^2) = w_r(0,\vartheta), \ \ v^2_l(0,x^2) = v^2_r(0,x^2),
\end{equation}
and $U_l$ is smooth on $\{x^1 \leqslant 0\}$. 

We repeat the construction in Section \ref{Section: app to Riemann 2}. Let $(v_l,c_l)$ be the smooth solution developed from $U_l$ on $\{x^1 \leqslant 0\}$ and $\Cb_0$ be the characteristic boundaries of its domain of dependence. Let $H' := C_{u^*}'$ be the left boundary of $(v',c')$ in $\mathcal{W}'$. We then solve the Goursat problem with smooth characteristic data on $\Cb_0$ and $H'$ to obtain $(v_m',c_m')$. Hence, the combination of $(v_l,c_l)$, $(v_m',c_m')$, $(v',c')$ and $(v_r,c_r)$ is a solution to the Riemann problem with given data $U_l$ and $U_r$ on $t=0$.

We also apply the above construction to $(v,c)$ constructed in {\bf Theorem 2} in the same manner. We then obtain another solution $(v_l,c_l)$, $(v_m,c_m)$, $(v,c)$ and $(v_r,c_r)$  to the Riemann problem with given data $U_l$ and $U_r$ on $t=0$. 

We then apply the proof of \ref{prop: uniqueness to thm 3} exactly in the same way and we prove that  the above two solutions must be the same. In particular, we have $\mathcal{W}' = \mathcal{W}$ and $(v,c) = (v',c')$. This completes the proof of Proposition \ref{prop: uniqueness to thm 2}.

\begin{remark}
There is an alternative proof for the uniqueness of centered rarefaction wave near $C_0$. We choose the following one dimensional Riemann data $(\mathring{v}'_l,\mathring{c}'_l)$ and $(\mathring{v}_r,\mathring{c}_r)$. The data $(\mathring{v}_r,\mathring{c}_r)$ is exactly the one used in Definition \ref{def:data} and {\bf Theorem 3}. To prescribe $(\mathring{v}'_l,\mathring{c}'_l)$, we consider the following picture which was discussed in Section \ref{section:review on p system}:
\begin{center}
\includegraphics[width=1.5in]{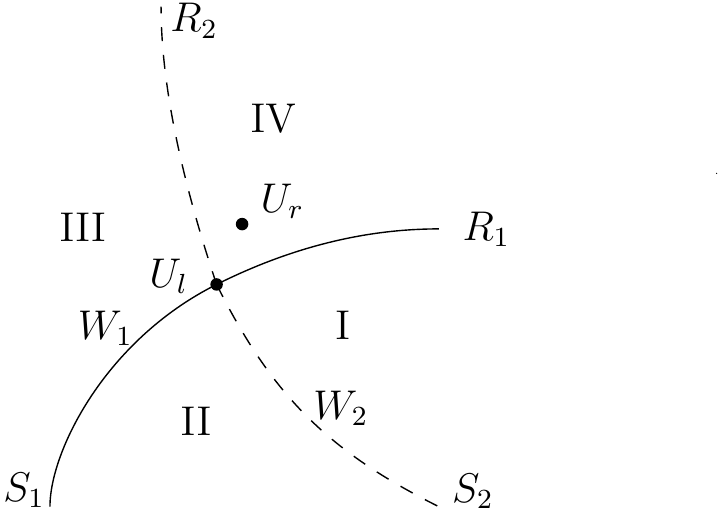} 
\end{center}
The point $U_l$ denotes the data $(\mathring{v}'_l,\mathring{c}'_l)$ and the point  $U_r$ denotes the data $(\mathring{v}_r,\mathring{c}_r)$. We can choose $U_l$ in the following way:
\begin{itemize}
\item[a)] We make sure that $U_r$ is in the region $IV$ so that there are  two families of centered rarefaction waves connecting $U_l$ and $U_r$. 
\item[b)] We make sure that $U_r$ is sufficiently close to $U_l$ so that the corresponding $
u^*$ for $U_r$ is much less than $u'^*$. 
\end{itemize}
We then take $(u_r,v_r)$ and $(\mathring{v}'_l,\mathring{c}'_l)$ as initial data to the Euler equations. We use the standard one dimensional centered rarefaction waves $(v_l,c_l)$ connecting $(\mathring{v}'_l,\mathring{c}'_l)$.  We have two solutions connected to $(u_r,v_r)$: the solution $(v,c)$ and the solution $(v',c')$. The constructions in  Section \ref{Section: app to Riemann 2} yield two solutions to the Riemann problem with data $(u_r,v_r)$ and $(\mathring{v}'_l,\mathring{c}'_l)$ on $\Sigma_0$. By  Proposition \ref{prop: uniqueness to thm 3}, they must be the same. We can restrict to the family on the righthand side and this provides an alternative proof of Proposition \ref{prop: uniqueness to thm 2}.
\end{remark}
}

\section*{Acknowledgment}
The authors are grateful to the anonymous referees, who suggested many valuable improvements and corrections. PY is supported by NSFC11825103, NSFC12141102, New Cornerstone Investigator Program and Xiao-Mi Professorship. TWL is supported by NSFC 11971464.

\end{document}